\newtheorem{theorem}{Theorem}
\newtheorem{proposition}{Proposition}
\newtheorem{lemma}{Lemma}
\theoremstyle{definition}
\theoremstyle{definition}\newtheorem{definition}{Definition}
\theoremstyle{definition}\newtheorem{remark}{Remark}
\DeclareMathOperator*{\supess}{sup\,ess}
\DeclareMathOperator*{\argmax}{arg\,max}
\newcommand{\fonction}[5]{\begin{array}[t]{lrcl}#1 :&#2 &\longrightarrow &#3\\&#4& \longmapsto &#5 \end{array}}
\def\R{\mathbb{R}}
\def\N{\mathbb{N}}
\def\T{\mathbb{T}}
\def\TU{{\mathbb{T}_1}}
\def\bs{\backslash}
\def\di{\displaystyle}
\def\RS{\mathrm{RS}}
\def\RD{\mathrm{RD}}
\def\DD{\Delta}
\def\AC{\mathrm{AC}}
\def\CC{\mathrm{C}}
\def\UU{\mathcal{UQ}^{b}}
\def\UUEB{\mathcal{U\bar{Q}}^{b^*}}
\def\B{\overline{\mathrm{B}}}
\def\t{\tau}
\def\iP{\amalg}
\def\L{\mathrm{L}}
\def\LL{\mathscr{L}}
\def\E{\mathrm{E}}
\def\S{\mathrm{S}}
\def\V{\mathcal{V}}
\def\O{\mathcal{O}}
\def\P{\mathrm{P}}
\def\OSCP{{\bf (OSDCP)}^{\T}_{\TU}}
\def\OSCPDEUX{{\bf (OSDCP)}^{\T}_{\T_2}}
\def\OSCPI{{\bf (OSDCP)}^{\R^+}_{\N T}}
\def\OF{{\bf (O)}}
\title{Optimal sampled-data control, and generalizations on time scales}
\author{Lo\"ic Bourdin\footnote{Universit\'e de Limoges, Institut de recherche XLIM, D\'epartement de Math\'ematiques et d'Informatique. UMR CNRS 7252. Limoges, France (\texttt{loic.bourdin@unilim.fr}).}
\and
Emmanuel Tr\'elat\footnote{Sorbonne Universit\'es, UPMC Univ Paris 06, CNRS UMR 7598, Laboratoire Jacques-Louis Lions, Institut Universitaire de France, F-75005, Paris, France (\texttt{emmanuel.trelat@upmc.fr}).}
}
\date{}
\begin{document}

\maketitle

\begin{abstract}
In this paper, we derive a version of the Pontryagin maximum principle for general finite-dimensional nonlinear optimal sampled-data control problems. Our framework is actually much more general, and we treat optimal control problems for which the state variable evolves on a given time scale (arbitrary non-empty closed subset of $\R$), and the control variable evolves on a smaller time scale. Sampled-data systems are then a particular case. Our proof is based on the construction of appropriate needle-like variations and on the Ekeland variational principle.
\end{abstract}

\bigskip

\noindent\textbf{Keywords:} optimal control; sampled-data; Pontryagin maximum principle; time scale.

\bigskip

\noindent\textbf{AMS Classification:} 49J15; 93C57; 34N99; 39A12.

%\tableofcontents

\section{Introduction}

Optimal control theory is concerned with the analysis of controlled dynamical systems, where one aims at steering such a system from a given configuration to some desired target by minimizing some criterion. The Pontryagin maximum principle (in short, PMP), established at the end of the 50's for general finite-dimensional nonlinear continuous-time dynamics (see \cite{pont}, and see \cite{gamk} for the history of this discovery), is certainly the milestone of the classical optimal control theory. It provides a first-order necessary condition for optimality, by asserting that any optimal trajectory must be the projection of an extremal. The PMP then reduces the search of optimal trajectories to a boundary value problem posed on extremals. Optimal control theory, and in particular the PMP, has an immense field of applications in various domains, and it is not our aim here to list them.

\medskip

We speak of a {\em purely continuous-time optimal control problem}, when both the state $q$ and the control $u$ evolve continuously in time, and the control system under consideration has the form
$$
\dot q(t) = f(t,q(t),u(t)), \qquad \text{for a.e. } t \in \R^+,
$$
where $q(t) \in \R^n$ and $u(t) \in \Omega \subset \R^m$. Such models assume that the control is permanent, that is, the value of $u(t)$ can be chosen at each time $t \in \R^+$. We refer the reader to textbooks on continuous optimal control theory such as \cite{agrach,Bon-Chy03,trel2,bres,brys,BulloLewis,hest,Jurdjevic,lee,pont,Schattler,seth,trel} for many examples of theoretical or practical applications.

\medskip

We speak of a {\em purely discrete-time optimal control problem}, when both the state $q$ and the control $u$ evolve in a discrete way in time, and the control system under consideration has the form
$$
q_{k+1}-q_k = f(k,q_k,u_k), \qquad k \in \N,
$$
where $q_k \in \R^n$ and $u_k \in \Omega \subset \R^m$. As in the continuous case, such models assume that the control is permanent, that is, the value of $u_k$ can be chosen at each time $k \in \N$. A version of the PMP for such discrete-time control systems has been established in \cite{Halkin,holt2,holt} under appropriate convexity assumptions. The considerable development of the discrete-time control theory was in particular motivated by the need of considering digital systems or discrete approximations in numerical simulations of differential control systems (see the textbooks \cite{bolt,cano,mord,seth}).
It can be noted that some early works devoted to the discrete-time PMP (like \cite{fan}) are mathematically incorrect. Some counterexamples were provided in \cite{bolt} (see also \cite{mord}), showing that, as is now well known, the exact analogous of the continuous-time PMP does not hold at the discrete level. More precisely, the maximization condition of the continuous-time PMP cannot be expected to hold in general in the discrete-time case. Nevertheless, a weaker condition can be derived, in terms of nonpositive gradient condition (see \cite[Theorem 42.1]{bolt}).

\medskip

We speak of an {\em optimal sampled-data control problem}, when the state $q$ evolves continuously in time, whereas the control $u$ evolves in a discrete way in time. This hybrid situation is often considered in practice for problems in which the evolution of the state is very quick (and thus can be considered continuous) with respect to that of the control. We often speak, in that case, of {\em digital control}. This refers to a situation where, due for instance to hardware limitations or to technical difficulties, the value $u(t)$ of the control can be chosen only at times $t=kT$, where $T>0$ is fixed and $k \in \N$. This means that, once the value $u(kT)$ is fixed, $u(t)$ remains constant over the time interval $[kT,(k+1)T)$. Hence the trajectory $q$ evolves according to
$$
\dot q(t) = f(t,q(t),u(kT)), \qquad \text{for a.e. } t \in [kT,(k+1)T),\quad k\in\N.
$$
In other words, this {\em sample-and-hold} procedure consists of ``freezing" the value of $u$ at each \textit{controlling time} $t=kT$ on the corresponding \textit{sampling time interval} $[kT,(k+1)T)$, where $T$ is called the \textit{sampling period}. In this situation, the control of the system is clearly nonpermanent.

\medskip

To the best of our knowledge, the classical optimal control theory does not treat general nonlinear optimal sampled-data control problems, but concerns either purely continuous-time, or purely discrete-time optimal control problems.
It is one of our objectives to derive, in this paper, a PMP which can be applied to general nonlinear optimal sampled-data control problems.

Actually, we will be able to establish a PMP in the much more general framework of {\em time scales}, which unifies and extends continuous-time and discrete-time issues.
But, before coming to that point, we feel that it is of interest to enunciate a PMP in the particular case of sampled-data systems and where the set $\Omega$ of pointwise constraints on the controls is convex.

\paragraph{PMP for optimal sampled-data control problems and $\Omega$ convex.}
Let $n$, $m$ and $j$ be nonzero integers. Let $T>0$ be an arbitrary sampling period. In what follows, for any real number $t$, we denote by $E(t)$ the integer part of $t$, defined as the unique integer such that $E(t)\leq t< E(t)+1$. Note that $k = E(t/T)$ whenever $kT\leq t<(k+1)T$.
We consider the general nonlinear optimal sampled-data control problem
\begin{equation*}
\OSCPI \qquad \left\{\begin{split}
& \min \; \int_0^{t_f} f^0 ( \t, q(\t), u( k(\t) T ) ) \, d\t ,  \quad \text{with } k(\t) = E(\t /T), \\
& \dot q(t) = f ( t,q(t), u( k(t) T )  ) ,  \quad \text{with } k(t) = E(t /T), \\
& u(kT) \in \Omega ,  \\
& g(q(0),q(t_f)) \in \S .
\end{split}\right.
\end{equation*}
Here, $f: \R\times \R^{n} \times \R^{m}  \rightarrow \R^{n}$ and $f^0: \R\times \R^{n} \times \R^{m}  \rightarrow \R$ are continuous, and of class $\CC^1$ in $(q,u)$,
$g: \R^n \times \R^n \rightarrow \R^j$ is of class $\CC^1$,
and $\Omega$ (resp., $\S$) is a non-empty closed convex subset of $\R^m$ (resp., of $\R^j$).
The final time $t_f \geq 0$ can be fixed or not.

Note that, under appropriate (usual) compactness and convexity assumptions, the optimal control problem $\OSCPI$ has at least one solution (see Theorem~\ref{propexistence} in Section~\ref{subsectionOSCP}).

Recall that $g$ is said to be submersive at a point $(q_1,q_2) \in \R^n \times \R^n$ if the differential of $g$ at this point is surjective. We define the Hamiltonian $H:\R\times\R^n \times \R^n \times \R \times \R^m\rightarrow \R$, as usual, by
$H(t,q,p,p^0,u) = \langle p, f(t,q,u) \rangle_{\R^n} + p^0 f^0(t,q,u)$.

\begin{theorem}[Pontryagin maximum principle for $\OSCPI$]\label{thmmainintro}
If a trajectory $q^*$, defined on $[0,t^*_f]$ and associated with a sampled-data control $u^*$, is an optimal solution of $\OSCPI$, then there exists a nontrivial couple $(p,p^0)$, where $p : [0,t^*_f] \rightarrow \R^n$ is an absolutely continuous mapping (called adjoint vector) and $p^0 \leq 0$, such that the following conditions hold:
\begin{itemize}
\item Extremal equations:
\begin{equation*}
\dot q^*(t) = \frac{\partial H}{\partial p} (t,q^*(t),p(t),p^0,u^*(k(t)T)), \qquad
\dot p(t) = - \frac{\partial H}{\partial q} (t,q^*(t),p(t),p^0,u^*(k(t)T)) ,
\end{equation*}
for almost every $t\in[0,t^*_f)$, with $k(t)=E(t/T)$.
\item Maximization condition:\\
For every controlling time $kT \in [0,t^*_f)$ such that $(k+1)T \leq  t^*_f$, we have
\begin{equation}\label{secondconditionintro}
\left\langle \int_{kT}^{(k+1)T} \frac{\partial H}{\partial u} (\t,q^*(\t),p(\t),p^0,u^*(kT)) \; d\t \; , \; y-u^*(kT) \right\rangle_{\R^m}  \leq 0,
\end{equation}
for every $y \in \Omega$. In the case where $kT \in [0,t^*_f)$ with $(k+1)T >  t^*_f$, the above maximization condition is still valid provided $(k+1)T$ is replaced with $t^*_f$.
\item Transversality conditions on the adjoint vector:\\
If $g$ is submersive at $(q^*(0),q^*(t^*_f))$, then the nontrivial couple $(p,p^0)$ can be selected to satisfy
\begin{equation*}
p(0) = - \left( \frac{\partial g}{\partial q_1} (q^*(0),q^*(t^*_f)) \right)^\top  \psi,\qquad
p(t^*_f) =  \left( \frac{\partial g}{\partial q_2} (q^*(0),q^*(t^*_f)) \right)^\top  \psi,
\end{equation*}
where $-\psi$ belongs to the orthogonal of $\S$ at the point $g (q^*(0),q^*(t^*_f)) \in \S$.
\item Transversality condition on the final time:\\
If the final time is left free in the optimal control problem $\OSCPI$, if $t^*_f>0$ and if $f$ and $f^0$ are of class $\CC^1$ with respect to $t$ in a neighborhood of $t^*_f$, then the nontrivial couple $(p,p^0)$ can be moreover selected to satisfy
\begin{equation*}
H(t^*_f, q^*(t^*_f), p(t^*_f),p^0,u^*(k^*_f T) ) = 0,
\end{equation*}
where $k^*_f=E(t^*_f/T)$ whenever $t^*_f \notin \N T$, and $k^*_f=E(t^*_f/T)-1$ whenever $t^*_f \in \N T$.
\end{itemize}
\end{theorem}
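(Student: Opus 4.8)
The plan is to exploit the special structure of sampled-data controls: on a \emph{fixed} horizon $[0,t_f^*]$ the control $u^*$ is entirely determined by the finite family of frozen values $(u^*(kT))$ indexed by the controlling times $kT\in[0,t_f^*)$, so that $\OSCPI$ reduces to a finite-dimensional constrained optimization problem, namely to minimize the cost over these finitely many values, each constrained to the convex set $\Omega$, subject to the terminal constraint $g(q(0),q(t_f))\in\S$, with the initial state $q(0)$ free in $\R^n$ and the final time $t_f$ either fixed or an extra scalar variable. First I would record the well-posedness and smoothness of the flow: for controlling values near those of $u^*$, for $q(0)$ near $q^*(0)$, and for $t_f$ near $t_f^*$, the Cauchy problem $\dot q(t)=f(t,q(t),u(k(t)T))$ has a unique solution depending in a $\CC^1$ way on $(u(kT))$, $q(0)$ and $t_f$; this follows from the regularity of $f$ and Gr\"onwall estimates, and it licenses differentiation of both the cost and the terminal map.

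Next I would introduce the variations and the adjoint vector. Since $\Omega$ is convex, for a fixed controlling time $kT$ and a fixed $y\in\Omega$ the convex variation replacing $u^*(kT)$ by $u^*(kT)+\alpha\,(y-u^*(kT))$, $\alpha\in[0,1]$, is admissible; this is the sampled-data analogue of a needle-like variation, the control being frozen on the whole interval rather than on an arbitrarily short one. The induced variation $w$ of the trajectory solves the linearized equation $\dot w=\frac{\partial f}{\partial q}\,w+\frac{\partial f}{\partial u}\,(y-u^*(kT))\,\mathbf{1}_{[kT,(k+1)T)}$, with $w(0)$ given by the variation of the initial state and a boundary contribution when $t_f$ varies. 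Defining the adjoint vector $p$ as the solution of the backward linear equation $\dot p=-\frac{\partial H}{\partial q}$ carrying the multiplier $p^0$, the classical computation of $\frac{d}{dt}\langle p(t),w(t)\rangle_{\R^n}$ collapses each directional derivative of the cost into the boundary terms at $0$ and $t_f$ together with the single interval integral $\int_{kT}^{(k+1)T}\langle\frac{\partial H}{\partial u},\,y-u^*(kT)\rangle_{\R^m}\,d\t$. Feasibility of the direction then forces this derivative to have the appropriate sign, which is exactly the integrated, nonpositive-gradient inequality \eqref{secondconditionintro}; because the value cannot be localized in time, one obtains this averaged condition rather than the pointwise maximization of the continuous-time theory.

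To turn optimality into multipliers robustly, and in particular to allow the abnormal case $p^0=0$ when a constraint qualification fails, I would apply the Ekeland variational principle to the penalized functional $J_\varepsilon=\big[(\,\mathrm{cost}-\mathrm{cost}^*+\varepsilon\,)_+^2+d_\S(g(q(0),q(t_f)))^2\big]^{1/2}$ on the complete metric space of admissible controlling values, initial states and final times. Optimality of $(u^*,q^*(0),t_f^*)$ makes it an $\varepsilon$-minimizer, so Ekeland yields nearby exact minimizers of an $\varepsilon$-perturbed functional; writing their first-order inequalities along the convex control variations, the initial-state variation and the one-sided time variation, and reading off $p^0_\varepsilon\le0$ from the cost penalty and $\psi_\varepsilon$ from the gradient of $d_\S$ (so that $-\psi_\varepsilon$ lies in the normal cone to $\S$), gives approximate forms of all four conclusions. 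Normalizing the couple $(p_\varepsilon,p^0_\varepsilon)$ on the unit sphere and letting $\varepsilon\to0$, compactness of the sphere and continuous dependence of the flow produce a nontrivial limit $(p,p^0)$ satisfying the adjoint equation, the maximization condition, and, when $g$ is submersive, the transversality relations with the stated $\psi$.

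The step I expect to be the main obstacle is the transversality condition on the free final time and its interaction with the sampling grid, which is also where the dimension of the reduced problem can jump. When $t_f^*\notin\N T$, a small change of $t_f$ does not cross a controlling time, the number of sampling intervals is locally constant, both one-sided time variations see the same frozen value $u^*(k_f^*T)$ with $k_f^*=E(t_f^*/T)$, and the endpoint term gives $H(t_f^*,\ldots)=0$ as usual. When $t_f^*\in\N T$, however, the value active immediately to the left of $t_f^*$ is $u^*((E(t_f^*/T)-1)T)$, only the left time variation stays admissible without creating a new interval, and one must argue carefully that the Hamiltonian to be annihilated is the one evaluated with $k_f^*=E(t_f^*/T)-1$. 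Handling this case distinction, together with the partially covered last sampling interval (which is why $(k+1)T$ must be replaced by $t_f^*$ in \eqref{secondconditionintro} when $(k+1)T>t_f^*$), is precisely where the sampled-data setting departs from both the continuous and the discrete theories.
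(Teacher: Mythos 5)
Your proposal is correct in its overall architecture and, at the level of machinery, it coincides with the paper's: convex variations $u(kT)+\alpha\,(y-u(kT))$ at the controlling times (exactly the paper's needle-like variations at right-scattered points, which for convex $\Omega$ reach every $y\in\Omega$ by Lemma~\ref{lemstable}), variations of the initial state, the Ekeland variational principle applied to the same penalized functional, the multipliers $(\psi^0_\varepsilon,\psi_\varepsilon)$ read off the penalty gradients, and the collapse of directional derivatives through the backward adjoint equation and the derivative of $\langle p,w\rangle_{\R^n}$. The genuinely different feature is the route. The paper never proves Theorem~\ref{thmmainintro} directly: it obtains it as the special case $\T=\R^+$, $\TU=\N T$ of the general time-scale PMP (Theorem~\ref{thmmain}), whose proof must also handle needle-like variations at right-dense controlling times, work in $\L^\infty_{\TU}/\L^1_{\TU}$ spaces, truncate to $\L^\infty$-balls of radius $R$, and pass to a double limit ($\varepsilon\to 0$, then $R\to\infty$). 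Your finite-dimensional reduction — legitimate here because on the bounded horizon $[0,t^*_f)$ a sampled-data control is determined by finitely many values in $\Omega$, and every point of $\N T$ is right-scattered — bypasses the $R$-truncation, the second limit, and the right-dense analysis entirely; the price is that it yields only Theorem~\ref{thmmainintro}, not Theorem~\ref{thmmain}. For the free final time you also depart from the paper, which proves its transversality condition via a time reparametrization and a multiscale version of the PMP (Remark~\ref{remfindepreuvefreefinaltime}); your direct one-sided variations of $t_f$ inside the Ekeland functional are precisely the alternative method the paper itself mentions at the end of Section~\ref{section47}.

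Two points need to be firmed up. First, in the case $t^*_f\in\N T$ you flag, but do not close, the argument: the leftward variation of $t_f$ only yields $H(t^*_f,q^*(t^*_f),p(t^*_f),p^0,u^*(k^*_fT))\geq 0$ with $k^*_f=E(t^*_f/T)-1$, and the rightward variation opens a new sampling interval governed by the value $u(t^*_f)$, which played no role in the optimization on $[0,t^*_f)$. The remedy is exactly this irrelevance: since neither the cost nor the constraints depend on $u(t^*_f)$, you may fix the reference extension $u^*(t^*_f):=u^*(k^*_fT)$ before applying Ekeland; then both one-sided variations see the same frozen value in the limit, and the two inequalities combine into the stated equality. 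Second, be careful about which couple you normalize: Ekeland naturally normalizes $(\psi_\varepsilon,\psi^0_\varepsilon)$ on the unit sphere, whereas normalizing $(p_\varepsilon,p^0_\varepsilon)$ directly may involve dividing by a quantity that vanishes when $g$ is degenerate. The limit then gives a nontrivial $(\psi,p^0)$, and nontriviality of $(p,p^0)$ requires the submersivity of $g$ (otherwise $\psi$ could lie in $\ker\bigl(\partial_{q_2}g\bigr)^\top$ with $p^0=0$); the unconditional nontriviality claimed in the theorem is recovered, as in Section~\ref{section47} of the paper, by replacing $(g,\S)$ with the submersive data $\tilde g(q_1,q_2)=(q_1,q_2)$, $\tilde{\S}=\{q^*(0)\}\times\{q^*(t^*_f)\}$ whenever $g$ fails to be submersive at $(q^*(0),q^*(t^*_f))$.
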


Note that the only difference with the usual statement of the PMP for purely continuous-time optimal control problems is in the maximization condition. Here, for sampled-data control systems, the usual pointwise maximization condition of the Hamiltonian is replaced with the inequality~\eqref{secondconditionintro}.
This is not a surprise, because already in the purely discrete case, as mentioned earlier, the pointwise maximization condition fails to be true in general, and must be replaced with a weaker condition.
%This is due to the fact that standard needle-like variations (which would lead to the standard pointwise condition) cannot be used anymore.

The condition~\eqref{secondconditionintro}, which is satisfied for every $y \in \Omega$, gives a necessary condition allowing to compute $u^*(kT)$ in general, and this, for all controlling times $kT \in [0,t^*_f)$. We will provide in Section~\ref{sectionexemple} a simple optimal consumption problem with sampled-data control, and show how these computations can be done in a simple way.

Note that the optimal sampled-data control problem $\OSCPI$ can of course be seen as a finite-dimensional optimization problem where the unknowns are $u^*(kT)$, with $k \in \N$ such that $kT \in [0,t^*_f)$. The same remark holds, by the way, for purely discrete-time optimal control problems. One could then apply classical Lagrange multiplier (or KKT) rules to such optimization problems with constraints (numerically, this leads to direct methods). The Pontryagin maximum principle is a far-reaching version of the Lagrange multiplier rule, yielding more precise information and reducing the initial optimal control problem to a shooting problem (see, \textit{e.g.}, \cite{Trelat_JOTA2012} for such a discussion).

\paragraph{Extension to the time scale framework.}
In this paper, we actually establish a version of Theorem~\ref{thmmainintro} in a much more general framework, allowing for example to study sampled-data control systems where the control can be permanent on a first time interval, then sampled on a finite set, then permanent again, etc. %In that case, the associated optimal sampled-data control problem cannot be seen as a finite-dimensional optimization problem.
%As mentioned earlier,
More precisely, Theorem~\ref{thmmainintro} can be extended to a general framework in which the set of controlling times is not $\N T$ but some arbitrary non-empty closed subset of $\R$ (\textit{i.e.}, a \textit{time scale}), and also the state may evolve on another time scale.
We will state a PMP for such general systems in Section~\ref{secPMP} (see Theorem~\ref{thmmain}). Since such systems, where we have two time scales (one for the state and one for the control), can be viewed as a generalization of sampled-data control systems, we will refer to them as {\em sampled-data control systems on time scales}.

Let us first recall and motivate the notion of time scale. The time scale theory was introduced in \cite{hilg} in order to unify discrete and continuous analysis. By definition, a time scale $\T$ is an arbitrary non-empty closed subset of $\R$, and a dynamical system is said to be posed on the time scale $\T$ whenever the time variable evolves along this set $\T$. The time scale theory aims at closing the gap between continuous and discrete cases, and allows one to treat general processes involving both continuous-time and discrete-time variables. The purely continuous-time case corresponds to $\T=\R^+$ and the purely discrete-time case corresponds to $\T=\N$. But a time scale can be much more general (see, \textit{e.g.}, \cite{gama,may} for a study of a seasonally breeding population whose generations do not overlap, and see \cite{ati} for applications to economics), and can even be a Cantor set.
Many notions of standard calculus have been extended to the time scale framework, and we refer the reader to \cite{agar2,agar3,bohn,bohn3} for details on that theory.

The theory of the calculus of variations on time scales, initiated in \cite{bohn2}, has been well studied in the existing literature (see, \textit{e.g.}, \cite{torr8,bohn4,bour12,torr9,hils,hils3}). In \cite{hils1,hils2}, the authors establish a weak version of the PMP (with a nonpositive gradient condition) for control systems defined on general time scales. In \cite{bour11}, we derived a strong version of the PMP, in a very general time scale setting, encompassing both the purely continuous-time PMP (with a maximization condition) and the purely discrete-time PMP (with a nonpositive gradient condition).

All these works are concerned with control systems defined on general time scales with permanent control. The main objective of the present paper is to handle control systems defined on general time scales with nonpermanent control, that we refer to as {\em sampled-data control systems on time scales}, and for which we assume that the state and the control are allowed to evolve on different time scales (the time scale of the control being a subset of the time scale of the state). This framework is the natural extension of the classical sampled-data setting, and allows to treat simultaneously many sampling-data control situations.

Our main result is a {\em PMP for general finite-dimensional nonlinear optimal sampled-data control problems on time scales}. Note that our result will be derived without any convexity assumption on the set $\Omega$ of pointwise constraints on the controls. Our proof is based on the construction of appropriate needle-like variations and on the Ekeland variational principle. In the case of a permanent control, our statement encompasses the time scale version of the PMP obtained in \cite{bour11}, and \textit{a fortiori} it also encompasses the classical continuous and discrete versions of the PMP.

\paragraph{Organization of the paper.}
In Section~\ref{section1}, after having recalled several basic facts in time scale calculus, we define a general nonlinear optimal sampled-data control problem defined on time scales, and we state a Pontryagin maximum principle (Theorem~\ref{thmmain}) for such problems.
Section~\ref{sectionapplication} is devoted to some applications of Theorem~\ref{thmmain} and further comments. Section~\ref{annexe} is devoted to the proof of Theorem~\ref{thmmain}.

\section{Main result}\label{section1}
Let $\T$ be a time scale, that is, an arbitrary non-empty closed subset of $\R$. Without loss of generality, we assume that $\T$ is bounded below, denoting by $a = \min \T $, and unbounded above.\footnote{In this paper we only work on a bounded subinterval of type $[a,b] \cap \T$ with $a$, $b \in \T$. It is not restrictive to assume that $a = \min \T $ and that $\T$ is unbounded above. On the other hand, these two assumptions widely simplify the notations introduced in Section~\ref{secprelimtimescale} (otherwise we would have, in all further statements, to distinguish between points of $\T \bs \{ \max\T \}$ and $\max\T$).}
Throughout the paper, $\T$ will be the time scale on which the state of the control system evolves.

We start the section by recalling some useful notations and basic results of time scale calculus, in particular the notion of Lebesgue $\DD$-measure and of absolutely continuous function within the time scale setting. The reader already acquainted with time scale calculus may jump directly to Section~\ref{subsectionOSCP}.

\subsection{Preliminaries on time scale calculus}\label{secprelimtimescale}
The forward jump operator $\sigma : \T \rightarrow \T$ is defined by $\sigma (t) = \inf \{ s \in \T\ \vert\ s > t \}$ for every $t \in \T$. A point $t \in \T$ is said to be \textit{right-scattered} whenever $\sigma (t) > t$. A point $t \in \T$ is said to be \textit{right-dense} whenever $\sigma (t) = t$. We denote by $\RS$ the set of all right-scattered points of $\T$, and by $\RD$ the set of all right-dense points of $\T$.
Note that $\RS$ is at most countable (see \cite[Lemma 3.1]{caba}) and that $\RD$ is the complement of $\RS$ in $\T$. The graininess function $\mu : \T \rightarrow \R^+$ is defined by $\mu(t) = \sigma (t) -t$ for every $t \in \T$.

For every subset $A$ of $\R$, we denote by $A_\T = A \cap \T$. An interval of $\T$ is defined by $I_\T$ where $I$ is an interval of $\R$. For every $b \in \T \backslash \{ a \}$ and every $s \in [a,b)_\T \cap \RD$, we set
\begin{equation}\label{defVbs}
\V^{s,b} = \{ \beta \geq 0 \mid s+\beta \in [s,b]_\T \}.
\end{equation}
Note that $0$ is not isolated in $\V^{s,b}$.

\paragraph{$\DD$-differentiability.}
Let $n \in \N^*$. The notations $\Vert \cdot \Vert_{\R^n}$ and $\langle \cdot, \cdot \rangle_{\R^n}$ respectively stand for the usual Euclidean norm and scalar product of $\R^n$. A function $q : \T \rightarrow \R^n$ is said to be $\DD$-differentiable at $t \in \T$ if the limit
$$q^\DD (t) = \lim\limits_{\substack{s \to t \\ s \in \T}} \frac{q^\sigma (t) -q(s)}{\sigma (t) -s} $$
exists in $\R^n$, where $q^\sigma = q \circ \sigma $. Recall that, if $t \in \RD$, then $q$ is $\DD$-differentiable at $t$ if and only if the limit of $\frac{q(t)-q(s)}{t-s}$ as $s \to t$, $s \in \T$, exists; in that case it is equal to $q^\DD (t)$.
If $t \in \RS$ and if $q$ is continuous at $t$, then $q$ is $\DD$-differentiable at $t$, and $q^\DD (t) = \frac{q^\sigma(t) - q(t)}{\mu (t)}$ (see \cite{bohn}).

If $q$, $q': \T \rightarrow \R^n$ are both $\DD$-differentiable at $t \in \T$, then the scalar product $\langle q  , q' \rangle_{\R^n}$ is $\DD$-differentiable at $t$ and
\begin{equation}\label{eqleibniz}
\langle q, q' \rangle_{\R^n}^\DD (t) = \langle q^\DD (t), q'^\sigma(t) \rangle_{\R^n}  + \langle q (t), q'^\DD(t) \rangle_{\R^n} = \langle q^\DD (t), q'(t) \rangle_{\R^n}  + \langle q^\sigma (t), q'^\DD(t) \rangle_{\R^n}.
\end{equation}
These equalities are usually called Leibniz formulas (see \cite[Theorem 1.20]{bohn}).

\paragraph{Lebesgue $\DD$-measure and Lebesgue $\DD$-integrability.}
Let $\mu_\DD$ be the Lebesgue $\DD$-measure on $\T$ defined in terms of Carath\'eodory extension in \cite[Chapter 5]{bohn3}. We also refer the reader to \cite{agar,aulb,caba,guse} for more details on the $\mu_\DD$-measure theory. For all $(c,d)\in\T^2$ such that $c \leq d $, one has $\mu_\DD ([c,d)_\T) = d-c$. Recall that $A \subset \T$ is a $\mu_\DD$-measurable set of $\T$ if and only if $A$ is an usual $\mu_L$-measurable set of $\R$, where $\mu_L$ denotes the usual Lebesgue measure (see \cite[Proposition 3.1]{caba}). Moreover, if $A \subset \T $, then
$$\mu_\DD ( A ) = \mu_L (A) +  \sum_{r \in A \cap \RS} \mu (r).$$
Let $A \subset \T$. A property is said to hold $\DD$-almost everywhere (in short, $\DD$-a.e.) on $A$ if it holds for every $t \in A \backslash A'$, where $A' \subset A$ is some $\mu_\DD$-measurable subset of $\T$ satisfying $\mu_\DD (A') = 0$. In particular, since $\mu_\DD (\{ r \}) = \mu (r) > 0$ for every $r \in \RS$, we conclude that, if a property holds $\DD$-a.e. on $A$, then it holds for every $r \in A \cap \RS$.

Similarly, if $A \subset \T$ is such that $\mu_\Delta (A) = 0$, then $A \subset \RD$.

Let $n \in \N^*$ and let $A \subset \T$ be a $\mu_\DD$-measurable subset of $\T$. Consider a function $q$ defined $\DD$-a.e.\ on $A$ with values in $\R^n$. Let $\tilde{A}=A \cup (r,\sigma(r))_{r \in A \cap \RS}$, and let $\tilde{q}$ be the extension of $q$ defined $\mu_L$-a.e.\ on $\tilde{A}$ by $\tilde{q} (t)=q(t)$ whenever $t \in A$, and by $\tilde{q}(t)=q(r)$ whenever $t \in (r,\sigma(r))$, for every $r \in A \cap \RS$.
Recall that $q$ is $\mu_\DD$-measurable on $A$ if and only if $\tilde{q}$ is $\mu_L$-measurable on $\tilde{A}$ (see \cite[Proposition 4.1]{caba}).

Let $n \in \N^*$ and let $A \subset \T$ be a $\mu_\DD$-measurable subset of $\T$. The functional space $\L^\infty_\T (A,\R^n)$ is the set of all functions $q$ defined $\DD$-a.e.\ on $A$, with values in $\R^n$, that are $\mu_\DD$-measurable on $A$ and bounded $\DD$-almost everywhere. Endowed with the norm $\Vert q \Vert_{\L^\infty_\T (A,\R^n)} = \supess_{\tau \in A} \Vert q(\tau) \Vert_{\R^n}$, it is a Banach space (see \cite[Theorem 2.5]{agar}). The functional space $\L^1_\T (A,\R^n)$ is the set of all functions $q$ defined $\DD$-a.e.\ on $A$, with values in $\R^n$, that are $\mu_\DD$-measurable on $A$ and such that
$\int_{A} \Vert q(\tau) \Vert_{\R^n} \, \DD \tau < + \infty$.
Endowed with the norm $\Vert q \Vert_{\L^1_\T (A,\R^n)} = \int_{A} \Vert q(\tau) \Vert_{\R^n} \, \DD \tau$, it is a Banach space (see \cite[Theorem 2.5]{agar}).
We recall here that if $q \in \L^1_\T (A,\R^n)$ then
$$ \int_{A} q(\tau) \,\DD \tau = \int_{\tilde{A}} \tilde{q}(\tau) \,d\tau =  \int_{A} q(\tau) \,d\tau +  \sum_{r \in A \cap \RS} \mu (r) q(r), $$
see \cite[Theorems 5.1 and 5.2]{caba}. Note that if $A$ is bounded then $\L^\infty_\T (A,\R^n) \subset \L^1_\T (A,\R^n)$.

\paragraph{Absolutely continuous functions.}
Let $n \in \N^*$ and let $(c,d) \in \T^2$ such that $c < d$. Let $\CC ([c,d]_\T,\R^n)$ denote the space of continuous functions defined on $[c,d]_\T$ with values in $\R^n$. Endowed with its usual uniform norm $\Vert \cdot \Vert_\infty$, it is a Banach space. Let $\AC([c,d]_\T,\R^n)$ denote the subspace of absolutely continuous functions.

Let $t_0 \in [c,d]_\T$ and $q:[c,d]_\T \rightarrow \R^n$.
It is easily derived from \cite[Theorem 4.1]{caba2} that $q \in \AC([c,d]_\T,\R^n)$ if and only if $q$ is $\DD$-differentiable $\DD$-a.e.\ on $[c,d)_\T$ and satisfies $q^\DD \in \L^1_\T ([c,d)_\T,\R^n)$, and for every $t \in [c,d]_\T$ one has
$ q(t) = q(t_0) + \int_{[t_0,t)_\T} q^\DD (\tau) \, \DD \tau$
whenever $t \geq t_0$, and
$ q(t) = q(t_0) - \int_{[t,t_0)_\T} q^\DD (\tau) \, \DD \tau $
whenever $ t \leq t_0$.

Assume that $q \in \L^1_\T ([c,d)_\T,\R^n)$, and let $Q$ be the function defined on $[c,d]_\T$ by $Q(t) = \int_{[t_0,t)_\T} q (\tau) \,\DD \tau$ whenever $ t \geq t_0$, and by $ Q(t) = - \int_{[t,t_0)_\T} q (\tau) \,\DD \tau $ whenever $t \leq t_0$.
Then $Q \in \AC([c,d]_\T)$ and $Q^\DD = q$ $\DD$-a.e.\ on $[c,d)_\T$.

Note that, if $q \in \AC([c,d]_\T, \R^n )$ is such that $q^\DD = 0$ $\DD$-a.e.\ on $[c,d)_\T$, then $q$ is constant on $[c,d]_\T$, and that, if $q$, $q' \in \AC([c,d]_\T, \R^n )$, then $\langle q,q' \rangle_{\R^n} \in \AC ([c,d]_\T, \R )$ and the Leibniz formula~\eqref{eqleibniz} is available $\DD$-a.e.\ on $[c,d)_\T$.

For every $q \in \L^1_\T ([c,d)_\T,\R^n)$, we denote by $\LL_{[c,d)_\T} (q)$ the set of points $t \in [c,d)_\T$ that are $\DD$-Lebesgue points of $q$. It holds $\mu_\DD (\LL_{[c,d)_\T}(q)) = \mu_\DD ([c,d)_\T) = d-c$, and
\begin{equation*}
\lim\limits_{\substack{\beta \to 0 \\ \beta \in \V^{s,d}}} \frac{1}{\beta}  \int_{[s,s+\beta)_\T} q(\tau) \, \DD \tau = q(s),
\end{equation*}
for every $s \in \LL_{[c,d)_\T}(q) \cap \RD$, where $\V^{s,d}$ is defined by~\eqref{defVbs}.

\subsection{Optimal sampled-data control problems on time scales}\label{subsectionOSCP}
Let $\TU$ be another time scale. Throughout the paper, $\TU$ will be the time scale on which the control evolves. We assume that $\TU \subset \T$.\footnote{Indeed, it is not natural to consider controlling times $t \in \TU$ at which the dynamics does not evolve, that is, at which $t \notin \T$. The value of the control at such times $t \in \TU \bs \T$ would not influence the dynamics, or, maybe, only on $[t^* ,+\infty[_\T$ where $t^*= \inf \{ s \in \T \,| \,s \geq t \}$. In this last case, note that $t^* \in \T$ and we can replace $\TU$ by $(\TU \cup \{ t^* \} ) \bs \{ t \}$ without loss of generality.}

Similarly to $\T$, we assume that $\min \TU = a$ and that $\TU$ is unbounded above. As in the previous paragraph, we introduce the notations $\sigma_1$, $\RS_1$, $\RD_1$, $\V^{s,b}_1$, $\DD_1$, etc., associated with the time scale $\TU$. Since $\TU \subset \T$, note that $\RS \subset \RS_1$ and $\RD_1 \subset \RD$. We define the map
$$ \fonction{\Phi}{\T}{\TU}{t}{\Phi(t) = \sup \,\{ s \in \TU \,| \,s \leq t \} .} $$
For every $t \in \TU$, we have $\Phi(t)=t$. For every $t \in \T \bs \TU$, we have $\Phi(t) \in \RS_1$ and $\Phi(t) < t < \sigma_1(\Phi(t))$. Note that, if $t \in \T$ is such that $\Phi(t) \in \RD_1$, then $t \in \TU$.

In what follows, given a function $u:\TU\rightarrow\R$, we denote by $u^\Phi$ the composition $u \circ \Phi : \T \rightarrow \R$. Of course, when dealing with functions having multiple components, this composition is applied to each component. Let us mention, at this step, that if $u \in \L^\infty_{\TU} (\TU,\R)$ then $u^\Phi \in \L^\infty_{\T} (\T,\R)$ (see Proposition~\ref{propnumber1} and more properties in Section~\ref{annexe1uetuphi}).

\medskip

Let $n$, $m$ and $j$ be nonzero integers. We consider the general nonlinear optimal sampled-data control problem on time scales
\begin{align}[left=\OSCP\qquad\empheqlbrace]
& \min  \int_{[a,b)_\T} f^0 (\t,q (\t), u^\Phi(\t) ) \, \DD \t , \notag  \\
& q^\DD(t) = f (t,q(t),u^\Phi(t)),  \label{DD-CS}  \\
& u\in \L^\infty_{\TU} (\TU,\Omega) , \notag \\
& g(q(a),q(b)) \in \S .  \notag
\end{align}
Here, the trajectory of the system is $q : \T \rightarrow \R^n$, the mappings $f:\T\times\R^{n} \times \R^{m} \rightarrow \R^{n}$ and $f^0: \T\times \R^{n} \times \R^{m}  \rightarrow \R$ are continuous, of class $\CC^1$ in $(q,u)$, the mapping $g: \R^n \times \R^n \rightarrow \R^j$ is of class $\CC^1$, $\Omega$ is a non-empty closed subset of $\R^m$, and $\S$ is a non-empty closed convex subset of $\R^j$.
The final time $b \in \T$ can be fixed or not.

\begin{remark}
We recall that, given $u \in \L^\infty_{\TU} (\TU,\R^m)$, we say that $q$ is a solution of~\eqref{DD-CS} on $I_\T$ if:
\begin{enumerate}
\item $I_\T$ is an interval of $\T$ satisfying $a \in I_\T$ and $I_\T \backslash \{ a \} \neq \emptyset$;
\item For every $c \in I_\T \backslash \{ a \}$, $q \in \AC ([a,c]_\T,\R^n)$ and~\eqref{DD-CS} holds for $\DD$-a.e. $t \in [a,c)_\T$.
\end{enumerate}
Existence and uniqueness of solutions (Cauchy-Lipschitz theorem on time scales) have been established in \cite{bour10}, and useful results are recalled in Section~\ref{recallCL}.
\end{remark}

\begin{remark}
The time scale $\TU$ stands for the set of controlling times of the control system~\eqref{DD-CS}. If $\T = \T_1$, then the control is permanent. The case $\T = \T_1 = \R^+$ corresponds to the classical continuous case, whereas $\T = \T_1 = \N$ coincides with the classical discrete case. If $\T_1 \varsubsetneq \T$, the control is nonpermanent and sampled. In that case, the sampling times are given by $t \in \RS_1$ such that $\sigma(t) < \sigma_1 (t)$ and the corresponding sampling time intervals are given by $[t,\sigma_1(t))_\T$. The classical optimal sampled-data control problem $\OSCPI$ investigated in Theorem~\ref{thmmainintro} corresponds to $\T=\R^+$ and $\TU=\N T$, with $T >0$.
\end{remark}

\begin{remark}\label{remarkinclusiondeuxT1}
Let us consider two optimal control problems $\OSCP$ and $\OSCPDEUX$, posed on the same general time scale $\T$ for the trajectories, but with two different sets of controlling times $\TU$ and $\T_2$, and let us assume that $\T_2 \subset \TU$. We denote by $\Phi_1$ and $\Phi_2$ the corresponding mappings from $\T$ to $\TU$ and $\T_2$ respectively. If $u_1 \in \L^\infty_{\TU} (\TU,\Omega)$ is an optimal control for $\OSCP$ and if there exists $u_2 \in \L^\infty_{\T_2} (\T_2,\Omega)$ such that $u_2^{\Phi_2} (t) = u_1^{\Phi_1} (t)$ for $\DD$-a.e. $t \in \T$, it is clear that $u_2$ is an optimal control for $\OSCPDEUX$. We refer to Section~\ref{sectionexemple} for examples.
\end{remark}

\begin{remark}\label{remarkparam}
The framework of $\OSCP$ encompasses optimal parameter problems. Indeed, let us consider the parametrized dynamical system
\begin{equation}\label{eqparam}
q^\DD(t) = f (t,q(t),\lambda), \quad \DD\text{-a.e. } t \in \T,
\end{equation}
with $\lambda \in \Omega$. Then, considering $\T_1 = \{ a \} \cup [b,+\infty[_{\T}$, \eqref{DD-CS} coincides with \eqref{eqparam} where $u(a)$ plays the role of $\lambda$. In this situation, Theorem~\ref{thmmain} (stated in Section~\ref{secPMP}) provides necessary conditions for optimal parameters $\lambda$. We refer to Section~\ref{sectionexemple} for examples.
\end{remark}

\begin{remark}\label{remmultiscale}
A possible extension is to study dynamical systems on time scales with several sampled-data controls but with different sets of controlling times:
$$  q^\DD(t) = f ( t ,q(t), u_1^{\Phi_1}(t),u_2^{\Phi_2} (t) ), \quad \DD\text{-a.e. } t \in \T, $$
where $\T_1$ and $\T_2$ are general time scales contained in $\T$, and $\Phi_1$ and $\Phi_2$ are the corresponding mappings from $\T$ to $\TU$ and $\T_2$. Our main result (Theorem~\ref{thmmain}) can be easily extended to this framework. Actually, this \textit{multiscale version} will be useful in order to derive the transversality condition on the final time (see Remark~\ref{remfindepreuvefreefinaltime}).
\end{remark}

\begin{remark}\label{rem8}
Another possible extension is to study dynamical systems on time scales with sampled-data control where the state $q$ and the constraint function $f$ are also sampled:
$$  q^\DD(t) = f ( \Phi_1(t) ,q^{\Phi_2} (t), u^{\Phi_3}(t) ), \quad \DD\text{-a.e. } t \in \T, $$
where $\T_1$, $\T_2$ and $\T_3$ are general time scales contained in $\T$, and $\Phi_1$, $\Phi_2$ and $\Phi_3$ are the corresponding mappings from $\T$ to $\TU$, $\T_2$ and $\T_3$ respectively. In particular, the setting of \cite{bour11} corresponds to the above framework with $\T = \R^+$ and $\T_1 = \T_2 = \T_3$ a general time scale. %\red{Qu'apporte cette derni\`ere phrase ? A mon avis il faut la supprimer.}
\end{remark}

Although this is not the main objective of our paper, we provide hereafter a result stating the existence of optimal solutions for $\OSCP$, under some appropriate compactness and convexity assumptions. Actually, if the existence of solutions is stated, the necessary conditions provided in Theorem~\ref{thmmain}, allowing to compute explicitly optimal sampled-data controls, may prove the uniqueness of the optimal solution. We refer to Section~\ref{sectionexemple} for examples.

Let $\mathcal{M}$ stand for the set of trajectories $q$, associated with $b \in \T$ and with a sampled-data control $u\in \L^\infty_{\TU}(\TU,\Omega)$, satisfying \eqref{DD-CS} $\DD$-a.e. on $[a,b)_\T$ and $g(q(a),q(b)) \in \S$. We define the set of extended velocities $ \mathcal{W}(t,q) = \{ ( f(t,q,u) , f^0(t,q,u) )^\top \mid u \in \Omega \} $ for every $(t,q) \in \T \times \R^n$.

\begin{theorem}\label{propexistence}
If $\Omega$ is compact, $\mathcal{M}$ is non-empty, $\Vert q \Vert_{\infty}+b \leq M$ for every $q \in \mathcal{M}$ and for some $M \geq 0$, and if $ \mathcal{W}(t,q)$ is convex for every $(t,q) \in \T \times \R^n$, then $\OSCP$ has at least one optimal solution.
\end{theorem}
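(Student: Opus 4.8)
The plan is to run the direct method of the calculus of variations, in the Filippov--Cesari spirit, adapted to the time scale setting. Write $I$ for the infimum of the cost over $\mathcal{M}$; I first check that $I$ is finite. Along any admissible triple $(q,u,b)$ one has $\Vert q \Vert_\infty + b \leq M$, so the integrand $f^0(\tau,q(\tau),u^\Phi(\tau))$ is evaluated only on the compact set $K = [a,M]_\T \times \B_{\R^n}(0,M) \times \Omega$ (using that $\Omega$ is compact); since $f^0$ is continuous it is bounded on $K$, and $\mu_\DD([a,b)_\T) = b-a \leq M-a$, so the cost is uniformly bounded. Hence $I \in \R$ and I may fix a minimizing sequence $(q_k,u_k,b_k)_k$ in $\mathcal{M}$.

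Next I would establish compactness. Since $b_k \in [a,M]_\T$ and $\T$ is closed, up to a subsequence $b_k \to b^* \in [a,M]_\T$. The same argument applied to $f$ gives $\Vert q_k^\DD \Vert_{\L^\infty_\T} \leq C$ uniformly, so by the absolute-continuity characterization recalled in Section~\ref{secprelimtimescale} the antiderivative identity $q_k(t) = q_k(a) + \int_{[a,t)_\T} q_k^\DD \, \DD\tau$ makes the $\R$-extensions $\tilde q_k$ uniformly Lipschitz on $[a,M]$, with $q_k(a)$ bounded. Arzel\`a--Ascoli (applied to the extensions) yields, up to a further subsequence, $q_k \to q^*$ uniformly on $[a,M]_\T$. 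The velocities $v_k = q_k^\DD$ and running costs $w_k = f^0(\cdot,q_k,u_k^\Phi)$ are bounded in $\L^\infty_\T([a,M)_\T)$; passing to the extensions on a bounded real interval (finite measure) and extracting once more, they converge weakly in $L^2$ to limits $v^*$, $w^*$. Passing to the limit in the antiderivative formula gives $q^*(t) = q^*(a) + \int_{[a,t)_\T} v^* \, \DD\tau$, i.e. $(q^*)^\DD = v^*$, and testing $w_k$ against the indicator of $[a,b^*)$ (the discrepancy on the symmetric difference of $[a,b_k)$ and $[a,b^*)$ being controlled by the uniform bound on $w_k$ and $b_k \to b^*$) gives $\int_{[a,b^*)_\T} w^* \, \DD\tau = I$.

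The heart of the proof is to show that $(v^*(t),w^*(t)) \in \mathcal{W}(t,q^*(t))$ for $\DD$-a.e. $t$. The pairs $(v_k(t),w_k(t))$ lie in $\mathcal{W}(t,q_k(t))$, these sets are compact (continuous image of the compact $\Omega$), and, invoking the assumed convexity together with the continuity of $f,f^0$ in $q$ and the uniform convergence $q_k \to q^*$, a Mazur-lemma argument (strong $L^2$ convergence of convex combinations, combined with upper semicontinuity of $q \mapsto \mathcal{W}(t,q)$) forces the weak limit into the convex compact set $\mathcal{W}(t,q^*(t))$ $\DD$-a.e. A joint measurable selection (Filippov's lemma) then produces a $\mu_\DD$-measurable $u^*$ with values in $\Omega$ such that $v^*(t) = f(t,q^*(t),u^*(t))$ and $w^*(t) = f^0(t,q^*(t),u^*(t))$. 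I expect the \emph{main obstacle} to be that $u^*$ must respect the sampled-data structure, that is, be of the form $u^\Phi$ for a genuine control on $\TU$: on the sampling intervals $[s,\sigma_1(s))_\T$ with $s \in \RS_1$ the control is frozen, so the pointwise selection is not automatically $\Phi$-adapted. I would resolve this by splitting $\T$ along $\Phi$: on $\RD_1$, where $\Phi$ is the identity, the selection above already yields a free control needing the convexity argument, whereas the values at the at-most-countably many controlling times $s \in \RS_1$ are obtained by a diagonal extraction using compactness of $\Omega$, after which continuity of $f,f^0$ and continuous dependence of solutions fix the frozen contributions. Care is also needed at the right-scattered points $r \in \RS$, which are atoms of $\mu_\DD$; there uniform convergence gives $q_k^\DD(r) \to (q^*)^\DD(r)$ directly and convexity is not required.

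Finally I would verify feasibility. Uniform convergence gives $q_k(a) \to q^*(a)$, and $\Vert q_k(b_k) - q^*(b^*)\Vert_{\R^n} \leq \Vert q_k(b_k)-q_k(b^*)\Vert_{\R^n} + \Vert q_k(b^*)-q^*(b^*)\Vert_{\R^n} \to 0$ using the uniform Lipschitz bound and $b_k \to b^*$; since $g$ is continuous and $\S$ is closed, $g(q^*(a),q^*(b^*)) \in \S$. Together with $u^* \in \L^\infty_{\TU}(\TU,\Omega)$ and the cost identity $\int_{[a,b^*)_\T} w^* \,\DD\tau = I$, the triple $(q^*,u^*,b^*)$ is admissible and achieves the infimum, proving the theorem. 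The compactness and feasibility steps are routine once the time-scale antiderivative and integral identities from Section~\ref{secprelimtimescale} are in hand; the convexity/closure step and the recovery of a $\Phi$-adapted control are the genuinely delicate points.
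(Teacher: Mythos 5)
Your proposal is correct and follows essentially the same route as the paper's proof: minimizing sequence, weak $\L^2_\T$ limits of the velocities and running costs, convexity of $\mathcal{W}(t,q)$ to force the weak limit into $\mathcal{W}(t,q^*(t))$, a Filippov-type measurable selection on the right-dense points of $\TU$, and a diagonal extraction at the at most countably many right-scattered points of $\TU$ — which is exactly how the paper recovers a $\Phi$-adapted control. The only differences are technical variants: the paper obtains pointwise convergence of $q_k$ directly from the weak convergence of the velocities (rather than Arzel\`a--Ascoli), performs the closure step by re-evaluating the dynamics along the limit trajectory and using that $\{\Theta \in \L^2_\T([a,b)_\T,\R^{n+1}) \mid \Theta(t) \in \mathcal{W}(t,x(t)) \ \DD\text{-a.e.}\}$ is convex and strongly closed, hence weakly closed (rather than your Mazur-plus-upper-semicontinuity argument), and treats only the fixed-final-time case, leaving the free-time adaptation (your $b_k \to b^*$ step) as a remark.
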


The proof of Theorem~\ref{propexistence} is done in Section~\ref{proofexistence}. Note that, in this theorem, it suffices to assume that $g$ is continuous. Besides, the assumption on the boundedness of trajectories can be weakened, by assuming, for instance, that the extended dynamics have a sublinear growth at infinity (see, \textit{e.g.}, \cite{Cesari}; many other easy and standard extensions are possible).

\subsection{Pontryagin maximum principle for $\OSCP$}\label{secPMP}

%In this section we state a PMP for $\OSCP$ and Section~\ref{annexe} is devoted to its proof.

\subsubsection{Preliminaries on convexity and stable $\Omega$-dense directions}
The orthogonal of the closed convex set $\S$ at a point $x \in \S$ is defined by
$$ \O_\S [x] = \{ x' \in \R^j \mid \forall x'' \in \S, \ \langle x',x''-x \rangle_{\R^j} \leq 0 \}. $$
It is a closed convex cone containing $0$.

We denote by $d_\S$ the distance function to $\S$ defined by $d_\S (x) = \inf_{x' \in \S} \Vert x-x' \Vert_{\R^j}$, for every $x\in\R^j$.
Recall that, for every $x \in \R^j$, there exists a unique element $\P_\S (x)\in\S$ (projection of $x$ onto $\S$) such that $d_\S (x) = \Vert x - \P_\S(x) \Vert_{\R^j}$. It is characterized by the property
$\langle x-\P_\S (x) , x'-\P_\S (x) \rangle_{\R^j} \leq 0$ for every $x'\in \S$. In particular, $x-\P_\S (x) \in \O_\S [\P_\S (x)]$. The function $\P_\S$ is $1$-Lipschitz continuous. We recall the following obvious lemmas.

\begin{lemma}\label{lemconvex}
Let $(x_k)_{k \in \N}$ be a sequence of points of $\R^j$ and $(\zeta_k)_{k \in \N}$ be a sequence of nonnegative real numbers such that $x_k \to x \in \S$ and $\zeta_k (x_k - \P_\S (x_k) ) \to x' \in \R^j$ as $k \to +\infty$.
Then $x' \in \O_\S [x]$.
\end{lemma}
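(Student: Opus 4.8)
The plan is to verify directly that $x'$ satisfies the defining inequality of $\O_\S[x]$, namely that $\langle x', x''-x\rangle_{\R^j}\leq 0$ for every $x''\in\S$. The whole argument rests on the variational characterization of the projection recalled just above the lemma, combined with a passage to the limit made legitimate by the continuity of $\P_\S$.

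First I would fix an arbitrary $x''\in\S$. For each $k\in\N$, the characterization of $\P_\S(x_k)$ gives
\begin{equation*}
\langle x_k-\P_\S(x_k),\, x''-\P_\S(x_k)\rangle_{\R^j}\leq 0.
\end{equation*}
Since $\zeta_k\geq 0$, multiplying this inequality by $\zeta_k$ preserves its sign, yielding
\begin{equation*}
\langle \zeta_k\big(x_k-\P_\S(x_k)\big),\, x''-\P_\S(x_k)\rangle_{\R^j}\leq 0
\end{equation*}
for every $k\in\N$.

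Next I would identify the limits of the two factors in the scalar product. By hypothesis $\zeta_k(x_k-\P_\S(x_k))\to x'$. For the second factor, I would use that $\P_\S$ is $1$-Lipschitz continuous, hence $\P_\S(x_k)\to\P_\S(x)$; and since $x\in\S$ one has $\P_\S(x)=x$, so that $x''-\P_\S(x_k)\to x''-x$. The scalar product being continuous on $\R^j\times\R^j$, I can pass to the limit in the displayed inequality to obtain $\langle x',\, x''-x\rangle_{\R^j}\leq 0$. As $x''\in\S$ was arbitrary, this is exactly the statement that $x'\in\O_\S[x]$, which concludes the proof.

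There is no genuine obstacle here, which is consistent with the lemma being labelled ``obvious'': the only point that must not be overlooked is the use of $\P_\S(x)=x$, valid precisely because the limit point $x$ is assumed to lie in $\S$, together with the continuity of $\P_\S$ to guarantee convergence of the second factor. Everything else is the monotone passage to the limit in a sign-preserving inequality.
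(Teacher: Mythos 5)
Your proof is correct: the paper itself gives no argument for this lemma (it is stated as ``obvious''), and your reasoning --- the variational characterization of $\P_\S(x_k)$, multiplication by $\zeta_k \geq 0$, and passage to the limit using the $1$-Lipschitz continuity of $\P_\S$ together with $\P_\S(x)=x$ for $x\in\S$ --- is precisely the standard argument the authors are implicitly invoking. Nothing is missing; in particular you correctly flag the one point that needs the hypothesis $x\in\S$, namely the identification $\P_\S(x)=x$ in the limit of the second factor.
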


%\begin{proof}
%Let $x'' \in \S$. For every $k \in \N$ one has $\langle \zeta_k (x_k - \P_\S (x_k)) , x''- \P_\S (x_k) \rangle_{\R^j} \leq 0$, and passing to the limit yields $\langle x' , x''-\P_\S (x) \rangle_{\R^j} = \langle x' , x''-x \rangle_{\R^j} \leq 0$.
%\end{proof}

\begin{lemma}
The function $d_\S^2 : x \mapsto d_\S (x)^2$ is differentiable on $\R^j$, with $\mathrm{d} d_\S^2 (x) ( x')= 2\langle x-\P_\S (x), x' \rangle_{\R^j}$.
\end{lemma}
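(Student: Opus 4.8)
The plan is to prove differentiability directly from the definition, by sandwiching the first-order increment between $\pm\Vert h\Vert_{\R^j}^2$. I fix $x\in\R^j$, abbreviate $p=\P_\S(x)$ and, for an increment $h\in\R^j$, $p_h=\P_\S(x+h)$; recall that $d_\S^2(y)=\Vert y-\P_\S(y)\Vert_{\R^j}^2$ for every $y$, and that the candidate differential is the linear form $h\mapsto 2\langle x-p,h\rangle_{\R^j}$. The whole argument rests on the two inputs already recorded above: the minimizing property of the projection and the $1$-Lipschitz continuity of $\P_\S$.

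First I would derive the upper bound. Since $p\in\S$ is an admissible competitor for the projection of $x+h$, minimality gives $d_\S^2(x+h)\le \Vert x+h-p\Vert_{\R^j}^2 = d_\S^2(x)+2\langle x-p,h\rangle_{\R^j}+\Vert h\Vert_{\R^j}^2$, whence $d_\S^2(x+h)-d_\S^2(x)-2\langle x-p,h\rangle_{\R^j}\le\Vert h\Vert_{\R^j}^2$.

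Symmetrically, for the lower bound I would use that $p_h\in\S$ is admissible for the projection of $x$, so $d_\S^2(x)\le\Vert x-p_h\Vert_{\R^j}^2$. Writing $x-p_h=(x+h-p_h)-h$ and expanding yields $d_\S^2(x)\le d_\S^2(x+h)-2\langle x+h-p_h,h\rangle_{\R^j}+\Vert h\Vert_{\R^j}^2$. Rearranging and subtracting $2\langle x-p,h\rangle_{\R^j}$, the cross terms combine (using $(x+h-p_h)-(x-p)=h+p-p_h$) into $d_\S^2(x+h)-d_\S^2(x)-2\langle x-p,h\rangle_{\R^j}\ge \Vert h\Vert_{\R^j}^2+2\langle p-p_h,h\rangle_{\R^j}$. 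Here the only nontrivial step enters: by Cauchy--Schwarz and the $1$-Lipschitz continuity of $\P_\S$ recalled above, $|\langle p-p_h,h\rangle_{\R^j}|\le\Vert \P_\S(x)-\P_\S(x+h)\Vert_{\R^j}\,\Vert h\Vert_{\R^j}\le\Vert h\Vert_{\R^j}^2$, so the right-hand side is bounded below by $-\Vert h\Vert_{\R^j}^2$.

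Combining the two bounds gives $|d_\S^2(x+h)-d_\S^2(x)-2\langle x-p,h\rangle_{\R^j}|\le\Vert h\Vert_{\R^j}^2 = o(\Vert h\Vert_{\R^j})$, which is precisely Fréchet differentiability at $x$ with differential $h\mapsto 2\langle x-\P_\S(x),h\rangle_{\R^j}$, as claimed. I do not expect any genuinely hard step: both inequalities are immediate from the minimizing property of the projection, and the contraction property of $\P_\S$ is exactly what upgrades the one-sided estimate into a two-sided $o(\Vert h\Vert_{\R^j})$ control. The only care required is bookkeeping of the cross terms when expanding the two squared norms, and noting that the bound $\Vert h\Vert_{\R^j}^2$ is uniform in $x$, so that $d_\S^2$ is in fact differentiable on all of $\R^j$.
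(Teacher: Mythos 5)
Your proof is correct. Note that the paper itself offers no argument to compare against: this lemma is stated as one of two ``obvious lemmas'' that the authors simply recall, so your write-up supplies the justification that the paper omits. Your sandwich argument is the standard one and is complete: the upper bound uses only that $\P_\S(x)$ is an admissible competitor for $x+x'$, the lower bound uses that $\P_\S(x+x')$ is an admissible competitor for $x$, and the $1$-Lipschitz continuity of $\P_\S$ (which the paper does record just before the lemma) is exactly what turns the residual cross term $2\langle \P_\S(x)-\P_\S(x+x'),x'\rangle_{\R^j}$ into an $O(\Vert x'\Vert_{\R^j}^2)$ error; note that mere continuity of $\P_\S$ would already give the required $o(\Vert x'\Vert_{\R^j})$ control, so the quantitative Lipschitz bound is a convenience rather than a necessity. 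One word of caution on your final remark: the uniform-in-$x$ bound $\Vert x'\Vert_{\R^j}^2$ is not needed for the statement (differentiability is pointwise), though it does show in passing that $d_\S^2$ is $\CC^1$ with a $2$-Lipschitz differential, which is stronger than what the lemma asserts.
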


Hereafter we recall the notion of stable $\Omega$-dense directions and we state an obvious lemma. We refer to \cite[Section~2.2]{bour11} for more details.

\begin{definition}\label{defstable}
Let $v \in \Omega$. A direction $y \in \Omega$ is said to be a stable $\Omega$-dense direction from $v$ if there exists $\varepsilon>0$ such that $0$ is not isolated in $\{ \alpha \in [0,1], \; v' + \alpha (y-v') \in \Omega \}$ for every $v' \in \overline{\mathrm{B}}_{\R^m} (v,\varepsilon) \cap \Omega$. The set of all stable $\Omega$-dense directions from $v$ is denoted by $\mathrm{D}^{\Omega}_{\mathrm{stab}} (v)$.
\end{definition}

\begin{lemma}\label{lemstable}
If $\Omega$ is convex, then $\mathrm{D}^{\Omega}_{\mathrm{stab}} (v) = \Omega$ for every $v \in \Omega$.
\end{lemma}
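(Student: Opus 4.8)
The plan is to verify the two inclusions separately. Since, by Definition~\ref{defstable}, a stable $\Omega$-dense direction from $v$ is in particular an element of $\Omega$, the inclusion $\mathrm{D}^{\Omega}_{\mathrm{stab}}(v) \subseteq \Omega$ holds for free, for any set $\Omega$. It therefore only remains to establish the reverse inclusion $\Omega \subseteq \mathrm{D}^{\Omega}_{\mathrm{stab}}(v)$, which is where convexity is used.

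First I would fix $v \in \Omega$ and an arbitrary $y \in \Omega$, and aim to exhibit a single $\varepsilon > 0$ fulfilling the requirement of Definition~\ref{defstable}. Because $\Omega$ is convex globally, the precise value of $\varepsilon$ is immaterial, so I would simply take any $\varepsilon > 0$, say $\varepsilon = 1$. The key observation is that for every $v' \in \overline{\mathrm{B}}_{\R^m}(v,\varepsilon) \cap \Omega$ — indeed for every $v' \in \Omega$ — the whole segment $\{ v' + \alpha (y - v') \mid \alpha \in [0,1] \}$ is contained in $\Omega$ by convexity. Consequently the set $\{ \alpha \in [0,1] \mid v' + \alpha (y - v') \in \Omega \}$ equals the entire interval $[0,1]$.

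It then only remains to note that $0$ is not isolated in $[0,1]$: any sequence $\alpha_k \downarrow 0$ with $\alpha_k > 0$ lies in the set and converges to $0$. Hence the defining condition of a stable $\Omega$-dense direction is met (uniformly in $v'$), so that $y \in \mathrm{D}^{\Omega}_{\mathrm{stab}}(v)$. Combining the two inclusions yields the claimed equality $\mathrm{D}^{\Omega}_{\mathrm{stab}}(v) = \Omega$.

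I do not anticipate any genuine obstacle here; the statement is qualified as ``obvious'' precisely because convexity forces the candidate set of admissible $\alpha$ to be a full subinterval of $[0,1]$ containing a right-neighbourhood of $0$, which trivially satisfies the non-isolation requirement. The only point deserving a moment of care is the logical direction: the nontrivial content is the inclusion $\Omega \subseteq \mathrm{D}^{\Omega}_{\mathrm{stab}}(v)$, the opposite inclusion being already built into Definition~\ref{defstable}.
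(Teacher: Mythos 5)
Your proof is correct and is exactly the argument the paper has in mind: the inclusion $\mathrm{D}^{\Omega}_{\mathrm{stab}}(v) \subseteq \Omega$ is built into Definition~\ref{defstable}, and for the converse, convexity makes the admissibility set $\{\alpha \in [0,1] \mid v' + \alpha(y-v') \in \Omega\}$ equal to all of $[0,1]$ for every $v' \in \Omega$, so $0$ is trivially non-isolated in it. The paper states the lemma without proof, qualifying it as obvious, and your write-up supplies precisely the details that make it so.
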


\subsubsection{Main result}
Recall that $g$ is said to be submersive at a point $(q_1,q_2) \in \R^n \times \R^n$ if the differential of $g$ at this point is surjective. We define the Hamiltonian $H:\T\times \R^n \times \R^n \times \R \times \R^m \rightarrow \R$ of $\OSCP$ by $H(t,q,p,p^0,u) = \langle p, f(t,q,u) \rangle_{\R^n} + p^0 f^0(t,q,u)$.

\begin{theorem}[Pontryagin maximum principle for $\OSCP$]\label{thmmain}
If a trajectory $q^*$, defined on $[a,b^*]_\T$ and associated with a sampled-data control $u^* \in \L^\infty_{\TU}(\TU,\Omega)$, is an optimal solution of $\OSCP$,
then there exists a nontrivial couple $(p,p^0)$, where $p \in \AC ([a,b^*]_\T,\R^n)$ (called adjoint vector) and $p^0 \leq 0$, such that the following conditions hold:
\begin{itemize}
\item Extremal equations:
\begin{equation}\label{extremal_equations}
q^{*\DD}(t) = \frac{\partial H}{\partial p} (t,q^*(t),p^\sigma(t),p^0,u^{*\Phi}(t)), \qquad
p^\DD(t) = - \frac{\partial H}{\partial q} (t,q^*(t),p^\sigma(t),p^0,u^{*\Phi}(t)) ,
\end{equation}
for $\DD$-a.e. $t\in[a,b^*)_\T$.
\item Maximization condition:
\begin{itemize}
\item For $\DD_1$-a.e. $s \in [a,b^*)_\TU \cap \RD_1$, we have
\begin{equation*}\label{maxcondition}
u^*(s) \in \argmax_{z \in \Omega} H(s,q^*(s),p(s),p^0,z) .
\end{equation*}
\item For every $r \in [a,b^*)_\TU \cap \RS_1$ such that $\sigma_1(r) \leq b^*$, we have
\begin{equation}\label{secondcondition}
\left\langle \int_{[r,\sigma_1(r))_\T} \frac{\partial H}{\partial u} (\t,q^*(\t),p^\sigma(\t),p^0,u^*(r)) \; \DD \t \; , \; y-u^*(r) \right\rangle_{\R^m}  \leq 0,
\end{equation}
for every $y \in \mathrm{D}^\Omega_{\mathrm{stab}} (u^*(r))$. In the case where $r \in [a,b^*)_\TU \cap \RS_1$ with $\sigma_1(r) > b^*$, the above maximization condition is still valid provided $\sigma_1(r)$ is replaced with $b^*$.
\end{itemize}
\item Transversality conditions on the adjoint vector: \\
If $g$ is submersive at $(q^*(a),q^*(b^*))$, then the nontrivial couple $(p,p^0)$ can be selected to satisfy
\begin{equation}\label{transv_cond}
p(a) = - \left( \frac{\partial g}{\partial q_1} (q^*(a),q^*(b^*)) \right)^\top  \psi,\qquad
p(b^*) =  \left( \frac{\partial g}{\partial q_2} (q^*(a),q^*(b^*)) \right)^\top  \psi,
\end{equation}
where $-\psi \in \O_\S [g (q^*(a),q^*(b^*))]$.
\item Transversality condition on the final time: \\
If the final time is left free in the optimal control problem $\OSCP$, if $b^*$ belongs to the interior of $\T$ (for the topology of $\R$), and if $f$ and $f^0$ are of class $\CC^1$ with respect to $t$ in a neighborhood of $b^*$, then the nontrivial couple $(p,p^0)$ can be moreover selected such that the Hamiltonian function $t \mapsto H(t,q^*(t),p(t),p^0,u^{*\Phi} (t)) $ coincides almost everywhere, in some neighborhood of $b^*$, with a continuous function vanishing at $t=b^*$. 

In particular, if $u^{*\Phi} (t)$ has a left-limit at $t=b^*$ (denoted by $u^{*\Phi} (b^*_-)$), then the transversality condition can be written as
$$ H(b^*,q^*(b^*),p(b^*),p^0,u^{*\Phi} (b^*_-)) = 0 .$$
\end{itemize}
\end{theorem}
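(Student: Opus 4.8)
The plan is to establish the result by the classical combination of needle-like variations and the Ekeland variational principle, upgrading the permanent-control time-scale PMP of \cite{bour11} to the present sampled-data setting. First I would treat the case of a fixed final time $b=b^*$, postponing the free-time transversality condition to the end. The starting point is a sensitivity analysis: for an admissible perturbation $u$ of $u^*$ I would show that the associated trajectory $q_u$ depends differentiably on the perturbation parameter and that the first-order variation $w$ of $q_u$ solves a linear $\DD$-Cauchy problem (the \emph{variation equation}) of the form $w^\DD(t) = \partial_q f(t,q^*(t),u^{*\Phi}(t))\, w(t) + \xi(t)$, where the source term $\xi$ encodes the chosen perturbation. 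Optimality is then exploited by penalization: introducing the functional
$$ J_\varepsilon(u) = \left( \max\Big(0,\, C(u)-C(u^*)+\varepsilon\Big)^2 + d_\S\big(g(q_u(a),q_u(b^*))\big)^2 \right)^{1/2}, \qquad C(u) = \int_{[a,b^*)_\T} f^0(\tau,q_u(\tau),u^\Phi(\tau))\,\DD\tau, $$
one has $J_\varepsilon \geq 0$, $J_\varepsilon(u^*) \leq \varepsilon$, and $J_\varepsilon$ is continuous for an appropriate complete metric on the admissible controls (as in \cite{bour11}). Ekeland's principle then provides a control $u_\varepsilon$ with $J_\varepsilon(u_\varepsilon)>0$, converging to $u^*$, that minimizes $J_\varepsilon$ up to a first-order $\sqrt{\varepsilon}$-penalty in the metric.

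Second, I would construct two distinct families of admissible variations, dictated by the structure of $\TU$. At a right-dense controlling time $s \in [a,b^*)_\TU \cap \RD_1$ (so $s \in \RD$ as well), I would use a classical \emph{needle-like} (spike) variation, freezing the value of the control to an arbitrary $z \in \Omega$ on a small interval of $\DD$-measure $\alpha$ around $s$; the associated source term $\xi$ concentrates, as $\alpha \to 0$, on the difference $f(s,q^*(s),z)-f(s,q^*(s),u^*(s))$. At a right-scattered controlling time $r \in [a,b^*)_\TU \cap \RS_1$, the control is held constant on the whole sampling interval $[r,\sigma_1(r))_\T$, so I would instead use the \emph{convex-type} variation $u^*(r) \mapsto u^*(r)+\alpha(y-u^*(r))$ for $y \in \mathrm{D}^\Omega_{\mathrm{stab}}(u^*(r))$ and $\alpha \geq 0$ small; here the stable $\Omega$-dense direction hypothesis guarantees that the perturbed control remains admissible for a sequence of parameters accumulating at $0$, and the source term integrates $\partial_u f(\tau,q^*(\tau),u^*(r))(y-u^*(r))$ over $[r,\sigma_1(r))_\T$.

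Third, I would introduce the adjoint vector $p \in \AC([a,b^*]_\T,\R^n)$ as the solution of the backward linear equation $p^\DD = -\partial_q H(\cdot,q^*,p^\sigma,p^0,u^{*\Phi})$, and use the Leibniz formula~\eqref{eqleibniz} to show that $t \mapsto \langle p(t),w(t)\rangle_{\R^n}$ has a computable $\DD$-derivative, yielding a duality identity between $\langle p(b^*),w(b^*)\rangle_{\R^n}$ and the source term $\xi$. Writing the Ekeland minimality inequality along each variation and dividing by $\alpha$ before letting $\alpha \to 0$, I obtain, for each $\varepsilon$, a nontrivial pair $(\psi_\varepsilon,p^0_\varepsilon)$ (of unit norm, whence nontriviality survives the limit) together with first-order inequalities. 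Choosing $p^0_\varepsilon$ and the terminal data for $p$ from $(\psi_\varepsilon,p^0_\varepsilon)$ via the differential of $d_\S^2$, and passing to the limit $\varepsilon \to 0$ using Lemma~\ref{lemconvex} (to place $-\psi$ in $\O_\S[g(q^*(a),q^*(b^*))]$) together with the $\DD$-Lebesgue point property, the needle variations produce the pointwise maximization $u^*(s) \in \argmax_{z\in\Omega} H(s,q^*(s),p(s),p^0,z)$ $\DD_1$-a.e.\ on right-dense times, while the convex variations produce exactly the integral inequality~\eqref{secondcondition}; the transversality conditions~\eqref{transv_cond} then follow from the submersivity of $g$ and the definition of $\psi$.

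Finally, the free-time transversality condition is obtained by additionally varying $b^*$. Since $b^*$ lies in the interior of $\T$ and $f,f^0$ are $\CC^1$ in $t$ near $b^*$, I would encode the time variation as an extra sampled-data control through the multiscale device of Remark~\ref{remmultiscale}, so that the same machinery yields that $t\mapsto H(t,q^*(t),p(t),p^0,u^{*\Phi}(t))$ agrees almost everywhere near $b^*$ with a continuous function vanishing at $b^*$, whence the stated form when $u^{*\Phi}$ has a left-limit. The main obstacle, and the place where the sampled-data structure genuinely departs from \cite{bour11}, is the second family of variations: because $\Omega$ need not be convex and the control is frozen across an entire sampling interval, one cannot use a pointwise spike at a scattered time, and must instead justify the convex variation via stable $\Omega$-dense directions, control the dependence of $q_u$ on $\alpha$ uniformly across the sampling interval, and ensure that the limiting multiplier remains nontrivial and compatible with the needle-variation conditions at dense times. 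Getting a single Ekeland metric and a single limit multiplier to serve both variation families simultaneously is the delicate technical point.
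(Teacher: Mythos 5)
Your overall strategy is exactly the one the paper follows: a truncated $\L^\infty$/$\L^1$ Ekeland space of admissible controls, spike variations at right-dense controlling times, convex-type variations along stable $\Omega$-dense directions at right-scattered ones, a backward adjoint equation combined with the Leibniz formula~\eqref{eqleibniz} to dualize the variational inequalities, and the time-reparametrization/multiscale device of Remark~\ref{remmultiscale} for the free final time. However, there is a genuine gap in your treatment of the boundary conditions. Your Ekeland space consists of controls only, so the trajectory $q_u$ is determined by $u$ together with a \emph{fixed} initial value. With the initial point frozen, none of the variations you propose can produce the initial transversality condition $p(a) = - ( \partial_{q_1} g (q^*(a),q^*(b^*)) )^\top \psi$; your claim that \eqref{transv_cond} ``follows from the submersivity of $g$ and the definition of $\psi$'' is true only for the terminal condition $p(b^*) = ( \partial_{q_2} g (q^*(a),q^*(b^*)) )^\top \psi$, which is simply the terminal datum you impose on the adjoint. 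Since $\OSCP$ has the coupled constraint $g(q(a),q(b)) \in \S$ with a free initial point, the paper enlarges the Ekeland space to pairs $(u,\bar q_a)$ (control \emph{and} initial condition, with the cost coordinate of the augmented state initialized at $0$), introduces a third family of variations $q_a \mapsto q_a + \gamma q'_a$ with its own variation vector~\eqref{varvect_pointinit}, and it is the resulting inequality~\eqref{varineqpointinit} that yields the condition on $p(a)$. Without this third family your argument proves the PMP only for problems with fixed initial state.

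Two secondary points. First, the unit normalization of $(\psi_\varepsilon,p^0_\varepsilon)$ gives nontriviality of the limit pair $(\psi,p^0)$, but the theorem asserts nontriviality of $(p,p^0)$; passing from one to the other requires the submersivity argument (if $p \equiv 0$ then $\psi$ is orthogonal to the image of $\mathrm{d}g(q^*(a),q^*(b^*))$, hence $\psi=0$, hence $p^0 \neq 0$), and, when $g$ is not submersive, the paper's additional trick of replacing $(g,\S)$ by the identity map together with $\{q^*(a)\}\times\{q^*(b^*)\}$. Second, the paper actually runs Ekeland for each truncation radius $R$ and then takes a second limit $R \to \infty$, and the right-dense maximization is obtained only at $\DD$-Lebesgue points common to all the perturbed controls $u^R_{\varepsilon_k}$, which requires a small lemma showing the exceptional sets have $\mu_{\DD_1}$-measure zero. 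These are fixable technicalities, but they are precisely part of what you rightly flag as the delicate point of making a single limit multiplier serve both variation families.
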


Theorem~\ref{thmmain} is proved in Section~\ref{annexe}. Several remarks are in order.

\begin{remark}\label{remarknormaliser}
As is well known, the nontrivial couple $(p,p^0)$ of Theorem~\ref{thmmain}, which is a Lagrange multiplier, is defined up to a multiplicative scalar. Defining as usual an \textit{extremal} as a quadruple $(q,p,p^0,u)$ solution of the extremal equations~\eqref{extremal_equations}, an extremal is said to be \textit{normal} whenever $p^0\neq 0$ and \textit{abnormal} whenever $p^0=0$. In the normal case $p^0\neq 0$, it is usual to normalize the Lagrange multiplier so that $p^0=-1$.
\end{remark}
%
%\begin{remark}
%For sake of completeness, we mention that if all assumptions of the third and the fourth items of Theorem~\ref{thmmain} are satisfied, then the nontrivial couple $(p,p^0)$ can be selected to satisfy both the transversality conditions on the adjoint vector and the transversality condition on the optimal free final time.
%\end{remark}

\begin{remark}\label{remencompass}
Theorem~\ref{thmmain} encompasses the time scale version of the PMP derived in \cite{bour11} when the control is permanent, that is, when $\TU = \T$. Indeed, in that case, for every $r \in [a,b^*)_\T \cap \RS$, $r \in [a,b^*)_\TU \cap \RS_1$ and $ \sigma_1(r) = \sigma (r) \leq b^*$. Then the condition~\eqref{secondcondition} can be written as the nonpositive gradient condition
\begin{equation*}
\left\langle \frac{\partial H}{\partial u} (r,q^*(r),p(\sigma(r)),p^0,u^*(r)) \; , \; y-u^*(r) \right\rangle_{\R^m} \leq 0,
\end{equation*}
for every $y \in \mathrm{D}^\Omega_{\mathrm{stab}} (u^*(r))$. Moreover, in the case of a free final time, under the assumptions made in the fourth item of Theorem~\ref{thmmain}, $b^*$ also belongs to the interior of $\TU = \T$, and then in that case we recover the classical condition
%. As a consequence, the maximized Hamiltonian $ t \mapsto \max_{z \in \Omega} H(t,q(t),p(t),p^0,z) $ coincides almost everywhere, in some neighborhood of $b$, with the Hamiltonian function $ t \mapsto H(t,q(t),p(t),p^0,u^\Phi (t)) $, which coincides as well, almost everywhere, with a continuous function vanishing at $t=b$. In other words, we have
$$ \max_{z \in \Omega} H(b^*,q^*(b^*),p(b^*),p^0,z) = 0. $$
\textit{A fortiori}, Theorem~\ref{thmmain} encompasses both the classical continuous-time and discrete-time versions of the PMP, that is respectively, when $\T = \TU = \R^+$ and $\T = \TU = \N$.
\end{remark}

\begin{remark}
If the final time is free, under the assumptions made in the fourth item of Theorem~\ref{thmmain}, and if moreover $b^* \notin \TU$ (or if $b^*$ is a left-scattered point of $\TU$), $u^{*\Phi}$ is constant on $[\rho_1(b^*),b^*)_\T$ where $\rho_1 (b^*) = \max \{ s \in \TU \mid s < b^* \} < b^*$ (and thus, in particular, $u^{*\Phi}(t)$ has a left-limit at $t=b^*$), and therefore $ H(b^*,q^*(b^*),p(b^*),p^0,u^* ( \rho_1(b^*) )) = 0 $. This is similar to the situation of Theorem~\ref{thmmainintro}.
\end{remark}

\begin{remark}\label{remarkconditionsterminales}
Let us describe some typical situations of terminal conditions $g(q(a),q(b)) \in \S$ in $\OSCP$, and of the corresponding transversality conditions on the adjoint vector.
\begin{itemize}
\item If the initial and final points are fixed in $\OSCP$, that is, if we impose $q(a) = q_a$ and $q(b) = q_b$, then $j=2n$, $g(q_1,q_2) = (q_1,q_2)$ and $\S = \{ q_a \} \times \{ q_b \}$. In that case, the transversality conditions on the adjoint vector give no additional information.
\item If the initial point is fixed, that is, if we impose $q(a) = q_a$, and if the final point is left free in $\OSCP$, then $j=n$, $g(q_1,q_2) = q_1$ and $\S = \{ q_a \} $. In that case, the transversality conditions on the adjoint vector imply that $p(b^*) = 0$. Moreover, we have $p^0 \neq 0$\footnote{Indeed, if $p^0 =0$, then the adjoint vector $p$ is trivial from the extremal equation and from the final condition $p(b^*)=0$. This leads to a contradiction since the couple $(p,p^0)$ has to be nontrivial.} and we can normalize the Lagrange multiplier so that $p^0=-1$ (see Remark~\ref{remarknormaliser}).
\item If the initial point is fixed, that is, if we impose $q(a) = q_a$, and if the final point is subject to the constraint $G(q(b)) \in (\R^+)^k$ in $\OSCP$, where $G = (G^1,\ldots,G^k) : \R^n \rightarrow \R^k$ is of class $\CC^1$ and is submersive at any point $q_2 \in G^{-1} ((\R^+)^k)$, then $j=n+k$, $g(q_1,q_2) = (q_1,G(q_2))$ and $\S = \{ q_a \} \times (\R^+)^k $. The transversality conditions on the adjoint vector imply that
$$ p(b^*) = \di \sum_{i=1}^k \lambda_i \partial_{q_2} G^i (q^*(b^*)), $$
with $\lambda_i \geq 0$, $i=1,\ldots,k$.
\item If the periodic condition $q(a)=q(b)$ is imposed in $\OSCP$, then $j=n$, $g(q_1,q_2) = q_1- q_2$ and $\S = \{ 0 \}$. In that case, the transversality conditions on the adjoint vector yield that $p(a) = p(b^*)$.
\end{itemize}
We stress that, in all examples above, the function $g$ is indeed a submersion.
\end{remark}

\begin{remark}\label{remarknonsubmersive}
In the case where $g$ is not submersive at $(q^*(a),q^*(b^*))$, to obtain transversality conditions on the adjoint vector, Theorem~\ref{thmmain} can be reformulated as follows:
\begin{quote}
\textit{If a trajectory $q^*$, defined on $[a,b^*]_\T$ and associated with a sampled-data control $u^* \in \L^\infty_{\TU}(\TU,\Omega)$, is an optimal solution of $\OSCP$,
then there exists a nontrivial couple $(\psi,p^0) \in \R^j \times \R$, with $-\psi \in \O_\S [g (q^*(a),q^*(b^*))]$ and $p^0 \leq 0$, and there exists $p \in \AC ([a,b^*]_\T,\R^n)$ such that the extremal equations, the maximization conditions and the transversality conditions are satisfied.}
\end{quote}
However, with this formulation, the couple $(p,p^0)$ may be trivial\footnote{Indeed, if $p^0 =0$ and $\psi$ belongs to the kernel of $(\partial_{q_2} g  (q^*(a),q^*(b^*)) )^\top$, then the couple $(p,p^0)$ is trivial. This situation leads to a contradiction if $g$ is submersive at $(q^*(a),q^*(b^*))$.} and, as a consequence, the result may not provide any information. We refer to Sections~\ref{section46} and \ref{section47} for more details.
\end{remark}

\begin{remark}
In this paper, the closedness of $\Omega$ is used in a crucial way in our proof of the PMP. Indeed, the closure of $\Omega$ allows us to define the Ekeland functional on a complete metric space (see Section~\ref{section41}). However, if the initial point is fixed, that is, if we impose $q(a) = q_a$, and if the final point is left free in $\OSCP$, then Theorem~\ref{thmmain} can be proved with a simple calculus of variations, without using the Ekeland variational principle. In this particular case, the closedness assumption can be removed.
\end{remark}

\begin{remark}\label{remarkbolza}
If the cost functional to be minimized in $\OSCP$ is $ \int_{[a,b)_\T} f^0 (\t,q (\t), u^\Phi(\t) ) \, \DD \t + \ell (b,q(a),q(b))$, where $\ell : \T \times \R^n \times \R^n \rightarrow \R$ is continuous, of class $\CC^1$ in $(q_1,q_2)$, then the transversality conditions on the adjoint vector become 
\begin{multline*}
p(a) =  - \left( \partial_{q_1} g (q^*(a),q^*(b^*)) \right)^\top  \psi - p^0 \partial_{q_1} \ell  (b^*,q^*(a),q^*(b^*)), \\ \text{and } p(b^*) =  \left( \partial_{q_2} g (q^*(a),q^*(b^*)) \right)^\top  \psi + p^0 \partial_{q_2} \ell (b^*,q^*(a),q^*(b^*)) . $$
\end{multline*}
Moreover, in the fourth item of Theorem~\ref{thmmain}, if $\ell$ is of class $\CC^1$ in a neighborhood of $b^*$, the transversality condition on the final time must be replaced by:
\begin{quote}
\textit{The nontrivial couple $(p,p^0)$ can be selected such that the Hamiltonian function $ t \mapsto H(t,q^*(t),p(t),p^0,u^{*\Phi} (t)) $ coincides almost everywhere, in some neighborhood of $b^*$, with a continuous function that is equal to $-p^0 \partial_b \ell (b^*,q^*(a),q^*(b^*))$ at $t=b^*$.}
\end{quote}
To prove this claim, it suffices to modify accordingly the Ekeland functional in the proof of Theorem~\ref{thmmain} (see Section~\ref{section41}).
\end{remark}

\section{Applications and further comments}\label{sectionapplication}

In this section, we first give, in Section~\ref{sectionexemple}, a very simple example of an optimal control problem on time scales with sampled-data control, that we treat in details and on which all computations are explicit. The interest is that this example provides as well a simple situation where it is evident that some of the properties that are valid in the classical continous-time PMP do not hold anymore in the time-scale context. We gather these remarks in Section~\ref{nonextension}.

\subsection{A model for optimal consumption with sampled-data control}\label{sectionexemple}

Throughout this subsection, $\T$ and $\TU$ are two time scales, unbounded above, satisfying $\TU \subset \T$, $\min \T = \min \TU = 0$ and $12 \in \T$. In the sequel, we study the following one-dimensional dynamical system with sampled-data control on time scales:
\begin{equation}\label{exequationq}
q^\DD (t) = u^\Phi(t)q(t), \qquad \DD\text{-a.e. } t \in [0,12)_\T,
\end{equation}
with the initial condition $q(0)=1$, and subject to the constraint $u(t) \in [0,1]$ for $\DD_1$-a.e. $t \in [0,12)_\TU$. Since the final time $b=12$ is fixed, we can assume that $12 \in \TU$ without loss of generality.

The above model is a classical model for the evolution of a controlled output of a factory during the time interval $[0,12]_\T$ (corresponding to the twelve months of a year). Precisely, $q(t) \in \R$ stands for the output at time $t \in \T$ and $u(t) \in [0,1]$ stands for the fraction of the output reinvested at each controlling time $t \in \TU$. We assume that this fraction is sampled at each sampling time $t \in \TU$ such that $t \in \RS_1$ and $\sigma (t) < \sigma_1(t)$, on the corresponding sampling interval $[t,\sigma_1(t))_\T$. In the sequel, our goal is to maximize the total consumption
\begin{equation}\label{exC}
C(u) = \int_{[0,12)_\T} (1-u^\Phi(\t))q(\t) \,\DD \t.
\end{equation}
In other words, our aim is to maximize the quantity of the output that we do not reinvest.

\begin{remark}
In the continuous case and with a permanent control (that is, with $\T = \TU = \R^+$), the above optimal control problem is a very well-known application of the classical Pontryagin maximum principle. We refer for example to~\cite[Exercice~2.3.3. p.82]{seie} or \cite[p.92]{JBHU}. In this section, our aim is to solve this optimal control problem in cases where the control is nonpermanent and sampled. We first treat some examples in the continuous-time setting $\T = \R^+$ and then in the discrete-time setting $\T = \N$.
\end{remark}

The above optimal sampled-data control problem corresponds to $\OSCP$ with $n=m=j=1$, $a=0$, $b=12$ (fixed final time), $\Omega = [0,1]$ (convex), $ g(q_1,q_2)=q_1$ and $\S =  \{ 1 \}$ (fixed initial value $q(0)=1$ and free final value $q(12)$), $f(t,q,u) = uq$ and $f^0(t,q,u) = (u-1)q$ (our aim is to minimize $-C(u)$).

Since $f$ and $f^0$ are affine in $u$ and since $\Omega$ is compact, Theorem~\ref{propexistence} asserts that $\OSCP$ admits an optimal solution $q^*$, defined on $[0,12]_\T$ and associated with a sampled-data control $u^* \in \L^\infty_\TU( \TU,[0,1])$. We now apply Theorem~\ref{thmmain} in order to compute explicitly the values of $u^*$ at each controlling time $t \in [0,12)_\TU$. The nontrivial couple $(p,p^0)$ satisfies $p^0 = -1$ and $p(12)=0$ (see Remark~\ref{remarkconditionsterminales}). The adjoint vector $p \in \AC ([0,12]_\T,\R)$ is a solution of
\begin{equation}\label{exequationp}
p^\DD(t) = - u^{*\Phi}(t) p^\sigma(t) + u^{*\Phi}(t) -1 , \qquad \DD\text{-a.e. } t \in [0,12)_\T.
\end{equation}
Moreover, one has the following maximization conditions:
\begin{enumerate}
\item for $\DD_1$-a.e. $s \in [0,12)_\TU \cap \RD_1$,
\begin{equation}\label{exequationmax1}
u^*(s) \in \argmax_{z \in [0,1]} \,( z (p(s)-1)+1 ) q^*(s) .
\end{equation}
\item for every $r \in [0,12)_\TU \cap \RS_1$,
\begin{equation}\label{exequationmax2}
\int_{[r,\sigma_1(r))_\T} q^*(\t) (p^\sigma (\t) - 1) (y-u^*(r)) \; \DD \t \leq 0,
\end{equation}
for every $y \in \mathrm{D}^\Omega_{\mathrm{stab}}(u^*(r)) = \Omega = [0,1]$ (since $\Omega$ is convex, see Lemma~\ref{lemstable}).
\end{enumerate}
Since $q^*$ is a solution of~\eqref{exequationq} and satisfies $q^*(0)=1$, one can easily see that $q^*$ is monotonically increasing on $[0,12]_\T$ and then $q^*$ has positive values. From~\eqref{exequationp} and since $p(12)=0$, one can easily obtain that $p$ is monotonically decreasing on $[0,12]_\T$ and then $p$ has nonnegative values.

\subsubsection{Continuous-time setting $\T = \R^+$}\label{sectionexcontinu}

\paragraph*{Case $\TU = \R^+$ (permanent control).} Since $q^*$ has positive values, one can conclude from~\eqref{exequationp} and~\eqref{exequationmax1} that:
$$ u^*(t) = \left\lbrace \begin{array}{ll}
1 & \text{ for a.e. } t \text{ such that } p(t)-1 > 0, \\
0 & \text{ for a.e. } t \text{ such that } p(t)-1 < 0,
\end{array} \right. \; 
\dot{p}(t) = \left\lbrace \begin{array}{ll}
-p(t) & \text{ for a.e. } t \text{ such that } u^*(t)=1, \\
-1 & \text{ for a.e. } t \text{ such that } u^*(t)=0.
\end{array} \right. $$
Since $p(12)=0$, one can easily prove that the optimal (permanent) control $u$ is unique and given by $ u^*(t) = 1 $ for a.e. $t \in [0,11)$ and $ u^*(t) = 0$ for a.e. $t \in [11,12)$. The associated optimal consumption is $ C(u^*) = e^{11} \simeq 59874.142  $. We refer to~\cite[Exercice~2.3.3. p.82]{seie} or \cite[p.92]{JBHU} for more details.

\paragraph*{Case $\TU $ discrete (sampled-data control).} Solving the differential equations~\eqref{exequationq} and \eqref{exequationp} leads to $ q^*(t) = q^*(\sigma_1(r)) e^{u^*(r)(t-\sigma_1(r))}$ and
$$ p^\sigma (t) = p(t) = \left\lbrace \begin{array}{ll} 
p(\sigma_1(r)) e^{-u^*(r)(t-\sigma_1(r))} + \frac{u^*(r)-1}{u^*(r)} ( 1 - e^{-u^*(r)(t-\sigma_1(r))} ) & \text{if } u^*(r) \neq 0, \\
p(\sigma_1(r)) + \sigma_1 (r) - t & \text{if } u^*(r) = 0,
\end{array} \right. $$
for every $r \in [0,12)_{\TU}$ and every $t \in [r,\sigma_1(r)]$. Then, \eqref{exequationmax2} can be written as $ q^*( \sigma_1(r) ) (y-u^*(r)) \Gamma_r (u^*(r)) \leq 0$, where $\Gamma_r : [0,1] \rightarrow \R$ is the continuous function given by:
$$ \Gamma_r (x) = \left\lbrace \begin{array}{ll}
\dfrac{1}{x^2} \Big[e^{-\mu_1(r)x} -  \Big(  1+\mu_1(r)x(x(1-p(\sigma_1(r)))-1) \Big) \Big] & \text{if } x \neq 0, \\
\mu_1(r) \left( p(\sigma_1(r)) + \dfrac{\mu_1(r)}{2} -1 \right) & \text{if } x = 0.
\end{array} \right. $$
Since~\eqref{exequationmax2} holds true for every $y \in [0,1]$ and since $q^*$ has positive values, the following properties are satisfied for every $r \in [0,12)_{\TU}$:
\begin{itemize}
\item if $\Gamma_r$ has negative values on $(0,1]$, then $u^*(r) =0$;
\item if $\Gamma_r$ has positive values on $[0,1)$, then $u^*(r) =1$;
\item if $\Gamma_r (0) > 0$ and $\Gamma_r (1) < 0 $, then $u^*(r) \in (0,1)$ is a solution of the nonlinear equation $\Gamma_r (x) = 0$.
\end{itemize}
Note that $\Gamma_r$ depends only on $\mu_1(r)$ and $p(\sigma_1(r))$. As a consequence, the knowledge of the value $p(12)=0$ and the above properties allow to compute $u^*(r_0)$ where $r_0$ is the element of $[0,12)_{\TU}$ such that $\sigma_1 (r_0) =12$ (and $\mu_1 (r_0) = 12-r_0$). Then, the knowledge of $u^*(r_0)$ allows to compute $p(r_0)$ from~\eqref{exequationp}. Then, the knowledge of $p(r_0)$ and the above properties allow to compute $u^*(r_1)$ where $r_1$ is the element of $[0,12)_{\TU}$ such that $\sigma_1 (r_1) =r_0$ (and $\mu_1 (r_1) = r_0-r_1$), etc. Actually, this recursive procedure allows to compute $u^*(r)$ for every $r \in [0,12)_{\TU}$. Numerically, we obtain the following results:
\begin{center}
\begin{tabular}{|c|c|c|}
\hline
$\TU = \N$ & $u^*(t) = \left\lbrace \begin{array}{ll}
1 & \text{ if }  t \in \{ 0,1,\ldots, 10 \} \\
0 & \text{ if }  t =11
\end{array} \right.$ & $C(u^*)=e^{11} \simeq 59874.142$ \\ \hline
$\TU = 2\N$ & $u^*(t) = \left\lbrace \begin{array}{ll}
1 & \text{ if }  t \in \{ 0,2,4,6,8 \} \\
0 & \text{ if }  t =10
\end{array} \right.$ & $C(u^*)= 2 e^{10} \simeq 44052.932 $ \\ \hline
$\TU = 3\N$ & $u^*(t) = \left\lbrace \begin{array}{ll}
1 & \text{ if }  t \in \{ 0,3,6 \} \\
0.4536 & \text{ if }  t =9
\end{array} \right.$ & $C(u^*)  \simeq 28299.767 $ \\ \hline
$\TU = 4\N$ & $u^*(t) = \left\lbrace \begin{array}{ll}
1 & \text{ if }  t \in \{ 0,4 \} \\
0.6392 & \text{ if }  t =8
\end{array} \right.$ & $C(u^*)  \simeq 20013.885$ \\ \hline
$\TU = 9\N$ & $u^*(t) = \left\lbrace \begin{array}{ll}
1 & \text{ if }  t = 0 \\
0.4536 & \text{ if }  t =9
\end{array} \right.$ & $C(u^*) \simeq 28299.767$ \\ \hline
$\TU = 12\N$ & $u^*(0) \simeq 0.9083$ & $C(u^*) \simeq 5467.24$ \\ \hline
$\TU = 12\N \cup \{ 10 , 11.5 \} $ & $u^*(t) = \left\lbrace \begin{array}{ll}
1 & \text{ if }  t = 0 \\
0.9072 & \text{ if } t = 10 \\
0 & \text{ if }  t =11.5
\end{array} \right.$ & $C(u^*)  \simeq 49476.604 $ \\ \hline
\end{tabular}
\end{center}

\begin{remark}
In this example, Theorem~\ref{propexistence} states the existence of an optimal solution. In all cases above studied, the Pontryagin maximum principle proves that the optimal solution is moreover unique.
\end{remark}

\begin{remark}
The case $\TU = \N$ can be easily deduced from the permanent case $\TU = \R^+$ (see Remark~\ref{remarkinclusiondeuxT1}). Similarly, the case $\TU = 9\N$ can be deduced from the case $\TU = 3\N$.
\end{remark}

\begin{remark}
The case $\TU = 12\N$ corresponds to an optimal parameter problem (see Remark~\ref{remarkparam}).
\end{remark}

\begin{remark}\label{remarkcontinu2N}
For the needs of Section~\ref{nonextension}, let us give some details on the case $\TU = 2\N$. In that case, $q^*(t)=e^t$ on $[0,10]$ and $q^*(t)=e^{10}$ on $[10,12]$. Moreover, $p(t) = 2e^{10-t}$ on $[0,10]$ and $p(t)=12-t$ on $[10,12]$. Hence, for every $t \in [10,11)$, $\argmax_{z \in \Omega} H(t,q^*(t),p(t),p^0,z) = \{ 1 \}$. The Hamiltonian $ t \mapsto H(t,q^*(t),p(t),p^0,u^{*\Phi}(t)) $ is equal to $t \mapsto 2e^{10}$ a.e. on $[0,10)$ and is equal to $t \mapsto e^{10}$ a.e. on $[10,12)$. Finally, the maximized Hamiltonian $ t \mapsto \max_{z \in \Omega} H(t,q^*(t),p^\sigma(t),p^0,z)$ is equal to $t \mapsto 2e^{10}$ on $[0,10)$, to $t \mapsto (12-t) e^{10}$ on $[10,11)$ and to $t \mapsto e^{10}$ on $[11,12)$.
\end{remark}

\paragraph*{Case $\TU$ hybrid (sampled-data control).} In this paragraph, we study the hybrid case $\TU = [0,6] \cup \{ 10 \} \cup [11.5,+\infty)$. Similarly to the permanent case $\TU = \R^+$, one can easily conclude from~\eqref{exequationmax1} that:
$$ u^*(t) = \left\lbrace \begin{array}{ll}
1 & \text{ for a.e. } t \in [0,6) \cup [11.5,12) \text{ such that } p(t)-1 > 0, \\
0 & \text{ for a.e. } t \in [0,6) \cup [11.5,12) \text{ such that } p(t)-1 < 0.
\end{array} \right. $$
Since $p(12)=0$, one can easily prove that $u^*(t) = 0$ for a.e. $t \in [11.5,12)$, and then $p(11.5)=0.5$. From the knowledge of $p(11.5)=0.5$ and using similar arguments than in the previous paragraph, one can compute $u^*(10) \simeq 0.9072$. From~\eqref{exequationp}, it gives $p(10) \simeq 2.2462$. From the knowledge of $p(10) \simeq 2.2462 $, one can compute $u^*(6)=1$. From~\eqref{exequationp}, it gives $p(6) \simeq 122.6402$. Since $p$ is monotonically decreasing, we conclude that $p(t)-1 > 0$ for every $t \in [0,6)$. We finally conclude that the optimal sampled-data control $u^*$ is unique and is given by $u^*(t) = 1$ for a.e. $t \in [0,6)$, $u^*(6)=1$, $u^*(10) \simeq 0.9072$ and $u^*(t) = 0$ for a.e. $t \in [11.5,12)$. The associated optimal consumption is $ C(u^*) \simeq 49476.604 $.

\begin{remark}
The discrete case $\TU = 12\N \cup \{ 10 , 11.5 \}$ can now be seen as a consequence of the hybrid case $\TU = [0,6] \cup \{ 10 \} \cup [11.5,+\infty)$ (see Remark~\ref{remarkinclusiondeuxT1}).
\end{remark}

\subsubsection{Discrete-time setting $\T = \N$}\label{sectionexdiscret}
In the discrete-time setting $\T = \N$, similarly to the continuous-time one, we can prove that \eqref{exequationmax2} can be written as $ q^*(r)(y-u^*(r)) \Lambda_r (u^*(r)) \leq 0$, where $\Lambda_r : [0,1] \rightarrow \R$ is the continuous function given by:
$$ \Lambda_r (x) = \left\lbrace \begin{array}{ll}
\dfrac{1}{x^2}  \Big[ 1- (1+x)^{\mu_1(r)-1} \Big(  1+\mu_1(r)x(x(1-p(\sigma_1(r)))-1)+x \Big)  \Big] & \text{if } x \neq 0, \\
\mu_1(r) \left( p(\sigma_1(r)) + \dfrac{\mu_1(r)-3}{2} \right) & \text{if } x = 0.
\end{array} \right. $$
The same recursive procedure (than in the previous section) allows to compute $u^*(r)$ for every $r \in [0,12)_{\TU}$. Numerically, we obtain the following results:
\begin{center}
\begin{tabular}{|c|c|c|}
\hline
$\TU = 2\N$ & $u^*(t) = \left\lbrace \begin{array}{ll}
1 & \text{ if }  t \in \{ 0,2,4,6,8 \} \\
0 & \text{ if }  t =10
\end{array} \right.$ & $C(u^*)= 2^{11} = 2048 $ \\ \hline
$\TU = 3\N$ & $u^*(t) = \left\lbrace \begin{array}{ll}
1 & \text{ if }  t \in \{ 0,3,6 \} \\
0 & \text{ if }  t =9
\end{array} \right.$ & $C(u^*) = 3 \cdot 2^9 = 1536 $ \\ \hline
$\TU = 4\N$ & $u^*(t) = \left\lbrace \begin{array}{ll}
1 & \text{ if }  t \in \{ 0,4 \} \\
0.2886 & \text{ if }  t =8
\end{array} \right.$ & $C(u^*)  \simeq 1108.882$ \\ \hline
$\TU = 6\N$ & $u^*(t) = \left\lbrace \begin{array}{ll}
1 & \text{ if }  t =0 \\
0.5725 & \text{ if }  t =6
\end{array} \right.$ & $C(u^*) \simeq 674.787 $ \\ \hline
$\TU = 12\N$ & $u^*(0) \simeq 0.8145$ & $C(u^*) \simeq 159.647$ \\ \hline
\end{tabular}
\end{center}

\begin{remark}\label{remarknonunique}
In the permanent discrete case $\T = \TU = \N$, the optimal control is not unique. Indeed, every control $u^*$ satisfying $u^*(t)=1$ for every $t \in \{ 0,1,\ldots,9 \}$, $u^*(10) \in [0,1]$ and $u^*(11)=0$ is optimal. Let us give some details on the proof and let us note that the PMP does not provide any constraint on the value $u^*(10)$ in that case.

With $r=11$ and $p(12)=0$, $\Lambda_r$ has negative values on $(0,1]$ (constantly equal to $-1$), then $u^*(11) = 0$ from \eqref{exequationmax2}. From~\eqref{exequationp}, we compute $p(11)=1$. With $r=10$ and $p(11)=1$, $\Lambda_r$ is constantly equal to $0$ and then \eqref{exequationmax2} does not provide any constraint on the value $u^*(10)$. Actually, it does not matter since $p(10)$ can still be computed from~\eqref{exequationp} ($p(10)=2$) and the recursive procedure can be pursued. We obtain $u^*(t)=1$ for every $t \in \{ 0,1,\ldots,9 \}$. From~\eqref{exC}, we have
$$ C(u^*) = (1-u^*(10)) q^*(10) + q^*(11) = (1-u^*(10)) \cdot 2^{10} + (1+u^*(10)) q^*(10) = 2 \cdot 2^{10} = 2^{11}. $$
Finally, if $u^*(11)=0$ and $u^*(t)=1$ for every $t \in \{ 0,1,\ldots,9 \}$, the value of $u^*(10) \in [0,1]$ does not influence $C(u^*)$. This concludes the proof and the remark.
\end{remark}

\begin{remark}
The case $\TU = 2\N$ can be seen as a consequence of the permanent case $\TU = \N$ (see Remarks~\ref{remarkinclusiondeuxT1} and \ref{remarknonunique}).
\end{remark}

\begin{remark}
The case $\TU = 12\N$ corresponds to an optimal parameter problem (see Remark~\ref{remarkparam}).
\end{remark}

\subsection{Non-extension of several classical properties}\label{nonextension}
In this section, we recall some basic properties that occur in classical optimal control theory in the continuous-time setting and with a permament control, that is, with $\T = \TU = \R^+$. Our aim is to discuss their extension (or their failure) to the general time scale setting and to the nonpermament control case. We will provide several counterexamples in the discrete-time setting with a permanent control ($\T = \TU = \N$) and in the continuous-time setting with a nonpermanent control ($\TU  \varsubsetneq \T = \R^+$). In the following paragraphs, except the last one, the final time $b \in \T$ can be fixed or not.

\paragraph*{Pointwise maximization condition of the Hamiltonian.}
In the case $\T = \TU = \R^+$, it is well known that an optimal (permanent) control $u^*$ satisfies the maximization condition $ u^*(t) \in \argmax_{z \in \Omega} H(t,q^*(t),p(t),p^0,z)$ for a.e. $t \in [0,b^*)$. We refer to \cite[Example~7]{bour11} for a counterexample showing the failure of this maximization condition in the case $\T = \TU = \N$. We refer to Remark~\ref{remarkcontinu2N} for a counterexample in the case $\TU  \varsubsetneq \T = \R^+$.

%\begin{example}
%In the spirit of \cite[Examples~10.1-10.4 p.59-62]{bolt}, consider $\OSCPFIC$ with $\T = \TU = \N$, $m=n=k=1$, $a=0$, $b=2$ (fixed final time), $q_a = 0$, $\Omega = [0,1]$, $ G(q_2)=q_2$ and $\S' = \R$ (free final value $q(2)$), $f(t,q,u) = u-q$ and $f^0(t,q,u) = 2q^2 - u^2$. Any solution of the resulting controlled system is such that $q(0)=0$, $q(1)=u(0)$ and $q(2)=u(1)$, and its cost is equal to $u(0)^2-u(1)^2$. It follows that the optimal control is unique and it is given by $u(0)=0$ and $u(1)=1$. Consider $(\psi,p^0)$ the nontrivial couple and $p$ the adjoint vector whose existences are asserted by Theorem~\ref{thmmainFIC}. Since the final value $q(2)$ is free, it follows that $\psi =0$, $p(2)=0$ and we can assume that $p^0 = -1$. From the extremal equations we infer that $p(1)=p(0)=0$. Finally, note that the maximization condition $u(0) \in \argmax_{z \in [0,1]} H(0,q(0),p(1),p^0,z)$ is not satisfied since $u(0)=0$ and $\argmax_{z \in [0,1]} z^2 = \{ 1 \}$.
%\end{example}
%
%\begin{example}
%Consider the framework of Section~\ref{sectionexcontinu} with $\TU = 2\N$. From Proposition~\ref{propcontinuediscretexample}, the optimal sampled-data control $u$ is given by $u(t)=1$ for every $t \in \{0,2,4,6,8 \}$ and $u(10)=0$. From \eqref{exequationp}, $p(t) = 12-t$ for every $t \in [10,12]$. As a consequence, the maximization condition $u^\Phi(t) \in \argmax_{z \in [0,1]} H(t,q(t),p(t),p^0,z)$ fails for almost every $t \in [10,11)$ since $u^\Phi(t) =0$ and $\argmax_{z \in [0,1]} q(t) ( z (p(t)-1)+1 ) = \{ 1 \}$ since $q(t) > 0$ and $p(t)-1 > 0$.
%\end{example}

\paragraph*{Continuity of the Hamiltonian.}
In the case $\T = \TU = \R^+$, it is well known that the Hamiltonian function $ t \mapsto H(t,q^*(t),p(t),p^0,u^*(t)) $ coincides almost everywhere on $[0,b^*]$ with the continuous function $ t \mapsto \max_{z \in \Omega} H(t,q^*(t),p(t),p^0,z)$. Remark~\ref{remarkcontinu2N} provides a counterexample showing the failure of this regularity property in the case $\TU  \varsubsetneq \T = \R^+$.

\begin{remark}
Nevertheless, in the case of a free final time, under the assumptions of the fourth item of Theorem~\ref{thmmain}, the Hamiltonian function $ t \mapsto H(t,q^*(t),p(t),p^0,u^{*\Phi}(t)) $ coincides almost everywhere, in some neighborhood of $b^*$, with a continuous function.
\end{remark}

%It is not a surprise since we have already shown that the pointwise maximization condition of the Hamiltonian fails in general in this framework.

%
%\begin{example}\label{example147}
%Consider the framework of Section~\ref{sectionexcontinu} with $\TU = 2\N$. From Proposition~\ref{propcontinuediscretexample}, the optimal sampled-data control $u$ is given by $u(t)=1$ for every $t \in \{0,2,4,6,8 \}$ and $u(10)=0$. From \eqref{exequationq}, $q(t) = e^t$ for every $t \in [0,10]$ and $q(t) = e^{10}$ for every $t \in [10,12]$. From \eqref{exequationp}, $p(t) = 12-t$ for every $t \in [10,12]$ and $p(t) = 2 e^{10-t}$ for every $t \in [0,10]$. Thus, the Hamiltonian $ t \mapsto H(t,q(t),p(t),p^0,u^\Phi(t)) $ is equal to $t \mapsto 2e^{10}$ almost everywhere on $[0,10)$ and is equal to $t \mapsto e^{10}$ almost everywhere on $[10,12)$. In that case, a discontinuity emerges at $t=10$.
%\end{example}
%
%Note that the discrete-time setting $\T = \N$ is not interesting in this section.

\paragraph*{The autonomous case.}
In the case $\T = \TU = \R^+$, if the Hamiltonian $H$ is autonomous (that is, does not depend on $t$), it is well known that the function $t \mapsto H(q^*(t),p(t),p^0,u^*(t))$ is almost everywhere constant on $[0,b^*]$, this constant being equal to the maximized Hamiltonian. We refer to \cite[Example~8]{bour11} for a counterexample showing the failure of this constantness property in the case $\T = \TU = \N$, and we refer to Remark~\ref{remarkcontinu2N} for a counterexample in the case $\TU  \varsubsetneq \T = \R^+$ (and there, even the maximized autonomous Hamiltonian is not constant).

\paragraph*{Saturated constraint set $\Omega$ for Hamiltonian affine in $u$.}
In this paragraph, we assume that $\Omega$ is convex. In the case $\T = \TU = \R^+$, if the Hamiltonian is \textit{affine in $u$}, that is, if it can be written as
$$ H(t,q,p,p^0,u) = \langle H_1 (t,q,p,p^0) , u \rangle_{\R^m} + H_2 (t,q,p,p^0), $$
one can easily prove that $H_1 (t,q^*(t),p(t),p^0) \in \O_\Omega [u^*(t)]$ for almost every $t \in [0,b^*)$. It follows that an optimal (permament) control $u^*$ must take its values at the boundary of $\Omega$ (saturation of the constraints) for almost every $t \in [0,b^*)$ such that $H_1 (t,q^*(t),p(t),p^0) \neq 0_{\R^m}$.

\begin{remark}
This classical property can be extended to the case $\T = \TU =\N$. Indeed, in that case, the nonpositive gradient condition is given by
$$ \left\langle \frac{\partial H}{\partial u} (t,q^*(t),p(t+1),p^0,u^*(t)) , y -u^*(t) \right\rangle_{\R^m} = \langle H_1 (t,q^*(t),p(t+1),p^0) , y - u^*(t) \rangle_{\R^m} \leq 0, $$
for every $y \in \Omega$, that is, $H_1 (t,q^*(t),p(t+1),p^0) \in \O_\Omega [u^*(t)]$.
\end{remark}

\begin{remark}
Remark~\ref{remarknonunique} provides an interesting example in the case $\T = \TU = \N$. Indeed, in that case, the control defined by $u^*(t) =0$ for every $t \in \{ 0, \ldots ,9 \}$, $u^*(10) = 1/2$ and $u^*(11)=1$ is an optimal (permanent) control. However, it does not saturate the constraint set $\Omega$ at $t=10$. It is not a surprise since, in that case, $H_1(t,q^*(t),p(t+1),p^0) = 0$ at $t=10$.

Note that, in the case $\TU = 4\N$, Section~\ref{sectionexdiscret} provides a counterexample showing the failure of this classical property in the case $\TU \varsubsetneq \T = \N$. Similarly, Section~\ref{sectionexcontinu} in the case $\TU = 3\N$ provides a counterexample in the case $\TU \varsubsetneq \T = \R^+$.
\end{remark}

\begin{remark}
Figure \ref{reffig} represents the values of the optimal sampled-data control $u^*$ of Section~\ref{sectionexcontinu}, in the case $\TU = 12\N \cup \{ \lambda \}$, where $\lambda$ is a parameter evolving in $(0,12)$. In that case, $u^*(0)$ (resp., $u^*(\lambda)$) saturates the constraint set $\Omega = [0,1]$ approximately for $\lambda \in (0,11.9245)$ (resp., for $\lambda \in (9.9866,12)$).
\end{remark}

\begin{figure}[h]
\begin{center}
\includegraphics[scale=0.25]{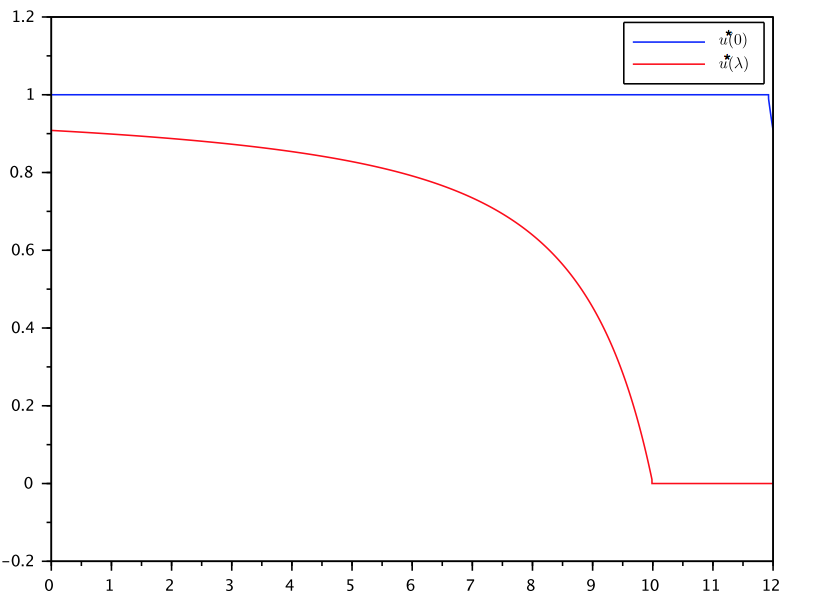}
\end{center}
\caption{Optimal sampled-data control $u^*$.}\label{reffig}
\end{figure}

\paragraph*{Vanishing of the maximized Hamiltonian at the final time.}
In this paragraph, we assume that the final time is left free.
In the case $\T = \TU = \R^+$, under the assumptions of the fourth item of Theorem~\ref{thmmain}, it is well known that the maximized Hamiltonian vanishes at $t=b^*$ (see Remark~\ref{remencompass}). In Theorem~\ref{thmmain}, we have established that this property is still valid in the time scale setting under some appropriate conditions, the main one being that $b^*$ must belong to the interior of $\T$. In the discrete case $\T = \N$, the interior of $\T$ is empty and then the latter assumption is never satisfied. The Hamiltonian at the final time may then not vanish, and we refer to \cite[Example~8]{bour11} for a counterexample with $\T = \TU = \N$.

%\begin{example}
%We refer to \cite[Example~8]{bour11}. Consider $\OSCPFIC$ with $\T = \TU = \N$, $m=n=k=1$, $a=0$, $b \in \N^*$ (free final time), $q_a = 0$, $\Omega = [0,1]$, $ G(q_2)=q_2$ and $\S' = \{ 3 / 2 \}$ (fixed final value $q(b)$), $f(t,q,u) = u$ and $f^0(t,q,u) = 1$. The corresponding optimal (permament) control problem is the problem of steering the discrete-time controlled system $q(t+1)=q(t)+u(t)$ from $q(0)=0$ to $q(b)=3/2$ in minimal time, with the constraint $0 \leq u(t) \leq 1$. It is clear that the minimal time is $b=2$ and any control $u$, satisfying $u(0)$, $u(1) \in [0,1]$ and $u(0)+u(1)=3/2$, is optimal. In the sequel, we consider the optimal control given by $u(0) = 1/2$ and $u(1)=1$. Consider $(\psi,p^0)$ the nontrivial couple and $p$ the adjoint vector whose existences are asserted by Theorem~\ref{thmmainFIC}. From the nonpositive gradient condition, we infer that $p(1) =0$ and since the Hamiltonian is independent of $q$, we conclude that $p$ is constantly equal to $0$. Since $p(2) =0$ and from the transversality conditions, we conclude that $\psi =0$ and then we can assume that $p^0 = -1$. Finally, the maximized Hamiltonian is constantly equal to $p^0 = -1$ and then does not vanish at free final time $b=2$.
%\end{example}

\section{Proofs}\label{annexe}

The section is structured as follows. Subsections~\ref{annexe1}, \ref{sectionvariations} and \ref{annexe3} are devoted to the proof of Theorem~\ref{thmmain}. In Subsection~\ref{annexe1}, we recall some known Cauchy-Lipschitz results on time scales and we establish some preliminary results on the relations between $u$ and $u^\Phi$. In Subsection~\ref{sectionvariations}, we introduce appropriate needle-like variations of the control. Finally, in Subsection~\ref{annexe3}, we apply the Ekeland variational principle to an adequate functional in an appropriate complete metric space, and then we prove the PMP. In Subsection~\ref{proofexistence}  (that the reader can read independently of the rest of Section~\ref{annexe}), we detail the proof of Theorem~\ref{propexistence}.

\subsection{Preliminaries}\label{annexe1}
\subsubsection{Relations between $u$ and $u^\Phi$}\label{annexe1uetuphi}
We start with a lemma whose arguments of proof will be used several times.

\begin{lemma}\label{lemnumber1}
Let $m \in \N^*$ and let $c < d$ be two elements of $\T$ with $c \in \TU$. Let $u$, $v : [c,d)_{\TU} \rightarrow \R^m$ be two functions. Then, $u = v$ $\DD_1$-a.e. on $[c,d)_{\TU}$ if and only if $u^\Phi = v^\Phi$ $\DD$-a.e. on $[c,d)_{\T}$.
\end{lemma}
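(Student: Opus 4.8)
The plan is to reduce the statement to a comparison of the two exceptional sets under the pullback by $\Phi$. I set $N_1 = \{ s \in [c,d)_{\TU} \mid u(s) \neq v(s) \}$ and $N = \{ t \in [c,d)_\T \mid u^\Phi(t) \neq v^\Phi(t) \}$. Since $u^\Phi = u \circ \Phi$ and $v^\Phi = v \circ \Phi$, one has immediately $N = \Phi^{-1}(N_1)$, where $\Phi$ is viewed as a map from $[c,d)_\T$ onto $[c,d)_{\TU}$; here the hypothesis $c \in \TU$ is used to guarantee $c \leq \Phi(t) \leq t < d$, so that $\Phi$ maps $[c,d)_\T$ into $[c,d)_{\TU}$, surjectivity being clear from $\Phi(s)=s$ for $s \in \TU$. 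The assertion ``$u=v$ $\DD_1$-a.e.'' (resp. ``$u^\Phi=v^\Phi$ $\DD$-a.e.'') means exactly that $N_1$ (resp. $N$) is contained in a $\mu_{\DD_1}$-null (resp. $\mu_\DD$-null) set; by completeness of the Carath\'eodory extensions this is equivalent to $N_1$ (resp. $N$) being measurable and null. So it suffices to prove that $N_1$ is $\mu_{\DD_1}$-null if and only if $\Phi^{-1}(N_1)$ is $\mu_\DD$-null.

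The heart of the matter is the computation of $\Phi^{-1}$ on singletons. For $s \in [c,d)_{\TU} \cap \RD_1$, the property that $\Phi(t) \in \RD_1$ forces $t \in \TU$ gives $\Phi^{-1}(\{s\}) = \{s\}$, and since $\RD_1 \subset \RD$ such an $s$ satisfies $\mu_\DD(\{s\}) = \mu(s) = 0$. For $s \in [c,d)_{\TU} \cap \RS_1$, the description $\Phi(t) < t < \sigma_1(\Phi(t))$ of points of $\T \bs \TU$ yields $\Phi^{-1}(\{s\}) \cap [c,d)_\T = [s, \min(\sigma_1(s),d))_\T$, an interval of positive $\mu_\DD$-measure since $s<d$, matching $\mu_{\DD_1}(\{s\}) = \mu_1(s) > 0$. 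I will also record the elementary fact, read off from $\mu_{\DD_1}(A) = \mu_L(A) + \sum_{r \in A \cap \RS_1} \mu_1(r)$, that a set is $\mu_{\DD_1}$-null precisely when it is $\mu_L$-null and disjoint from $\RS_1$, and likewise for $\mu_\DD$, $\mu_L$, $\RS$.

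For the forward implication, if $N_1 \subset A_1'$ with $\mu_{\DD_1}(A_1')=0$, then $A_1' \subset \RD_1$ and $\mu_L(A_1')=0$; since every point of $\RD_1$ is fixed by $\Phi$, we get $\Phi^{-1}(A_1') = A_1' \subset \RD$, whence $\mu_\DD(A_1') = \mu_L(A_1') = 0$ and $N = \Phi^{-1}(N_1) \subset A_1'$ is $\mu_\DD$-null. For the converse, suppose $N \subset A'$ with $\mu_\DD(A')=0$. The key step is that $N_1$ cannot meet $\RS_1$: if $r \in N_1 \cap \RS_1$, then $[r,\min(\sigma_1(r),d))_\T = \Phi^{-1}(\{r\}) \cap [c,d)_\T \subset N \subset A'$ would force $\mu_\DD(A') \geq \min(\sigma_1(r),d)-r > 0$, a contradiction. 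Hence $N_1 \subset \RD_1$, every point of $N_1$ is fixed by $\Phi$, so $N_1 = N \subset A'$; then $B := A' \cap \RD_1$ is $\mu_{\DD_1}$-measurable with $B \cap \RS_1 = \emptyset$ and $\mu_L(B) \leq \mu_L(A') = 0$, so $\mu_{\DD_1}(B)=0$ and $N_1 \subset B$ is $\mu_{\DD_1}$-null, i.e. $u=v$ $\DD_1$-a.e. The one delicate point is precisely this last implication, where scattered controlling times must be excluded from the exceptional set by exploiting that $\Phi$ inflates each of them into an interval of positive $\DD$-measure.
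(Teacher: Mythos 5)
Your proof is correct and follows essentially the same route as the paper's: the forward direction rests on the fact that a $\mu_{\DD_1}$-null set lies in $\RD_1$ and is therefore fixed pointwise by $\Phi$ (hence equal to its own preimage and $\mu_\DD$-null), and the converse rests on the fact that a right-scattered point $r \in \RS_1$ in the exceptional set would force the interval $[r,\min(\sigma_1(r),d))_\T$, of positive $\mu_\DD$-measure, into the exceptional set of $u^\Phi$ --- exactly the contradiction the paper derives. Your packaging via $N=\Phi^{-1}(N_1)$, the singleton-preimage computations, and the explicit null superset $B=A'\cap\RD_1$ only streamlines the bookkeeping (and handles the measurability of the exceptional sets a bit more carefully), without changing the underlying argument.
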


\begin{proof}
Without loss of generality, we can assume that $v$ is constant equal to $0_{\R^m}$. Let us define $ A = \{ t \in [c,d)_{\TU} \,| \,u(t) \neq 0_{\R^m} \} $ and $ B = \{ t \in [c,d)_{\T} \,| \,u^\Phi(t) \neq 0_{\R^m} \} $. Since $\Phi (t) = t$ for every $t \in \TU$, the inclusion $A \subset B$ holds. Firstly, let us assume that $\mu_{\DD_1}(A) = 0$. Hence, $A \subset \RD_1$ and $\mu_{\DD_1}(A) = \mu_L (A) = 0$. Since $A \subset \RD_1 \subset \RD$, we deduce that $\mu_\DD (A) = \mu_L (A) = 0$. On the other hand, for every $t \in B$, $\Phi(t) \in A \subset \RD_1$, then $t \in \TU$ and $t = \Phi(t) \in A$. We conclude that $A = B$ and $\mu_\Delta (B) = 0$. Secondly, let us assume that $\mu_\Delta (B) = 0$ and $\mu_{\DD_1}(A) > 0$. Since $A \subset B$, we deduce that $\mu_\Delta (A) = 0$, $A \subset \RD$ and $\mu_\Delta (A) = \mu_L (A) = 0$. Since $\mu_L (A) = 0$ and $\mu_{\DD_1}(A) > 0$, we conclude that there exists $t_0 \in \RS_1 \cap A \subset \RS_1 \cap \RD$. Consequently, $u^\Phi$ is constant (different of $0_{\R^m}$) on $[t_0, \min ( \sigma_1(t_0),d))_\T$. As a consequence, $\mu_\Delta (B) \geq \min ( \sigma_1(t_0),d) - t_0 > 0$. This raises a contradiction.
\end{proof}

\begin{proposition}\label{propnumber1}
Let $m \in \N^*$ and let $c < d$ be two elements of $\T$ with $c \in \TU$.
\begin{enumerate}
\item For every $u \in \L^1_{\TU} ([c,d)_{\TU},\R^m)$, we have $u^\Phi \in \L^1_{\T}( [c,d)_{\T},\R^m)$ and
\begin{equation}\label{inequalitynotequality}
\Vert u^\Phi \Vert_{\L^1_{\T}( [c,d)_{\T},\R^m)} \leq \Vert u \Vert_{\L^1_{\TU} ([c,d)_{\TU},\R^m)}.
\end{equation}
\item For every $u \in \L^\infty_{\TU} ([c,d)_{\TU},\R^m)$, we have $u^\Phi \in \L^\infty_{\T}( [c,d)_{\T},\R^m)$ and
$$ \Vert u^\Phi \Vert_{\L^\infty_{\T}( [c,d)_{\T},\R^m)} = \Vert u \Vert_{\L^\infty_{\TU} ([c,d)_{\TU},\R^m)}. $$
\end{enumerate}
\end{proposition}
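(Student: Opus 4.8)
The plan is to reduce both statements to ordinary Lebesgue integration via the extension procedure $q \mapsto \widetilde{q}$ recalled in Section~\ref{secprelimtimescale}, and then to identify the extensions of $u$ and of $u^\Phi$. To this end I first introduce the real-valued map $\widehat{\Phi}(x) = \sup\{s \in \TU \mid s \leq x\}$, defined for every $x \geq c$ (the supremum is attained since $\TU$ is closed and $c \in \TU$), which prolongs $\Phi$ off $\T$ and satisfies $\widehat{\Phi}\vert_\T = \Phi$. Writing $A = [c,d)_\T$ and $B = [c,d)_{\TU}$, and using that $d \in \T$, one checks that $\widetilde{A} = [c,d)$, whereas $\widetilde{B} = [c,d)$ when $d \in \TU$ and $\widetilde{B} = [c,\sigma_1(\rho_1^*))$ with $\rho_1^* = \max\{s \in \TU \mid s < d\}$ when $d \notin \TU$; in the latter case $\widetilde{B} \supsetneq [c,d)$, the last sampling interval overshooting $d$. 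This asymmetry --- present precisely because $d$ need not belong to $\TU$ --- is what I expect to be the only delicate point, and it is exactly what forces an inequality rather than an equality in the first item.

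The key step is then the pointwise identity $\widetilde{u^\Phi} = \widetilde{u} = u \circ \widehat{\Phi}$, $\mu_L$-a.e.\ on $[c,d)$. For $\widetilde{u} = u\circ\widehat{\Phi}$ one distinguishes $t \in B$ (where $\widehat{\Phi}(t)=t$) from $t$ lying in a $\TU$-gap $(r,\sigma_1(r))$ (where $\widehat{\Phi}(t)=r$ and $\widetilde{u}(t)=u(r)$). For $\widetilde{u^\Phi} = u\circ\widehat{\Phi}$ one distinguishes $t \in A$ (where it equals $u(\Phi(t))=u(\widehat{\Phi}(t))$) from $t$ in a $\T$-gap $(\rho,\sigma(\rho))$; on such a gap $\widetilde{u^\Phi}(t)=u^\Phi(\rho)=u(\widehat{\Phi}(\rho))$, and since $(\rho,\sigma(\rho))$ contains no point of $\T$, hence none of $\TU$, one has $\widehat{\Phi}(t)=\widehat{\Phi}(\rho)$, giving the claim. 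The measure-zero set where $u$ is undefined pulls back, via the remark $\Phi^{-1}(\RD_1)\subseteq\TU$ and $\Phi\vert_\TU=\mathrm{id}$, to a $\mu_\DD$-null subset of $A$, so everything is meant $\DD$-a.e.

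With this identity in hand, measurability in both items is immediate: $u$ is $\mu_{\DD_1}$-measurable on $B$ iff $\widetilde{u}$ is $\mu_L$-measurable on $\widetilde{B}$ (\cite[Proposition~4.1]{caba}), hence $\widetilde{u^\Phi}=\widetilde{u}\vert_{[c,d)}$ is $\mu_L$-measurable on $\widetilde{A}=[c,d)$, i.e.\ $u^\Phi$ is $\mu_\DD$-measurable on $A$. For the first item I then use $\int_A q\,\DD\tau = \int_{\widetilde{A}} \widetilde{q}\,d\tau$ together with $\widetilde{\Vert u^\Phi\Vert}=\Vert \widetilde{u^\Phi}\Vert$ to write
\[
\Vert u^\Phi\Vert_{\L^1_\T(A)} = \int_{[c,d)} \Vert \widetilde{u}\Vert_{\R^m}\,d\tau \leq \int_{\widetilde{B}} \Vert \widetilde{u}\Vert_{\R^m}\,d\tau = \Vert u\Vert_{\L^1_{\TU}(B)},
\]
the inequality coming solely from $[c,d)=\widetilde{A}\subseteq\widetilde{B}$ and nonnegativity of the integrand; this also proves finiteness, hence $u^\Phi\in\L^1_\T(A)$.

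For the second item, the same extension identifies $\Vert u^\Phi\Vert_{\L^\infty_\T(A)}$ with the $\mu_L$-essential supremum of $\widetilde{u}$ over $[c,d)$ and $\Vert u\Vert_{\L^\infty_{\TU}(B)}$ with that over $\widetilde{B}$. When $d\in\TU$ these sets coincide and equality is clear. When $d\notin\TU$, on the extra piece $[d,\sigma_1(\rho_1^*))$ the function $\widetilde{u}$ is constant equal to $u(\rho_1^*)$; but $\widetilde{u}=u(\rho_1^*)$ already on $[\rho_1^*,d)$, a set of positive $\mu_L$-measure $d-\rho_1^*>0$, so $\Vert u(\rho_1^*)\Vert_{\R^m}$ does not exceed the essential supremum over $[c,d)$ and the overshoot leaves it unchanged. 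This yields the claimed equality. The only auxiliary fact I invoke beyond the excerpt is the $\L^\infty$ analogue of the extension formula, $\Vert q\Vert_{\L^\infty_\T(A)}=\Vert \widetilde{q}\Vert_{\L^\infty(\widetilde{A},\mu_L)}$, which follows from $\mu_\DD(\{r\})=\mu(r)=\mu_L((r,\sigma(r)))$ for $r\in\RS$ and from right-dense points being simultaneously $\mu_\DD$- and $\mu_L$-negligible.
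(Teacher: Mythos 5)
Your proof is correct, and for the measurability and $\L^1$ parts it is essentially the paper's own argument: the same reduction to ordinary Lebesgue integration via the extension $q\mapsto\tilde q$ of \cite[Proposition 4.1]{caba}, the same three-case verification that $\widetilde{u^\Phi}$ coincides with $\tilde u$ restricted to $[c,d)$ (your $\widehat\Phi$ is just a convenient bookkeeping device for this), and the same observation that the overshoot of the last sampling interval past $d$ — the paper writes it explicitly as the term $(d'-d)\,u(\Phi(d))\geq 0$, you phrase it as the inclusion $[c,d)=\widetilde A\subseteq\widetilde B$ — is precisely what makes \eqref{inequalitynotequality} an inequality. The one place where you genuinely diverge is the second item. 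The paper does not go through the extension at all there: it transfers the bound $\Vert u\Vert_{\R^m}\leq M$ directly between the two time scales, re-running the argument of Lemma~\ref{lemnumber1} (a right-scattered point of $\TU$ that is right-dense in $\T$ spreads its value over a set of positive $\mu_\DD$-measure, so an essential bound violated on $\TU$ would be violated on a non-null subset of $\T$). You instead stay inside the Lebesgue picture, which obliges you to supply an $\L^\infty$ analogue of the Cabada--Vivero formula, namely $\Vert q\Vert_{\L^\infty_\T(A,\R^m)}=\supess_{\tilde A}\Vert\tilde q\Vert_{\R^m}$, and then to check that the overshoot interval $[d,\sigma_1(\rho_1^*))$ does not raise the essential supremum because $\tilde u$ already equals $u(\rho_1^*)$ on $[\rho_1^*,d)$, a set of positive Lebesgue measure. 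Both mechanisms rest on the same underlying phenomenon; yours localizes it at the endpoint and makes explicit why the $\L^\infty$ norm, unlike the $\L^1$ norm, is insensitive to the overshoot, while the paper's bound-transfer is shorter once Lemma~\ref{lemnumber1} is available. Your auxiliary $\L^\infty$ extension formula is correctly justified (atoms at points of $\RS$ match the Lebesgue measure of the corresponding gaps, and null subsets of $\RD$ are null for both measures), so no gap remains.
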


\begin{proof}
Let $u \in \L^1_{\TU} ([c,d)_{\TU},\R^m)$. We first treat the $\mu_{\DD}$-measurability of $u^\Phi$. From Lemma~\ref{lemnumber1}, we can consider that $u$ is defined everywhere on $[c,d)_{\TU}$ and is $\mu_{\DD_1}$-measurable on $[c,d)_{\TU}$. Let us prove that $u^\Phi$ is $\mu_{\DD}$-measurable on $[c,d)_{\T}$. We introduce $d' = \inf \{ s \in \TU \,| \,s \geq d \} \in \TU$. Note that $d' \geq d$, with equality if and only if $d \in \TU$. From~\cite{caba}, $u$ is $\mu_{\DD_1}$-measurable on $[c,d)_{\TU}$ if and only if the extension $\tilde{u}$ defined on $[c,d')$ by
$$ \tilde{u}(t) = \left\lbrace \begin{array}{lcl}
u(t) & \text{if} & t \in \TU, \\
u(r) & \text{if} & t \in (r,\sigma_1(r)), \,\text{where} \,r \in \RS_1,
\end{array} \right. $$
is $\mu_L$-measurable on $[c,d')$. By hypothesis, $\tilde{u}$ is $\mu_L$-measurable on $[c,d')$ and consequently, the restriction $\tilde{u}_{|_{[c,d)}}$ is $\mu_L$-measurable on $[c,d)$. Still from~\cite{caba}, $u^\Phi$ is $\mu_{\DD}$-measurable on $[c,d)_{\T}$ if and only if the extension $\widetilde{u^\Phi}$ defined on $[c,d)$ by
$$ \widetilde{u^\Phi}(t) = \left\lbrace \begin{array}{lcl}
u^\Phi(t) & \text{if} & t \in \T, \\
u^\Phi(r) & \text{if} & t \in (r,\sigma(r)), \,\text{where} \,r \in \RS,
\end{array} \right. $$
is $\mu_L$-measurable on $[c,d)$. Hence, it is sufficient to see that $\widetilde{u^\Phi} = \tilde{u}_{|_{[c,d)}}$. This is true since, for any $t \in [c,d)$, we have:
\begin{itemize}
\item either $t \in \TU$, and then $t \in \T$ and thus $\widetilde{u^\Phi}(t) = u^\Phi(t) = u(\Phi(t)) = u(t) = \tilde{u}(t)$.
\item either $t \in \T \bs \TU$, and then $t \in (\Phi(t),\sigma_1(\Phi(t)))$ where $\Phi(t) \in \RS_1$ and thus $\widetilde{u^\Phi}(t) = u^\Phi(t) = u(\Phi(t)) = \tilde{u}(t)$.
\item either $t \notin \T$, and then $t \in (r,\sigma(r))$ where $r \in \RS$ and then $t \in (\Phi(r),\sigma_1(\Phi(r)))$ where $\Phi(r) \in \RS_1$ and thus $\widetilde{u^\Phi}(t) = u^\Phi(r) = u(\Phi(r)) = \tilde{u}(t)$.
\end{itemize}
This establishes the $\mu_{\DD}$-measurability of $u^\Phi$ on $[c,d)_{\T}$.

Considering $\Vert u \Vert_{\R^m}$ instead of $u$, we can consider that $m = 1$ and $u \in \L^1_{\TU}([c,d)_{\TU},\R^+)$. From~\cite{caba}, we have
\begin{multline*}
\di \int_{[c,d)_\T} u^\Phi (\t) \,\DD \t  =  \int_{[c,d)} \widetilde{u^\Phi}(\t) \,d\t
 =  \int_{[c,d)} \tilde{u}_{|_{[c,d)}}(\t) \,d\t  \\
 =  \int_{[c,d')} \tilde{u}(\t) \,d\t - \int_{[d,d')} \tilde{u}(\t) \,d\t
 =  \int_{[c,d)_{\TU}} u(\t) \,\DD_1 \t - (d'-d)u(\Phi(d)).
\end{multline*}
Noting that $(d'-d)u(\Phi(d)) \geq 0$ concludes the proof of the first point.

Let us prove the second point. Let $u \in \L^\infty_{\TU} ([c,d)_{\TU},\R^m)$. The $\mu_{\DD}$-measurability of $u^\Phi$ is already proved since $\L^\infty_{\TU} ([c,d)_{\TU},\R^m) \subset \L^1_{\TU} ([c,d)_{\TU},\R^m)$. Let $M \geq 0$ be a constant. With the same arguments as in the proof of Lemma~\ref{lemnumber1}, we can prove that $ \Vert u(t) \Vert_{\R^m} \leq M$ for $\DD_1$-a.e. $t \in [c,d)_{\TU}$ if and only if $ \Vert u^\Phi(t) \Vert_{\R^m} \leq M$ for $\DD$-a.e. $t \in [c,d)_{\T}$. As a consequence, we get $u^\Phi \in \L^\infty_{\T}( [c,d)_{\T},\R^m)$ and $ \Vert u^\Phi \Vert_{\L^\infty_{\T}( [c,d)_{\T},\R^m)} = \Vert u \Vert_{\L^\infty_{\TU} ([c,d)_{\TU},\R^m)}$.
\end{proof}

\begin{remark}
From the proof above, we see that the inequality~\eqref{inequalitynotequality} is an equality if and only if $d \in \TU$ or $u(\Phi(d)) = 0$. Then, considering $\T = \N$, $\TU = 2\N$, $m=1$, $c=0$, $d=1$ and $u$ the constant function equal to $1$ provides a counterexample. Indeed, in that case, we have $u \in \L^1_{\TU} ([c,d)_{\TU},\R^m)$ and $u^\Phi \in \L^1_{\T}( [c,d)_{\T},\R^m)$ with
$ \Vert u^\Phi \Vert_{\L^1_{\T}( [c,d)_{\T},\R^m)} = 1 < 2 = \Vert u \Vert_{\L^1_{\TU} ([c,d)_{\TU},\R^m)}.  $
\end{remark}

\subsubsection{Recalls on $\DD$-Cauchy-Lipschitz results}\label{recallCL}
According to \cite[Theorem 1]{bour10}, for every control $u \in \L^\infty_{\TU} (\TU,\R^m)$ and every initial condition $q_a \in \R^n$, there exists a unique maximal solution of~\eqref{DD-CS} such that $q(a)=q_a$, denoted by $q(\cdot,u,q_a)$, and defined on a maximal interval, denoted by $I_\T(u,q_a)$. The word \textit{maximal} means that $q(\cdot,u,q_a)$ is an extension of any other solution.
Moreover, we recall that (see \cite[Lemma~1]{bour10})
$$ \forall t \in I_\T (u,q_a), \quad q(t,u,q_a) = q_a + \di \int_{[a,t)_\T} f(\t,q(\tau,u,q_a),u^\Phi(\tau)) \,\DD \tau .$$
Finally, either $I_\T (u,q_a) = \T$, that is, $q(\cdot,u,q_a)$ is a \textit{global} solution of~\eqref{DD-CS}, or $I_\T (u,q_a) = [a,c)_\T$ where $c$ is a left-dense point of $\T$, and in that case, $q(\cdot,u,q_a)$ is unbounded on $I_\T (u,q_a)$ (see \cite[Theorem~2]{bour10}).

\begin{definition}\label{defadm}
For a given $b \in \T$, a couple $(u,q_a) \in \L^\infty_{\TU} (\TU,\R^m) \times \R^n$ is said to be \textit{admissible} on $[a,b]_\T$ whenever $b \in I_\T (u,q_a)$.
\end{definition}

For a given $b \in \T$, we denote by $\UU$ the set of all admissible couples $(u,q_a)$ on $[a,b]_\T$. It is endowed with the norm
$ \Vert (u,q_a) \Vert_{\UU} = \Vert u \Vert_{\L^1_{\TU} ([a,b)_{\TU},\R^{m})} + \Vert q_a \Vert_{\R^n} . $

\subsection{Needle-like variations of the control, and variation of the initial condition}\label{sectionvariations}

Throughout this section, we consider $b \in \T$ and $(u,q_a) \in \UU$.
We are going, in particular, to define appropriate needle-like variations. As in \cite{bour11}, we have to distinguish between right-dense and right-scattered times, along the time scale $\TU$. We will also define appropriate variation of the initial condition.

In the sequel, the notation $\Vert \cdot \Vert$ stands for the usual induced norm of matrices in $\R^{n,n}$, $\R^{n,m}$ and $\R^{m,m}$.

\subsubsection{General variation of $(u,q_a)$}\label{section31prel}
In the first lemma below, we prove that $\UU$ is open. Actually we prove a stronger result, by showing that $\UU$ contains a neighborhood of any of its point in $\L^1_{\TU} \times \R^n$ topology, which will be useful in order to define needle-like variations.

\begin{lemma}\label{prop30-1}
Let $R > \Vert u \Vert_{\L^\infty_{\TU}([a,b)_{\TU},\R^m)}$.
There exist $\nu_R > 0$ and $\eta_R > 0$ such that the set
\begin{equation*}
\E_R(u,q_a) = \Big( \B_{\L^\infty_{\TU}([a,b)_{\TU},\R^m)} (0,R) \cap \B_{\L^1_{\TU}([a,b)_{\TU},\R^m)} (u,\nu_R) \Big) \times \B_{\R^n} (q_a,\eta_R)
\end{equation*}
is contained in $\UU$.
\end{lemma}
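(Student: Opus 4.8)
The goal is to show that the set of admissible couples $\UU$ is open, and in fact that around any point $(u,q_a)$ it contains a product neighborhood of the specified form, where the $\L^\infty$ radius $R$ can be fixed arbitrarily above $\Vert u \Vert_{\L^\infty_{\TU}}$ in advance. The plan is to use the integral formulation of the dynamics recalled in Section~\ref{recallCL} together with a Gronwall-type argument on time scales to obtain a uniform a priori bound on trajectories, and then to deduce that $b$ remains in the maximal interval of existence for all nearby data.

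First I would reduce everything to the $\L^\infty$-ball. Any couple $(v,q'_a)$ in the candidate set $\E_R(u,q_a)$ has a control $v$ bounded by $R$ in $\L^\infty_{\TU}([a,b)_{\TU},\R^m)$, hence by Proposition~\ref{propnumber1} the composition $v^\Phi$ is bounded by $R$ in $\L^\infty_{\T}([a,b)_{\T},\R^m)$. Working on the fixed compact set $[a,b]_\T$, the continuity of $f$ and its $\CC^1$ dependence on $(q,u)$ give a uniform bound on $f$ and a uniform Lipschitz constant $L$ in $q$, valid on a region of the form $[a,b]_\T \times \B_{\R^n}(0,\rho) \times \B_{\R^m}(0,R)$. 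The core estimate is then to show that trajectories starting from $\B_{\R^n}(q_a,\eta_R)$ with such controls cannot escape this region before time $b$: from the integral equation, as long as the solution stays in the ball, $\Vert q(t) \Vert_{\R^n}$ is controlled by $\Vert q'_a \Vert + \int_{[a,t)_\T} ( \kappa + L \Vert q(\tau) \Vert_{\R^n} ) \,\DD\tau$, and the Gronwall lemma on time scales (the $\DD$-integral version) yields a bound depending only on $R$, $b$ and the starting radius, not on the particular data. Choosing $\rho$ larger than this bound makes the region invariant, which, combined with the blow-up alternative recalled from \cite{bour10} (a maximal solution that does not reach $b$ must be unbounded on a left-dense interval), forces $b \in I_\T(v,q'_a)$; hence $(v,q'_a)$ is admissible.

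It remains to produce the parameters $\nu_R$ and $\eta_R$ making $\E_R(u,q_a)$ genuinely a neighborhood. Here $\eta_R$ is simply the admissible starting radius fixed above. The role of $\nu_R$ is mild: the $\L^1$-proximity to $u$ is not needed for the a priori bound (that uses only the $\L^\infty$-cap $R$), so $\nu_R$ may be chosen as any positive number, for instance $\nu_R = 1$; its presence is what will later allow needle-like variations, which perturb $u$ by a small amount in $\L^1_{\TU}$ while staying $\L^\infty$-bounded, to land inside $\UU$. I would state the estimate with the $\L^\infty$-bound doing all the work and the $\L^1$-ball along for the ride.

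The main obstacle is the careful execution of the Gronwall argument in the time scale setting, specifically handling the passage between $v$ on $\TU$ and $v^\Phi$ on $\T$ cleanly via Proposition~\ref{propnumber1}, and invoking the correct form of the blow-up alternative so that staying bounded on $[a,b)_\T$ genuinely implies $b \in I_\T(v,q'_a)$ rather than merely implying boundedness. One must also ensure the uniform Lipschitz and sup bounds on $f$ are taken over a compact $q$-region chosen \emph{after} the Gronwall bound is known, which requires the standard bootstrap: first bound the solution assuming it stays in the region, then enlarge the region so the bound is strictly interior, closing the argument. No genuinely new idea beyond \cite{bour11} is needed, but the bookkeeping between the two time scales is where care is required.
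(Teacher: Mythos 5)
Your proposal has a fatal gap: the claim that the $\L^1$-radius $\nu_R$ is ``along for the ride'' and can be chosen arbitrarily (e.g.\ $\nu_R=1$) is false, and the absolute a priori bound that is supposed to replace it does not exist. The estimate you invoke, $\Vert q(t)\Vert_{\R^n} \le \Vert q'_a\Vert_{\R^n} + \int_{[a,t)_\T}(\kappa + L\Vert q(\tau)\Vert_{\R^n})\,\DD\tau$, requires a Lipschitz constant $L=L(\rho)$ valid on $[a,b]_\T\times \B_{\R^n}(0,\rho)\times\B_{\R^m}(0,R)$; since $f$ is only continuous and $\CC^1$ in $(q,u)$, $L(\rho)$ may grow with $\rho$ (think of quadratic $f$), so the Gronwall bound $(\Vert q'_a\Vert_{\R^n}+\kappa(b-a))\,e_{L(\rho)}(b,a)$ is not eventually dominated by $\rho$, and the ``standard bootstrap'' you appeal to does not close. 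This is not a technicality: the statement your argument would establish --- every control bounded by $R$ in $\L^\infty$, with initial point near $q_a$, is admissible up to $b$ --- is simply wrong. Take $\T=\TU=\R^+$, $n=m=1$, $f(t,q,u)=uq^2$, $a=0$, $b=1$, $u\equiv 0$, $q_a=10$; then $q(\cdot,u,q_a)\equiv 10$ is global, so $(u,q_a)\in\UU$, and $R=1>\Vert u\Vert_{\L^\infty}$. The control $v\equiv 1$ satisfies $\Vert v\Vert_{\L^\infty}\le R$ and $\Vert v-u\Vert_{\L^1([0,1))}=1\le\nu_R$ for your choice $\nu_R=1$, yet the solution of $\dot q=q^2$, $q(0)=10$, blows up at $t=1/10<b$, so $(v,q_a)\notin\UU$. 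Hence $\nu_R$ must be small, and any correct proof must genuinely use the $\L^1$-proximity of the control to $u$.

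That is exactly what the paper's proof does: it is a perturbation argument around the reference trajectory, not an absolute bound. It fixes the compact tube $K_R$ of radius $1$ around $q(\cdot,u,q_a)$ (which exists on all of $[a,b]_\T$ by hypothesis), takes $L_R$ bounding $\partial f/\partial q$ and $\partial f/\partial u$ on $K_R$, and applies Gronwall to the \emph{difference} $q(\cdot,u',q'_a)-q(\cdot,u,q_a)$. As long as the perturbed trajectory stays in the tube, this difference is bounded by $(\Vert q'_a-q_a\Vert_{\R^n} + L_R\Vert u'-u\Vert_{\L^1_{\TU}([a,b)_{\TU},\R^m)})\,e_{L_R}(b,a)\le(\eta_R+\nu_R L_R)\,e_{L_R}(b,a)$, where Proposition~\ref{propnumber1} converts $\Vert u'^\Phi-u^\Phi\Vert_{\L^1_\T}$ into $\Vert u'-u\Vert_{\L^1_{\TU}}$ (a point your proposal does handle correctly); choosing $\nu_R$ and $\eta_R$ so that this quantity is $<1$ keeps the perturbed trajectory strictly inside the tube (via a contradiction argument at the first exit time $t_0$), and only then does the blow-up alternative of \cite{bour10} yield $b\in I_\T(u',q'_a)$. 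The exponential constant involves only the Lipschitz constant of the \emph{fixed} tube $K_R$, which is what breaks the circularity your approach runs into, and both smallness parameters $\eta_R$ and $\nu_R$ are needed on an equal footing.
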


Before proving this lemma, let us recall a time scale version of Gronwall's Lemma (see \cite[Chapter 6.1]{bohn}).
The generalized exponential function is defined by
$e_L (t,c) = \exp ( \int_{[c,t)_\T} \xi_{\mu (\tau)} (L) \ \DD \tau )$,
for every $L \geq 0$, every $c \in \T$ and every $t \in [c,+\infty)_\T$,
where
$\xi_{\mu (\tau)} (L) = \log ( 1+ L \mu (\tau)) / \mu (\tau)$ whenever $\mu (\tau) > 0$, and $\xi_{\mu (\tau)} (L) = L$ whenever $\mu (\tau) =0$
(see \cite[Chapter 2.2]{bohn}). Note that, for every $L \geq 0$ and every $c \in \T$, the function $e_L (\cdot,c)$ is positive and increasing on $[c,+\infty)_\T$.

\begin{lemma}[\cite{bohn}]\label{lemgronwall}
Let $c < d$ be two elements of $\T$, let $L_1$ and $L_2$ be two nonnegative real numbers, and let $q \in \CC ([c,d]_\T, \R )$ satisfying
$0 \leq q(t) \leq L_1 + L_2 \int_{[c,t)_\T} q(\tau) \;\DD \tau,$
for every $t \in [c,d]_\T$. Then
$0 \leq q(t) \leq L_1 e_{L_2} (t,c),$
for every $t \in [c,d]_\T$.
\end{lemma}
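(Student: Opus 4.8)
The plan is to reduce the integral inequality to a dynamic differential inequality and then to compare it with the dynamic equation solved by the generalized exponential. First I would set $y(t) = \int_{[c,t)_\T} q(\tau)\,\DD\tau$. Since $q$ is continuous on the compact set $[c,d]_\T$, it belongs to $\L^1_\T([c,d)_\T,\R)$, so by the absolute continuity results recalled above, $y \in \AC([c,d]_\T,\R)$ with $y(c)=0$ and $y^\DD = q$ $\DD$-a.e.\ on $[c,d)_\T$ (hence, in particular, at every right-scattered point). The lower bound $0\leq q(t)$ is part of the hypothesis and requires nothing further, so it only remains to establish the upper bound.

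Next I would introduce $z(t) = L_1 + L_2\, y(t)$, so that $z \in \AC([c,d]_\T,\R)$, $z(c)=L_1$, and $z^\DD = L_2 q$ $\DD$-a.e. The hypothesis reads $q(t) \leq z(t)$ for every $t \in [c,d]_\T$, whence
$$
z^\DD(t) = L_2\, q(t) \leq L_2\, z(t), \qquad \DD\text{-a.e. } t \in [c,d)_\T.
$$
Since $q \leq z$, it is then enough to prove that $z(t) \leq L_1\, e_{L_2}(t,c)$ for every $t$.

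The key step is a comparison argument. Writing $e(t)=e_{L_2}(t,c)$, I would use that $e$ solves the dynamic Cauchy problem $e^\DD = L_2\, e$, $e(c)=1$ (the characterizing property of the generalized exponential, see \cite[Chapter~2.2]{bohn}), and that $e$ is positive and continuous, hence bounded below by a positive constant on the compact set $[c,d]_\T$. Consider $w = z/e$. As a quotient of absolutely continuous functions with denominator bounded away from $0$, one has $w \in \AC([c,d]_\T,\R)$, and by the time scale quotient rule (see \cite[Theorem~1.20]{bohn}), $\DD$-a.e.\ on $[c,d)_\T$,
$$
w^\DD = \frac{z^\DD\, e - z\, e^\DD}{e\, e^\sigma} = \frac{z^\DD - L_2\, z}{e^\sigma} \leq 0,
$$
since $e^\sigma > 0$ and $z^\DD - L_2\, z \leq 0$. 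Integrating $w^\DD$ over $[c,t)_\T$ and using the fundamental theorem of $\DD$-calculus yields $w(t) \leq w(c) = L_1$, that is $z(t) \leq L_1\, e(t)$, and therefore $0 \leq q(t) \leq z(t) \leq L_1\, e_{L_2}(t,c)$ for every $t \in [c,d]_\T$.

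The computations are routine; the only point requiring care is the interplay between the $\DD$-a.e.\ differentiability of $z$ (which inherits the possible failure of differentiability of $y$ on a $\mu_\DD$-null set of right-dense points) and the everywhere-differentiability of the smooth factor $e$, so that the quotient rule and the sign of $w^\DD$ genuinely hold $\DD$-a.e.\ — which is exactly what is needed to deduce the monotonicity of $w$. Since the statement is classical and already available in \cite[Chapter~6.1]{bohn}, no deeper obstacle is expected.
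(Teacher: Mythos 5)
Your proof is correct. Note that the paper does not actually prove this lemma: it is recalled verbatim from Bohner--Peterson (\cite[Chapter~6.1]{bohn}), so there is no in-paper argument to compare against. Your integrating-factor argument — setting $z = L_1 + L_2\int_{[c,\cdot)_\T} q(\tau)\,\DD\tau$, dividing by the generalized exponential $e_{L_2}(\cdot,c)$, and showing via the time-scale quotient rule that $w = z/e_{L_2}(\cdot,c)$ is nonincreasing — is essentially the classical proof of the time-scale Gronwall inequality given in that reference. The two points you single out are indeed the only delicate ones, and both hold as you claim: $z$ is $\DD$-differentiable only $\DD$-a.e.\ while $e_{L_2}(\cdot,c)$ satisfies $e^\DD = L_2 e$ everywhere with positive values (here $L_2 \geq 0$ guarantees regressivity and positivity), and $w$ is absolutely continuous (product of the AC function $z$ with $1/e$, which is AC since $e$ is AC and bounded below by a positive constant on the compact interval $[c,d]_\T$), so that integrating $w^\DD \leq 0$ legitimately yields $w(t) \leq w(c) = L_1$.
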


\begin{proof}[Proof of Lemma~\ref{prop30-1}]
By continuity of $q(\cdot,u,q_a)$ on $[a,b]_\T$, the set
$$
K_R = \{ (t,x,v) \in [a,b]_\T\times\R^{n} \times \B_{\R^m}(0,R) \mid \Vert x-q(t,u,q_a) \Vert_{\R^n} \leq 1 \}
$$
is a compact subset of $\T \times \R^n \times \R^m $. Therefore $ \Vert \partial f / \partial q \Vert $ and $ \Vert \partial f / \partial u \Vert $ are bounded on $K_R$ by some $L_R \geq 0$ and then, from convexity and from the mean value inequality, it holds:
\begin{equation}\label{eqlipK}
\Vert f(t,x_2,v_2) - f(t,x_1,v_1) \Vert_{\R^n}  \leq L_R ( \Vert x_2 -x_1 \Vert_{\R^n}  + \Vert v_2 -v_1 \Vert_{\R^m}  ),
\end{equation}
for all $(t,x_1,v_1),(t,x_2,v_2) \in K_R$.
Let $\nu_R >0$ and $0 < \eta_R <1$ such that $ (\eta_R + \nu_R L_R) e_{L_R} (b,a) < 1$.

Let $(u',q'_a) \in \E_R(u,q_a)$. Our aim is to prove that $b \in I_\T(u',q'_a)$.
By contradiction, assume that the set
$$ A = \{ t \in I_\T(u',q'_a) \cap [a,b]_\T\ \vert \ \Vert q(t,u',q'_a) - q(t,u,q_a) \Vert_{\R^n}  > 1 \}$$
is not empty and let $t_0 = \inf A$. Since $\T$ is closed, $t_0 \in I_\T(u',q'_a) \cap [a,b]_\T$ and $[a,t_0]_\T \subset I_\T(u',q'_a) \cap [a,b]_\T$. If $t_0$ is a minimum then $ \Vert q(t_0,u',q'_a) - q(t_0,u,q_a) \Vert_{\R^n} > 1$. If $t_0$ is not a minimum then $t_0 \in \RD$ and by continuity we have $ \Vert q(t_0,u',q'_a) - q(t_0,u,q_a) \Vert_{\R^n}  \geq 1$. Moreover one has $t_0 > a$ since $\Vert q(a,u',q'_a) - q(a,u,q_a) \Vert_{\R^n}  = \Vert q'_a - q_a \Vert_{\R^n}  \leq \eta_R < 1$. Hence $\Vert q(\tau,u',q'_a) - q(\tau,u,q_a) \Vert_{\R^n}  \leq 1$ for every $\tau \in [a,t_0)_\T$. Therefore $(\tau,q(\tau,u',q'_a),u'^{\Phi}(\tau))$ and $(\tau,q(\tau,u,q_a),u^{\Phi}(\tau))$ are elements of $K_R$ for $\DD$-a.e. $\tau \in [a,t_0)_\T$.
Since one has
$$
q(t,u',q'_a) - q(t,u,q_a) = q'_a - q_a  + \int_{[a,t)_\T} \left( f(\tau,q(\tau,u',q'_a),u'^{\Phi}(\tau)) - f(\tau,q(\tau,u,q_a),u^{\Phi}(\tau)) \right)  \DD \tau ,
$$
for every $t \in I_\T(u',q'_a) \cap [a,b]_\T$, it follows from~\eqref{eqlipK} that, for every $t \in [a,t_0]_\T$,
\begin{multline*}
\Vert q(t,u',q'_a) - q(t,u,q_a) \Vert_{\R^n} \leq  \Vert q'_a - q_a \Vert_{\R^n}  + L_R  \int_{[a,t)_\T} \Vert  u'^{\Phi}(\tau) - u^{\Phi}(\tau) \Vert_{\R^m}  \, \DD \tau \\
 + L_R  \int_{[a,t)_\T} \Vert   q(\tau,u',q'_a) - q(\tau,u,q_a) \Vert_{\R^n}  \, \DD \tau,  
\end{multline*}
which implies from Lemma~\ref{lemgronwall} that, for every $t \in [a,t_0]_\T$,
\begin{equation*}
\Vert q(t,u',q'_a) - q(t,u,q_a) \Vert_{\R^n} \leq  (\Vert q'_a - q_a \Vert_{\R^n}  + L_R \Vert u'^{\Phi}- u^{\Phi} \Vert_{\L^1_\T([a,b)_\T,\R^m)}) e_{L_R} (b,a),
\end{equation*}
which finally implies from Proposition~\ref{propnumber1} that, for every $t \in [a,t_0]_\T$,
\begin{equation*}
\begin{split}
\Vert q(t,u',q'_a) - q(t,u,q_a) \Vert_{\R^n} & \leq  (\Vert q'_a - q_a \Vert_{\R^n}  + L_R \Vert u'- u \Vert_{\L^1_{\TU} ([a,b)_{\TU},\R^m)}) e_{L_R} (b,a) \\
& \leq  (\eta_R + \nu_R L_R) e_{L_R} (b,a)  <  1.
\end{split}
\end{equation*}
This raises a contradiction at $t=t_0$. Therefore $A$ is empty and thus $q(\cdot,u',q'_a)$ is bounded on $I_\T(u',q'_a) \cap [a,b]_\T$. It follows from \cite[Theorem 2]{bour10} that $b \in I_\T(u',q'_a)$, that is, $(u',q'_a) \in \UU$.
\end{proof}

\begin{remark}\label{rmK}
Let $R > \Vert u \Vert_{\L^\infty_{\TU}([a,b)_{\TU},\R^m)}$ and $(u',q'_a) \in \E_R(u,q_a)$. With the notations of the above proof, since $[a,b]_\T \subset I_\T(u',q'_a)$ and $A$ is empty, we infer that $\Vert q(t,u',q'_a) - q(t,u,q_a) \Vert_{\R^n} \leq 1$, for every $t \in [a,b]_\T$.
Therefore $(\t,q(\t,u',q'_a),u'^{\Phi}(\t)) \in K_R$ for every $(u',q'_a) \in \E_R(u,q_a)$ and for $\DD$-a.e. $\t \in [a,b)_\T$.
\end{remark}

\begin{lemma}\label{prop30-1-1}
Let $R > \Vert u \Vert_{\L^\infty_{\TU}([a,b)_{\TU},\R^m)}$. The mapping
\begin{equation*}
\fonction{F_R(u,q_a)}{(\E_R(u,q_a),\Vert \cdot \Vert_{\UU})}{(\CC([a,b]_\T,\R^{n}),\Vert \cdot \Vert_\infty)}{(u',q'_a)}{q(\cdot,u',q'_a)}
\end{equation*}
 is Lipschitzian.
In particular, for every $(u',q'_a) \in \E_R(u,q_a)$, $q(\cdot,u',q'_a)$ converges uniformly to $q(\cdot,u,q_a)$ on $[a,b]_\T$ when $u'$ tends to $u$ in $\L^1_{\TU} ([a,b)_{\TU},\R^m)$ and $q'_a $ tends to $q_a$ in $\R^n$.
\end{lemma}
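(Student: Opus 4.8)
The plan is to re-run the Gronwall estimate established inside the proof of Lemma~\ref{prop30-1}, but now comparing two \emph{arbitrary} elements of $\E_R(u,q_a)$ rather than comparing a single perturbation against the reference couple $(u,q_a)$. The decisive ingredient is Remark~\ref{rmK}: for every $(u',q'_a)\in\E_R(u,q_a)$ one has $(\t,q(\t,u',q'_a),u'^{\Phi}(\t))\in K_R$ for $\DD$-a.e.\ $\t\in[a,b)_\T$. Thus, fixing two couples $(u_1,q_{a,1})$ and $(u_2,q_{a,2})$ in $\E_R(u,q_a)$, the associated triples both lie in $K_R$ for $\DD$-a.e.\ $\t$, which is exactly the hypothesis needed to apply the uniform Lipschitz estimate~\eqref{eqlipK} on $K_R$.

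First I would subtract the two integral representations of the trajectories recalled in Section~\ref{recallCL}, which gives, for every $t\in[a,b]_\T$,
\begin{equation*}
q(t,u_2,q_{a,2}) - q(t,u_1,q_{a,1}) = (q_{a,2}-q_{a,1}) + \int_{[a,t)_\T}\big( f(\t,q(\t,u_2,q_{a,2}),u_2^{\Phi}(\t)) - f(\t,q(\t,u_1,q_{a,1}),u_1^{\Phi}(\t)) \big)\,\DD\t .
\end{equation*}
Taking norms and using~\eqref{eqlipK} bounds the right-hand side by $\Vert q_{a,2}-q_{a,1}\Vert_{\R^n} + L_R\Vert u_2^{\Phi}-u_1^{\Phi}\Vert_{\L^1_\T} + L_R\int_{[a,t)_\T}\Vert q(\t,u_2,q_{a,2})-q(\t,u_1,q_{a,1})\Vert_{\R^n}\,\DD\t$. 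Applying the time-scale Gronwall lemma (Lemma~\ref{lemgronwall}) to absorb the last integral term then yields, uniformly in $t\in[a,b]_\T$,
\begin{equation*}
\Vert q(t,u_2,q_{a,2}) - q(t,u_1,q_{a,1})\Vert_{\R^n} \leq \big( \Vert q_{a,2}-q_{a,1}\Vert_{\R^n} + L_R\Vert u_2^{\Phi}-u_1^{\Phi}\Vert_{\L^1_\T([a,b)_\T,\R^m)} \big)\, e_{L_R}(b,a).
\end{equation*}

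Finally, I would invoke the first point of Proposition~\ref{propnumber1} to replace $\Vert u_2^{\Phi}-u_1^{\Phi}\Vert_{\L^1_\T([a,b)_\T,\R^m)}$ by the larger quantity $\Vert u_2-u_1\Vert_{\L^1_{\TU}([a,b)_{\TU},\R^m)}$; taking the supremum over $t\in[a,b]_\T$ and recalling the definition of $\Vert\cdot\Vert_{\UU}$ produces
\begin{equation*}
\Vert q(\cdot,u_2,q_{a,2}) - q(\cdot,u_1,q_{a,1})\Vert_\infty \leq \max(1,L_R)\, e_{L_R}(b,a)\, \Vert (u_2,q_{a,2}) - (u_1,q_{a,1})\Vert_{\UU},
\end{equation*}
which is precisely the asserted Lipschitz continuity of $F_R(u,q_a)$. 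The ``in particular'' statement is then immediate, since a Lipschitz map is continuous, so $q(\cdot,u',q'_a)\to q(\cdot,u,q_a)$ uniformly on $[a,b]_\T$ as $u'\to u$ in $\L^1_{\TU}([a,b)_{\TU},\R^m)$ and $q'_a\to q_a$ in $\R^n$. I do not expect any genuine obstacle here: the only point requiring care is checking that \emph{both} compared trajectories remain inside the tube $K_R$, so that the uniform estimate~\eqref{eqlipK} is legitimately applicable, and this is guaranteed once and for all by Remark~\ref{rmK}.
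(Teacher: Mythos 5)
Your proposal is correct and is essentially the paper's own proof: the paper likewise takes two arbitrary elements of $\E_R(u,q_a)$, invokes Remark~\ref{rmK} to place both trajectories in the tube $K_R$, and then runs the estimate~\eqref{eqlipK}, Lemma~\ref{lemgronwall} and Proposition~\ref{propnumber1} to obtain the bound $\Vert q(t,u'',q''_a) - q(t,u',q'_a) \Vert_{\R^n} \leq (\Vert q''_a - q'_a \Vert_{\R^n} + L_R \Vert u'' - u' \Vert_{\L^1_{\TU}([a,b)_{\TU},\R^m)})\, e_{L_R}(b,a)$, from which the Lipschitz property follows. The only difference is that you write out in full the subtraction of the integral representations and the Gronwall step, which the paper compresses into the phrase ``following the same arguments as in the previous proof.''
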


\begin{proof}
Let $(u',q'_a)$ and $(u'',q''_a)$ be two elements of $\E_R(u,q_a) \subset \UU$. It follows from Remark~\ref{rmK} that $(\tau,q(\tau,u'',q''_a),u''^{\Phi}(\tau))$ and $(\tau,q(\tau,u',q'_a),u'^{\Phi}(\tau))$ are elements of $K_R$ for $\DD$-a.e. $\tau \in [a,b)_\T$. Following the same arguments as in the previous proof, it follows from~\eqref{eqlipK}, from Lemma~\ref{lemgronwall} and from Proposition~\ref{propnumber1} that, for every $t \in [a,b]_\T$,
\begin{equation*}
\Vert q(t,u'',q''_a) - q(t,u',q'_a) \Vert_{\R^n}  \leq  (\Vert q''_a - q'_a \Vert_{\R^n}  + L_R \Vert u'' - u' \Vert_{\L^1_{\TU}([a,b)_{\TU},\R^m)}) e_{L_R} (b,a).
\end{equation*}
The lemma follows.
\end{proof}

\subsubsection{Needle-like variation of $u$ at a point $r \in \RS_1$}\label{section31RS}
Let $r \in [a,b)_\TU \cap \RS_1$ and let $y \in \R^m$. We define the needle-like variation $\Pi = (r,y)$ of $u$ at $r$ by
\begin{equation*}
u_\Pi (t,\alpha) = \left\{ \begin{array}{lcl}
u(r) + \alpha (y-u(r)) & \textrm{if} & t=r, \\
u(t) & \textrm{if} & t \neq r,
\end{array} \right.
\end{equation*}
for $\Delta_1$-a.e. $t \in [a,b)_{\TU}$ and for every $\alpha \in [0,1]$. In the sequel, let us denote by $\sigma^*_1(r) = \min(\sigma_1(r),b)$.

\begin{lemma}\label{lem32-1}
There exists $0 < \alpha_0 \leq 1$ such that $(u_\Pi (\cdot,\alpha),q_a) \in \UU$ for every $\alpha \in [0,\alpha_0]$.
\end{lemma}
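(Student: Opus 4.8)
The plan is to derive this directly from the openness result of Lemma~\ref{prop30-1}. Since $(u,q_a) \in \UU$ by hypothesis, that lemma furnishes, for any $R > \Vert u \Vert_{\L^\infty_{\TU}([a,b)_{\TU},\R^m)}$, two radii $\nu_R>0$ and $\eta_R>0$ with $\E_R(u,q_a) \subset \UU$. I will exhibit an $R$ and a threshold $\alpha_0$ such that $(u_\Pi(\cdot,\alpha),q_a) \in \E_R(u,q_a)$ for every $\alpha \in [0,\alpha_0]$. As the initial condition is left unchanged, membership in $\B_{\R^n}(q_a,\eta_R)$ is automatic, so only the $\L^\infty_{\TU}$ and $\L^1_{\TU}$ conditions need to be checked.

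First I would fix $R$. The variation alters $u$ only at the single point $r$, where $u_\Pi(r,\alpha) = u(r) + \alpha(y-u(r))$, so that $\Vert u_\Pi(r,\alpha)\Vert_{\R^m} \leq \Vert u \Vert_{\L^\infty_{\TU}} + \alpha \Vert y - u(r)\Vert_{\R^m}$, while $u_\Pi(\cdot,\alpha)$ agrees with $u$ elsewhere. Hence, for all $\alpha \in [0,1]$, $\Vert u_\Pi(\cdot,\alpha)\Vert_{\L^\infty_{\TU}} \leq \Vert u \Vert_{\L^\infty_{\TU}} + \Vert y - u(r)\Vert_{\R^m}$. Choosing any $R > \Vert u \Vert_{\L^\infty_{\TU}} + \Vert y - u(r)\Vert_{\R^m}$ then guarantees both $R > \Vert u \Vert_{\L^\infty_{\TU}}$ (as required to invoke Lemma~\ref{prop30-1}) and $u_\Pi(\cdot,\alpha) \in \B_{\L^\infty_{\TU}}(0,R)$ uniformly in $\alpha \in [0,1]$. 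I fix this $R$ together with the associated $\nu_R$ and $\eta_R$.

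Next I would estimate the $\L^1_{\TU}$ distance. The difference $u_\Pi(\cdot,\alpha) - u$ vanishes off $\{r\}$, and $r \in \RS_1$ carries positive $\mu_{\DD_1}$-mass $\mu_1(r)>0$; using the splitting of the $\DD_1$-integral into a usual Lebesgue integral plus a sum over right-scattered points (recalled in Section~\ref{secprelimtimescale}), the Lebesgue contribution vanishes (a single point is $\mu_L$-negligible) and only the term at $r$ survives, giving $\Vert u_\Pi(\cdot,\alpha) - u \Vert_{\L^1_{\TU}([a,b)_{\TU},\R^m)} = \alpha\, \mu_1(r)\, \Vert y - u(r)\Vert_{\R^m}$. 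This is linear in $\alpha$ and tends to $0$ as $\alpha \to 0$, so I can select $\alpha_0 \in (0,1]$ with $\alpha\, \mu_1(r)\, \Vert y - u(r)\Vert_{\R^m} < \nu_R$ for every $\alpha \in [0,\alpha_0]$. Combining the two bounds yields $(u_\Pi(\cdot,\alpha),q_a) \in \E_R(u,q_a) \subset \UU$ for all such $\alpha$, which is the assertion.

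There is no serious difficulty here beyond bookkeeping; the only point deserving attention is that $r$ is right-scattered in $\TU$, so both the essential supremum and the $\L^1_{\TU}$ norm genuinely register the modified value at $r$, weighted by the graininess $\mu_1(r)$. This is precisely the scattered-point analogue of a classical needle variation, the role of the variation's ``length'' being played by $\mu_1(r)$; a modification at a right-dense point would instead be invisible in these norms and would call for the different construction carried out for the right-dense case.
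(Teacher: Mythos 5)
Your proof is correct and follows essentially the same route as the paper: fix $R$ so that $\Vert u_\Pi(\cdot,\alpha)\Vert_{\L^\infty_{\TU}([a,b)_{\TU},\R^m)}\leq R$ uniformly in $\alpha$, compute $\Vert u_\Pi(\cdot,\alpha)-u\Vert_{\L^1_{\TU}([a,b)_{\TU},\R^m)}=\alpha\,\mu_1(r)\,\Vert y-u(r)\Vert_{\R^m}$ using the positive graininess at the right-scattered point $r$, choose $\alpha_0$ so this stays below $\nu_R$, and conclude via the inclusion $\E_R(u,q_a)\subset\UU$ of Lemma~\ref{prop30-1}. The only cosmetic difference is the choice of $R$ (the paper takes $R=\max(\Vert u\Vert_{\L^\infty_{\TU}},\Vert u(r)\Vert_{\R^m}+\Vert y\Vert_{\R^m})+1$, you take any $R>\Vert u\Vert_{\L^\infty_{\TU}}+\Vert y-u(r)\Vert_{\R^m}$), which is immaterial.
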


\begin{proof}
Let $R = \max ( \Vert u \Vert_{\L^\infty_{\TU} ([a,b)_{\TU},\R^m)},\Vert u(r) \Vert_{\R^m} + \Vert y \Vert_{\R^m} ) +1 > \Vert u \Vert_{\L^\infty_{\TU} ([a,b)_{\TU},\R^m)}$. We use the notations $K_R$, $L_R$, $\nu_R$ and $\eta_R$, defined in Lemma~\ref{prop30-1} and in its proof. One has $\Vert u_\Pi (\cdot,\alpha) \Vert_{\L^\infty_{\TU} ([a,b)_{\TU},\R^m)} \leq R$ for every $\alpha \in [0,1]$, and
$$
\Vert u_\Pi (\cdot,\alpha) - u \Vert_{\L^1_{\TU} ([a,b)_{\TU},\R^m)} = \mu_1 (r) \Vert u_\Pi (r,\alpha) - u(r) \Vert_{\R^m}  = \alpha \mu_1 (r) \Vert y - u(r) \Vert_{\R^m} .
$$
Hence, there exists $0 < \alpha_0 \leq 1$ such that $\Vert u_\Pi (\cdot,\alpha) - u \Vert_{\L^1_{\TU} ([a,b)_{\TU},\R^m)} \leq \nu_R$ for every $\alpha \in [0,\alpha_0]$, and hence $(u_\Pi (\cdot,\alpha),q_a) \in \E_R(u,q_a)$. The claim follows then from Lemma~\ref{prop30-1}.
\end{proof}

\begin{lemma}\label{lem32-1-1}
The mapping
\begin{equation*}
\fonction{F_\Pi(u,q_a)}{([0,\alpha_0],\vert \cdot \vert)}{(\CC([a,b]_\T,\R^{n}),\Vert \cdot \Vert_\infty)}{\alpha}{q (\cdot,u_\Pi (\cdot,\alpha),q_a)}
\end{equation*}
is Lipschitzian.
In particular, for every $\alpha \in [0,\alpha_0]$, $q (\cdot,u_\Pi (\cdot,\alpha),q_a)$ converges uniformly to $q (\cdot,u,q_a)$ on $[a,b]_\T$ as $\alpha$ tends to $0$.
\end{lemma}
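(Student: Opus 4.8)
The plan is to recognize $F_\Pi(u,q_a)$ as a composition of two Lipschitz maps and thereby reduce the statement to results already in hand. Fix the constant $R$ and the associated data $\nu_R$, $\eta_R$, $K_R$, $L_R$ furnished by Lemma~\ref{prop30-1} and its proof, exactly as in the proof of Lemma~\ref{lem32-1}. By that lemma, $(u_\Pi(\cdot,\alpha),q_a) \in \E_R(u,q_a)$ for every $\alpha \in [0,\alpha_0]$, so the auxiliary map $G : \alpha \longmapsto (u_\Pi(\cdot,\alpha),q_a)$ is well defined from $([0,\alpha_0],\vert\cdot\vert)$ into $(\E_R(u,q_a),\Vert\cdot\Vert_{\UU})$, and one has $F_\Pi(u,q_a) = F_R(u,q_a) \circ G$.

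The only genuine computation is to check that $G$ is Lipschitz. Since $u_\Pi(\cdot,\alpha)$ and $u_\Pi(\cdot,\alpha')$ coincide $\DD_1$-a.e.\ off the single point $r \in \RS_1$, and since $q_a$ does not depend on $\alpha$, the $\Vert\cdot\Vert_{\UU}$ distance reduces to the contribution carried by the $\mu_{\DD_1}$-mass $\mu_1(r)$ of $\{r\}$; using $u_\Pi(r,\alpha) - u_\Pi(r,\alpha') = (\alpha-\alpha')(y-u(r))$, this gives
$$\Vert (u_\Pi(\cdot,\alpha),q_a) - (u_\Pi(\cdot,\alpha'),q_a) \Vert_{\UU} = \mu_1(r)\,\Vert y-u(r)\Vert_{\R^m}\,\vert\alpha-\alpha'\vert,$$
which is precisely the elementary estimate already used in the proof of Lemma~\ref{lem32-1}. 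Hence $G$ is Lipschitz with constant $\mu_1(r)\Vert y-u(r)\Vert_{\R^m}$.

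Finally, I would invoke Lemma~\ref{prop30-1-1}, which asserts that $F_R(u,q_a)$ is Lipschitz on $\E_R(u,q_a)$. The composition $F_R(u,q_a)\circ G$ of two Lipschitz maps is then Lipschitz, with constant the product of the two, proving the first assertion. For the ``in particular'' part, note that $u_\Pi(\cdot,0)=u$ (the variation is trivial at $\alpha=0$), so $q(\cdot,u_\Pi(\cdot,0),q_a)=q(\cdot,u,q_a)$; continuity of the Lipschitz map $F_\Pi(u,q_a)$ at $\alpha=0$ yields the announced uniform convergence as $\alpha\to 0$. I do not expect any real obstacle here: all the analytic content (the Gronwall argument and the $\L^1$-to-sup bound) has been absorbed into Lemmas~\ref{prop30-1} and~\ref{prop30-1-1}, and the only ingredient specific to the needle-like variation is the one-line $\L^1$ computation displayed above.
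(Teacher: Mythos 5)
Your proof is correct and is essentially the same argument as the paper's: the paper likewise combines the Lipschitz estimate of Lemma~\ref{prop30-1-1} for $F_R(u,q_a)$ with the one-line computation $\Vert (u_\Pi (\cdot,\alpha_2),q_a)- (u_\Pi (\cdot,\alpha_1),q_a) \Vert_{\UU} = \vert \alpha_2 - \alpha_1 \vert\, \mu_1 (r)\, \Vert y - u(r) \Vert_{\R^m}$, which is exactly your factorization $F_\Pi(u,q_a)=F_R(u,q_a)\circ G$ written as a chained inequality. The only cosmetic difference is that you make the composition structure explicit, which changes nothing of substance.
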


\begin{proof}
We use the notations of the proof of Lemma~\ref{lem32-1}. It follows from Lemma~\ref{prop30-1-1} that there exists $C \geq 0$ (the Lipschitz constant of $F_R(u,q_a)$) such that
\begin{equation*}
\begin{split}
\Vert q (\cdot,u_\Pi (\cdot,\alpha_2),q_a) - q (\cdot,u_\Pi (\cdot,\alpha_1),q_a) \Vert_\infty
& \leq C \Vert (u_\Pi (\cdot,\alpha_2),q_a)- (u_\Pi (\cdot,\alpha_1),q_a) \Vert_{\UU}  \\
& = C \vert \alpha_2 - \alpha_1 \vert \mu_1 (r) \Vert y - u(r) \Vert_{\R^m} ,
\end{split}
\end{equation*}
for all $\alpha_1$ and $\alpha_2$ in $ [0,\alpha_0]$.
The lemma follows.
\end{proof}

We define the so-called \textit{first variation vector} $h_\Pi (\cdot,u,q_a)$ associated with the needle-like variation $\Pi = (r,y)$ as the unique solution on $[r,\sigma^*_1(r)]_\T$ of the linear $\DD$-Cauchy problem
\begin{equation}\label{varvect_scatteredh}
h^\DD(t) = \frac{\partial f}{\partial q} (t,q (t,u,q_a),u(r)) \, h(t) + \frac{\partial f}{\partial u} (t,q (t,u,q_a),u(r)) \, (y-u(r)), \quad
h(r) = 0.
\end{equation}
The existence and uniqueness of $h_\Pi (\cdot,u,q_a)$ are ensured by \cite[Theorem 3]{bour10}.

\begin{proposition}\label{prop32-10}
The mapping
\begin{equation*}
\fonction{F_\Pi(u,q_a)}{ ([0,\alpha_0],\vert \cdot \vert)}{(\CC ([r,\sigma^*_1(r)]_\T,\R^{n}),\Vert \cdot \Vert_\infty)}{\alpha}{q (\cdot,u_\Pi (\cdot,\alpha),q_a)}
\end{equation*}
is differentiable at $0$, and one has $DF_\Pi(u,q_a)(0) = h_\Pi (\cdot,u,q_a)$.
\end{proposition}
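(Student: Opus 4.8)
The plan is to reduce the statement to a differentiable-dependence-on-a-parameter result for the $\DD$-Cauchy problem on $[r,\sigma^*_1(r)]_\T$, and to transfer the convergence of the difference quotients through the time scale Gronwall lemma (Lemma~\ref{lemgronwall}).

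First I would exploit that the needle-like variation modifies the control only at the single (right-scattered) point $r$, hence modifies the sampled control $u^\Phi$ only on $[r,\sigma_1(r))_\T$. Indeed, for every $t \in [r,\sigma_1(r))_\T$ one has $\Phi(t)=r$, so that $u_\Pi(\cdot,\alpha)^\Phi(t)=u(r)+\alpha(y-u(r))$ while $u^\Phi(t)=u(r)$, whereas $u_\Pi(\cdot,\alpha)^\Phi=u^\Phi$ for every $\tau\in[a,r)_\T$. Writing $q_\alpha=q(\cdot,u_\Pi(\cdot,\alpha),q_a)$ and $q_0=q(\cdot,u,q_a)$, it follows by uniqueness that $q_\alpha=q_0$ on $[a,r]_\T$, so that $q_\alpha(r)=q_0(r)$ does not depend on $\alpha$. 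Therefore, on $[r,\sigma^*_1(r)]_\T$, the trajectory $q_\alpha$ solves the $\DD$-Cauchy problem driven by the \emph{constant} (in $t$) control value $v_\alpha:=u(r)+\alpha(y-u(r))$ with the fixed initial datum $q_0(r)$, and by \cite[Lemma~1]{bour10} satisfies $q_\alpha(t)=q_0(r)+\int_{[r,t)_\T}f(\tau,q_\alpha(\tau),v_\alpha)\,\DD\tau$.

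Next I would set $w_\alpha:=(q_\alpha-q_0)/\alpha$, which satisfies $w_\alpha(r)=0$. Writing the mean value identity $f(\tau,q_\alpha,v_\alpha)-f(\tau,q_0,u(r))=A_\alpha(\tau)(q_\alpha-q_0)+B_\alpha(\tau)(v_\alpha-u(r))$, where $A_\alpha(\tau)=\int_0^1\frac{\partial f}{\partial q}(\tau,q_0+\theta(q_\alpha-q_0),u(r)+\theta\alpha(y-u(r)))\,d\theta$ and $B_\alpha$ is defined analogously with $\frac{\partial f}{\partial u}$, and dividing by $\alpha$ (recall $v_\alpha-u(r)=\alpha(y-u(r))$), I obtain the linear $\DD$-integral equation $w_\alpha(t)=\int_{[r,t)_\T}\big(A_\alpha(\tau)w_\alpha(\tau)+B_\alpha(\tau)(y-u(r))\big)\,\DD\tau$. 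By Remark~\ref{rmK} the relevant arguments stay in the compact set $K_R$ on which $\Vert\partial f/\partial q\Vert$ and $\Vert\partial f/\partial u\Vert$ are bounded by $L_R$, so $A_\alpha$ and $B_\alpha$ are uniformly bounded and Lemma~\ref{lemgronwall} yields a bound $\Vert w_\alpha\Vert_\infty\leq C$ uniform in $\alpha$.

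Finally I would compare $w_\alpha$ with the first variation vector $h_\Pi$, which solves the same type of integral equation with coefficients $A_0:=\frac{\partial f}{\partial q}(\cdot,q_0,u(r))$ and $B_0:=\frac{\partial f}{\partial u}(\cdot,q_0,u(r))$. By Lemma~\ref{lem32-1-1}, $q_\alpha\to q_0$ uniformly as $\alpha\to0$, hence by continuity of the partial differentials $A_\alpha\to A_0$ and $B_\alpha\to B_0$ uniformly on $[r,\sigma^*_1(r)]_\T$. Splitting $A_\alpha w_\alpha-A_0 h_\Pi=A_\alpha(w_\alpha-h_\Pi)+(A_\alpha-A_0)h_\Pi$ and using the uniform bounds on $w_\alpha$ and $h_\Pi$, I get $\Vert w_\alpha(t)-h_\Pi(t)\Vert_{\R^n}\leq\varepsilon_\alpha+L_R\int_{[r,t)_\T}\Vert w_\alpha-h_\Pi\Vert_{\R^n}\,\DD\tau$ with $\varepsilon_\alpha:=\int_{[r,\sigma^*_1(r))_\T}\big(\Vert A_\alpha-A_0\Vert\,\Vert h_\Pi\Vert_{\R^n}+\Vert B_\alpha-B_0\Vert\,\Vert y-u(r)\Vert_{\R^m}\big)\,\DD\tau\to0$; a last application of Lemma~\ref{lemgronwall} gives $\Vert w_\alpha-h_\Pi\Vert_\infty\leq\varepsilon_\alpha\,e_{L_R}(\sigma^*_1(r),r)\to0$. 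This is exactly the (one-sided) differentiability of $F_\Pi(u,q_a)$ at $0$ with $DF_\Pi(u,q_a)(0)=h_\Pi(\cdot,u,q_a)$. The main obstacle is to run both Gronwall estimates uniformly in $\alpha$ along the time scale, which is precisely where the compactness of $K_R$ and the uniform trajectory convergence of Lemma~\ref{lem32-1-1} are essential.
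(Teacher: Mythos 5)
Your proposal is correct and takes essentially the same route as the paper's proof: both compare the difference quotient with $h_\Pi$ through the Taylor formula with integral remainder (your $A_\alpha$, $B_\alpha$ are exactly the paper's $\int_0^1 \frac{\partial f}{\partial q}(\star_{\theta\alpha\tau})\,d\theta$ and $\int_0^1 \frac{\partial f}{\partial u}(\star_{\theta\alpha\tau})\,d\theta$), bound all terms on the compact $K_R$, and conclude with the time scale Gronwall lemma (Lemma~\ref{lemgronwall}) combined with the uniform trajectory convergence of Lemma~\ref{lem32-1-1} and the uniform continuity of the partial derivatives. The only cosmetic differences are your explicit preliminary reduction (trajectory unchanged on $[a,r]_\T$, constant control $v_\alpha$ on the sampling interval) and the auxiliary uniform bound on $w_\alpha$, which is in fact not needed since the comparison term involves only $h_\Pi$.
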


\begin{proof}
We use the notations of the proof of Lemma~\ref{lem32-1}. Recall that $(\t,q(\t,u_\Pi(\cdot,\alpha),q_a),u^\Phi_\Pi (\t,\alpha)) \in K_R$ for $\DD$-a.e. $\t \in [a,b)_\T$ and for every $\alpha \in [0,\alpha_0] $ (see Remark~\ref{rmK}). For every $\alpha \in (0,\alpha_0]$ and every $t \in [r,\sigma^*_1(r)]_\T$, we define
$$
\varepsilon_\Pi (t,\alpha) = \frac{q(t,u_\Pi(\cdot,\alpha),q_a) - q (t,u,q_a)}{\alpha }- h_{\Pi} (t,u,q_a).
$$
It suffices to prove that $\varepsilon_\Pi (\cdot,\alpha)$ converges uniformly to $0$ on $[r,\sigma^*_1(r)]_\T$ as $\alpha$ tends to $0$.
For every $\alpha \in (0,\alpha_0]$, since the function $\varepsilon_\Pi (\cdot,\alpha)$ vanishes at $t=r$ and is absolutely continuous on $[r,\sigma^*_1(r)]_\T$, $\varepsilon_\Pi (t,\alpha) = \int_{[r,t)_\T} \varepsilon^\DD_\Pi (\tau,\alpha) \,\DD \tau$
for every $t \in [r,\sigma^*_1(r)]_\T$, where
\begin{equation*}
\begin{split}
\varepsilon_\Pi^\DD (\t,\alpha) = & \frac{f(\t,q(\t,u_\Pi(\cdot,\alpha),q_a),u(r)+\alpha (y-u(r)))-f(\t,q (\t,u,q_a),u(r))}{\alpha} \\ & - \frac{\partial f}{\partial q} (\t,q(\t,u,q_a),u(r)) \, h_{\Pi} (\t,u,q_a) - \frac{\partial f}{\partial u} (\t,q(\t,u,q_a),u(r)) \, (y-u(r)) ,
\end{split}
\end{equation*}
for $\DD$-a.e. $\t \in [r,\sigma^*_1(r))_\T$.
Using the Taylor formula with integral remainder, we get
\begin{equation*}\begin{split}
\varepsilon_\Pi^\DD (\t,\alpha) = & \di \int_0^1 \dfrac{\partial f}{\partial q} (\star_{\theta \alpha \tau}) \,d\theta \ \varepsilon_\Pi (\t,\alpha)\\ &  + \left( \di \int_0^1 \dfrac{\partial f}{\partial q} (\star_{\theta \alpha \tau}) \,d\theta - \frac{\partial f}{\partial q} (\t,q(\t,u,q_a),u(r)) \right)  h_{\Pi} (\t,u,q_a) \\ & + \left( \di \int_0^1 \dfrac{\partial f}{\partial u} (\star_{\theta \alpha \tau}) \,d\theta - \frac{\partial f}{\partial u} (\t,q(\t,u,q_a),u(r)) \right)  (y-u(r)),
\end{split}\end{equation*}
where
$$ \star_{\theta \alpha \tau} = ( \t, q (\t,u,q_a) + \theta (q(\t,u_\Pi(\cdot,\alpha),q_a) - q (\t,u,q_a)),u(r)+\theta \alpha (y-u(r)) ) \in K_R.$$
It follows that
$\Vert \varepsilon_\Pi^\DD (\t,\alpha) \Vert_{\R^n} \leq \chi_\Pi (\t,\alpha) +  L_R \Vert \varepsilon_\Pi (\t,\alpha) \Vert_{\R^n}$, where
\begin{equation*}\begin{split}
\chi_\Pi (\t,\alpha) = & \left\Vert \left( \di \int_0^1 \dfrac{\partial f}{\partial q} (\star_{\theta \alpha \tau}) \,d\theta - \frac{\partial f}{\partial q} (\t,q(\t,u,q_a),u(r)) \right) \, h_{\Pi} (\t,u,q_a) \right\Vert_{\R^n} \\ & + \left\Vert \left( \di \int_0^1 \dfrac{\partial f}{\partial u} (\star_{\theta \alpha \tau}) \,d\theta - \frac{\partial f}{\partial u} (\t,q(\t,u,q_a),u(r)) \right) \, (y-u(r)) \right\Vert_{\R^n}.
\end{split}\end{equation*}
Therefore, one has
\begin{equation*}
\Vert \varepsilon_\Pi (t,\alpha) \Vert_{\R^n}  \leq \int_{[r,\sigma^*_1(r))_\T} \chi_\Pi (\tau,\alpha) \,\DD \tau  +L_R \int_{[r,t)_\T} \Vert \varepsilon_\Pi (\tau,\alpha) \Vert_{\R^n}  \, \DD \tau,
\end{equation*}
for every $t \in [r,\sigma^*_1(r)]_\T$. From Lemma~\ref{lemgronwall},
$\Vert \varepsilon_\Pi (t,\alpha) \Vert_{\R^n}  \leq \Upsilon_\Pi (\alpha) e_{L_R} (\sigma^*_1(r),r)$,
for every $t \in [r,\sigma^*_1(r)]_\T$, where
$\Upsilon_\Pi (\alpha) = \int_{[r,\sigma^*_1(r))_\T} \chi_\Pi (\tau,\alpha) \,\DD \tau $.

To conclude, it remains to prove that $\Upsilon_\Pi (\alpha)$ converges to $0$ as $\alpha$ tends to $0$. Since $q(\t,u_\Pi(\cdot,\alpha),q_a)$ converges uniformly to $q(\t,u,q_a)$ on $[r,\sigma^*_1(r)]_\T$ as $\alpha$ tends to $0$ (see Lemma~\ref{lem32-1-1}) and since $\partial f / \partial q$ and $\partial f / \partial u$ are uniformly continuous on $K_R$, the conclusion follows.
\end{proof}

Then, we define the so-called \textit{second variation vector} $w_\Pi (\cdot,u,q_a)$ associated with the needle-like variation $\Pi = (r,y)$ as the unique solution on $[\sigma^*_1(r),b]_\T$ of the linear $\DD$-Cauchy problem
\begin{equation}\label{varvect_scatteredw}
w^\DD(t) = \frac{\partial f}{\partial q} (t,q (t,u,q_a),u^\Phi(t)) \, w(t) , \quad
w(\sigma^*_1(r)) = h_\Pi(\sigma^*_1(r),u,q_a).
\end{equation}
The existence and uniqueness of $w_\Pi (\cdot,u,q_a)$ are ensured by \cite[Theorem 3]{bour10}.

\begin{proposition}\label{prop32-1}
The mapping
\begin{equation*}
\fonction{F_\Pi(u,q_a)}{ ([0,\alpha_0],\vert \cdot \vert)}{(\CC ([\sigma^*_1(r),b]_\T,\R^{n}),\Vert \cdot \Vert_\infty)}{\alpha}{q (\cdot,u_\Pi (\cdot,\alpha),q_a)}
\end{equation*}
is differentiable at $0$, and one has $DF_\Pi(u,q_a)(0) = w_\Pi (\cdot,u,q_a)$.
\end{proposition}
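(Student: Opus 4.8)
The plan is to reuse the structure of the proof of Proposition~\ref{prop32-10}, exploiting the crucial observation that beyond $\sigma^*_1(r)$ the needle-like variation no longer acts on the control. Indeed, $u_\Pi(\cdot,\alpha)$ differs from $u$ only at the single point $r \in \RS_1$, so $u^\Phi_\Pi(\cdot,\alpha)$ differs from $u^\Phi$ only on the sampling interval $[r,\sigma_1(r))_\T$; since $\sigma^*_1(r) = \min(\sigma_1(r),b)$, we have $u^\Phi_\Pi(\cdot,\alpha) = u^\Phi$ $\DD$-a.e.\ on $[\sigma^*_1(r),b)_\T$. Consequently, on $[\sigma^*_1(r),b]_\T$ both $q(\cdot,u_\Pi(\cdot,\alpha),q_a)$ and $q(\cdot,u,q_a)$ solve the \emph{same} $\DD$-differential equation $q^\DD(t) = f(t,q(t),u^\Phi(t))$, and they differ only through their values at $\sigma^*_1(r)$. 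In this way the proposition reduces to a differentiability-with-respect-to-initial-condition statement for the fixed-control dynamics, whose linearization is exactly~\eqref{varvect_scatteredw} with the initial datum $w(\sigma^*_1(r)) = h_\Pi(\sigma^*_1(r),u,q_a)$ inherited from the first variation vector.

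Mimicking Proposition~\ref{prop32-10}, I would set, for $\alpha \in (0,\alpha_0]$ and $t \in [\sigma^*_1(r),b]_\T$,
$$
\varepsilon_\Pi(t,\alpha) = \frac{q(t,u_\Pi(\cdot,\alpha),q_a) - q(t,u,q_a)}{\alpha} - w_\Pi(t,u,q_a),
$$
and show that $\varepsilon_\Pi(\cdot,\alpha)$ converges uniformly to $0$ on $[\sigma^*_1(r),b]_\T$ as $\alpha \to 0$. At the left endpoint, since $w_\Pi(\sigma^*_1(r),u,q_a) = h_\Pi(\sigma^*_1(r),u,q_a)$, the quantity $\varepsilon_\Pi(\sigma^*_1(r),\alpha)$ coincides with the first-variation error evaluated at $\sigma^*_1(r)$, hence tends to $0$ by Proposition~\ref{prop32-10}. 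For the $\DD$-derivative, because the controls agree on $[\sigma^*_1(r),b)_\T$, a Taylor expansion with integral remainder (with argument $\diamond_{\theta\alpha\t} = (\t,q(\t,u,q_a) + \theta(q(\t,u_\Pi(\cdot,\alpha),q_a) - q(\t,u,q_a)),u^\Phi(\t)) \in K_R$, see Remark~\ref{rmK}) yields
$$
\varepsilon_\Pi^\DD(\t,\alpha) = \left( \int_0^1 \frac{\partial f}{\partial q}(\diamond_{\theta\alpha\t})\,d\theta \right) \varepsilon_\Pi(\t,\alpha) + \left( \int_0^1 \frac{\partial f}{\partial q}(\diamond_{\theta\alpha\t})\,d\theta - \frac{\partial f}{\partial q}(\t,q(\t,u,q_a),u^\Phi(\t)) \right) w_\Pi(\t,u,q_a),
$$
so that $\Vert \varepsilon_\Pi^\DD(\t,\alpha) \Vert_{\R^n} \leq L_R \Vert \varepsilon_\Pi(\t,\alpha)\Vert_{\R^n} + \chi_\Pi(\t,\alpha)$, where $\chi_\Pi(\t,\alpha)$ denotes the norm of the second term.

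Integrating from $\sigma^*_1(r)$ and applying the time-scale Gronwall lemma (Lemma~\ref{lemgronwall}) then gives, for every $t \in [\sigma^*_1(r),b]_\T$,
$$
\Vert \varepsilon_\Pi(t,\alpha) \Vert_{\R^n} \leq \left( \Vert \varepsilon_\Pi(\sigma^*_1(r),\alpha)\Vert_{\R^n} + \int_{[\sigma^*_1(r),b)_\T} \chi_\Pi(\tau,\alpha)\,\DD\tau \right) e_{L_R}(b,\sigma^*_1(r)).
$$
The first term vanishes as $\alpha \to 0$ by Proposition~\ref{prop32-10}; the integral term vanishes because $q(\cdot,u_\Pi(\cdot,\alpha),q_a)$ converges uniformly to $q(\cdot,u,q_a)$ (Lemma~\ref{lem32-1-1}), $\partial f/\partial q$ is uniformly continuous on the compact $K_R$, and $w_\Pi(\cdot,u,q_a)$ is bounded on $[\sigma^*_1(r),b]_\T$. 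This establishes the uniform convergence, hence the differentiability at $0$ with $DF_\Pi(u,q_a)(0) = w_\Pi(\cdot,u,q_a)$. The argument is essentially routine once the key reduction is made; the only genuinely load-bearing step is the transfer of the nonzero ``initial error'' at $\sigma^*_1(r)$ from the first-variation proposition, which is what allows the source perturbation to come entirely from the endpoint value rather than from the control itself.
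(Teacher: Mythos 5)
Your proposal is correct and follows essentially the same route as the paper: the same error function $\varepsilon_\Pi(\cdot,\alpha)$ on $[\sigma^*_1(r),b]_\T$, the same Taylor expansion with integral remainder (exploiting that the controls coincide beyond $\sigma^*_1(r)$), the same Gronwall estimate, with the endpoint error controlled by Proposition~\ref{prop32-10} via $w_\Pi(\sigma^*_1(r),u,q_a)=h_\Pi(\sigma^*_1(r),u,q_a)$ and the integral term controlled by Lemma~\ref{lem32-1-1} and uniform continuity of $\partial f/\partial q$ on $K_R$. The only cosmetic difference is that the paper first dispatches the degenerate case $\sigma^*_1(r)=b$ by invoking Proposition~\ref{prop32-10} and then assumes $\sigma^*_1(r)=\sigma_1(r)<b$, whereas you treat both cases uniformly, which is equally valid.
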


\begin{proof}
We use the notations of the proof of Lemma~\ref{lem32-1}. From Proposition~\ref{prop32-10}, the case $\sigma^*_1(r) = b$ is already proved. As a consequence, we only focus here on the case $\sigma^*_1(r) = \sigma_1(r) < b$.

Recall that $(\t,q(\t,u_\Pi(\cdot,\alpha),q_a),u^\Phi_\Pi (\t,\alpha)) \in K_R$ for $\DD$-a.e. $\t \in [a,b)_\T$ and for every $\alpha \in [0,\alpha_0] $ (see Remark~\ref{rmK}). For every $\alpha \in (0,\alpha_0]$ and every $t \in [\sigma_1(r),b]_\T$, we define
$$
\varepsilon_\Pi (t,\alpha) = \frac{q(t,u_\Pi(\cdot,\alpha),q_a) - q (t,u,q_a)}{\alpha }- w_{\Pi} (t,u,q_a).
$$
It suffices to prove that $\varepsilon_\Pi (\cdot,\alpha)$ converges uniformly to $0$ on $[\sigma_1(r),b]_\T$ as $\alpha$ tends to $0$.
For every $\alpha \in (0,\alpha_0]$, since the function $\varepsilon_\Pi (\cdot,\alpha)$ is absolutely continuous on $[\sigma_1(r),b]_\T$, we have
$\varepsilon_\Pi (t,\alpha) = \varepsilon_\Pi (\sigma_1(r),\alpha) + \int_{[\sigma_1(r),t)_\T} \varepsilon^\DD_\Pi (\tau,\alpha) \, \DD \tau$
for every $t \in [\sigma_1(r),b]_\T$, where
\begin{multline*}
\varepsilon_\Pi^\DD (\t,\alpha) =  \frac{f(\t,q(\t,u_\Pi(\cdot,\alpha),q_a),u^\Phi(\t))-f(\t,q (\t,u,q_a),u^\Phi(\t))}{\alpha}  \\
 - \frac{\partial f}{\partial q} (\t,q(\t,u,q_a),u^\Phi(\t)) \, w_{\Pi} (\t,u,q_a) ,
\end{multline*}
for $\DD$-a.e. $\t \in [\sigma_1(r),b)_\T$.
Using the Taylor formula with integral remainder, we get
\begin{multline*}
\varepsilon_\Pi^\DD (\t,\alpha) =  \di \int_0^1 \dfrac{\partial f}{\partial q} (\star_{\theta \alpha \tau}) \,d\theta \, \varepsilon_\Pi (\t,\alpha)\\   + \left( \di \int_0^1 \dfrac{\partial f}{\partial q} (\star_{\theta \alpha \tau}) \,d\theta - \frac{\partial f}{\partial q} (\t,q(\t,u,q_a),u^\Phi(\t)) \right) \, w_{\Pi} (\t,u,q_a),
\end{multline*}
where:
$$ \star_{\theta \alpha \tau} = ( \t, q (\t,u,q_a) + \theta (q(\t,u_\Pi(\cdot,\alpha),q_a) - q (\t,u,q_a)),u^\Phi(\t) ) \in K_R.$$
It follows that
$\Vert \varepsilon_\Pi^\DD (\t,\alpha) \Vert_{\R^n} \leq \chi_\Pi (\t,\alpha) +  L_R \Vert \varepsilon_\Pi (\t,\alpha) \Vert_{\R^n}$, where
\begin{equation*}
\chi_\Pi (\t,\alpha) = \left\Vert \left( \di \int_0^1 \dfrac{\partial f}{\partial q} (\star_{\theta \alpha \tau}) \,d\theta - \frac{\partial f}{\partial q} (\t, q(\t,u,q_a),u^\Phi(\t)) \right) \, w_{\Pi} (\t,u,q_a) \right\Vert_{\R^n}.
\end{equation*}
Therefore, one has
\begin{equation*}
\Vert \varepsilon_\Pi (t,\alpha) \Vert_{\R^n}  \leq \Vert \varepsilon_\Pi (\sigma_1(r),\alpha) \Vert_{\R^n}  +  \int_{[\sigma_1(r),b)_\T} \chi_\Pi (\tau,\alpha) \,\DD \tau  +L_R \int_{[\sigma_1(r),t)_\T} \Vert \varepsilon_\Pi (\tau,\alpha) \Vert_{\R^n}  \, \DD \tau,
\end{equation*}
for every $t \in [\sigma_1(r),b]_\T$. It follows from Lemma~\ref{lemgronwall} that
$\Vert \varepsilon_\Pi (t,\alpha) \Vert_{\R^n}  \leq \Upsilon_\Pi (\alpha) e_{L_R} (b,\sigma_1(r))$,
for every $t \in [\sigma_1(r),b]_\T$, where
$\Upsilon_\Pi (\alpha) = \Vert \varepsilon_\Pi (\sigma_1(r),\alpha) \Vert_{\R^n}  +  \int_{[\sigma_1(r),b)_\T} \chi_\Pi (\tau,\alpha) \, \DD \tau$.

To conclude, it remains to prove that $\Upsilon_\Pi (\alpha)$ converges to $0$ as $\alpha$ tends to $0$.
First, from Proposition~\ref{prop32-10}, it is easy to see that $\Vert \varepsilon_\Pi (\sigma_1(r),\alpha) \Vert_{\R^n} $ converges to $0$ as $\alpha$ tends to $0$.
Second, since $q(\t,u_\Pi(\cdot,\alpha),q_a)$ converges uniformly to $q(\t,u,q_a)$ on $[\sigma_1(r),b]_\T$ as $\alpha$ tends to $0$ (see Lemma~\ref{lem32-1-1}) and since $\partial f / \partial q$ is uniformly continuous on $K_R$, we infer that $\int_{[\sigma_1(r),b)_\T} \chi_\Pi (\tau,\alpha) \, \DD \tau$ converges to $0$ as $\alpha$ tends to $0$. The conclusion follows.
\end{proof}

\begin{lemma}\label{lem32-2h}
Let $R > \Vert u \Vert_{\L^\infty_\TU ([a,b)_\TU,\R^m)}$ and let $(u_k,q_{a,k})_{k \in \N}$ be a sequence of elements of $\E_R(u,q_a)$. If $u_k$ converges to $u$ $\DD_1$-a.e. on $[a,b)_\TU$ and $q_{a,k}$ converges to $q_a$ in $\R^n$ as $k$ tends to $+\infty$, then
$h_{\Pi}(\cdot,u_k,q_{a,k})$ converges uniformly to $h_\Pi(\cdot,u,q_a)$ on $[r,\sigma^*_1(r)]_\T$ as $k$ tends to $+\infty$.
\end{lemma}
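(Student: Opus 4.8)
The plan is to write both $h_\Pi(\cdot,u_k,q_{a,k})$ and $h_\Pi(\cdot,u,q_a)$ in integral form and to control their difference by a Gronwall argument, exactly as in the proof of Proposition~\ref{prop32-10}. Writing $h_k = h_\Pi(\cdot,u_k,q_{a,k})$, $h = h_\Pi(\cdot,u,q_a)$, and abbreviating $q_k = q(\cdot,u_k,q_{a,k})$ and $q_\infty = q(\cdot,u,q_a)$, each $h_k$ satisfies $h_k(r)=0$ and
\begin{equation*}
h_k(t) = \int_{[r,t)_\T} \left( \frac{\partial f}{\partial q}(\tau,q_k(\tau),u_k(r)) \, h_k(\tau) + \frac{\partial f}{\partial u}(\tau,q_k(\tau),u_k(r)) \, (y-u_k(r)) \right) \DD \tau
\end{equation*}
for every $t \in [r,\sigma^*_1(r)]_\T$, and similarly for $h$ with $(q_k,u_k(r))$ replaced by $(q_\infty,u(r))$. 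The goal is then to prove $\Vert h_k - h \Vert_\infty \to 0$.

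First I would establish the preliminary convergences. Since $r \in \RS_1$ we have $\mu_{\DD_1}(\{r\}) = \mu_1(r) > 0$, so the $\DD_1$-a.e.\ convergence of $u_k$ to $u$ forces $u_k(r) \to u(r)$ in $\R^m$. Moreover, as all the $u_k$ are bounded by $R$ in $\L^\infty_{\TU}([a,b)_{\TU},\R^m)$ and $[a,b)_{\TU}$ has finite $\mu_{\DD_1}$-measure, the dominated convergence theorem yields $u_k \to u$ in $\L^1_{\TU}([a,b)_{\TU},\R^m)$; combined with $q_{a,k} \to q_a$, Lemma~\ref{prop30-1-1} then gives that $q_k \to q_\infty$ uniformly on $[a,b]_\T$, hence in particular on $[r,\sigma^*_1(r)]_\T$. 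By Remark~\ref{rmK}, all the triples $(\tau,q_k(\tau),u_k(r))$ and $(\tau,q_\infty(\tau),u(r))$ lie in the compact set $K_R$, on which $\partial f/\partial q$ and $\partial f/\partial u$ are bounded by $L_R$ and uniformly continuous.

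Then I would split the difference in the usual way, writing the integrand increment of the $\partial f/\partial q$ term as
\begin{equation*}
\frac{\partial f}{\partial q}(\tau,q_k,u_k(r)) \, h_k - \frac{\partial f}{\partial q}(\tau,q_\infty,u(r)) \, h = \frac{\partial f}{\partial q}(\tau,q_k,u_k(r)) \, (h_k - h) + \left( \frac{\partial f}{\partial q}(\tau,q_k,u_k(r)) - \frac{\partial f}{\partial q}(\tau,q_\infty,u(r)) \right) h,
\end{equation*}
and treating the $\partial f/\partial u$ term analogously, so as to obtain
\begin{equation*}
\Vert h_k(t) - h(t) \Vert_{\R^n} \leq \delta_k + L_R \int_{[r,t)_\T} \Vert h_k(\tau) - h(\tau) \Vert_{\R^n} \, \DD \tau ,
\end{equation*}
for every $t \in [r,\sigma^*_1(r)]_\T$, where $\delta_k$ collects the two source integrals over $[r,\sigma^*_1(r))_\T$ involving the increments of the frozen partial derivatives and of $y-u_k(r)$. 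Since $h$ is bounded on the compact interval $[r,\sigma^*_1(r)]_\T$, since $q_k \to q_\infty$ uniformly and $u_k(r) \to u(r)$, and since $\partial f/\partial q$ and $\partial f/\partial u$ are uniformly continuous on $K_R$, the integrands in $\delta_k$ tend to $0$ uniformly in $\tau$, whence $\delta_k \to 0$. Applying Gronwall's Lemma~\ref{lemgronwall} then gives $\Vert h_k(t) - h(t) \Vert_{\R^n} \leq \delta_k \, e_{L_R}(\sigma^*_1(r),r)$ for all $t$, and letting $k \to +\infty$ concludes.

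The main obstacle, such as it is, is purely a matter of making the two convergence inputs cooperate: one must check that $u_k(r) \to u(r)$ at the scattered point $r$ (which legitimizes replacing $u(r)$ inside the frozen coefficients) and that the $\DD_1$-a.e.\ convergence of the controls upgrades to $\L^1_{\TU}$-convergence so as to feed Lemma~\ref{prop30-1-1}. Both are immediate here, respectively because $r \in \RS_1$ carries positive $\mu_{\DD_1}$-measure and because the controls are uniformly bounded on a set of finite measure; everything else is the same Gronwall estimate already carried out for Proposition~\ref{prop32-10}.
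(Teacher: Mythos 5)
Your proof is correct and follows essentially the same route as the paper's: the same integral representation of the two variation vectors, the same decomposition isolating the term $\frac{\partial f}{\partial q}(\tau,q_k,u_k(r))(h_k-h)$ plus source terms (increments of the frozen partial derivatives against $h$ and $y-u(r)$, and the $\frac{\partial f}{\partial u}$ term against $u(r)-u_k(r)$), the same preliminary convergences ($u_k(r)\to u(r)$ because $\mu_{\DD_1}(\{r\})>0$; dominated convergence plus Lemma~\ref{prop30-1-1} for uniform convergence of the trajectories), and the same Gronwall conclusion. The paper's $\Upsilon_k$ is exactly your $\delta_k$, so nothing is missing.
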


\begin{proof}
We use the notations $K_R$, $L_R$, $\nu_R$ and $\eta_R$, defined in Lemma~\ref{prop30-1} and in its proof.

Let us consider the absolutely continuous function defined by $\varepsilon_k (\cdot) = h_{\Pi}(\cdot,u_k,q_{a,k}) - h_{\Pi}(\cdot,u,q_a)$ on $[r,\sigma^*_1(r)]_\T$. Let us prove that $\varepsilon_k$ converges uniformly to $0$ on $[r,\sigma^*_1(r)]_\T$ as $k$ tends to $+\infty$. Since $\varepsilon_k (r) = 0$, one has
\begin{equation*}\begin{split}
\varepsilon_k (t) = & \int_{[r,t)_\T} \frac{\partial f}{\partial q} (\tau,q(\tau,u_k,q_{a,k}),u_k (r)) \, \varepsilon_k (\tau) \,\DD \tau \\
& + \int_{[r,t)_\T} \left(\frac{ \partial f}{\partial q} (\tau,q(\tau,u_k,q_{a,k}),u_k (r)) - \frac{\partial f}{\partial q} (\tau,q(\tau,u,q_a),u (r)) \right) \, h_{\Pi} (\tau,u,q_a) \,\DD \tau \\
& + \int_{[r,t)_\T} \left( \frac{ \partial f}{\partial u} (\tau,q(\tau,u_k,q_{a,k}),u_k (r)) - \frac{\partial f}{\partial u} (\tau,q(\tau,u,q_a),u (r)) \right)  \, (y-u(r)) \,\DD \tau \\
& + \int_{[r,t)_\T} \frac{ \partial f}{\partial u} (\tau,q(\tau,u_k,q_{a,k}),u_k (r)) \, (u(r)-u_k(r)) \,\DD \tau
\end{split}\end{equation*}
for every $t \in [r,\sigma^*_1(r)]_\T$ and every $k \in \N$. Since $(u_k,q_{a,k}) \in \E_R(u,q_a)$ for every $k \in \N$, it follows from Remark~\ref{rmK} that $(\t,q(\t,u_k,q_{a,k}),u^\Phi_k(\t)) \in K_R$ for $\DD$-a.e. $\t \in [a,b)_\T$. Hence it follows from Lemma~\ref{lemgronwall} that
$ \Vert \varepsilon_k (t) \Vert_{\R^n} \leq \Upsilon_k e_{L_R} (b,r),$
for every $t \in [r,\sigma^*_1(r)]_\T$, where $\Upsilon_k$ is given by
\begin{equation*}\begin{split}
\Upsilon_k & = L_R \mu_1(r) \Vert u(r)-u_k(r) \Vert_{\R^m} \\
 & + \int_{[r,\sigma^*_1(r))_\T} \left\Vert \frac{ \partial f}{\partial q} (\tau,q(\tau,u_k,q_{a,k}),u_k (r)) - \frac{\partial f}{\partial q} (\tau,q(\tau,u,q_a),u (r)) \right\Vert \Vert  h_{\Pi} (\tau,u,q_a) \Vert_{\R^n} \,\DD \tau  \\
& + \int_{[r,\sigma^*_1(r))_\T} \left\Vert \frac{ \partial f}{\partial u} (\tau,q(\tau,u_k,q_{a,k}),u_k (r)) - \frac{\partial f}{\partial u} (\tau,q(\tau,u,q_a),u (r)) \right\Vert \Vert y-u(r) \Vert_{\R^m} \,\DD \tau .
\end{split}\end{equation*}
Since $\mu_{\DD_1} (\{ r\} ) = \mu_1 (r) > 0$, $u_k(r)$ converges to $u(r)$ as $k$ tends to $+\infty$. Moreover, from the Lebesgue dominated convergence theorem, $(u_k,q_{a,k})$ converges to $(u,q_a)$ in $(\E_R(u,q_a),\Vert \cdot \Vert_{\UU} )$ and, from Lemma~\ref{prop30-1-1}, $q(\cdot,u_k,q_{a,k})$ converges uniformly to $q(\cdot,u,q_a)$ on $[a,b]_\T$ as $k$ tends to $+\infty$. Since $\partial f / \partial q$ and $\partial f / \partial u$ are uniformly continuous on $K_R$, we conclude that $\Upsilon_k$ converges to $0$ as $k$ tends to $+\infty$. The lemma follows.
\end{proof}

\begin{lemma}\label{lem32-2w}
Let $R > \Vert u \Vert_{\L^\infty_\TU ([a,b)_\TU,\R^m)}$ and let $(u_k,q_{a,k})_{k \in \N}$ be a sequence of elements of $\E_R(u,q_a)$. If $u_k$ converges to $u$ $\DD_1$-a.e. on $[a,b)_\TU$ and $q_{a,k}$ converges to $q_a$ in $\R^n$ as $k$ tends to $+\infty$, then
$w_{\Pi}(\cdot,u_k,q_{a,k})$ converges uniformly to $w_\Pi(\cdot,u,q_a)$ on $[\sigma^*_1(r),b]_\T$ as $k$ tends to $+\infty$.
\end{lemma}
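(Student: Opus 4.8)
The plan is to mimic closely the proof of Lemma~\ref{lem32-2h}, replacing the first variation vector by the second one and adjusting the initial condition and the control argument accordingly. I would set $\varepsilon_k(\cdot) = w_\Pi(\cdot,u_k,q_{a,k}) - w_\Pi(\cdot,u,q_a)$ on $[\sigma^*_1(r),b]_\T$ and prove that $\varepsilon_k$ converges uniformly to $0$. Since each $w_\Pi(\cdot,u_k,q_{a,k})$ solves~\eqref{varvect_scatteredw} with the data $(u_k,q_{a,k})$, subtracting the two Cauchy problems and using $\varepsilon_k(\sigma^*_1(r)) = h_\Pi(\sigma^*_1(r),u_k,q_{a,k}) - h_\Pi(\sigma^*_1(r),u,q_a)$ yields, for $t \in [\sigma^*_1(r),b]_\T$,
\begin{equation*}
\varepsilon_k(t) = \varepsilon_k(\sigma^*_1(r)) + \int_{[\sigma^*_1(r),t)_\T} \varepsilon_k^\DD(\tau)\, \DD\tau .
\end{equation*}
Adding and subtracting $\frac{\partial f}{\partial q}(\tau,q(\tau,u_k,q_{a,k}),u_k^\Phi(\tau))\, w_\Pi(\tau,u,q_a)$, exactly as in the proof of Proposition~\ref{prop32-1} and using Remark~\ref{rmK} so that both arguments lie in $K_R$, one obtains $\Vert \varepsilon_k^\DD(\tau)\Vert_{\R^n} \leq L_R \Vert \varepsilon_k(\tau)\Vert_{\R^n} + \chi_k(\tau)$ with
\begin{equation*}
\chi_k(\tau) = \left\Vert \left( \frac{\partial f}{\partial q}(\tau,q(\tau,u_k,q_{a,k}),u_k^\Phi(\tau)) - \frac{\partial f}{\partial q}(\tau,q(\tau,u,q_a),u^\Phi(\tau)) \right) w_\Pi(\tau,u,q_a) \right\Vert_{\R^n}.
\end{equation*}

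Applying the Gronwall estimate of Lemma~\ref{lemgronwall} then gives $\Vert\varepsilon_k(t)\Vert_{\R^n} \leq \Upsilon_k\, e_{L_R}(b,\sigma^*_1(r))$ for every $t \in [\sigma^*_1(r),b]_\T$, where $\Upsilon_k = \Vert\varepsilon_k(\sigma^*_1(r))\Vert_{\R^n} + \int_{[\sigma^*_1(r),b)_\T} \chi_k(\tau)\, \DD\tau$, so it remains only to check that $\Upsilon_k \to 0$. The first term tends to $0$ directly by Lemma~\ref{lem32-2h} applied at $t=\sigma^*_1(r)$; this is precisely where the convergence of the first variation vector enters, in contrast with the $h$-case of Lemma~\ref{lem32-2h}, where the analogous initial term vanished identically. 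For the integral term I would combine the uniform convergence $q(\cdot,u_k,q_{a,k}) \to q(\cdot,u,q_a)$ on $[a,b]_\T$ from Lemma~\ref{prop30-1-1} (via dominated convergence of $u_k$ to $u$ in $\L^1_\TU$), the uniform continuity of $\partial f/\partial q$ on the compact $K_R$, the boundedness of $w_\Pi(\cdot,u,q_a)$ (continuous on the compact $[\sigma^*_1(r),b]_\T$), and the pointwise convergence $u_k^\Phi(\tau) \to u^\Phi(\tau)$ for $\DD$-a.e.\ $\tau$.

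The key new ingredient, and the point I expect to require the most care, is the passage from $\DD_1$-a.e.\ convergence of $u_k$ to $\DD$-a.e.\ convergence of $u_k^\Phi$. Unlike in Lemma~\ref{lem32-2h}, where the control entered only through the single value $u_k(r)$ and $u_k(r)\to u(r)$ holds automatically because $\mu_{\DD_1}(\{r\}) = \mu_1(r) > 0$, here the whole composition $u_k^\Phi$ appears under the integral. I would argue that if $N \subset [a,b)_{\TU}$ denotes the $\mu_{\DD_1}$-null exceptional set of the convergence $u_k \to u$, then $N \subset \RD_1$ and $\mu_\DD(N)=0$ (as in the proof of Lemma~\ref{lemnumber1}), and that for any $t$ with $u_k^\Phi(t)\not\to u^\Phi(t)$ one has $\Phi(t)\in N \subset \RD_1$, whence $t\in\TU$ and $t=\Phi(t)\in N$; thus the exceptional set for $u_k^\Phi$ is $\mu_\DD$-null. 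With this in hand, $\chi_k(\tau)\to 0$ for $\DD$-a.e.\ $\tau$, and since $\chi_k$ is uniformly bounded by $2L_R \Vert w_\Pi(\cdot,u,q_a)\Vert_\infty$ on the bounded interval $[\sigma^*_1(r),b)_\T$, the Lebesgue $\DD$-dominated convergence theorem gives $\int_{[\sigma^*_1(r),b)_\T} \chi_k(\tau)\, \DD\tau \to 0$. Hence $\Upsilon_k \to 0$ and the uniform convergence follows.
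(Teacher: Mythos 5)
Your proof is correct and follows essentially the same route as the paper's: the same Gronwall estimate on $\varepsilon_k = w_\Pi(\cdot,u_k,q_{a,k}) - w_\Pi(\cdot,u,q_a)$, with the initial term handled by Lemma~\ref{lem32-2h} and the integral term by dominated convergence, using exactly the Lemma~\ref{lemnumber1}-type argument (which the paper states in a remark preceding its proof) to pass from $\DD_1$-a.e.\ convergence of $u_k$ to $\DD$-a.e.\ convergence of $u_k^\Phi$. The only cosmetic difference is that the paper first splits off the trivial case $\sigma^*_1(r)=b$ (where the claim reduces to Lemma~\ref{lem32-2h}) and then works on $[\sigma_1(r),b]_\T$, whereas you treat both cases uniformly; this changes nothing of substance.
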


\begin{remark}
With the same arguments of the proof of Lemma~\ref{lemnumber1}, the hypotheses of Lemma~\ref{lem32-2w} give the convergences of $u^\Phi_k$ to $u^\Phi$ $\DD$-a.e. on $[a,b)_\T$ and, from the Lebesgue dominated convergence theorem, of $(u_k,q_{a,k})$ to $(u,q_a)$ in $(\E_R(u,q_a),\Vert \cdot \Vert_{\UU} )$.
\end{remark}

\begin{proof}[Proof of Lemma~\ref{lem32-2w}]
We use the notations $K_R$, $L_R$, $\nu_R$ and $\eta_R$, defined in Lemma~\ref{prop30-1} and in its proof. From Lemma~\ref{lem32-2h}, the case $\sigma^*_1(r) = b$ is already proved. As a consequence, we only focus here on the case $\sigma^*_1(r) = \sigma_1(r) < b$.

Let us consider the absolutely continuous function defined by $\varepsilon_k (\cdot) = w_{\Pi}(\cdot,u_k,q_{a,k}) - w_{\Pi}(\cdot,u,q_a)$ on $[\sigma_1(r),b]_\T$. Let us prove that $\varepsilon_k$ converges uniformly to $0$ on $[\sigma_1(r),b]_\T$ as $k$ tends to $+\infty$. One has
\begin{multline*}
\varepsilon_k (t) =  \varepsilon_k (\sigma_1(r)) + \int_{[\sigma_1(r),t)_\T} \frac{\partial f}{\partial q} (\tau,q(\tau,u_k,q_{a,k}),u^\Phi_k (\tau)) \, \varepsilon_k (\tau) \,\DD \tau \\
 + \int_{[\sigma_1(r),t)_\T} \left(\frac{ \partial f}{\partial q} (\tau,q(\tau,u_k,q_{a,k}),u^\Phi_k (\tau)) - \frac{\partial f}{\partial q} (\tau,q(\tau,u,q_a),u^\Phi(\tau)) \right) \, w_{\Pi} (\tau,u,q_a) \,\DD \tau  ,
\end{multline*}
for every $t \in [\sigma_1(r),b]_\T$ and every $k \in \N$. Since $(u_k,q_{a,k}) \in \E_R(u,q_a)$ for every $k \in \N$, it follows from Remark~\ref{rmK} that $(\t,q(\t,u_k,q_{a,k}),u^\Phi_k(\t)) \in K_R$ for $\DD$-a.e. $\t \in [\sigma_1(r),b)_\T$. Hence it follows from Lemma~\ref{lemgronwall} that
$$ \Vert \varepsilon_k (t) \Vert_{\R^n} \leq (\varepsilon_k (\sigma_1(r)) + \Upsilon_k ) e_{L_R} (b,\sigma_1(r)),$$
for every $t \in [\sigma_1(r),b]_\T$, where $\Upsilon_k$ is given by
\begin{equation*}
\Upsilon_k = \int_{[\sigma_1(r),b)_\T} \left\Vert \frac{ \partial f}{\partial q} (\tau,q(\tau,u_k,q_{a,k}),u^\Phi_k (\t)) - \frac{\partial f}{\partial q} (\tau,q(\tau,u,q_a),u^\Phi(\t)) \right\Vert \Vert  w_{\Pi} (\tau,u,q_a) \Vert_{\R^n} \,\DD \tau.
\end{equation*}
Since $(u_k,q_{a,k})$ converges to $(u,q_a)$ in $(\E_R(u,q_a),\Vert \cdot \Vert_{\UU})$ and from Lemma~\ref{prop30-1-1}, $q(\cdot,u_k,q_{a,k})$ converges uniformly to $q(\cdot,u,q_a)$ on $[a,b]_\T$ as $k$ tends to $+\infty$. Since $\partial f / \partial q$ is continuous and bounded on $K_R$ and since $u^\Phi_k$ converges to $u^\Phi$ $\DD$-a.e. on $[a,b)_\T$, the Lebesgue dominated convergence theorem concludes that $\Upsilon_k$ converges to $0$ as $k$ tends to $+\infty$. Finally, from Lemma~\ref{lem32-2h}, one has $\varepsilon_k (\sigma_1(r)) = w_{\Pi}(\sigma_1(r),u_k,q_{a,k}) - w_{\Pi}(\sigma_1(r),u,q_a) = h_{\Pi}(\sigma_1(r),u_k,q_{a,k}) - h_{\Pi}(\sigma_1(r),u,q_a) $
converges to $0$ as $k$ tends to $+\infty$. The lemma follows.
\end{proof}
%
%\begin{remark}
%It is interesting to note that, since $u_k (r)$ converges to $u(r)$ as $k$ tends to $+\infty$, if we assume that $y \in \DS(u(r))$, then $y \in \D(u_k(r))$ for $k$ sufficiently large.
%\end{remark}

\subsubsection{Needle-like variation of $u$ at a point $s \in \RD_1$}\label{section31RD}
Let $s \in \LL_{[a,b)_\T} (f(\cdot,q(\cdot,u,q_a),u^\Phi)) \cap \RD_1$ and $z \in \R^m$. Note that $s \in \TU$ and then $\Phi(s)=s$. We define the needle-like variation $\amalg = (s,z)$ of $u$ at $s$ by
\begin{equation*}
u_\amalg (t,\beta) = \left\{ \begin{array}{lcl}
z & \textrm{if} & t \in [s,s+\beta)_{\TU} , \\
u(t) & \textrm{if} & t \notin [s,s+\beta)_{\TU}.
\end{array} \right.
\end{equation*}
for $\Delta_1$-a.e. $t \in [a,b)_{\TU}$ and for every $\beta \in \V^{s,b}_1 \subset \V^{s,b}$.

\begin{lemma}\label{lem33-1}
There exists $\beta_0 > 0$ such that $(u_\amalg (\cdot,\beta),q_a) \in \UU$ for every $\beta \in \V^{s,b}_1 \cap [0,\beta_0]$.
\end{lemma}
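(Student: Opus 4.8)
The plan is to mirror the proof of Lemma~\ref{lem32-1} for the right-scattered case: I will exhibit $\beta_0 > 0$ such that, for every admissible $\beta$, the varied couple $(u_\amalg(\cdot,\beta),q_a)$ lands in the set $\E_R(u,q_a)$ of Lemma~\ref{prop30-1}, which is contained in $\UU$. This reduces the whole statement to two elementary estimates: a uniform $\L^\infty_{\TU}$ bound on $u_\amalg(\cdot,\beta)$, and an $\L^1_{\TU}$ bound on $u_\amalg(\cdot,\beta)-u$ that tends to $0$ as $\beta\to 0$.

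First I would fix $R = \max(\Vert u \Vert_{\L^\infty_{\TU}([a,b)_{\TU},\R^m)}, \Vert z \Vert_{\R^m}) + 1$, so that $R > \Vert u \Vert_{\L^\infty_{\TU}([a,b)_{\TU},\R^m)}$ and Lemma~\ref{prop30-1} applies, furnishing constants $\nu_R > 0$ and $\eta_R > 0$ with $\E_R(u,q_a) \subset \UU$. Since $u_\amalg(\cdot,\beta)$ takes only the values $u(t)$ and $z$, both of norm at most $R$, one has $\Vert u_\amalg(\cdot,\beta)\Vert_{\L^\infty_{\TU}([a,b)_{\TU},\R^m)} \leq R$ for every $\beta$, so $u_\amalg(\cdot,\beta)$ lies in $\B_{\L^\infty_{\TU}([a,b)_{\TU},\R^m)}(0,R)$.

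Next I would estimate the $\L^1_{\TU}$ distance. The functions $u_\amalg(\cdot,\beta)$ and $u$ differ only on $[s,s+\beta)_{\TU}$, where $u_\amalg = z$, hence
\begin{equation*}
\Vert u_\amalg(\cdot,\beta) - u \Vert_{\L^1_{\TU}([a,b)_{\TU},\R^m)} = \int_{[s,s+\beta)_{\TU}} \Vert z - u(\tau) \Vert_{\R^m} \, \DD_1 \tau \leq 2R\, \mu_{\DD_1}([s,s+\beta)_{\TU}).
\end{equation*}
Since $\beta \in \V^{s,b}_1$ forces $s + \beta \in [s,b]_{\TU} \subset \TU$, the measure formula on time scales recalled in Section~\ref{secprelimtimescale} gives $\mu_{\DD_1}([s,s+\beta)_{\TU}) = \beta$, so the distance is at most $2R\beta$. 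Choosing $\beta_0 = \nu_R/(2R)$, I obtain $\Vert u_\amalg(\cdot,\beta) - u \Vert_{\L^1_{\TU}([a,b)_{\TU},\R^m)} \leq \nu_R$, i.e. $u_\amalg(\cdot,\beta) \in \B_{\L^1_{\TU}([a,b)_{\TU},\R^m)}(u,\nu_R)$, for every $\beta \in \V^{s,b}_1 \cap [0,\beta_0]$. Combined with the $\L^\infty$ bound and the trivial inequality $\Vert q_a - q_a \Vert_{\R^n} = 0 \leq \eta_R$, this places $(u_\amalg(\cdot,\beta),q_a)$ in $\E_R(u,q_a) \subset \UU$, which is the claim.

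The argument is essentially routine and I do not anticipate a genuine obstacle. The only points requiring a little care are the two time-scale facts that feed the $\L^1$ estimate: that $u_\amalg(\cdot,\beta)$ is a bona fide element of $\L^\infty_{\TU}(\TU,\R^m)$, and that the $\DD_1$-measure of the sampling window $[s,s+\beta)_{\TU}$ equals $\beta$; both follow directly from the preliminaries. The structural difference with the scattered case is only cosmetic: there the variation is concentrated on the single atom $r$, giving the sharp gap $\alpha\mu_1(r)\Vert y - u(r)\Vert_{\R^m}$, whereas here it is spread over the right-dense interval $[s,s+\beta)_{\TU}$, where the crude bound $2R\beta$ already suffices to drive the $\L^1$ distance to $0$.
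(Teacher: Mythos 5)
Your proposal is correct and follows essentially the same route as the paper's own proof: same choice of $R = \max(\Vert u \Vert_{\L^\infty_{\TU}([a,b)_{\TU},\R^m)}, \Vert z \Vert_{\R^m})+1$, same uniform $\L^\infty_{\TU}$ bound, same $\L^1_{\TU}$ estimate $\Vert u_\amalg(\cdot,\beta)-u\Vert_{\L^1_{\TU}([a,b)_{\TU},\R^m)} \leq 2R\beta$, and the same conclusion via $\E_R(u,q_a) \subset \UU$ from Lemma~\ref{prop30-1}. The only differences are cosmetic: you make explicit the measure identity $\mu_{\DD_1}([s,s+\beta)_{\TU}) = \beta$ (valid since $\beta \in \V^{s,b}_1$ forces $s+\beta \in \TU$) and the choice $\beta_0 = \nu_R/(2R)$, both of which the paper leaves implicit.
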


\begin{proof}
Let $R = \max ( \Vert u \Vert_{\L^\infty_{\TU} ([a,b)_{\TU},\R^m)},\Vert z \Vert_{\R^m}) +1 > \Vert u \Vert_{\L^\infty_{\TU} ([a,b)_{\TU},\R^m)}$. We use the notations $K_R$, $L_R$, $\nu_R$ and $\eta_R$, defined in the proof of Lemma~\ref{prop30-1}.
For every $\beta \in \V^{s,b}_1$, we have $\Vert u_\amalg (\cdot,\beta) \Vert_{\L^\infty_{\TU} ([a,b)_{\TU},\R^m)} \leq R$ and
$$
\Vert u_\amalg (\cdot,\beta) - u \Vert_{\L^1_{\TU} ([a,b)_{\TU},\R^m)} = \int_{[s,s+\beta)_{\TU}} \Vert z - u(\tau) \Vert_{\R^m}  \,\DD_1 \tau \leq 2 R \beta.
$$
Hence, there exists $\beta_0 > 0$ such that for every $\beta \in \V^{s,b}_1 \cap [0,\beta_0]$, $\Vert u_\amalg (\cdot,\beta) - u \Vert_{\L^1_{\TU} ([a,b)_{\TU},\R^m)} \leq \nu_R$ and thus $(u_\amalg (\cdot,\beta),q_a) \in \E_R(u,q_a)$. The conclusion then follows from Lemma~\ref{prop30-1}.
\end{proof}

\begin{lemma}\label{lem33-1-1}
The mapping
\begin{equation*}
\fonction{F_\iP(u,q_a)}{(\V^{s,b}_1 \cap [0,\beta_0],\vert \cdot \vert)}{(\CC([a,b]_\T,\R^{n}),\Vert \cdot \Vert_\infty)}{\beta}{q (\cdot,u_\amalg (\cdot,\beta),q_a)}
\end{equation*}
is Lipschitzian.
In particular, for every $\beta \in \V^{s,b}_1 \cap [0,\beta_0]$, $q (\cdot,u_\amalg (\cdot,\beta),q_a)$ converges uniformly to $q(\cdot,u,q_a)$ on $[a,b]_\T$ as $\beta$ tends to $0$.
\end{lemma}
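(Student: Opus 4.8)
The plan is to reduce the statement to the already-established Lipschitz property of the solution map, exactly as was done for the right-scattered case in Lemma~\ref{lem32-1-1}. First I would fix $R = \max ( \Vert u \Vert_{\L^\infty_{\TU} ([a,b)_{\TU},\R^m)},\Vert z \Vert_{\R^m}) +1$, as in the proof of Lemma~\ref{lem33-1}; by that lemma, every varied couple $(u_\amalg(\cdot,\beta),q_a)$ with $\beta \in \V^{s,b}_1 \cap [0,\beta_0]$ lies in $\E_R(u,q_a)$. Lemma~\ref{prop30-1-1} then furnishes a Lipschitz constant $C \geq 0$ for $F_R(u,q_a)$ on $\E_R(u,q_a)$, so that
\[
\Vert q(\cdot,u_\amalg(\cdot,\beta_2),q_a) - q(\cdot,u_\amalg(\cdot,\beta_1),q_a) \Vert_\infty \leq C \, \Vert (u_\amalg(\cdot,\beta_2),q_a) - (u_\amalg(\cdot,\beta_1),q_a) \Vert_{\UU}
\]
for all $\beta_1,\beta_2 \in \V^{s,b}_1 \cap [0,\beta_0]$. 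It then remains to estimate the right-hand $\UU$-norm in terms of $\vert \beta_2 - \beta_1 \vert$.

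Since the two varied couples share the same initial condition $q_a$, the $\UU$-norm reduces to $\Vert u_\amalg(\cdot,\beta_2) - u_\amalg(\cdot,\beta_1) \Vert_{\L^1_{\TU}([a,b)_{\TU},\R^m)}$. Assuming without loss of generality $\beta_1 \leq \beta_2$, the two needle-like controls coincide everywhere except on $[s+\beta_1,s+\beta_2)_{\TU}$, where one equals $z$ and the other equals $u$. Hence
\[
\Vert u_\amalg(\cdot,\beta_2) - u_\amalg(\cdot,\beta_1) \Vert_{\L^1_{\TU}([a,b)_{\TU},\R^m)} = \int_{[s+\beta_1,s+\beta_2)_{\TU}} \Vert z - u(\tau) \Vert_{\R^m} \, \DD_1 \tau \leq 2R \, \mu_{\DD_1}([s+\beta_1,s+\beta_2)_{\TU}),
\]
using $\Vert z \Vert_{\R^m} < R$ and $\Vert u(\tau) \Vert_{\R^m} < R$ for $\DD_1$-a.e.\ $\tau$.

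The only genuinely new point — and the step I would treat most carefully — is the evaluation of $\mu_{\DD_1}([s+\beta_1,s+\beta_2)_{\TU})$. This is where the choice of the domain $\V^{s,b}_1$ (rather than the larger $\V^{s,b}$) is essential: by definition $\beta \in \V^{s,b}_1$ forces $s+\beta \in [s,b]_{\TU}$, so both endpoints $s+\beta_1$ and $s+\beta_2$ belong to $\TU$. The basic measure identity $\mu_{\DD_1}([c,d)_{\TU}) = d-c$ for $c \leq d$ in $\TU$ then yields $\mu_{\DD_1}([s+\beta_1,s+\beta_2)_{\TU}) = \beta_2 - \beta_1$, whence
\[
\Vert q(\cdot,u_\amalg(\cdot,\beta_2),q_a) - q(\cdot,u_\amalg(\cdot,\beta_1),q_a) \Vert_\infty \leq 2RC \, \vert \beta_2 - \beta_1 \vert ,
\]
establishing that $F_\iP(u,q_a)$ is Lipschitzian. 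The asserted uniform convergence follows at once by specializing to $\beta_1 = 0$, $\beta_2 = \beta$, since $u_\amalg(\cdot,0) = u$. I expect no serious obstacle here; the entire argument is parallel to the right-scattered case, the lone subtlety being the measure computation above, which would fail for endpoints outside $\TU$ and is precisely what the restriction $\beta \in \V^{s,b}_1$ guarantees.
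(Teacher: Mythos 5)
Your proof is correct and follows essentially the same route as the paper's: both reduce the statement to the Lipschitz property of $F_R(u,q_a)$ from Lemma~\ref{prop30-1-1} and bound $\Vert (u_\amalg(\cdot,\beta_2),q_a)-(u_\amalg(\cdot,\beta_1),q_a)\Vert_{\UU}$ by $2R\vert \beta_2-\beta_1\vert$, yielding the Lipschitz constant $2CR$. The only difference is that you make explicit the computation of the control difference on $[s+\beta_1,s+\beta_2)_{\TU}$ and the measure identity $\mu_{\DD_1}([s+\beta_1,s+\beta_2)_{\TU})=\beta_2-\beta_1$ (which indeed hinges on $\beta_1,\beta_2\in\V^{s,b}_1$ so that both endpoints lie in $\TU$), a step the paper leaves implicit.
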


\begin{proof}
We use the notations of the proof of Lemma~\ref{lem33-1}. From Lemma~\ref{prop30-1-1}, there exists $C \geq 0$ (Lipschitz constant of $F_R(u,q_a)$) such that
\begin{equation*}\begin{split}
\Vert q (\cdot,u_\amalg (\cdot,\beta_2),q_a) - q (\cdot,u_\amalg (\cdot,\beta_1),q_a) \Vert_\infty
& \leq C \Vert (u_\amalg (\cdot,\beta_2),q_a) - (u_\amalg (\cdot,\beta_1),q_a) \Vert_{\UU} \\
& \leq 2 C R \vert \beta_2-\beta_1 \vert,
\end{split}\end{equation*}
for all $\beta_1$ and $\beta_2$ in $\V^{s,b}_1 \cap [0,\beta_0]$.
The lemma follows.
\end{proof}

According to \cite[Theorem 3]{bour10}, we define the \textit{variation vector} $w_{\amalg}(\cdot,u,q_a)$ associated with the needle-like variation $\amalg = (s,z)$ as the unique solution on $[s,b]_\T$ of the linear $\DD$-Cauchy problem
\begin{equation}\label{varvect_densew}
w^\DD(t)  = \frac{\partial f}{\partial q} (t,q(t,u,q_a),u^\Phi(t)) \, w(t) , \quad w(s) = f(s,q(s,u,q_a),z) - f(s,q(s,u,q_a),u(s)).
\end{equation}

\begin{proposition}\label{prop33-1}
For every $\delta \in \V^{s,b} \backslash \{ 0 \}$, the mapping
\begin{equation*}
\fonction{F_\iP(u,q_a)}{(\V^{s,b}_1 \cap [0,\beta_0],\vert \cdot \vert)}{(\CC ([s+\delta,b]_\T,\R^{n}),\Vert \cdot \Vert_\infty)}{\beta}{q (\cdot,u_\amalg (\cdot,\beta),q_a)}
\end{equation*}
is differentiable at $0$, and one has $ DF_\iP(u,q_a)(0) = w_\amalg (\cdot,u,q_a)$.
\end{proposition}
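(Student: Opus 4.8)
The plan is to follow the scheme of the proofs of Propositions~\ref{prop32-10} and~\ref{prop32-1}, the genuinely new point being the behaviour of the needle at the right-dense instant $s$. Write $q_0 = q(\cdot,u,q_a)$ and $q_\beta = q(\cdot,u_\amalg(\cdot,\beta),q_a)$, fix $\delta \in \V^{s,b}\setminus\{0\}$, and set $R = \max(\Vert u \Vert_{\L^\infty_\TU([a,b)_\TU,\R^m)},\Vert z \Vert_{\R^m})+1$, so that the compact set $K_R$ and the constant $L_R$ of Lemma~\ref{prop30-1} are available and, by Remark~\ref{rmK}, $(\tau,q_\beta(\tau),u_\amalg^\Phi(\tau,\beta)) \in K_R$ for $\DD$-a.e.\ $\tau \in [a,b)_\T$. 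Since $u_\amalg(\cdot,\beta)$ equals $u$ on $[a,s)_\TU$, the two trajectories coincide on $[a,s]_\T$, whence $q_\beta(s)=q_0(s)$. For $\beta \in \V^{s,b}_1 \cap (0,\min(\beta_0,\delta))$ I would define, on $[s+\beta,b]_\T$,
\[ \varepsilon_\amalg(t,\beta) = \frac{q_\beta(t)-q_0(t)}{\beta} - w_\amalg(t,u,q_a), \]
and observe that $[s+\delta,b]_\T \subset [s+\beta,b]_\T$, so it is enough to prove that $\varepsilon_\amalg(\cdot,\beta) \to 0$ uniformly on $[s+\beta,b]_\T$ as $\beta \to 0$.

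The hard part is the estimate of the difference quotient at the moving endpoint $s+\beta$. On $[s,s+\beta)_\T$ the perturbed control equals $z$ (indeed $u_\amalg^\Phi(\cdot,\beta)=z$ $\DD$-a.e.\ there, since $s \in \RD_1$ forces $\Phi(t) \in [s,s+\beta)_\TU$ for such $t$), while $q_0$ is still driven by $u^\Phi$; as $q_\beta(s)=q_0(s)$, this yields
\[ \frac{q_\beta(s+\beta)-q_0(s+\beta)}{\beta} = \frac{1}{\beta}\int_{[s,s+\beta)_\T}\big( f(\tau,q_\beta(\tau),z) - f(\tau,q_0(\tau),u^\Phi(\tau)) \big)\,\DD\tau. \]
I would split this into three averages. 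Because $s \in \RD$ one has $\mu_\DD([s,s+\beta)_\T)=\beta$, so the average of the constant $f(s,q_0(s),z)$ equals $f(s,q_0(s),z)$; the correction $\frac{1}{\beta}\int_{[s,s+\beta)_\T}(f(\tau,q_\beta(\tau),z)-f(s,q_0(s),z))\,\DD\tau$ tends to $0$ by uniform continuity of $f$ on $K_R$, since $\tau \to s$, $q_\beta \to q_0$ uniformly (Lemma~\ref{lem33-1-1}) and $q_0$ is continuous at $s$; and, crucially, since $s$ is a $\DD$-Lebesgue point of $\tau \mapsto f(\tau,q_0(\tau),u^\Phi(\tau))$ and $s \in \RD_1 \subset \RD$, the Lebesgue-point formula recalled in Section~\ref{secprelimtimescale} gives $\frac{1}{\beta}\int_{[s,s+\beta)_\T} f(\tau,q_0(\tau),u^\Phi(\tau))\,\DD\tau \to f(s,q_0(s),u(s))$ (using $u^\Phi(s)=u(s)$). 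Collecting these limits,
\[ \frac{q_\beta(s+\beta)-q_0(s+\beta)}{\beta} \longrightarrow f(s,q_0(s),z)-f(s,q_0(s),u(s)) = w_\amalg(s,u,q_a), \]
which is exactly the initial datum in~\eqref{varvect_densew}. This is the step that distinguishes the right-dense case from the right-scattered one, and is the main obstacle: it is precisely here that the choice $s \in \LL_{[a,b)_\T}(f(\cdot,q_0(\cdot),u^\Phi)) \cap \RD_1$ is used.

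It then remains to propagate this limit forward. On $[s+\beta,b]_\T$ both $q_\beta$ and $q_0$ satisfy the unperturbed dynamics $q^\DD = f(\cdot,q,u^\Phi)$; subtracting the integral representation of $q_\beta-q_0$ (started at $s+\beta$) from that of $w_\amalg$ and using the Taylor formula with integral remainder exactly as in Proposition~\ref{prop32-1}, I obtain
\begin{multline*}
\varepsilon_\amalg(t,\beta) = \rho(\beta) + \int_{[s+\beta,t)_\T}\Big(\int_0^1 \tfrac{\partial f}{\partial q}(\star_\theta)\,d\theta\Big)\varepsilon_\amalg(\tau,\beta)\,\DD\tau \\ + \int_{[s+\beta,t)_\T}\Big(\int_0^1 \tfrac{\partial f}{\partial q}(\star_\theta)\,d\theta - \tfrac{\partial f}{\partial q}(\tau,q_0(\tau),u^\Phi(\tau))\Big)w_\amalg(\tau,u,q_a)\,\DD\tau,
\end{multline*}
where $\star_\theta = (\tau,q_0(\tau)+\theta(q_\beta(\tau)-q_0(\tau)),u^\Phi(\tau)) \in K_R$ and $\rho(\beta) = \frac{q_\beta(s+\beta)-q_0(s+\beta)}{\beta} - w_\amalg(s+\beta,u,q_a)$. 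Since $w_\amalg$ is continuous at $s$ (as $s \in \RD$), $w_\amalg(s+\beta,u,q_a) \to w_\amalg(s,u,q_a)$, and the previous step gives $\rho(\beta) \to 0$. Bounding the last integrand by a term $\chi(\tau,\beta)$ and applying the time-scale Gronwall lemma (Lemma~\ref{lemgronwall}) yields $\Vert \varepsilon_\amalg(t,\beta) \Vert_{\R^n} \le \Upsilon(\beta)\,e_{L_R}(b,s+\beta)$ with $\Upsilon(\beta) = \Vert \rho(\beta) \Vert_{\R^n} + \int_{[s+\beta,b)_\T}\chi(\tau,\beta)\,\DD\tau$. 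Uniform continuity of $\partial f/\partial q$ on $K_R$ together with $q_\beta \to q_0$ force $\chi(\cdot,\beta) \to 0$ uniformly, hence $\Upsilon(\beta) \to 0$; this establishes the uniform convergence of $\varepsilon_\amalg(\cdot,\beta)$ to $0$ on $[s+\beta,b]_\T$, and therefore the differentiability of $F_\iP(u,q_a)$ at $0$ with $DF_\iP(u,q_a)(0)=w_\amalg(\cdot,u,q_a)$.
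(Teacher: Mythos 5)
Your proof is correct and follows essentially the same route as the paper's: the same error function $\varepsilon_\amalg$, Taylor expansion with integral remainder, the time-scale Gronwall lemma, and the key use of the $\DD$-Lebesgue point property at $s$ (together with $q_\beta(s)=q_0(s)$ and $u^\Phi_\amalg(\cdot,\beta)=z$ on $[s,s+\beta)_\T$) to show that the endpoint term converges to $w_\amalg(s,u,q_a)$. The only difference is cosmetic: you split the endpoint average into three terms handled by continuity, whereas the paper first replaces $q_\beta$ by $q_0$ via uniform convergence and then invokes the Lebesgue point property for both $f(\cdot,q_0,z)$ and $f(\cdot,q_0,u^\Phi)$ — these are equivalent arguments.
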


\begin{proof}
We use the notations of the proof of Lemma~\ref{lem33-1}. Recall that $(\t,q (\t,u_\amalg (\cdot,\beta),q_a),u^\Phi_\amalg (\t,\beta))$ and $(\t,q (\t,u_\amalg (\cdot,\beta),q_a),z)$ belong to $K_R$ for every $\beta \in \V^{s,b}_1 \cap [0,\beta_0]$ and for $\DD$-a.e. $\t \in [a,b)_\T$ (see Remark~\ref{rmK}). For every $\beta \in \V^{s,b}_1 \cap (0,\beta_0]$ and every $t \in [s+\beta,b]_\T$, we define
$$ \varepsilon_\amalg (t,\beta) = \frac{q (t,u_\amalg (\cdot,\beta),q_a) - q(t,u,q_a)}{ \beta } - w_{\amalg} (t,u,q_a). $$
It suffices to prove that $\varepsilon_\amalg (\cdot,\beta)$ converges uniformly to $0$ on $[s+\beta,b]_\T$ as $\beta$ tends to $0$. Note that, for every $\delta \in \V^{s,b} \backslash \{ 0 \}$, it suffices to consider $\beta \leq \delta$.
For every $\beta \in \V^{s,b}_1 \cap (0,\beta_0]$, the function $\varepsilon_\amalg (\cdot,\beta)$ is absolutely continuous on $[s+\beta,b]_\T$ and
$\varepsilon_\amalg (t,\beta) = \varepsilon_\amalg (s+\beta,\beta) +  \int_{[s+\beta,t)_\T} \varepsilon^\DD_\amalg (\tau,\beta) \, \DD \tau$,
for every $t \in [s+\beta,b]_\T$,
where
\begin{multline*}
\varepsilon_\amalg^\DD (\t,\beta) =  \frac{f(\t,q (\t,u_\amalg (\cdot,\beta),q_a),u^\Phi(\t))-f(\t,q (\t,u,q_a),u^\Phi(\t))}{\beta} \\
- \frac{\partial f}{\partial q} (\t,q(\t,u,q_a),u^\Phi(\t)) \, w_{\amalg} (\t,u,q_a),
\end{multline*}
for $\DD$-a.e. $\t \in [s+\beta,b)_\T$.
Using the Taylor formula with integral remainder, we get
\begin{multline*}
\varepsilon_\iP^\DD (\t,\beta) =  \di \int_0^1 \dfrac{\partial f}{\partial q} (\star_{\theta \beta \tau}) \,d\theta \, \varepsilon_\iP (\t,\beta)\\
+ \left( \di \int_0^1 \dfrac{\partial f}{\partial q} (\star_{\theta \beta \tau}) \,d\theta - \frac{\partial f}{\partial q} (\t,q(\t,u,q_a),u^\Phi(\t)) \right) \, w_{\iP} (\t,u,q_a),
\end{multline*}
where:
$$ \star_{\theta \beta \tau} = ( \t, q (\t,u,q_a) + \theta (q(\t,u_\iP(\cdot,\beta),q_a) - q (\t,u,q_a)),u^\Phi(\t) ) \in K_R.$$
It follows that
$\Vert \varepsilon_\iP^\DD (\t,\beta) \Vert_{\R^n} \leq  \chi_\iP (\t,\beta) +  L_R \Vert \varepsilon_\iP (\t,\beta) \Vert_{\R^n}$, where
\begin{equation*}\begin{split}
\chi_\iP (\t,\beta) = & \left\Vert \left( \di \int_0^1 \dfrac{\partial f}{\partial q} (\star_{\theta \beta \tau}) \,d\theta - \frac{\partial f}{\partial q} (\t,q(\t,u,q_a),u^\Phi(\t)) \right) \, w_{\iP} (\t,u,q_a) \right\Vert_{\R^n}.
\end{split}\end{equation*}
Therefore, one has
\begin{equation*}
\Vert \varepsilon_\amalg (t,\beta) \Vert_{\R^n}  \leq \Vert \varepsilon_\amalg (s+\beta,\beta) \Vert_{\R^n}  +  \int_{[s+\beta,b)_\T} \chi_\amalg (\tau,\beta) \,\DD \tau  +L_R \int_{[s+\beta,t)_\T} \Vert \varepsilon_\amalg (\tau,\beta) \Vert_{\R^n}  \, \DD \tau ,
\end{equation*}
for every $t \in [s+\beta,b]_\T$, and it follows from Lemma~\ref{lemgronwall} that
$\Vert \varepsilon_\amalg (t,\beta) \Vert_{\R^n} \leq \Upsilon_\amalg (\beta) e_{L_R} (b,s)$,
for every $t \in [s+\beta,b]_\T$, where
$ \Upsilon_\amalg (\beta) = \Vert \varepsilon_\amalg (s+\beta,\beta) \Vert_{\R^n}  +  \int_{[s+\beta,b)_\T} \chi_\amalg (\tau,\beta) \, \DD \tau$.

To conclude, it remains to prove that $\Upsilon_\amalg (\beta)$ converges to $0$ as $\beta$ tends to $0$. Since $q (\cdot,u_\amalg (\cdot,\beta),q_a)$ converges uniformly to $q(\cdot,u,q_a)$ on $[s+\beta,b]_\T$ as $\beta$ tends to $0$ (see Lemma~\ref{lem33-1-1}) and since $\partial f / \partial q$ is uniformly continuous on $K_R$, we first infer that $\int_{[s+\beta,b)_\T} \chi_\amalg (\tau,\beta) \, \DD \tau$ converges to $0$ as $\beta$ tends to $0$.
Secondly, let us prove that $\Vert \varepsilon_\amalg (s+\beta,\beta) \Vert_{\R^n} $ converges to $0$ as $\beta$ tends to $0$. By continuity, $w_\amalg (s+\beta,u,q_a)$ converges to $w_\amalg (s,u,q_a)$ as $\beta$ to $0$. Moreover, since $q (\cdot,u_\amalg (\cdot,\beta),q_a)$ converges uniformly to $q(\cdot,u,q_a)$ on $[a,b]_\T$ as $\beta$ tends to $0$ and since $f$ is uniformly continuous on $K_R$, it follows that $f(\cdot, q (\cdot,u_\amalg (\cdot,\beta),q_a),z)$ converges uniformly to $f(\cdot, q(\cdot,u,q_a),z)$ on $[a,b]_\T$ as $\beta$ tends to $0$. Therefore, it suffices to note that
$$\frac{1}{ \beta } \int_{[s,s+\beta)_\T} \left( f(\tau,q (\tau,u,q_a),z) - f(\tau,q(\tau,u,q_a),u^\Phi(\tau)) \right) \DD \tau$$
converges to $w_\amalg (s,u,q_a) = f(s,q(s,u,q_a),z) - f(s,q(s,u,q_a),u(s))$ as $\beta$ tends to $0$ since $s$ is a $\DD$-Lebesgue point of $f(\cdot,q(\cdot,u,q_a),z)$ by continuity and of $f(\cdot,q(\cdot,u,q_a),u^\Phi(\cdot))$ by hypothesis.
Then $\Vert \varepsilon_\amalg (s+\beta,\beta) \Vert_{\R^n}$ converges to $0$ as $\beta$ tends to $0$, and hence $\Upsilon_\amalg (\beta)$ converges to $0$ as well.
\end{proof}

\begin{lemma}\label{lem33-2}
Let $R > \Vert u \Vert_{\L^\infty_\TU ([a,b)_\TU,\R^m)}$ and let $(u_k,q_{a,k})_{k \in \N}$ be a sequence of elements of $\E_R(u,q_a)$. If $u_k$ converges to $u$ $\DD_1$-a.e. on $[a,b)_\TU$, $u_k(s)$ converges to $u(s)$ and $q_{a,k}$ converges to $q_a$ as $k$ tends to $+\infty$, then $w_{\amalg}(\cdot,u_k,q_{a,k})$ converges uniformly to $w_\amalg(\cdot,u,q_a)$ on $[s,b]_\T$ as $k$ tends to $+\infty$.
\end{lemma}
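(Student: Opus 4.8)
The plan is to mirror the proof of Lemma~\ref{lem32-2w}, since $w_\amalg$ is governed by a linear $\DD$-Cauchy problem exactly as $w_\Pi$ was; the only genuinely new point is the way the convergence of the initial value is obtained. First I would fix the notations $K_R$, $L_R$, $\nu_R$, $\eta_R$ from Lemma~\ref{prop30-1}, and observe, by the same argument as in the remark preceding the proof of Lemma~\ref{lem32-2w}, that the hypotheses force $u_k^\Phi \to u^\Phi$ $\DD$-a.e.\ on $[a,b)_\T$ and $(u_k,q_{a,k}) \to (u,q_a)$ in $(\E_R(u,q_a),\Vert \cdot \Vert_{\UU})$, whence, by Lemma~\ref{prop30-1-1}, $q(\cdot,u_k,q_{a,k}) \to q(\cdot,u,q_a)$ uniformly on $[a,b]_\T$.

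Next I would set $\varepsilon_k = w_\amalg(\cdot,u_k,q_{a,k}) - w_\amalg(\cdot,u,q_a)$ on $[s,b]_\T$. Writing both functions through the integral form of~\eqref{varvect_densew} and inserting the term $\frac{\partial f}{\partial q}(\tau,q(\tau,u_k,q_{a,k}),u_k^\Phi(\tau))\,w_\amalg(\tau,u,q_a)$ gives, for every $t \in [s,b]_\T$,
$$
\varepsilon_k(t) = \varepsilon_k(s) + \int_{[s,t)_\T} \frac{\partial f}{\partial q}(\tau,q(\tau,u_k,q_{a,k}),u_k^\Phi(\tau))\,\varepsilon_k(\tau)\,\DD\tau + \int_{[s,t)_\T} D_k(\tau)\,w_\amalg(\tau,u,q_a)\,\DD\tau,
$$
where $D_k(\tau) = \frac{\partial f}{\partial q}(\tau,q(\tau,u_k,q_{a,k}),u_k^\Phi(\tau)) - \frac{\partial f}{\partial q}(\tau,q(\tau,u,q_a),u^\Phi(\tau))$. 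By Remark~\ref{rmK} the relevant points lie in $K_R$, so $\Vert \partial f/\partial q \Vert \leq L_R$ there, and Lemma~\ref{lemgronwall} yields
$$
\Vert \varepsilon_k(t)\Vert_{\R^n} \leq \Big( \Vert \varepsilon_k(s)\Vert_{\R^n} + \Upsilon_k \Big)\, e_{L_R}(b,s), \qquad \Upsilon_k = \int_{[s,b)_\T} \Vert D_k(\tau)\Vert\,\Vert w_\amalg(\tau,u,q_a)\Vert_{\R^n}\,\DD\tau.
$$
That $\Upsilon_k \to 0$ is routine: $\partial f/\partial q$ is continuous and bounded on $K_R$, the uniform convergence of $q(\cdot,u_k,q_{a,k})$ together with $u_k^\Phi \to u^\Phi$ $\DD$-a.e.\ forces $D_k \to 0$ $\DD$-a.e., and the Lebesgue dominated convergence theorem applies because $w_\amalg(\cdot,u,q_a)$ is a fixed continuous function.

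The hard part — and the only place where the argument departs from the right-scattered case — will be the convergence of the initial value $\varepsilon_k(s)$. In Lemma~\ref{lem32-2w} this followed from Lemma~\ref{lem32-2h}, but here the initial value is given explicitly by~\eqref{varvect_densew}, namely
$$
\varepsilon_k(s) = \big[ f(s,q(s,u_k,q_{a,k}),z) - f(s,q(s,u_k,q_{a,k}),u_k(s)) \big] - \big[ f(s,q(s,u,q_a),z) - f(s,q(s,u,q_a),u(s)) \big].
$$
Since $q(s,u_k,q_{a,k}) \to q(s,u,q_a)$, the terms evaluated at $z$ converge by continuity of $f$; the terms evaluated at the control value converge by continuity of $f$ together with the hypothesis $u_k(s) \to u(s)$, so $\Vert \varepsilon_k(s)\Vert_{\R^n} \to 0$. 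I would stress that this extra hypothesis is indispensable precisely because $s \in \RD_1$ gives $\mu_{\DD_1}(\{s\}) = 0$, so $\DD_1$-a.e.\ convergence of $u_k$ to $u$ conveys no information at $s$ itself — in sharp contrast with the right-scattered case, where $\mu_1(r) > 0$ makes pointwise convergence automatic. Combining the two limits with the Gronwall bound delivers the uniform convergence of $\varepsilon_k$ to $0$ on $[s,b]_\T$, which concludes the proof.
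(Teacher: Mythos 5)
Your proof is correct and follows exactly the route the paper takes: the paper's own proof of Lemma~\ref{lem33-2} consists of the single sentence that it is ``similar to the one of Lemma~\ref{lem32-2w}, replacing $\sigma_1(r)$ with $s$,'' and your argument is precisely that adaptation, with the decomposition, the Gronwall estimate, and the dominated-convergence step carried over verbatim. Your handling of the initial value $\varepsilon_k(s)$ --- via the explicit initial condition of~\eqref{varvect_densew}, continuity of $f$, and the hypothesis $u_k(s)\to u(s)$, which is indeed indispensable because $\mu_{\DD_1}(\{s\})=0$ at a right-dense point --- is exactly the detail the paper's terse proof leaves to the reader, and it correctly replaces the appeal to Lemma~\ref{lem32-2h} used in the right-scattered case.
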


\begin{proof}
The proof is similar to the one of Lemma~\ref{lem32-2w}, replacing $\sigma_1(r)$ with $s$.
\end{proof}

\subsubsection{Variation of the initial condition $q_a$}\label{section32}
Let $q'_a \in \R^n$.

\begin{lemma}\label{lem34-1}
There exists $\gamma_0>0$ such that $(u,q_a+\gamma q'_a) \in \UU$ for every $\gamma \in [0,\gamma_0]$.
\end{lemma}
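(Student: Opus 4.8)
The plan is to read off this statement directly from Lemma~\ref{prop30-1}, by keeping the control $u$ unchanged and letting only the initial condition move inside the neighborhood $\E_R(u,q_a)$ that was already shown to be contained in $\UU$. First I would fix an arbitrary $R > \Vert u \Vert_{\L^\infty_{\TU}([a,b)_{\TU},\R^m)}$ and let $\nu_R > 0$ and $\eta_R > 0$ be the associated constants furnished by Lemma~\ref{prop30-1}, so that $\E_R(u,q_a) \subset \UU$.

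The key observation is that the fixed control $u$ trivially meets both control constraints defining $\E_R(u,q_a)$: on the one hand $\Vert u \Vert_{\L^\infty_{\TU}([a,b)_{\TU},\R^m)} < R$, so $u \in \B_{\L^\infty_{\TU}([a,b)_{\TU},\R^m)}(0,R)$; on the other hand $u$ is the center of $\B_{\L^1_{\TU}([a,b)_{\TU},\R^m)}(u,\nu_R)$ and hence belongs to it. Consequently the couple $(u, q_a + \gamma q'_a)$ lies in $\E_R(u,q_a)$ as soon as $q_a + \gamma q'_a \in \B_{\R^n}(q_a,\eta_R)$, that is, as soon as $\gamma \Vert q'_a \Vert_{\R^n} \leq \eta_R$.

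Finally I would choose $\gamma_0$ accordingly. If $q'_a = 0_{\R^n}$, any $\gamma_0 > 0$ works; otherwise I set $\gamma_0 = \eta_R / \Vert q'_a \Vert_{\R^n} > 0$. Then for every $\gamma \in [0,\gamma_0]$ one has $\Vert \gamma q'_a \Vert_{\R^n} = \gamma \Vert q'_a \Vert_{\R^n} \leq \eta_R$, whence $(u, q_a + \gamma q'_a) \in \E_R(u,q_a) \subset \UU$, which is exactly the claim. I do not expect any genuine obstacle here: the lemma is an immediate specialization of Lemma~\ref{prop30-1} to variations that perturb only the initial datum, the single minor point being to treat the degenerate case $q'_a = 0_{\R^n}$ separately so as not to divide by zero.
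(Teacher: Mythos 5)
Your proof is correct and follows essentially the same route as the paper: fix $R > \Vert u \Vert_{\L^\infty_{\TU}([a,b)_{\TU},\R^m)}$, note that $(u,q_a+\gamma q'_a)$ lies in $\E_R(u,q_a)$ as soon as $\gamma \Vert q'_a \Vert_{\R^n} \leq \eta_R$, and conclude by Lemma~\ref{prop30-1}. The only (harmless) differences are cosmetic: the paper takes the concrete choice $R = \Vert u \Vert_{\L^\infty_{\TU}([a,b)_{\TU},\R^m)}+1$ and does not single out the degenerate case $q'_a = 0_{\R^n}$.
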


\begin{proof}
Let $R = \Vert u \Vert_{\L^\infty_{\TU} ([a,b)_{\TU},\R^m)}+1 > \Vert u \Vert_{\L^\infty_{\TU} ([a,b)_{\TU},\R^m)}$. We use the notations $K_R$, $L_R$, $\nu_R$ and $\eta_R$, defined in the proof of Lemma~\ref{prop30-1}.
There exists $\gamma_0>0$ such that $\Vert q_a+\gamma q'_a - q_a \Vert_{\R^n}  = \gamma \Vert q'_a \Vert_{\R^n}  \leq \eta_R$ for every $\gamma \in [0,\gamma_0]$, and hence $(u,q_a+\gamma q'_a) \in \E_R(u,q_a)$. Then the claim follows from Lemma~\ref{prop30-1}.
\end{proof}

\begin{lemma}\label{lem34-1-1}
The mapping
\begin{equation*}
\fonction{F_{q'_a}(u,q_a)}{([0,\gamma_0],\vert \cdot \vert)}{(\CC([a,b]_\T,\R^{n}),\Vert \cdot \Vert_\infty)}{\gamma}{q (\cdot,u,q_a+\gamma q'_a)}
\end{equation*}
is Lipschitzian.
In particular, for every $\gamma \in [0,\gamma_0]$, $q (\cdot,u,q_a+\gamma q'_a)$ converges uniformly to $q (\cdot,u,q_a)$ on $[a,b]_\T$ as $\gamma$ tends to $0$.
\end{lemma}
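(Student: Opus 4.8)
The plan is to reduce everything to the Lipschitz estimate already established in Lemma~\ref{prop30-1-1}, exactly as was done for the needle-like variations in Lemmas~\ref{lem32-1-1} and~\ref{lem33-1-1}. The point is that a variation of the initial condition is nothing but a special perturbation of the pair $(u,q_a)$ inside $\UU$ in which only the $\R^n$-component moves, the control being left untouched.

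First I would fix $R = \Vert u \Vert_{\L^\infty_{\TU}([a,b)_{\TU},\R^m)} + 1 > \Vert u \Vert_{\L^\infty_{\TU}([a,b)_{\TU},\R^m)}$ as in the proof of Lemma~\ref{lem34-1}, so that, by that lemma, $(u, q_a + \gamma q'_a) \in \E_R(u,q_a)$ for every $\gamma \in [0,\gamma_0]$. This is the step that guarantees that $F_{q'_a}(u,q_a)$ is well defined on the whole interval $[0,\gamma_0]$, since admissibility of $(u,q_a+\gamma q'_a)$ on $[a,b]_\T$ is precisely what it means for the trajectory $q(\cdot,u,q_a+\gamma q'_a)$ to exist on all of $[a,b]_\T$. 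In particular, for any $\gamma_1,\gamma_2 \in [0,\gamma_0]$, both perturbed pairs $(u, q_a + \gamma_1 q'_a)$ and $(u, q_a + \gamma_2 q'_a)$ belong to $\E_R(u,q_a) \subset \UU$.

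Next I would apply Lemma~\ref{prop30-1-1}, which provides a Lipschitz constant $C \geq 0$ for $F_R(u,q_a)$ with respect to $\Vert \cdot \Vert_{\UU}$. Since the control component is unchanged, the $\Vert \cdot \Vert_{\UU}$-distance between the two perturbed pairs reduces to the $\R^n$-distance of the initial conditions:
$$ \Vert (u, q_a + \gamma_2 q'_a) - (u, q_a + \gamma_1 q'_a) \Vert_{\UU} = \vert \gamma_2 - \gamma_1 \vert \, \Vert q'_a \Vert_{\R^n}. $$
Hence $\Vert q(\cdot, u, q_a + \gamma_2 q'_a) - q(\cdot, u, q_a + \gamma_1 q'_a) \Vert_\infty \leq C \Vert q'_a \Vert_{\R^n} \, \vert \gamma_2 - \gamma_1 \vert$, which is exactly the claimed Lipschitz continuity of $F_{q'_a}(u,q_a)$, with Lipschitz constant $C \Vert q'_a \Vert_{\R^n}$. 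The ``in particular'' statement then follows at once by taking $\gamma_1 = 0$ and letting $\gamma_2 = \gamma \to 0$.

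There is essentially no obstacle here: all the analytic work (the compactness of $K_R$, the local Lipschitz bound~\eqref{eqlipK}, the translation from $\L^1_\T$ to $\L^1_{\TU}$ via Proposition~\ref{propnumber1}, and the Gronwall estimate of Lemma~\ref{lemgronwall}) has already been packaged into Lemma~\ref{prop30-1-1}. The only genuinely new verification is the elementary computation of the $\Vert \cdot \Vert_{\UU}$ distance, which is trivial precisely because the control is held fixed and only the initial condition varies. The single subtlety worth stating explicitly is the preliminary appeal to Lemma~\ref{lem34-1} for admissibility, without which the estimate could not even be written down on the whole of $[a,b]_\T$.
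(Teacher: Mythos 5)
Your proof is correct and follows essentially the same route as the paper: both invoke the admissibility result of Lemma~\ref{lem34-1} (to know the perturbed pairs lie in $\E_R(u,q_a)$), then apply the Lipschitz estimate of Lemma~\ref{prop30-1-1} and compute that the $\Vert \cdot \Vert_{\UU}$-distance reduces to $\vert \gamma_2 - \gamma_1 \vert \, \Vert q'_a \Vert_{\R^n}$ because the control component is unchanged. Your explicit remarks on well-definedness and the choice of $R$ simply spell out what the paper compresses into "we use the notations of the proof of Lemma~\ref{lem34-1}".
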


\begin{proof}
We use the notations of the proof of Lemma~\ref{lem34-1}. From Lemma~\ref{prop30-1-1}, there exists $C \geq 0$ (Lipschitz constant of $F_R(u,q_a)$) such that
\begin{equation*}\begin{split}
\Vert q (\cdot,u,q_a + \gamma_2 q'_a ) - q (\cdot,u,q_a + \gamma_1 q'_a ) \Vert_\infty
& \leq  C \Vert (u,q_a+\gamma_2 q'_a)-(u,q_a+\gamma_1 q'_a) \Vert_{\UU}  \\
& = C \vert \gamma_2 - \gamma_1 \vert \Vert q'_a \Vert_{\R^n} .
\end{split}\end{equation*}
for all $\gamma_1$ and $\gamma_2$ in $[0,\gamma_0]$.
\end{proof}

According to \cite[Theorem 3]{bour10}, we define the \textit{variation vector} $w_{q'_a}(\cdot,u,q_a)$ associated with the perturbation $q_a'$ as the unique solution on $[a,b]_\T$ of the linear $\DD$-Cauchy problem
\begin{equation}\label{varvect_pointinit}
w^\DD(t) = \frac{\partial f}{\partial q} (t,q (t,u,q_a),u^\Phi(t)) \, w(t), \quad w(a) = q'_a.
\end{equation}

\begin{proposition}\label{prop34-1}
The mapping
\begin{equation*}
\fonction{F_{q'_a}(u,q_a)}{([0,\gamma_0],\vert \cdot \vert)}{(\CC ([a,b]_\T,\R^{n}),\Vert \cdot \Vert_\infty)}{\gamma}{q (\cdot,u,q_a+\gamma q'_a )}
\end{equation*}
is differentiable at $0$, and one has $DF_{q'_a}(u,q_a)(0) = w_{q'_a} (\cdot,u,q_a)$.
\end{proposition}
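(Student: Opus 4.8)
The plan is to follow exactly the scheme used in the proofs of Propositions~\ref{prop32-10}, \ref{prop32-1} and \ref{prop33-1}, which is in fact strictly simpler here since only the initial condition is perturbed while the control $u$ is left unchanged. First I would fix $R > \Vert u \Vert_{\L^\infty_{\TU}([a,b)_{\TU},\R^m)}$ so that Lemma~\ref{lem34-1} guarantees $(u,q_a+\gamma q'_a) \in \E_R(u,q_a)$ for every $\gamma \in [0,\gamma_0]$, and reuse the notations $K_R$ and $L_R$ from Lemma~\ref{prop30-1}, recalling from Remark~\ref{rmK} that $(\t,q(\t,u,q_a+\gamma q'_a),u^\Phi(\t)) \in K_R$ for $\DD$-a.e.\ $\t \in [a,b)_\T$. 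For $\gamma \in (0,\gamma_0]$ I would introduce the error term
$$ \varepsilon_{q'_a}(t,\gamma) = \frac{q(t,u,q_a+\gamma q'_a) - q(t,u,q_a)}{\gamma} - w_{q'_a}(t,u,q_a), $$
so that the statement reduces to proving that $\varepsilon_{q'_a}(\cdot,\gamma)$ converges uniformly to $0$ on $[a,b]_\T$ as $\gamma \to 0$.

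The key simplification, compared with the needle-like variations, is that the control argument fed to $f$ is the \emph{same} $u^\Phi(\t)$ for both $q(\cdot,u,q_a+\gamma q'_a)$ and $q(\cdot,u,q_a)$. Hence no $\partial f/\partial u$ term arises, and moreover $\varepsilon_{q'_a}(a,\gamma) = (q_a+\gamma q'_a - q_a)/\gamma - q'_a = 0$ because $w_{q'_a}(a)=q'_a$ by~\eqref{varvect_pointinit}; thus $\varepsilon_{q'_a}(\cdot,\gamma)$ vanishes at $t=a$ and $\varepsilon_{q'_a}(t,\gamma) = \int_{[a,t)_\T} \varepsilon^\DD_{q'_a}(\tau,\gamma)\,\DD\tau$. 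Differentiating, using the dynamics~\eqref{DD-CS} for both trajectories and the defining equation~\eqref{varvect_pointinit} of $w_{q'_a}$, and then applying the Taylor formula with integral remainder exactly as in Proposition~\ref{prop32-10}, I would obtain
$$ \varepsilon^\DD_{q'_a}(\t,\gamma) = \int_0^1 \frac{\partial f}{\partial q}(\star_{\theta\gamma\t})\,d\theta\,\varepsilon_{q'_a}(\t,\gamma) + \left( \int_0^1 \frac{\partial f}{\partial q}(\star_{\theta\gamma\t})\,d\theta - \frac{\partial f}{\partial q}(\t,q(\t,u,q_a),u^\Phi(\t)) \right) w_{q'_a}(\t,u,q_a), $$
where $\star_{\theta\gamma\t} = (\t, q(\t,u,q_a) + \theta(q(\t,u,q_a+\gamma q'_a) - q(\t,u,q_a)), u^\Phi(\t)) \in K_R$. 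This gives the pointwise bound $\Vert \varepsilon^\DD_{q'_a}(\t,\gamma) \Vert_{\R^n} \leq \chi_{q'_a}(\t,\gamma) + L_R \Vert \varepsilon_{q'_a}(\t,\gamma) \Vert_{\R^n}$, with $\chi_{q'_a}(\t,\gamma)$ the norm of the second term on the right-hand side.

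Integrating this inequality and invoking the time-scale Gronwall Lemma~\ref{lemgronwall} then yields $\Vert \varepsilon_{q'_a}(t,\gamma) \Vert_{\R^n} \leq \Upsilon_{q'_a}(\gamma)\, e_{L_R}(b,a)$ for every $t \in [a,b]_\T$, where $\Upsilon_{q'_a}(\gamma) = \int_{[a,b)_\T} \chi_{q'_a}(\tau,\gamma)\,\DD\tau$. It remains to check that $\Upsilon_{q'_a}(\gamma) \to 0$ as $\gamma \to 0$: since $q(\cdot,u,q_a+\gamma q'_a)$ converges uniformly to $q(\cdot,u,q_a)$ on $[a,b]_\T$ by Lemma~\ref{lem34-1-1} and $\partial f/\partial q$ is uniformly continuous on the compact $K_R$, the integrand $\chi_{q'_a}(\cdot,\gamma)$ tends uniformly to $0$, whence the claim. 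The only mildly delicate point is this last uniform convergence of $\Upsilon_{q'_a}$, and it is handled by precisely the same argument as in the earlier propositions; the absence of any needle variation and of a $\partial f/\partial u$ contribution makes the present case the most straightforward of the four differentiability results.
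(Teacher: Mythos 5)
Your proposal is correct and follows essentially the same route as the paper's own proof: the same error term $\varepsilon_{q'_a}(\cdot,\gamma)$ vanishing at $t=a$, the same Taylor-with-integral-remainder expansion (with no $\partial f/\partial u$ term since the control is unchanged), the same Gronwall estimate via Lemma~\ref{lemgronwall}, and the same conclusion from Lemma~\ref{lem34-1-1} together with the uniform continuity of $\partial f/\partial q$ on $K_R$. Nothing is missing.
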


\begin{proof}
We use the notations of the proof of Lemma~\ref{lem34-1}. Note that, from Remark~\ref{rmK}, $(\t,q (\t,u,q_a+\gamma q'_a),u^\Phi(\t)) \in K_R$ for every $\gamma \in [0,\gamma_0]$ and for $\DD$-a.e. $\t \in [a,b)_\T$.
For every $\gamma \in (0,\gamma_0]$ and every $t \in [a,b]_\T$, we define
\begin{equation*}
\varepsilon_{q'_a} (t,\gamma) = \frac{q (t,u,q_a+\gamma q'_a) - q(t,u,q_a)}{ \gamma } - w_{q'_a} (t,u,q_a).
\end{equation*}
It suffices to prove that $\varepsilon_{q'_a} (\cdot,\gamma)$ converges uniformly to $0$ on $[a,b]_\T$ as $\gamma$ tends to $0$.
For every $\gamma \in (0,\gamma_0]$, since the function $\varepsilon_{q'_a} (\cdot,\gamma)$ vanishes at $t=a$ and is absolutely continuous on $[a,b]_\T$, $\varepsilon_{q'_a} (t,\gamma) = \int_{[a,t)_\T} \varepsilon^\DD_{q'_a} (\tau,\gamma) \, \DD \tau$, for every $t \in [a,b]_\T$, where
\begin{multline*}
\varepsilon_{q'_a}^\DD (\t,\gamma) = \frac{f(\t,q (\t,u,q_a+\gamma q'_a),u^\Phi(\t))-f(\t,q (\t,u,q_a),u^\Phi(\t))}{\gamma} \\
- \frac{\partial f}{\partial q} (\t,q(\t,u,q_a),u^\Phi(\t),\t) \, w_{q'_a} (\t,u,q_a) ,
\end{multline*}
for $\DD$-a.e. $\t \in [a,b)_\T$.
Using the Taylor formula with integral remainder, we get
\begin{multline*}
\varepsilon_{q'_a}^\DD (\t,\gamma) =  \di \int_0^1 \frac{\partial f}{\partial q} (\star_{\theta \gamma \t}) \,d\theta \cdot \varepsilon_{q'_a} (\t,\gamma)  \\
+ \left( \di \int_0^1 \frac{\partial f}{\partial q} (\star_{\theta \gamma \t}) \,d\theta - \frac{\partial f}{\partial q} (\t,q(\t,u,q_a),u^\Phi(\t)) \right) \, w_{q'_a} (\t,u,q_a),
\end{multline*}
where
$$ \star_{\theta \gamma \t} = ( \t, q(\t,u,q_a) + \theta (q(\t,u,q_a+\gamma q'_a) - q(\t,u,q_a)) , u^\Phi(\t)) \in K_R. $$
It follows that $\Vert \varepsilon_{q'_a}^\DD (\t,\gamma) \Vert_{\R^n}  \leq \chi_{q'_a} (\t,\gamma) +  L_R \Vert \varepsilon_{q'_a} (\t,\gamma) \Vert_{\R^n}$, where
\begin{equation*}
\chi_{q'_a} (\t,\gamma) = \left\Vert \left( \di \int_0^1 \frac{\partial f}{\partial q} (\star_{\theta \gamma \t}) \,d\theta - \frac{\partial f}{\partial q} (\t,q(\t,u,q_a),u^\Phi(\t)) \right) \, w_{q'_a} (\t,u,q_a) \right\Vert_{\R^n} .
\end{equation*}
Hence
\begin{equation*}
\Vert \varepsilon_{q'_a} (t,\gamma) \Vert_{\R^n}  \leq  \int_{[a,b)_\T} \chi_{q'_a} (\tau,\gamma) \, \DD \tau  +L_R \int_{[a,t)_\T} \Vert \varepsilon_{q'_a} (\tau,\gamma) \Vert_{\R^n}  \, \DD \tau,
\end{equation*}
for every $t \in [a,b]_\T$, and it follows from Lemma~\ref{lemgronwall} that
$\Vert \varepsilon_{q'_a} (t,\gamma) \Vert_{\R^n} \leq \Upsilon_{q'_a} (\gamma) e_{L_R} (b,a)$,
for every $t \in [a,b]_\T$, where
$
\Upsilon_{q'_a} (\gamma) = \int_{[a,b)_\T} \chi_{q'_a} (\tau,\gamma) \, \DD \tau.
$

To conclude, it remains to prove that $\Upsilon_{q'_a} (\gamma)$ converges to $0$ as $\gamma$ tends to $0$. Since $q(\cdot,u,q_a+\gamma q'_a)$ converges uniformly to $q(\cdot,u,q_a)$ on $[a,b]_\T$ as $\gamma$ tends to $0$ (see Lemma~\ref{lem34-1-1}) and since $\partial f / \partial q$ is uniformly continuous on $K_R$, the conclusion follows.
\end{proof}

\begin{lemma}\label{lem34-2}
Let $R > \Vert u \Vert_{\L^\infty_\T([a,b)_\T,\R^m)}$ and let $(u_k,q_{a,k})_{k \in \N}$ be a sequence of elements of $\E_R(u,q_a)$. If $u_k$ converges to $u$ $\DD_1$-a.e. on $[a,b)_\TU$ and $q_{a,k}$ converges to $q_a$ in $\R^n$ as $k$ tends to $+\infty$, then $w_{q'_a}(\cdot,u_k,q_{a,k})$ converges uniformly to $w_{q'_a}(\cdot,u,q_a)$ on $[a,b]_\T$ as $k$ tends to $+\infty$.
\end{lemma}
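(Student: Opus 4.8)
The plan is to follow exactly the scheme of the proof of Lemma~\ref{lem32-2w}, which is in fact slightly simpler here because the two variation vectors $w_{q'_a}(\cdot,u_k,q_{a,k})$ and $w_{q'_a}(\cdot,u,q_a)$ share the same initial condition $w(a)=q'_a$, so no boundary term needs to be controlled separately. First I would set $\varepsilon_k(\cdot) = w_{q'_a}(\cdot,u_k,q_{a,k}) - w_{q'_a}(\cdot,u,q_a)$ on $[a,b]_\T$ and note that $\varepsilon_k(a)=0$. Writing both variation vectors in integral form from their defining linear $\DD$-Cauchy problem~\eqref{varvect_pointinit} and subtracting, I obtain
\begin{multline*}
\varepsilon_k(t) = \int_{[a,t)_\T} \frac{\partial f}{\partial q}(\tau,q(\tau,u_k,q_{a,k}),u_k^\Phi(\tau)) \, \varepsilon_k(\tau) \, \DD\tau \\
+ \int_{[a,t)_\T} \left( \frac{\partial f}{\partial q}(\tau,q(\tau,u_k,q_{a,k}),u_k^\Phi(\tau)) - \frac{\partial f}{\partial q}(\tau,q(\tau,u,q_a),u^\Phi(\tau)) \right) w_{q'_a}(\tau,u,q_a) \, \DD\tau,
\end{multline*}
for every $t \in [a,b]_\T$ and every $k \in \N$.

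Since $(u_k,q_{a,k}) \in \E_R(u,q_a)$, Remark~\ref{rmK} ensures that $(\tau,q(\tau,u_k,q_{a,k}),u_k^\Phi(\tau)) \in K_R$ for $\DD$-a.e. $\tau \in [a,b)_\T$, so that $\Vert \partial f / \partial q \Vert \leq L_R$ along this curve, and likewise along $(\tau,q(\tau,u,q_a),u^\Phi(\tau))$. Applying the time-scale Gronwall Lemma~\ref{lemgronwall}, I then get $\Vert \varepsilon_k(t)\Vert_{\R^n} \leq \Upsilon_k \, e_{L_R}(b,a)$ for every $t \in [a,b]_\T$, with
\begin{equation*}
\Upsilon_k = \int_{[a,b)_\T} \left\Vert \frac{\partial f}{\partial q}(\tau,q(\tau,u_k,q_{a,k}),u_k^\Phi(\tau)) - \frac{\partial f}{\partial q}(\tau,q(\tau,u,q_a),u^\Phi(\tau)) \right\Vert \Vert w_{q'_a}(\tau,u,q_a)\Vert_{\R^n} \, \DD\tau.
\end{equation*}

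It then remains to prove that $\Upsilon_k \to 0$ as $k \to +\infty$. By the argument of Lemma~\ref{lemnumber1}, the $\DD_1$-a.e. convergence $u_k \to u$ yields $u_k^\Phi \to u^\Phi$ $\DD$-a.e. on $[a,b)_\T$; the Lebesgue dominated convergence theorem gives $(u_k,q_{a,k}) \to (u,q_a)$ in $(\E_R(u,q_a),\Vert\cdot\Vert_{\UU})$, whence by Lemma~\ref{prop30-1-1} the uniform convergence $q(\cdot,u_k,q_{a,k}) \to q(\cdot,u,q_a)$ on $[a,b]_\T$. Since $\partial f/\partial q$ is continuous and bounded on the compact set $K_R$, the integrand in $\Upsilon_k$ converges to $0$ for $\DD$-a.e. $\tau$ and is dominated by the $\DD$-integrable function $\tau \mapsto 2 L_R \Vert w_{q'_a}(\tau,u,q_a)\Vert_{\R^n}$; a last application of the Lebesgue dominated convergence theorem then gives $\Upsilon_k \to 0$, hence the uniform convergence of $\varepsilon_k$ to $0$ on $[a,b]_\T$ and the lemma.

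I do not expect any serious obstacle: the statement is a continuous-dependence estimate for the linear adjoint-type equation~\eqref{varvect_pointinit}, and the only points requiring care are the passage from $\DD_1$-a.e. convergence of $u_k$ to $\DD$-a.e. convergence of $u_k^\Phi$ and the domination needed to conclude $\Upsilon_k \to 0$, both of which are handled exactly as in Lemmas~\ref{lemnumber1} and \ref{lem32-2w}.
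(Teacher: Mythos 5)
Your proof is correct and is exactly the argument the paper intends: the paper's own proof of Lemma~\ref{lem34-2} simply says it is the proof of Lemma~\ref{lem32-2w} with $\sigma_1(r)$ replaced by $a$, which is precisely what you carried out (Gronwall on $\varepsilon_k$, dominated convergence for $\Upsilon_k$, and the $\DD_1$-a.e. to $\DD$-a.e. passage via Lemma~\ref{lemnumber1}). Your observation that $\varepsilon_k(a)=0$ removes the boundary term is the correct simplification relative to Lemma~\ref{lem32-2w}.
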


\begin{proof}
The proof is similar to the one of Lemma~\ref{lem32-2w}, replacing $\sigma_1(r)$ with $a$.
\end{proof}

\subsection{Proof of Theorem~\ref{thmmain}}\label{annexe3}
We are now in a position to prove the PMP. In the sequel, we consider $q^*$ an optimal trajectory, associated with an optimal sampled-data control $u^*$ and with $b^* \in \T$, with $b^* = b$ if the final time is fixed. We set $q^*_a = q^*(a)$.

\subsubsection{The augmented system}%\label{section40}
As in \cite{lee,pont}, we consider the \textit{augmented system} in $\R^{n+1}$
\begin{equation} \label{DD-CS_augm}
\bar q^\DD(t) = \bar f (t,\bar q(t),u^\Phi(t)),
\end{equation}
with $\bar q=(q,q^0)^\top$ the augmented state with values in $\R^n\times\R$, and $\bar f:\T\times\R^{n+1}\times\R^m \rightarrow\R^{n+1}$, the augmented dynamics, defined by
$\bar f(t,\bar q,u)=(f(t,q,u),f^0(t,q,u))^\top$.
Note that $\bar f$ does not depend on $q^0$. We will always impose as an initial condition $q^0(a)=0$, so that $q^0(b)= \int_{[a,b)_\T} f^0 ( \tau , q (\tau), u^\Phi(\tau) ) \, \DD \tau$. Hence, the additional coordinate $q^0$ stands for the cost.

Denoting $\bar q^*_a = (q^*_a,0)^\top$, we have $(u^*,\bar q_a^*) \in \UUEB$. We set $ \bar q^* = \bar q(\cdot,u^*,\bar q^*_a) = (q^*,q^{0*})^\top$. Hence, $\bar q^*$ is a solution of~\eqref{DD-CS_augm} on $[a,b^*]_\T$ associated with the control $u^*$, satisfying $q^{0*}(a) = 0$ and $ g( q^*(a), q^*(b^*)) \in \S$ and minimizing $ q^0(b) $
over all possible trajectories $\bar q$ solutions of~\eqref{DD-CS_augm} on $[a,b]_\T$, where $b \in \T$ and associated with a control $u\in \L^\infty_{\TU} ([a,b)_{\TU},\Omega)$, satisfying $q^0(a) = 0$ and $ g( q(a), q(b)) \in \S$.

\subsubsection{Application of the Ekeland variational principle}\label{section41}
For the completeness, we recall a simplified (but sufficient) version of the Ekeland variational principle.
\begin{theorem}[\cite{ekel}]%\label{lemekeland}
Let $(\E,d_\E)$ be a complete metric space and let $J : \E \rightarrow \R^+$, $\lambda \mapsto J(\lambda)$ be a continuous nonnegative mapping. Let $\varepsilon > 0$ and $\lambda^* \in \E$ such that $J(\lambda^*) \leq \varepsilon$.
Then, there exists $\lambda_\varepsilon \in \E$ such that $d_\E (\lambda_\varepsilon,\lambda^*) \leq \sqrt{\varepsilon}$ and, $-\sqrt{\varepsilon} \, d_\E (\lambda,\lambda_\varepsilon) \leq J(\lambda) - J(\lambda_\varepsilon)$, for every $\lambda \in \E$.
\end{theorem}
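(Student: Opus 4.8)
The plan is to prove this classical statement (Ekeland's variational principle) by equipping $\E$ with an auxiliary partial order and extracting a minimal element whose minimality encodes exactly the desired inequality. Concretely, for $\lambda,\lambda' \in \E$ I would define
$$ \lambda' \preceq \lambda \quad \Longleftrightarrow \quad \sqrt{\varepsilon}\, d_\E(\lambda,\lambda') \leq J(\lambda) - J(\lambda'). $$
First I would check that $\preceq$ is a genuine partial order: reflexivity is immediate, antisymmetry follows by adding the two defining inequalities (which forces $d_\E(\lambda,\lambda')=0$), and transitivity follows from the triangle inequality for $d_\E$ together with telescoping the two $J$-differences. I would also record that each sublevel set $S(\lambda) = \{ \lambda' \in \E : \lambda' \preceq \lambda \}$ is closed, by continuity of $J$ and of $d_\E$.

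Next I would build a descending chain. Starting from $\lambda_0 = \lambda^*$, given $\lambda_n$ I set $m_n = \inf \{ J(\lambda') : \lambda' \preceq \lambda_n \} \geq 0$ and choose $\lambda_{n+1} \preceq \lambda_n$ with $J(\lambda_{n+1}) \leq m_n + 2^{-n}$. By transitivity the sets $S(\lambda_n)$ are nested, and since $\lambda_{n+1} \preceq \lambda_n$ the real sequence $J(\lambda_n)$ is nonincreasing and bounded below by $0$, hence convergent. For $p > n$ one has $\lambda_p \preceq \lambda_n$, so $\sqrt{\varepsilon}\, d_\E(\lambda_n,\lambda_p) \leq J(\lambda_n) - J(\lambda_p) \to 0$; thus $(\lambda_n)$ is Cauchy and, by completeness of $\E$, converges to some $\lambda_\varepsilon$. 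Since each $S(\lambda_n)$ is closed and contains the tail $(\lambda_p)_{p \geq n}$, the limit satisfies $\lambda_\varepsilon \preceq \lambda_n$ for every $n$, in particular $\lambda_\varepsilon \preceq \lambda^*$, which yields $\sqrt{\varepsilon}\, d_\E(\lambda^*,\lambda_\varepsilon) \leq J(\lambda^*) - J(\lambda_\varepsilon) \leq J(\lambda^*) \leq \varepsilon$, i.e. the distance estimate $d_\E(\lambda_\varepsilon,\lambda^*) \leq \sqrt{\varepsilon}$.

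It then remains to prove that $\lambda_\varepsilon$ is minimal for $\preceq$, for this is exactly the second conclusion. I would argue as follows: if $z \preceq \lambda_\varepsilon$, then $z \preceq \lambda_n$ for every $n$ by transitivity, whence $J(z) \geq m_n \geq J(\lambda_{n+1}) - 2^{-n}$ and, using $z \preceq \lambda_{n+1}$, $\sqrt{\varepsilon}\, d_\E(\lambda_{n+1},z) \leq J(\lambda_{n+1}) - J(z) \leq 2^{-n}$; letting $n \to \infty$ and using $\lambda_{n+1} \to \lambda_\varepsilon$ forces $d_\E(\lambda_\varepsilon,z)=0$, i.e. $z = \lambda_\varepsilon$. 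Minimality means precisely that for every $\lambda \neq \lambda_\varepsilon$ the relation $\lambda \preceq \lambda_\varepsilon$ fails, i.e. $\sqrt{\varepsilon}\, d_\E(\lambda_\varepsilon,\lambda) > J(\lambda_\varepsilon) - J(\lambda)$; this rewrites (trivially including $\lambda = \lambda_\varepsilon$) as $-\sqrt{\varepsilon}\, d_\E(\lambda,\lambda_\varepsilon) \leq J(\lambda) - J(\lambda_\varepsilon)$ for every $\lambda \in \E$, which is the claim.

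I expect the main obstacle to be obtaining genuine minimality of the limit rather than merely a near-minimizer: one must choose the iterates $\lambda_{n+1}$ with the controlled gap $J(\lambda_{n+1}) - m_n \to 0$, and then use both the closedness of the sublevel sets (so that the limit remains in every $S(\lambda_n)$) and the completeness of $\E$ (to produce the limit in the first place). The antisymmetry and transitivity verifications, though routine, are exactly where the triangle inequality and the precise constant $\sqrt{\varepsilon}$ enter.
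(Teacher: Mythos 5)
Your proof is correct: the order $\preceq$ is well defined, the iterates exist, the Cauchy/completeness argument and the closedness of the sublevel sets give $\lambda_\varepsilon \preceq \lambda_n$ for all $n$, and the minimality argument (using the controlled gap $J(\lambda_{n+1}) \leq m_n + 2^{-n}$) correctly yields the second conclusion, with the distance estimate following from $\lambda_\varepsilon \preceq \lambda^*$ and $J(\lambda^*) \leq \varepsilon$. Note, however, that the paper does not prove this statement at all: it is recalled as a known result and attributed to Ekeland's original article \cite{ekel}, so there is no in-paper argument to compare yours with. What you have written is the classical proof of Ekeland's variational principle (the Bishop--Phelps-type partial order, a near-infimum descending chain, completeness to extract a limit, and minimality of that limit), specialized to the constants of the simplified version used here, namely $\inf J \geq 0$ and the symmetric choice $\delta = \varepsilon/\delta = \sqrt{\varepsilon}$.
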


Let $R > \Vert u^* \Vert_{\L^\infty_\TU([a,b^*)_\TU,\R^m)}$. Recall that $\E_R(u^*,\bar{q}^*_a)\subset\UUEB$ (see Lemma~\ref{prop30-1}). We set
$$
\E^{\Omega,0}_R (u^*,\bar{q}^*_a) = \{ (u,\bar{q}_a) \in \E_R(u^*,\bar{q}^*_a) \ \vert \ u\in \L^\infty_\TU ([a,b^*)_\TU,\Omega), \, \bar{q}_a=(q_a,0)  \}.
$$
Note that $(u^*,\bar{q}^*_a) \in \E^{\Omega,0}_R (u^*,\bar{q}^*_a)$. Since $\Omega$ is closed, it follows from the (partial) converse of the Lebesgue dominated convergence theorem that $(\E^{\Omega,0}_R (u^*,\bar{q}^*_a),\Vert \cdot \Vert_{\UUEB})$ is a closed subset of $\L^1_\TU([a,b^*)_\TU,\R^m) \times \R^{n+1}$ and then is a complete metric space.
%We are now in a position to apply the Ekeland variational principle.
For every $\varepsilon > 0$, we define the functional $J^R_\varepsilon:(\E^{\Omega,0}_R (u^*,\bar{q}^*_a),\Vert \cdot \Vert_{\UUEB}) \rightarrow \R^+$ by
$$
J^R_\varepsilon(u,\bar{q}_a)
=
\left( \left( \left( q^0(b^*,u,\bar q_a)-q^{0*}(b^*) + \varepsilon \right)^+ \right)^2 + d^2_\S \left(  g \left( q_a,q(b^*,u,\bar q_a)  \right) \right) \right)^{1/2}  .
$$
Since $g$ and $d^2_{\S}$ are continuous and so is $F_R(u^*,\bar{q}^*_a)$ (see Lemma~\ref{prop30-1-1}), it follows that $J^R_\varepsilon$ is continuous on $(\E^{\Omega,0}_R (u^*,\bar{q}^*_a),\Vert \cdot \Vert_{\UUEB})$. Moreover, one has $J^R_\varepsilon (u^*,\bar{q}^*_a) = \varepsilon$ and, from optimality of $q^{0*}(b^*)$, $J^R_\varepsilon (u,\bar{q}_a) > 0$ for every $(u,\bar{q}_a) \in \E^{\Omega,0}_R (u^*,\bar{q}^*_a)$.
It follows from the Ekeland variational principle that, for every $\varepsilon > 0$, there exists $(u^R_\varepsilon,\bar{q}^R_{a,\varepsilon}) \in \E^{\Omega,0}_R (u^*,\bar{q}^*_a)$ such that $\Vert (u^R_\varepsilon,\bar{q}^R_{a,\varepsilon})-(u^*,\bar{q}^*_a) \Vert_{\UUEB}  \leq \sqrt{\varepsilon}$ and
\begin{equation}\label{eqconsequenceekeland}
-\sqrt{\varepsilon} \, \Vert (u,\bar{q}_a) - (u^R_\varepsilon,\bar{q}^R_{a,\varepsilon}) \Vert_{\UUEB} \leq  J^R_\varepsilon (u,\bar{q}_a) - J^R_\varepsilon (u^R_\varepsilon,\bar{q}^R_{a,\varepsilon}),
\end{equation}
for every $(u,\bar{q}_a) \in \E^{\Omega,0}_R (u^*,\bar{q}^*_a)$.
In particular, $u^R_\varepsilon$ converges to $u^*$ in $\L^1_\TU([a,b^*)_\TU,\R^m)$ and $\bar{q}^R_{a,\varepsilon}$ converges to $\bar{q}^*_a$ as $\varepsilon$ tends to $0$. Besides, setting
\begin{equation}\label{defp0R}
\psi^{0R}_\varepsilon = \frac{-1}{J^R_\varepsilon (u^R_\varepsilon,\bar{q}^R_{a,\varepsilon})} \big(q^0 (b^*,u^R_\varepsilon,\bar{q}^R_{a,\varepsilon}) - q^{0*}(b^*) + \varepsilon \big)^+ \leq 0
\end{equation}
and
\begin{equation}\label{defpsiepsR}
\psi^R_\varepsilon = \frac{-1}{J^R_\varepsilon (u^R_\varepsilon,\bar{q}^R_{a,\varepsilon})} \left( g (q^R_{a,\varepsilon},q(b^*,u^R_\varepsilon,\bar{q}^R_{a,\varepsilon})) - \P_{\S} \big( g (q^R_{a,\varepsilon},q(b^*,u^R_\varepsilon,\bar q^R_{a,\varepsilon})) \big) \right) \in \R^{j} ,
\end{equation}
note that $\vert \psi^{0R}_\varepsilon \vert^2 + \Vert \psi^R_\varepsilon \Vert_{\R^j}^2 = 1 $ and $-\psi^R_\varepsilon \in \O_\S [\P_{\S} (g (q^R_{a,\varepsilon},q(b^*,u^R_\varepsilon,\bar{q}^R_{a,\varepsilon})))]$.

Using a compactness argument, the continuity of $F_R(u^*,\bar{q}^*_a)$ (see Lemma~\ref{prop30-1-1}), the $\CC^1$-regularity of $g$ and the (partial) converse of the Lebesgue dominated convergence theorem, we infer that there exists a sequence $(\varepsilon_k)_{k \in \N}$ of positive real numbers converging to $0$ such that $u^R_{\varepsilon_k}$ converges to $u^*$ $\DD_1$-a.e. on $[a,b^*)_\TU$,
$\bar{q}^R_{a,\varepsilon_k}$ converges to $\bar{q}^*_a$,
$g(q^R_{a,\varepsilon_k},q(b^*,u^R_{\varepsilon_k},\bar{q}^R_{a,\varepsilon_k}))$ converges to $g(q^*_a,q^*(b^*)) \in \S$,
$\mathrm{d}g(q^R_{a,\varepsilon_k},q(b^*,u^R_{\varepsilon_k},\bar{q}^R_{a,\varepsilon_k}))$ converges to $\mathrm{d}g(q^*_a,q^*(b^*))$,
$\psi^{0R}_{\varepsilon_k}$ converges to some $\psi^{0R} \leq 0$,
and $\psi^R_{\varepsilon_k}$ converges to some $\psi^R \in \R^j$ as $k$ tends to $+\infty$,
with $\vert \psi^{0R} \vert^2 + \Vert \psi^R \Vert_{\R^j}^2 = 1 $ and $-\psi^R \in \O_{\S} [g (q^*_a,q^*(b^*))]$ (see Lemma~\ref{lemconvex}).

In the next lemmas, we use the inequality~\eqref{eqconsequenceekeland} respectively with needle-like variations of $u^R_{\varepsilon_k}$ at right-scattered points of $\TU$ and at right-dense points of $\TU$, and then variations of $\bar{q}^R_{a,\varepsilon_k}$. Hence, we infer some important inequalities by taking the limit in $k$.
Note that these variations were defined in Section~\ref{sectionvariations} for any dynamics $f$, and that we apply them here to the augmented system~\eqref{DD-CS_augm}, associated with the augmented dynamics $\bar f$.

\begin{lemma}%\label{lemfondamscattered}
For every $r \in [a,b^*)_\TU \cap \RS_1$ and every $y \in \mathrm{D}^\Omega_\mathrm{stab}(u^*(r))$, considering the needle-like variation $\Pi = (r,y)$ as defined in Section~\ref{section31RS}, one has
\begin{equation}\label{inegfondamscattered}
\psi^{0R}  w^{0}_\Pi (b^*,u^*,\bar{q}^*_a) + \left\langle \Big( \frac{\partial g}{\partial q_2} (q^*_a,q^*(b^*)) \Big)^\top  \psi^R , w_\Pi (b^*,u^*,\bar{q}^*_{a}) \right\rangle_{\R^n} \leq 0 ,
\end{equation}
where the variation vector $\bar w_\Pi=(w_\Pi,w^0_\Pi)^\top$ is defined by~\eqref{varvect_scatteredw} (replacing $f$ with $\bar f$).
\end{lemma}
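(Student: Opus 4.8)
The plan is to test the Ekeland inequality~\eqref{eqconsequenceekeland} against the needle-like variation $\Pi=(r,y)$ performed on the Ekeland points $(u^R_{\varepsilon_k},\bar q^R_{a,\varepsilon_k})$, to divide by the variation parameter, to pass to the one-sided limit as the parameter tends to $0^+$, and finally to let $k\to+\infty$. Concretely, I would fix $r\in[a,b^*)_\TU\cap\RS_1$ and $y\in\mathrm{D}^\Omega_{\mathrm{stab}}(u^*(r))$ and, for each $k$, consider the variation $u_\Pi(\cdot,\alpha)$ of $u^R_{\varepsilon_k}$ at $r$ in the direction $y$ as in Section~\ref{section31RS} (built on $u^R_{\varepsilon_k}$, not on $u^*$), keeping the augmented initial condition $\bar q^R_{a,\varepsilon_k}$ unchanged. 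Applied to the augmented system~\eqref{DD-CS_augm}, Proposition~\ref{prop32-1} gives the differentiability of $\alpha\mapsto\bar q(b^*,u_\Pi(\cdot,\alpha),\bar q^R_{a,\varepsilon_k})$ at $0$, with derivative the second variation vector $\bar w_\Pi=(w_\Pi,w^0_\Pi)^\top$ of~\eqref{varvect_scatteredw}.

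The first step is admissibility: I must ensure $(u_\Pi(\cdot,\alpha),\bar q^R_{a,\varepsilon_k})\in\E^{\Omega,0}_R(u^*,\bar q^*_a)$ for a sequence of parameters tending to $0^+$. Here the notion of stable $\Omega$-dense direction (Definition~\ref{defstable}) is crucial, since $\Omega$ is not assumed convex. As $u^R_{\varepsilon_k}(r)\to u^*(r)$, for $k$ large the value $u^R_{\varepsilon_k}(r)$ lies in the ball $\B_{\R^m}(u^*(r),\varepsilon)$ provided by the definition, so that $0$ is not isolated in the set $\{\alpha\in[0,1]\mid u^R_{\varepsilon_k}(r)+\alpha(y-u^R_{\varepsilon_k}(r))\in\Omega\}$; this furnishes the required admissible parameters $\alpha\to0^+$. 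By Lemma~\ref{lem32-1} the variation stays in $\E_R(u^*,\bar q^*_a)$ for small $\alpha$, and since $u_\Pi(\cdot,\alpha)$ is $\Omega$-valued and $\bar q^R_{a,\varepsilon_k}=(q^R_{a,\varepsilon_k},0)$ is fixed, it remains in the complete metric space $\E^{\Omega,0}_R(u^*,\bar q^*_a)$ on which Ekeland's principle was applied.

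Next I would differentiate $\alpha\mapsto J^R_{\varepsilon_k}(u_\Pi(\cdot,\alpha),\bar q^R_{a,\varepsilon_k})$ at $0^+$. Since $J^R_{\varepsilon_k}>0$ at the Ekeland point, $t\mapsto((t)^+)^2$ is $\CC^1$ with derivative $2(t)^+$, $\mathrm{d}d_\S^2(x)(x')=2\langle x-\P_\S(x),x'\rangle_{\R^j}$, $g$ is $\CC^1$, and only the second slot of $g$ varies, the chain rule together with the definitions~\eqref{defp0R}--\eqref{defpsiepsR} of $\psi^{0R}_{\varepsilon_k}$ and $\psi^R_{\varepsilon_k}$ and the adjoint identity $\langle\psi,\partial_{q_2}g\,w\rangle=\langle(\partial_{q_2}g)^\top\psi,w\rangle$ yields
$$\frac{\mathrm{d}}{\mathrm{d}\alpha}\Big|_{\alpha=0^+}J^R_{\varepsilon_k}\big(u_\Pi(\cdot,\alpha),\bar q^R_{a,\varepsilon_k}\big)=-\psi^{0R}_{\varepsilon_k}\,w^0_\Pi-\left\langle\Big(\tfrac{\partial g}{\partial q_2}\Big)^{\!\top}\psi^R_{\varepsilon_k}\,,\,w_\Pi\right\rangle_{\R^n},$$
where $w^0_\Pi$ and $w_\Pi$ stand for $w^0_\Pi(b^*,u^R_{\varepsilon_k},\bar q^R_{a,\varepsilon_k})$ and $w_\Pi(b^*,u^R_{\varepsilon_k},\bar q^R_{a,\varepsilon_k})$, and $\partial_{q_2}g$ is evaluated at $(q^R_{a,\varepsilon_k},q(b^*,u^R_{\varepsilon_k},\bar q^R_{a,\varepsilon_k}))$.

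Finally I would use that the left-hand side of~\eqref{eqconsequenceekeland} equals $-\sqrt{\varepsilon_k}\,\mu_1(r)\Vert y-u^R_{\varepsilon_k}(r)\Vert_{\R^m}$ (as in the proof of Lemma~\ref{lem32-1}); dividing by $\alpha>0$ and letting $\alpha\to0^+$ along the admissible parameters, then rearranging, gives
$$\psi^{0R}_{\varepsilon_k}\,w^0_\Pi+\left\langle\Big(\tfrac{\partial g}{\partial q_2}\Big)^{\!\top}\psi^R_{\varepsilon_k}\,,\,w_\Pi\right\rangle_{\R^n}\le\sqrt{\varepsilon_k}\,\mu_1(r)\,\Vert y-u^R_{\varepsilon_k}(r)\Vert_{\R^m}.$$
Passing to the limit $k\to+\infty$ then closes the argument: Lemma~\ref{lem32-2w} (applied to the augmented dynamics) gives the uniform convergence of $w^0_\Pi$ and $w_\Pi$ to $w^0_\Pi(b^*,u^*,\bar q^*_a)$ and $w_\Pi(b^*,u^*,\bar q^*_a)$, while the convergences $\psi^{0R}_{\varepsilon_k}\to\psi^{0R}$, $\psi^R_{\varepsilon_k}\to\psi^R$ and $\mathrm{d}g(q^R_{a,\varepsilon_k},q(b^*,\cdots))\to\mathrm{d}g(q^*_a,q^*(b^*))$ were recorded before the lemma, and $\sqrt{\varepsilon_k}\to0$ with $\Vert y-u^R_{\varepsilon_k}(r)\Vert_{\R^m}$ bounded, so the right-hand side vanishes in the limit and~\eqref{inegfondamscattered} follows. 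The main obstacle is precisely this admissibility step: performing the convex-type needle perturbation at the single right-scattered time $r$ inside the possibly non-convex set $\Omega$, uniformly for all $u^R_{\varepsilon_k}(r)$ close to $u^*(r)$, which is exactly what the stability in Definition~\ref{defstable} is designed to guarantee.
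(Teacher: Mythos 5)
Your proposal is correct and follows essentially the same route as the paper's proof: needle variation at the right-scattered point $r$ applied to the Ekeland points $(u^R_{\varepsilon_k},\bar q^R_{a,\varepsilon_k})$, admissibility in $\E^{\Omega,0}_R(u^*,\bar q^*_a)$ secured by the stability in Definition~\ref{defstable} together with $u^R_{\varepsilon_k}(r)\to u^*(r)$, differentiation of $J^R_{\varepsilon_k}$ at $\alpha=0^+$ via Proposition~\ref{prop32-1} and the identities~\eqref{defp0R}--\eqref{defpsiepsR}, then the limit $k\to+\infty$ using Lemma~\ref{lem32-2w}. The only cosmetic difference is that the paper manipulates the difference quotient of $J^2$ divided by $\alpha\,(J(\mathrm{var})+J(\mathrm{base}))$ rather than differentiating $J$ directly by the chain rule, which is equivalent since $J^R_{\varepsilon_k}>0$ at the Ekeland point.
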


\begin{proof}
Since $u^R_{\varepsilon_k}$ converges to $u^*$ $\DD_1$-a.e. on $[a,b^*)_\TU$, it follows that $u^R_{\varepsilon_k} (r)$ converges to $u^*(r)$ as $k$ tends to $+\infty$, where $\Vert  u^*(r) \Vert_{\R^m} < R$. It follows that $ \Vert u^R_{\varepsilon_k} (r) \Vert_{\R^m} < R$ and $0$ is not isolated in $\Xi = \{ \alpha \in [0,1], \; u^R_{\varepsilon_k} (r)+\alpha (y-u^R_{\varepsilon_k} (r)) \in \Omega \}$ for any sufficently large $k$, see Definition~\ref{defstable}. Fixing such a large $k$, one has $u^R_{\varepsilon_k,\Pi} (\cdot,\alpha) \in \L^\infty_\TU ([a,b^*)_\TU,\Omega)$ and
\begin{equation*}\begin{split}
\Vert u^R_{\varepsilon_k,\Pi} (\cdot,\alpha) \Vert_{\L^\infty_\TU ([a,b^*)_\TU,\R^m)}
& \leq \max (\Vert u^R_{\varepsilon_k} \Vert_{\L^\infty_\TU ([a,b^*)_\TU,\R^m)}, \Vert u^R_{\varepsilon_k,\Pi} (r,\alpha) \Vert_{\R^m} ) \\
& \leq \max (R, \Vert u^R_{\varepsilon_k} (r) \Vert_{\R^m} + \alpha \Vert y - u^R_{\varepsilon_k} (r) \Vert_{\R^m}  ) ,
\end{split}\end{equation*}
for every $\alpha \in \Xi$. Moreover, one has
\begin{equation*}\begin{split}
\Vert u^R_{\varepsilon_k,\Pi} (\cdot,\alpha) - u^* \Vert_{\L^1_\TU ([a,b^*)_\TU,\R^m)}
& \leq \Vert u^R_{\varepsilon_k,\Pi} (\cdot,\alpha) - u^R_{\varepsilon_k} \Vert_{\L^1_\TU ([a,b^*)_\TU,\R^m)} + \Vert u^R_{\varepsilon_k}-u^* \Vert_{\L^1_\TU ([a,b^*)_\TU,\R^m)} \\
& \leq \alpha \mu_1 (r) \Vert y-u^R_{\varepsilon_k} (r) \Vert_{\R^m} + \sqrt{\varepsilon_k}.
\end{split}\end{equation*}
Therefore $(u^R_{\varepsilon_k,\Pi} (\cdot,\alpha),\bar{q}^R_{a,\varepsilon_k}) \in \E^{\Omega,0}_R (u^*,\bar{q}^*_a)$ for every $\alpha \in \Xi$ sufficiently small and every $k$ sufficiently large.
It then follows from~\eqref{eqconsequenceekeland} that
\begin{equation*}
-\sqrt{\varepsilon_k} \, \Vert u^R_{\varepsilon_k,\Pi} (\cdot,\alpha) - u^R_{\varepsilon_k} \Vert_{\L^1_\TU ([a,b^*)_\TU,\R^m)} \leq  J^R_{\varepsilon_k} (u^R_{\varepsilon_k,\Pi} (\cdot,\alpha),\bar{q}^R_{a,\varepsilon_k}) - J^R_{\varepsilon_k} (u^R_{\varepsilon_k},\bar{q}^R_{a,\varepsilon_k}),
\end{equation*}
and thus
\begin{equation*}
-\sqrt{\varepsilon_k} \, \mu_1 (r) \Vert y-u^R_{\varepsilon_k} (r) \Vert_{\R^m}
\leq \frac{J^R_{\varepsilon_k} (u^R_{\varepsilon_k,\Pi} (\cdot,\alpha),\bar{q}^R_{a,\varepsilon_k})^2 - J^R_{\varepsilon_k} (u^R_{\varepsilon_k},\bar{q}^R_{a,\varepsilon_k})^2}{\alpha ( J^R_{\varepsilon_k} (u^R_{\varepsilon_k,\Pi} (\cdot,\alpha),\bar{q}^R_{a,\varepsilon_k}) + J^R_{\varepsilon_k} (u^R_{\varepsilon_k},\bar{q}^R_{a,\varepsilon_k}) ) } .
\end{equation*}
Using Proposition~\ref{prop32-1}, we infer that
\begin{equation*}\begin{split}
& \lim_{\alpha \to 0} \frac{J^R_{\varepsilon_k} (u^R_{\varepsilon_k,\Pi} (\cdot,\alpha),\bar{q}^R_{a,\varepsilon_k})^2 - J^R_{\varepsilon_k} (u^R_{\varepsilon_k},\bar{q}^R_{a,\varepsilon_k})^2}{\alpha}
\\
& = 2 \big( q^0 (b^*,u^R_{\varepsilon_k},\bar{q}^R_{a,\varepsilon_k}) - q^{0*}(b^*) + \varepsilon_k \big)^+  w^{0}_\Pi (b^*,u^R_{\varepsilon_k},\bar{q}^R_{a,\varepsilon_k}) \\
& \qquad + 2 \Big\langle  g(q^R_{a,\varepsilon_k},q(b^*,u^R_{\varepsilon_k},\bar{q}^R_{a,\varepsilon_k})) - \P_{\S} \big( g(q^R_{a,\varepsilon_k},q(b^*,u^R_{\varepsilon_k},\bar{q}^R_{a,\varepsilon_k})) \big),   \\
& \qquad \qquad \qquad \qquad \qquad \qquad \qquad \frac{\partial g}{\partial q_2} (q^R_{a,\varepsilon_k},q(b^*,u^R_{\varepsilon_k},\bar{q}^R_{a,\varepsilon_k})) \, w_\Pi (b^*,u^R_{\varepsilon_k},\bar{q}^R_{a,\varepsilon_k}) \Big\rangle_{\R^j} .
\end{split}\end{equation*}
Since $J^R_{\varepsilon_k} (u^R_{\varepsilon_k,\Pi} (\cdot,\alpha),\bar{q}^R_{a,\varepsilon_k})$ converges to $J^R_{\varepsilon_k} (u^R_{\varepsilon_k},\bar{q}^R_{a,\varepsilon_k})$ as $\alpha$ tends to $0$, using~\eqref{defp0R} and~\eqref{defpsiepsR} it follows that
\begin{multline*}
-\sqrt{\varepsilon_k} \, \mu_1 (r) \Vert y-u^R_{\varepsilon_k} (r) \Vert_{\R^m} \leq -\psi^{0R}_{\varepsilon_k}  w^{0}_\Pi (b^*,u^R_{\varepsilon_k},\bar{q}^R_{a,\varepsilon_k}) \\
-  \left\langle \Big( \frac{\partial g}{\partial q_2} (q^R_{a,\varepsilon_k},q(b^*,u^R_{\varepsilon_k},\bar{q}^R_{a,\varepsilon_k})) \Big)^\top  \psi^R_{\varepsilon_k} , w_\Pi (b^*,u^R_{\varepsilon_k},\bar{q}^R_{a,\varepsilon_k}) \right\rangle_{\R^n} .
\end{multline*}
By letting $k$ tend to $+\infty$ and using Lemma~\ref{lem32-2w}, the lemma follows.
\end{proof}

We define the sets
\begin{equation*}
\begin{split}
A &= \left\lbrace t \in [a,b^*)_{\TU} \mid u^{R}_{\varepsilon_k}(t) \,\text{does not converge to} \,u^*(t) \,\text{when} \,k \,\text{tends to} \,+\infty \right\rbrace ,\\
A_k &= \left\lbrace t \in [a,b^*)_{\TU} \mid t \notin \LL_{[a,b^*)_\T} (\bar f( \cdot, \bar q(\cdot,u^R_{\varepsilon_k},\bar q^R_{a,k}),u^{R\Phi}_{\varepsilon_k} )) \right\rbrace,\quad  k \in \N.
\end{split}
\end{equation*}

\begin{lemma}%\label{lemmesure1}
We have $ \mu_{\DD_1} \left( A \cup \bigcup_{k \in \N} A_k \right) = 0. $
\end{lemma}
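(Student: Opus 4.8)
The plan is to split the union into the single set $A$ and the countable family $\{A_k\}_{k\in\N}$, treat each separately, and recombine by countable subadditivity of $\mu_{\DD_1}$. For $A$, there is nothing to do: by construction of the subsequence $(\varepsilon_k)$ just before this lemma, $u^R_{\varepsilon_k}$ converges to $u^*$ $\DD_1$-a.e. on $[a,b^*)_\TU$, which is precisely the statement $\mu_{\DD_1}(A)=0$. It therefore remains to prove that $\mu_{\DD_1}(A_k)=0$ for each fixed $k$, after which $\mu_{\DD_1}\big(A\cup\bigcup_k A_k\big)\leq \mu_{\DD_1}(A)+\sum_k\mu_{\DD_1}(A_k)=0$.

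Fix $k$ and set $g_k=\bar f(\cdot,\bar q(\cdot,u^R_{\varepsilon_k},\bar q^R_{a,\varepsilon_k}),u^{R\Phi}_{\varepsilon_k})$. First I would check that $g_k\in\L^1_\T([a,b^*)_\T,\R^{n+1})$: indeed $\bar q(\cdot,u^R_{\varepsilon_k},\bar q^R_{a,\varepsilon_k})$ is continuous on the compact $[a,b^*]_\T$, $u^{R\Phi}_{\varepsilon_k}\in\L^\infty_\T$ by Proposition~\ref{propnumber1}, so by Remark~\ref{rmK} the argument of $\bar f$ stays in $K_R$ $\DD$-a.e., whence $g_k$ is $\mu_\DD$-measurable and bounded, i.e. $g_k\in\L^\infty_\T\subset\L^1_\T$. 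Consequently $\LL_{[a,b^*)_\T}(g_k)$ is well defined and its complement in $[a,b^*)_\T$ has $\mu_\DD$-measure zero. By the property recalled in the preliminaries (a subset of $\T$ of zero $\mu_\DD$-measure is contained in $\RD$), this complement lies in $\RD$; in particular its $\mu_L$-measure vanishes as well. Since $A_k$ is, by definition, the intersection of $[a,b^*)_\TU$ with this complement, we obtain $A_k\subset\RD$, hence $A_k\cap\RS=\emptyset$, and $\mu_L(A_k)=0$.

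The decisive point is then to show that $A_k\cap\RS_1=\emptyset$, for then $\mu_{\DD_1}(A_k)=\mu_L(A_k)+\sum_{r\in A_k\cap\RS_1}\mu_1(r)=0+0=0$. Let $r\in[a,b^*)_\TU\cap\RS_1$. If $r\in\RS$, then $r$ is automatically a $\DD$-Lebesgue point of $g_k$ (right-scattered points cannot lie in the zero-$\mu_\DD$-measure complement of $\LL_{[a,b^*)_\T}(g_k)$), so $r\notin A_k$. Otherwise $r\in\RD$, i.e. $r$ is a genuine sampling time with $\sigma(r)=r<\sigma_1(r)$. On the nondegenerate interval $[r,\sigma_1(r))_\T$ one has $\Phi(t)=r$, so $u^{R\Phi}_{\varepsilon_k}$ is constant equal to $u^R_{\varepsilon_k}(r)$ there; since $\bar q$ and $\bar f$ are continuous, $g_k$ coincides on $[r,\sigma_1(r))_\T$ with the continuous map $t\mapsto\bar f(t,\bar q(t,u^R_{\varepsilon_k},\bar q^R_{a,\varepsilon_k}),u^R_{\varepsilon_k}(r))$. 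As $r$ is right-dense in $\T$, continuity of $g_k$ at $r$ yields $\tfrac1\beta\int_{[r,r+\beta)_\T}g_k(\tau)\,\DD\tau\to g_k(r)$ as $\beta\to0$, $\beta\in\V^{r,b^*}$, so $r\in\LL_{[a,b^*)_\T}(g_k)$ and again $r\notin A_k$. In both cases $r\notin A_k$, establishing $A_k\cap\RS_1=\emptyset$ and completing the proof. The main obstacle, handled in this last paragraph, is exactly the mismatch between the two time scales at sampling points (right-scattered in $\TU$ but right-dense in $\T$): the naive bound $\mu_\DD(A_k)=0$ does not transfer to $\mu_{\DD_1}$, and one must exploit the fact that the control is frozen on each sampling interval to recover continuity, and hence the $\DD$-Lebesgue point property, at those points.
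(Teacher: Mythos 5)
Your proof is correct and follows essentially the same route as the paper's: reduce to $\mu_{\DD_1}(A_k)=0$ for each $k$, use that the zero-$\mu_\DD$-measure complement of the Lebesgue-point set lies in $\RD$ to get $\mu_L(A_k)=0$, and then rule out points of $A_k\cap\RS_1$ by observing that at a point right-dense in $\T$ but right-scattered in $\TU$ the control $u^{R\Phi}_{\varepsilon_k}$ is frozen on the sampling interval, so the integrand is continuous there and the point is a $\DD$-Lebesgue point. Your explicit verification that $g_k\in\L^1_\T$ and your case split ($r\in\RS$ versus $r\in\RD$) are only cosmetic variants of the paper's contradiction argument, which uses the already-established inclusion $A_k\subset\RD$.
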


\begin{proof}
Since $\mu_{\DD_1} (A)=0$, it suffices to prove that $\mu_{\DD_1} ( A_k ) = 0$ for every $k \in \N$. Let $k \in \N$. We set
$$ B_k = \left\lbrace t \in [a,b^*)_{\T} \mid t \notin \LL_{[a,b^*)_\T} (\bar f(\cdot , \bar q(\cdot,u^R_{\varepsilon_k},\bar q^R_{a,k}),u^{R\Phi}_{\varepsilon_k})) \right\rbrace. $$
We know that $\mu_{\DD} (B_k)=0$. Hence, $B_k \subset \RD$ and consequently, $\mu_\DD (B_k) = \mu_L(B_k) = 0$. Since $A_k \subset B_k$, $\mu_L(A_k) = 0$. To conclude, it suffices to prove that $\mu_{\DD_1} ( A_k ) = \mu_L (A_k)$. To see this, let us prove that $A_k \subset \RD_1$. By contradiction, let us assume that there exists $t \in A_k \cap \RS_1$. Since $t \in A_k \subset \RD$, we conclude that $u^{R\Phi}_{\varepsilon_k}$ is constant on $[t,\min(\sigma_1(t),b^*))_{\T}$, where $(t,\min(\sigma_1(t),b^*))_\T \neq \emptyset$. As a consequence, $\bar f( \cdot , \bar q(\cdot,u^R_{\varepsilon_k},\bar q^R_{a,k}),u^{R\Phi}_{\varepsilon_k})$ is continuous on $[t,\min(\sigma_1(t),b^*))_{\T}$ and consequently $t \in \LL_{[a,b^*)_\T} (\bar f(\cdot, \bar q(\cdot,u^R_{\varepsilon_k},\bar q^R_{a,k}),u^{R\Phi}_{\varepsilon_k}))$. This leads to a contradiction.
\end{proof}

We define the set of Lebesgue times $t \in [a,b^*)_{\TU}$ by
$$ \LL^{R,1}_{[a,b^*)_\T} = [a,b^*)_{\TU} \bs \left( A \cup \bigcup_{k \in \N} A_k \right). $$
Note that $\mu_{\DD_1} (\LL^{R,1}_{[a,b^*)_\T}) = \mu_{\DD_1} ([a,b^*)_{\TU}) $ and then $ \mu_{\DD_1} (\LL^{R,1}_{[a,b^*)_\T} \cap \RD_1) = \mu_{\DD_1} ([a,b^*)_{\TU} \cap \RD_1)$.

\begin{lemma}%\label{lemfondamdense}
For every $s \in \LL^{R,1}_{[a,b^*)_\T} \cap \RD_1$ and for every $z \in \Omega \cap \B_{\R^m}(0,R)$,
considering the needle-like variation $\amalg = (s,z)$ as defined in Section~\ref{section31RD}, one has
\begin{equation}\label{inegfondamdense}
\psi^{0R}  w^{0}_\amalg (b^*,u^*,\bar{q}^*_a) + \left\langle \Big( \frac{\partial g}{\partial q_2} (q^*_a,q^*(b^*)) \Big)^\top  \psi^R , w_\amalg (b^*,u^*,\bar{q}^*_a) \right\rangle_{\R^n} \leq 0 ,
\end{equation}
where the variation vector $\bar w_\amalg=(w_\amalg,w^0_\amalg)^\top$ is defined by~\eqref{varvect_densew} (replacing $f$ with $\bar f$).
\end{lemma}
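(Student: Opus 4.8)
The plan is to follow verbatim the structure of the proof of the preceding lemma for right-scattered points, substituting the dense needle-like variation $\amalg=(s,z)$ of Section~\ref{section31RD} for the scattered one, and invoking Proposition~\ref{prop33-1} and Lemma~\ref{lem33-2} in place of Proposition~\ref{prop32-1} and Lemma~\ref{lem32-2w}. Fix $s\in\LL^{R,1}_{[a,b^*)_\T}\cap\RD_1$ and $z\in\Omega\cap\B_{\R^m}(0,R)$. For each $k$, I would form $u^R_{\varepsilon_k,\amalg}(\cdot,\beta)$, which sets the value $z$ on $[s,s+\beta)_\TU$ and keeps $u^R_{\varepsilon_k}$ elsewhere. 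Here the dense case is in fact \emph{simpler} than the scattered one: since $z$ itself lies in $\Omega$ (and $u^R_{\varepsilon_k}$ takes values in $\Omega$), no convexity or stable-direction hypothesis is needed and $u^R_{\varepsilon_k,\amalg}(\cdot,\beta)\in\L^\infty_\TU([a,b^*)_\TU,\Omega)$ automatically. The admissibility $(u^R_{\varepsilon_k,\amalg}(\cdot,\beta),\bar q^R_{a,\varepsilon_k})\in\E^{\Omega,0}_R(u^*,\bar q^*_a)$ for $\beta$ small and $k$ large follows as in Lemma~\ref{lem33-1}, using the bounds $\Vert z\Vert_{\R^m}\le R$, $\Vert u^R_{\varepsilon_k}\Vert_{\L^\infty_\TU}\le R$, and $\Vert u^R_{\varepsilon_k,\amalg}(\cdot,\beta)-u^*\Vert_{\L^1_\TU}\le 2R\beta+\sqrt{\varepsilon_k}$.

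Next I would insert this variation into the Ekeland inequality~\eqref{eqconsequenceekeland}, rewrite the difference $J^R_{\varepsilon_k}(\text{pert})-J^R_{\varepsilon_k}(u^R_{\varepsilon_k},\bar q^R_{a,\varepsilon_k})$ through the usual factorization of a difference of squares over the sum, divide by $\beta$, and let $\beta\to0$. Proposition~\ref{prop33-1} supplies the directional derivative $DF_\amalg(u^R_{\varepsilon_k},\bar q^R_{a,\varepsilon_k})(0)=\bar w_\amalg(\cdot,u^R_{\varepsilon_k},\bar q^R_{a,\varepsilon_k})$ on $[s+\delta,b^*]_\T$; since $s<b^*$ and $0$ is not isolated in $\V^{s,b^*}$, a small $\delta\in\V^{s,b^*}\setminus\{0\}$ with $\delta\le b^*-s$ places $b^*$ in this interval, so evaluation at the endpoint $b^*$ is legitimate. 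The essential point---and the very reason $s$ is required to lie in $\LL^{R,1}_{[a,b^*)_\T}$---is that Proposition~\ref{prop33-1} applies to each perturbed pair $(u^R_{\varepsilon_k},\bar q^R_{a,\varepsilon_k})$ only when $s$ is a $\DD$-Lebesgue point of $\bar f(\cdot,\bar q(\cdot,u^R_{\varepsilon_k},\bar q^R_{a,\varepsilon_k}),u^{R\Phi}_{\varepsilon_k})$; the removal of the sets $A_k$ secures this for all $k$ simultaneously. Bounding the left-hand side below by $-2R\sqrt{\varepsilon_k}$ (using $\Vert u^R_{\varepsilon_k,\amalg}(\cdot,\beta)-u^R_{\varepsilon_k}\Vert_{\L^1_\TU}\le 2R\beta$) and using~\eqref{defp0R} and~\eqref{defpsiepsR}, this yields the $k$-level inequality
\begin{multline*}
-2R\sqrt{\varepsilon_k}\le -\psi^{0R}_{\varepsilon_k}\,w^0_\amalg(b^*,u^R_{\varepsilon_k},\bar q^R_{a,\varepsilon_k})\\
-\left\langle\Big(\frac{\partial g}{\partial q_2}(q^R_{a,\varepsilon_k},q(b^*,u^R_{\varepsilon_k},\bar q^R_{a,\varepsilon_k}))\Big)^\top\psi^R_{\varepsilon_k},\,w_\amalg(b^*,u^R_{\varepsilon_k},\bar q^R_{a,\varepsilon_k})\right\rangle_{\R^n}.
\end{multline*}

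Finally I would let $k\to+\infty$. The convergences recorded just before the scattered lemma give $\psi^{0R}_{\varepsilon_k}\to\psi^{0R}$, $\psi^R_{\varepsilon_k}\to\psi^R$, $q^R_{a,\varepsilon_k}\to q^*_a$ and $\mathrm{d}g(q^R_{a,\varepsilon_k},q(b^*,u^R_{\varepsilon_k},\bar q^R_{a,\varepsilon_k}))\to\mathrm{d}g(q^*_a,q^*(b^*))$; moreover $u^R_{\varepsilon_k}(s)\to u^*(s)$ since $s\notin A$. Lemma~\ref{lem33-2} then gives $\bar w_\amalg(b^*,u^R_{\varepsilon_k},\bar q^R_{a,\varepsilon_k})\to\bar w_\amalg(b^*,u^*,\bar q^*_a)$. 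As $\sqrt{\varepsilon_k}\to0$, the left-hand side vanishes, so $0\le -\psi^{0R}w^0_\amalg(b^*,u^*,\bar q^*_a)-\langle(\partial_{q_2}g(q^*_a,q^*(b^*)))^\top\psi^R,w_\amalg(b^*,u^*,\bar q^*_a)\rangle$, which is exactly~\eqref{inegfondamdense}. I expect the only genuine difficulties to be bookkeeping: justifying the endpoint evaluation of Proposition~\ref{prop33-1} at $t=b^*$ (handled by the choice of $\delta$) and, more delicately, ensuring that $s$ is a common $\DD$-Lebesgue point of all the perturbed augmented dynamics so that each $w_\amalg(\cdot,u^R_{\varepsilon_k},\bar q^R_{a,\varepsilon_k})$ is well defined, which is precisely what the set $\LL^{R,1}_{[a,b^*)_\T}$ was constructed to guarantee.
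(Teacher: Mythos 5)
Your proposal is correct and follows essentially the same route as the paper's proof: the same needle-like variation inserted into the Ekeland inequality~\eqref{eqconsequenceekeland}, the difference-of-squares factorization, Proposition~\ref{prop33-1} for the limit in $\beta$, and Lemma~\ref{lem33-2} (with $u^R_{\varepsilon_k}(s)\to u^*(s)$) for the limit in $k$. Your added remarks on the role of $\LL^{R,1}_{[a,b^*)_\T}$ and on evaluating at the endpoint $b^*$ are accurate clarifications of points the paper leaves implicit.
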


\begin{proof}
For every $k \in \N$ and any $\beta \in \V^{s,b^*}_1$, we recall that $u^R_{\varepsilon_k,\amalg} (\cdot,\beta) \in \L^\infty_\TU ([a,b^*)_\TU,\Omega) $ and
\begin{equation*}
\Vert u^R_{\varepsilon_k,\amalg} (\cdot,\beta) \Vert_{\L^\infty_\TU ([a,b^*)_\TU,\R^m)} \leq \max (\Vert u^R_{\varepsilon_k} \Vert_{\L^\infty_\TU ([a,b^*)_\TU,\R^m)} , \Vert z \Vert_{\R^m} ) \leq R  ,
\end{equation*}
and
\begin{equation*}\begin{split}
\Vert u^R_{\varepsilon_k,\amalg} (\cdot,\beta) - u^* \Vert_{\L^1_\TU ([a,b^*)_\TU,\R^m)}
& \leq \Vert u^R_{\varepsilon_k,\amalg} (\cdot,\beta) - u^R_{\varepsilon_k} \Vert_{\L^1_\TU ([a,b^*)_\TU,\R^m)} + \Vert u^R_{\varepsilon_k}-u^* \Vert_{\L^1_\TU ([a,b^*)_\TU,\R^m)} \\
& \leq 2R \beta + \sqrt{\varepsilon_k}.
\end{split}\end{equation*}
Therefore $(u^R_{\varepsilon_k,\amalg} (\cdot,\beta),\bar{q}^R_{a,\varepsilon_k}) \in \E^{\Omega,0}_R (u^*,\bar{q}^*_a)$ for every $\beta$ sufficiently small and every $k$ sufficiently large.
It then follows from~\eqref{eqconsequenceekeland} that
\begin{equation*}
-\sqrt{\varepsilon_k} \, \Vert u^R_{\varepsilon_k,\amalg} (\cdot,\beta) - u^R_{\varepsilon_k} \Vert_{\L^1_\TU ([a,b^*)_\TU,\R^m)} \leq  J^R_{\varepsilon_k} (u^R_{\varepsilon_k,\amalg} (\cdot,\beta),\bar{q}^R_{a,\varepsilon_k}) - J^R_{\varepsilon_k} (u^R_{\varepsilon_k},\bar{q}^R_{a,\varepsilon_k}) ,
\end{equation*}
and thus
\begin{equation*}
-2 R \sqrt{\varepsilon_k}
\leq \frac{J^R_{\varepsilon_k} (u^R_{\varepsilon_k,\amalg} (\cdot,\beta),\bar{q}^R_{a,\varepsilon_k})^2 - J^R_{\varepsilon_k} (u^R_{\varepsilon_k},\bar{q}^R_{a,\varepsilon_k})^2}{\beta ( J^R_{\varepsilon_k} (u^R_{\varepsilon_k,\amalg} (\cdot,\beta),\bar{q}^R_{a,\varepsilon_k}) + J^R_{\varepsilon_k} (u^R_{\varepsilon_k},\bar{q}^R_{a,\varepsilon_k}) ) } .
\end{equation*}
Using Proposition~\ref{prop33-1}, we infer that
\begin{equation*}\begin{split}
& \lim_{\beta \to 0} \frac{J^R_{\varepsilon_k} (u^R_{\varepsilon_k,\amalg} (\cdot,\beta),\bar{q}^R_{a,\varepsilon_k})^2 - J^R_{\varepsilon_k} (u^R_{\varepsilon_k},\bar{q}^R_{a,\varepsilon_k})^2}{\beta} \\
& = 2 \big( q^0 (b^*,u^R_{\varepsilon_k},\bar{q}^R_{a,\varepsilon_k}) - q^{0*}(b^*) + \varepsilon_k \big)^+ w^{0}_\amalg (b^*,u^R_{\varepsilon_k},\bar{q}^R_{a,\varepsilon_k}) \\
& \qquad + 2 \Big\langle g(q^R_{a,\varepsilon_k},q(b^*,u^R_{\varepsilon_k},\bar{q}^R_{a,\varepsilon_k})) - \P_{\S} \big( g(q^R_{a,\varepsilon_k},q(b^*,u^R_{\varepsilon_k},\bar{q}^R_{a,\varepsilon_k})) \big), \\
& \qquad \qquad \qquad \qquad \qquad \qquad \qquad  \frac{\partial g}{\partial q_2}(q^R_{a,\varepsilon_k},q(b^*,u^R_{\varepsilon_k},\bar{q}^R_{a,\varepsilon_k})) \, w_\amalg (b^*,u^R_{\varepsilon_k},\bar{q}^R_{a,\varepsilon_k}) \Big\rangle_{\R^j}.
\end{split}\end{equation*}
Since $J^R_{\varepsilon_k} (u^R_{\varepsilon_k,\amalg} (\cdot,\beta),\bar{q}^R_{a,\varepsilon_k})$ converges to $J^R_{\varepsilon_k} (u^R_{\varepsilon_k},\bar{q}^R_{a,\varepsilon_k})$ as $\beta$ tends to $0$, using~\eqref{defp0R} and~\eqref{defpsiepsR} it follows that
\begin{equation*}
-2R \sqrt{\varepsilon_k} \leq -\psi^{0R}_{\varepsilon_k}  w^{0}_\amalg (b^*,u^R_{\varepsilon_k},\bar{q}^R_{a,\varepsilon_k})
- \left\langle \Big( \frac{\partial g}{\partial q_2} (q^R_{a,\varepsilon_k},q(b^*,u^R_{\varepsilon_k},\bar{q}^R_{a,\varepsilon_k})) \Big)^\top \psi^R_{\varepsilon_k} , w_\amalg (b^*,u^R_{\varepsilon_k},\bar{q}^R_{a,\varepsilon_k}) \right\rangle_{\R^n}.
\end{equation*}
By letting $k$ tend to $+\infty$, and using Lemma~\ref{lem33-2}, the lemma follows.
\end{proof}

\begin{lemma}%\label{lemfondampoint}
For every $\bar{q}_a \in \R^n \times \{ 0 \}$, considering the variation of initial point as defined in Section~\ref{section32}, one has
\begin{multline}\label{inegfondampoint}
\psi^{0R}  w^0_{\bar{q}_a} (b^*,u^*,\bar{q}^*_a)+ \left\langle \Big( \frac{\partial g}{\partial q_2} (q^*_a,q^*(b^*)) \Big)^\top \psi^R , w_{\bar{q}_a} (b^*,u^*,\bar{q}^*_a) \right\rangle_{\R^n}
\\
\leq - \left\langle \Big( \frac{\partial g}{\partial q_1} (q^*_a,q^*(b^*)) \Big)^\top  \psi^R , q_a \right\rangle_{\R^n}  ,
\end{multline}
where the variation vector $\bar w_{\bar{q}_a}=(w_{\bar{q}_a},w^0_{\bar{q}_a})^\top$ is defined by~\eqref{varvect_pointinit} (replacing $f$ with $\bar f$).
\end{lemma}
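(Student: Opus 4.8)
The plan is to run exactly the same Ekeland argument as in the two preceding lemmas, except that the inequality \eqref{eqconsequenceekeland} is now tested against a variation of the \emph{initial condition} of the augmented system \eqref{DD-CS_augm} rather than against a needle-like perturbation of the control. So I fix $\bar{q}_a=(q_a,0)\in\R^n\times\{0\}$, keep the control frozen at $u^R_{\varepsilon_k}$, and perturb the initial point to $\bar{q}^R_{a,\varepsilon_k}+\gamma\bar{q}_a$, as defined in Section~\ref{section32}. Since the control is unchanged it still lies in $\L^\infty_\TU([a,b^*)_\TU,\Omega)$, and the cost coordinate of $\bar{q}^R_{a,\varepsilon_k}+\gamma\bar{q}_a=(q^R_{a,\varepsilon_k}+\gamma q_a,0)$ stays equal to $0$; combining Lemma~\ref{lem34-1} with the fact that $\bar{q}^R_{a,\varepsilon_k}$ is within $\sqrt{\varepsilon_k}$ of $\bar{q}^*_a$, I get $(u^R_{\varepsilon_k},\bar{q}^R_{a,\varepsilon_k}+\gamma\bar{q}_a)\in\E^{\Omega,0}_R(u^*,\bar{q}^*_a)$ for every $\gamma$ small enough and every $k$ large enough. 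As $\Vert(u^R_{\varepsilon_k},\bar{q}^R_{a,\varepsilon_k}+\gamma\bar{q}_a)-(u^R_{\varepsilon_k},\bar{q}^R_{a,\varepsilon_k})\Vert_{\UUEB}=\gamma\Vert q_a\Vert_{\R^n}$, inequality \eqref{eqconsequenceekeland} becomes $-\sqrt{\varepsilon_k}\,\gamma\Vert q_a\Vert_{\R^n}\leq J^R_{\varepsilon_k}(u^R_{\varepsilon_k},\bar{q}^R_{a,\varepsilon_k}+\gamma\bar{q}_a)-J^R_{\varepsilon_k}(u^R_{\varepsilon_k},\bar{q}^R_{a,\varepsilon_k})$.

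Next I divide by $\gamma>0$ and let $\gamma\to 0^+$, treating the difference quotient as in the earlier lemmas by rewriting it as a difference of squares over the sum $J^R_{\varepsilon_k}(\cdots+\gamma\bar{q}_a)+J^R_{\varepsilon_k}(\cdots)\to 2J^R_{\varepsilon_k}(\cdots)$. Differentiating the squared functional uses Proposition~\ref{prop34-1}, which gives the derivative of $\gamma\mapsto q(b^*,u^R_{\varepsilon_k},\bar{q}^R_{a,\varepsilon_k}+\gamma\bar{q}_a)$ at $0$ as the variation vector $\bar{w}_{\bar{q}_a}=(w_{\bar{q}_a},w^0_{\bar{q}_a})^\top$ solving \eqref{varvect_pointinit}, together with the differentiability of $d_\S^2$ (the lemma following Lemma~\ref{lemconvex}). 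The one genuinely new feature relative to \eqref{inegfondamscattered}--\eqref{inegfondamdense} is that the initial point now enters $g$ through its \emph{first} argument as well: the chain rule applied to $\gamma\mapsto g(q^R_{a,\varepsilon_k}+\gamma q_a,\,q(b^*,u^R_{\varepsilon_k},\bar{q}^R_{a,\varepsilon_k}+\gamma\bar{q}_a))$ produces the two contributions $\frac{\partial g}{\partial q_1}\,q_a$ and $\frac{\partial g}{\partial q_2}\,w_{\bar{q}_a}(b^*,\cdots)$, and it is the first of these that will give the extra right-hand side of \eqref{inegfondampoint}. Substituting the definitions \eqref{defp0R} and \eqref{defpsiepsR} of $\psi^{0R}_{\varepsilon_k}$ and $\psi^R_{\varepsilon_k}$, the limit in $\gamma$ yields $-\sqrt{\varepsilon_k}\Vert q_a\Vert_{\R^n}\leq-\psi^{0R}_{\varepsilon_k}\,w^0_{\bar{q}_a}(b^*,u^R_{\varepsilon_k},\bar{q}^R_{a,\varepsilon_k})-\big\langle\psi^R_{\varepsilon_k},\,\frac{\partial g}{\partial q_1}(\cdots)\,q_a+\frac{\partial g}{\partial q_2}(\cdots)\,w_{\bar{q}_a}(b^*,u^R_{\varepsilon_k},\bar{q}^R_{a,\varepsilon_k})\big\rangle_{\R^j}$.

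Finally I pass to the limit $k\to+\infty$: the left-hand side tends to $0$ since $\sqrt{\varepsilon_k}\to0$, while the convergences recorded just before these lemmas ($\psi^{0R}_{\varepsilon_k}\to\psi^{0R}$, $\psi^R_{\varepsilon_k}\to\psi^R$, and $\mathrm{d}g(\cdots)\to\mathrm{d}g(q^*_a,q^*(b^*))$) together with Lemma~\ref{lem34-2}, which gives the uniform convergence of $w_{\bar{q}_a}(\cdot,u^R_{\varepsilon_k},\bar{q}^R_{a,\varepsilon_k})$ (hence of both $w_{\bar{q}_a}(b^*,\cdots)$ and $w^0_{\bar{q}_a}(b^*,\cdots)$) to $w_{\bar{q}_a}(\cdot,u^*,\bar{q}^*_a)$, allow me to take the limit termwise. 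Moving the matrices $\frac{\partial g}{\partial q_1}$ and $\frac{\partial g}{\partial q_2}$ into the adjoint slot and transferring the $q_a$-term to the right then gives exactly \eqref{inegfondampoint}. I do not expect any serious obstacle here: all the analytic ingredients (admissibility, differentiability of the flow and of $J^R_\varepsilon$, and stability of the variation vectors) are already in place, so the only point requiring care is the chain-rule splitting that isolates the new $\frac{\partial g}{\partial q_1}$ contribution, which is precisely what distinguishes this case from the control-variation lemmas.
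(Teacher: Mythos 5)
Your proposal is correct and follows essentially the same route as the paper's proof: admissibility of $(u^R_{\varepsilon_k},\bar{q}^R_{a,\varepsilon_k}+\gamma\bar{q}_a)$ via the triangle inequality (the paper does this directly rather than through Lemma~\ref{lem34-1}, whose conclusion alone would only give membership in $\UU$), the Ekeland inequality~\eqref{eqconsequenceekeland} divided by $\gamma$ as a difference of squares, Proposition~\ref{prop34-1} plus the chain rule isolating the $\partial g/\partial q_1$ term, substitution of~\eqref{defp0R}--\eqref{defpsiepsR}, and the limit $k\to+\infty$ using Lemma~\ref{lem34-2}.
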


\begin{proof}
For every $k \in \N$ and every $\gamma\geq 0$, one has
$$
\Vert \bar{q}^R_{a,\varepsilon_k}+\gamma \bar{q}_a - \bar{q}^*_a \Vert_{\R^{n+1}} \leq  \gamma \Vert \bar{q}_a \Vert_{\R^{n+1}} + \Vert \bar{q}^R_{a,\varepsilon_k} - \bar{q}^*_a \Vert_{\R^{n+1}} \leq \gamma \Vert \bar{q}_a \Vert_{\R^{n+1}} + \sqrt{\varepsilon_k}.
$$
Therefore $(u^R_{\varepsilon_k},\bar{q}^R_{a,\varepsilon_k}+\gamma \bar{q}_a) \in \E^{\Omega,0}_R (u^*,\bar{q}^*_a)$ for every $\gamma$ sufficiently small and every $k$ sufficiently large.
It then follows from~\eqref{eqconsequenceekeland} that
\begin{equation*}
-\sqrt{\varepsilon_k} \, \Vert \bar{q}^R_{a,\varepsilon_k}+\gamma \bar{q}_a - \bar{q}^R_{a,\varepsilon_k} \Vert_{\R^{n+1}} \leq  J^R_{\varepsilon_k} (u^R_{\varepsilon_k},\bar{q}^R_{a,\varepsilon_k}+\gamma \bar{q}_a) - J^R_{\varepsilon_k} (u^R_{\varepsilon_k},\bar{q}^R_{a,\varepsilon_k}),
\end{equation*}
and thus
\begin{equation*}
-\sqrt{\varepsilon_k} \, \Vert \bar{q}_a \Vert_{\R^{n+1}}
\leq \frac{J^R_{\varepsilon_k} (u^R_{\varepsilon_k},\bar{q}^R_{a,\varepsilon_k}+\gamma \bar{q}_a)^2 - J^R_{\varepsilon_k} (u^R_{\varepsilon_k},\bar{q}^R_{a,\varepsilon_k})^2}{\gamma ( J^R_{\varepsilon_k} (u^R_{\varepsilon_k},\bar{q}^R_{a,\varepsilon_k}+\gamma \bar{q}_a) + J^R_{\varepsilon_k} (u^R_{\varepsilon_k},\bar{q}^R_{a,\varepsilon_k}) )  } .
\end{equation*}
Using Proposition~\ref{prop34-1}, we infer that
\begin{equation*}\begin{split}
& \lim_{\gamma \to 0} \frac{J^R_{\varepsilon_k} (u^R_{\varepsilon_k},\bar{q}^R_{a,\varepsilon_k}+\gamma \bar{q}_a)^2 - J^R_{\varepsilon_k} (u^R_{\varepsilon_k},\bar{q}^R_{a,\varepsilon_k})^2}{\gamma} \\
& = 2 \big( q^0 (b^*,u^R_{\varepsilon_k},\bar{q}^R_{a,\varepsilon_k}) - q^{0*}(b^*) + \varepsilon_k \big)^+ w^0_{\bar{q}_a} (b^*,u^R_{\varepsilon_k},\bar{q}^R_{a,\varepsilon_k}) \\
&  \quad + 2 \Big\langle g(q^R_{a,\varepsilon_k},q(b^*,u^R_{\varepsilon_k},\bar{q}^R_{a,\varepsilon_k})) - \P_{\S} \big( g(q^R_{a,\varepsilon_k},q(b^*,u^R_{\varepsilon_k},\bar{q}^R_{a,\varepsilon_k})) \big), \\
& \qquad\qquad  \frac{\partial g}{\partial q_1} (q^R_{a,\varepsilon_k},q(b^*,u^R_{\varepsilon_k},\bar{q}^R_{a,\varepsilon_k})) \, q_a + \frac{\partial g}{\partial q_2} (q^R_{a,\varepsilon_k},q(b^*,u^R_{\varepsilon_k},\bar{q}^R_{a,\varepsilon_k})) \, w_{\bar{q}_a} (b^*,u^R_{\varepsilon_k},\bar{q}^R_{a,\varepsilon_k}) \Big\rangle_{\R^j}.
\end{split}\end{equation*}
Since $J^R_{\varepsilon_k} (u^R_{\varepsilon_k},\bar{q}^R_{a,\varepsilon_k}+\gamma \bar{q}_a)$ converges to $J^R_{\varepsilon_k} (u^R_{\varepsilon_k},\bar{q}^R_{a,\varepsilon_k})$ as $\gamma$ tends to $0$, using~\eqref{defp0R} and~\eqref{defpsiepsR} it follows that
\begin{multline*}
-\sqrt{\varepsilon_k} \, \Vert \bar{q}_a \Vert_{\R^{n+1}} \leq -\psi^{0R}_{\varepsilon_k}  w^{0}_{\bar{q}_a} (b^*,u^R_{\varepsilon_k},\bar{q}^R_{a,\varepsilon_k}) - \left\langle \Big( \frac{\partial g}{\partial q_1} (q^R_{a,\varepsilon_k},q(b^*,u^R_{\varepsilon_k},\bar{q}^R_{a,\varepsilon_k})) \Big)^\top  \psi^R_{\varepsilon_k} , q_a \right\rangle_{\R^n} \\
- \left\langle \Big( \frac{\partial g}{\partial q_2} (q^R_{a,\varepsilon_k},q(b^*,u^R_{\varepsilon_k},\bar{q}^R_{a,\varepsilon_k})) \Big)^\top  \psi^R_{\varepsilon_k} , w_{\bar{q}_a} (b^*,u^R_{\varepsilon_k},\bar{q}^R_{a,\varepsilon_k}) \right\rangle_{\R^n}.
\end{multline*}
By letting $k$ tend to $+\infty$, and using Lemma~\ref{lem34-2}, the lemma follows.
\end{proof}

At this step, we have obtained in the previous lemmas the three inequalities~\eqref{inegfondamscattered},~\eqref{inegfondamdense} and~\eqref{inegfondampoint}, valuable for any $R> \Vert u^* \Vert_{\L^\infty_\TU ([a,b)_\TU,\R^m)}$. Recall that $\vert \psi^{0R} \vert^2 + \Vert \psi^R \Vert_{\R^j}^2 = 1 $ and that $-\psi^R \in \O_{\S} [g (q^*_a,q^*(b^*))]$.
Then, considering a sequence of real numbers $R_\ell$ converging to $+\infty$ as $\ell$ tends to $+\infty$, we infer that there exist $\psi^0 \leq 0$ and $\psi \in \R^j$ such that $\psi^{0R_\ell}$ converges to $\psi^0$ and $\psi^{R_\ell}$ converges to $\psi$ as $\ell$ tends to $+\infty$, and moreover $\vert \psi^0 \vert^2 + \Vert \psi \Vert_{\R^j}^2 = 1 $ and $-\psi \in \O_{\S} [g (q^*_a,q^*(b^*))] $.

We set $\LL^{1}_{[a,b^*)_\T} = \bigcap_{\ell \in \N} \LL^{R_\ell,1}_{[a,b^*)_\T}$. Note that $\mu_{\DD_1} (\LL^{1}_{[a,b^*)_\T}) = \mu_{\DD_1} ([a,b^*)_{\TU}) $ and consequently $ \mu_{\DD_1} (\LL^{1}_{[a,b^*)_\T} \cap \RD_1) = \mu_{\DD_1} ([a,b^*)_{\TU} \cap \RD_1)$. Taking the limit in $\ell$ in~\eqref{inegfondamscattered},~\eqref{inegfondamdense} and~\eqref{inegfondampoint}, we get the following lemma.

\begin{lemma}
For every $r \in [a,b^*)_\TU \cap \RS_1$, and every $y \in \mathrm{D}^\Omega_\mathrm{stab}(u^*(r))$, one has
\begin{equation}\label{varineqscattered}
\psi^{0}  w^{0}_\Pi (b^*,u^*,\bar{q}^*_a) + \left\langle \Big( \frac{\partial g}{\partial q_2} (q^*_a,q^*(b^*)) \Big)^\top \psi , w_\Pi (b^*,u^*,\bar{q}^*_{a}) \right\rangle_{\R^n} \leq 0 ,
\end{equation}
where the variation vector $\bar w_\Pi=(w_\Pi,w^0_\Pi)^\top$ associated with the needle-like variation $\Pi = (r,y)$ of $u$ is defined by~\eqref{varvect_scatteredw} (replacing $f$ with $\bar f$).

For every $s \in \LL^{1}_{[a,b^*)_\T} \cap \RD_1$ and every $z \in \Omega$, one has
\begin{equation}\label{varineqdense}
\psi^{0} w^{0}_\amalg (b^*,u^*,\bar{q}^*_a) + \left\langle \Big( \frac{\partial g}{\partial q_2} (q^*_a,q^*(b^*)) \Big)^\top  \psi , w_\amalg (b^*,u^*,\bar{q}^*_a) \right\rangle_{\R^n} \leq 0 ,
\end{equation}
where the variation vector $\bar w_\amalg=(w_\amalg,w^0_\amalg)^\top$ associated with the needle-like variation $\amalg = (s,z)$ of $u$ is defined by~\eqref{varvect_densew} (replacing $f$ with $\bar f$);

For every $\bar{q}_a \in \R^n \times \{ 0 \}$, one has
\begin{multline}\label{varineqpointinit}
\psi^{0}  w^0_{\bar{q}_a} (b^*,u^*,\bar{q}^*_a)+ \left\langle \Big( \frac{\partial g}{\partial q_2} (q^*_a,q^*(b^*)) \Big)^\top  \psi , w_{\bar{q}_a} (b^*,u^*,\bar{q}^*_a) \right\rangle_{\R^n}
 \\
\leq - \left\langle \Big( \frac{\partial g}{\partial q_1} (q^*_a,q^*(b^*))  \Big)^\top  \psi , q_a \right\rangle_{\R^n}  ,
\end{multline}
where the variation vector $\bar w_{\bar{q}_a}=(w_{\bar{q}_a},w^0_{\bar{q}_a})^\top$ associated with the variation $\bar{q}_a$ of the initial point $\bar{q}_a^*$ is defined by~\eqref{varvect_pointinit} (replacing $f$ with $\bar f$).
\end{lemma}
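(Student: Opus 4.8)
The plan is to deduce the lemma simply by letting $\ell\to+\infty$ in the three fundamental inequalities \eqref{inegfondamscattered}, \eqref{inegfondamdense} and \eqref{inegfondampoint}, each of which has already been established, for every $R>\Vert u^* \Vert_{\L^\infty_\TU ([a,b^*)_\TU,\R^m)}$, with multipliers $\psi^{0R}\le 0$ and $\psi^R\in\R^j$. The single observation that makes this passage essentially automatic is that the variation vectors $\bar{w}_\Pi(b^*,u^*,\bar{q}^*_a)$, $\bar{w}_\amalg(b^*,u^*,\bar{q}^*_a)$ and $\bar{w}_{\bar{q}_a}(b^*,u^*,\bar{q}^*_a)$ solve the linear $\DD$-Cauchy problems \eqref{varvect_scatteredw}, \eqref{varvect_densew} and \eqref{varvect_pointinit} along the \emph{fixed} reference couple $(u^*,\bar{q}^*_a)$, and hence do not depend on $R$. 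Thus, along $(R_\ell)$, only the multipliers $(\psi^{0R_\ell},\psi^{R_\ell})$ move, and they have already been shown to converge to $(\psi^0,\psi)$ with $\vert \psi^0 \vert^2 + \Vert \psi \Vert_{\R^j}^2 = 1$ and $-\psi \in \O_\S[g(q^*_a,q^*(b^*))]$ (the latter by closedness of the cone $\O_\S$, cf. Lemma~\ref{lemconvex}), so it remains only to take limits in the scalar inequalities.

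First I would dispatch the scattered and initial-point cases, which are immediate. For fixed $r \in [a,b^*)_\TU \cap \RS_1$ and $y \in \mathrm{D}^\Omega_{\mathrm{stab}}(u^*(r))$, inequality \eqref{inegfondamscattered} holds with $(\psi^{0R_\ell},\psi^{R_\ell})$ for every $\ell$ large enough that $R_\ell > \Vert u^* \Vert_{\L^\infty_\TU ([a,b^*)_\TU,\R^m)}$; letting $\ell \to +\infty$ and using the continuity of the scalar product and of the linear map $v \mapsto (\partial_{q_2} g(q^*_a,q^*(b^*)))^\top v$ yields \eqref{varineqscattered}, the sign $\le 0$ being preserved in the limit. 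The same argument applied to \eqref{inegfondampoint}, for each fixed $\bar{q}_a \in \R^n \times \{0\}$, produces \eqref{varineqpointinit}, the additional right-hand term being handled by continuity of $v \mapsto (\partial_{q_1} g(q^*_a,q^*(b^*)))^\top v$.

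The dense case is where the only genuine care is required, and I expect it to be the main (if modest) obstacle, because the domain over which \eqref{inegfondamdense} holds depends on $R$ in two ways: the admissible values are restricted to $\Omega \cap \B_{\R^m}(0,R)$, and the admissible times to the full-measure set $\LL^{R,1}_{[a,b^*)_\T}$. To obtain a single limit for a fixed pair $(s,z)$, I would require $s \in \LL^1_{[a,b^*)_\T} = \bigcap_{\ell \in \N} \LL^{R_\ell,1}_{[a,b^*)_\T}$, so that $s$ lies in \emph{every} $\LL^{R_\ell,1}_{[a,b^*)_\T}$; and for any fixed $z \in \Omega$ one has $z \in \Omega \cap \B_{\R^m}(0,R_\ell)$ as soon as $\ell$ is large, since $R_\ell \to +\infty$. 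With these two reductions, \eqref{inegfondamdense} applies to $(s,z)$ for all large $\ell$, and the limit gives \eqref{varineqdense} for every $z \in \Omega$ (the ball constraint having disappeared). The accompanying measure bookkeeping is routine: each complement $[a,b^*)_\TU \setminus \LL^{R_\ell,1}_{[a,b^*)_\T}$ is $\mu_{\DD_1}$-null, so their countable union is $\mu_{\DD_1}$-null, whence $\mu_{\DD_1}(\LL^1_{[a,b^*)_\T} \cap \RD_1) = \mu_{\DD_1}([a,b^*)_\TU \cap \RD_1)$, which is exactly the set of dense times on which \eqref{varineqdense} is asserted.
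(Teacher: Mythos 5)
Your proposal is correct and is essentially the paper's own argument: the paper likewise fixes a sequence $R_\ell \to +\infty$, extracts the convergent multipliers $(\psi^{0R_\ell},\psi^{R_\ell}) \to (\psi^0,\psi)$ on the unit sphere with $-\psi \in \O_\S[g(q^*_a,q^*(b^*))]$, sets $\LL^{1}_{[a,b^*)_\T} = \bigcap_{\ell} \LL^{R_\ell,1}_{[a,b^*)_\T}$, and passes to the limit in $\ell$ in \eqref{inegfondamscattered}, \eqref{inegfondamdense} and \eqref{inegfondampoint}. The details you spell out --- that the variation vectors are evaluated along the fixed couple $(u^*,\bar{q}^*_a)$ and hence do not depend on $R$, that the ball constraint $z \in \Omega \cap \B_{\R^m}(0,R_\ell)$ is absorbed for large $\ell$, and the null-set bookkeeping for $\LL^{1}_{[a,b^*)_\T}$ --- are exactly what the paper leaves implicit.
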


This result concludes the application of the Ekeland variational principle.
The last step of the proof consists of deriving the PMP from these inequalities.

\subsubsection{Proof of Remark~\ref{remarknonsubmersive}}\label{section46}
In this subsection, we prove the formulation of the PMP mentioned in Remark~\ref{remarknonsubmersive}. Note that we do not prove, at this step, the transversality condition on the final time.

We define $\bar{p} =(p ,p^0 )^\top$ as the unique solution on $[a,b^*]_\T$ of the backward shifted linear $\DD$-Cauchy problem
\begin{equation}\label{equationpourpdanspreuve}
\bar{p}^\DD(t) = - \left( \frac{\partial \bar{f}}{\partial \bar{q}} (t, \bar{q}^*(t),u^{*\Phi}(t)) \right)^\top  \bar{p}^\sigma(t), \quad
\bar{p}(b^*) = \left( \Big( \frac{\partial g}{\partial q_2} (q^*_a,q^*(b^*)) \Big)^\top  \psi,\psi^0 \right)^\top.
\end{equation}
The existence and uniqueness of $\bar{p}$ are ensured by \cite[Theorem 6]{bour10}. Since $\bar f$ does not depend on $q^0$, it is clear that $p^0$ is constant with $p^0=\psi^0 \leq 0$.

\paragraph{Right-dense points.} Let $s \in \LL^1_{[a,b^*)_\T} \cap \RD_1$ and $z \in \Omega$. Since the function $\langle \bar{p} , \bar{w}_\iP(\cdot,u^*,\bar{q}^*_a) \rangle_{\R^{n+1}} $ is absolutely continuous and satisfies $\langle \bar{p} , \bar{w}_\iP(\cdot,u^*,\bar{q}^*_a) \rangle_{\R^{n+1}}^\DD = 0$ $\DD$-a.e. on $[s,b^*)_\T$ from the Leibniz formula~\eqref{eqleibniz}, this function is constant on $[s,b^*]_\T$. It thus follows from~\eqref{varineqdense} that
\begin{equation*}
\begin{split}
\langle \bar{p}(s) , \bar{w}_\amalg(s,u^*,\bar{q}^*_a) \rangle_{\R^{n+1}}
& = \langle \bar{p}(b^*) , \bar{w}_\amalg(b^*,u^*,\bar{q}^*_a) \rangle_{\R^{n+1}} \\
& = \psi^{0} w^{0}_\amalg (b^*,u^*,\bar{q}^*_a) + \left\langle \Big( \frac{\partial g}{\partial q_2} (q^*_a,q^*(b^*)) \Big)^\top  \psi , w_\amalg (b^*,u^*,\bar{q}^*_a) \right\rangle_{\R^n} \leq 0,
\end{split}
\end{equation*}
and since $\bar{w}_\amalg(s,u^*,\bar{q}^*_a) = \bar{f} (s,\bar{q}^*(s),z) - \bar{f} (s,\bar{q}^*(s),u^*(s))$, we finally get
$$
\langle  \bar{p}(s) , \bar{f} (s,\bar{q}^*(s),z) \rangle_{\R^{n+1}} \leq \langle \bar{p}(s) , \bar{f} (s,\bar{q}^*(s),u^*(s)) \rangle_{\R^{n+1}}.
$$
Since this inequality holds for every $z \in \Omega$, we have obtained the maximization condition
$$ H(s,q^*(s),p(s),p^0,u^*(s)) = \max_{z \in \Omega} H(s,q^*(s),p(s),p^0,z) . $$

\paragraph{Right-scattered points.} Let $r \in [a,b^*)_\TU \cap \RS_1$ and $y \in \mathrm{D}^\Omega_\mathrm{stab}(u^*(r))$. Since $\langle \bar{p} , \bar{w}_\Pi(\cdot,u^*,\bar{q}^*_a) \rangle_{\R^{n+1}}$ is absolutely continuous and satisfies $\langle \bar{p} , \bar{w}_\Pi(\cdot,u^*,\bar{q}^*_a) \rangle_{\R^{n+1}}^\DD = 0 $ $\DD$-a.e. on $[\sigma^*_1(r),b^*)_\T$ from the Leibniz formula~\eqref{eqleibniz}, this function is constant on $[\sigma^*_1(r),b^*]_\T$. It thus follows from~\eqref{varineqscattered} that
\begin{multline*}
\langle \bar{p}(\sigma^*_1(r)) , \bar{w}_\Pi(\sigma^*_1(r),u^*,\bar{q}^*_a) \rangle_{\R^{n+1}}
= \langle \bar{p}(b^*) , \bar{w}_\Pi(b^*,u^*,\bar{q}^*_a) \rangle_{\R^{n+1}} \\
= \psi^{0}  w^{0}_\Pi (b^*,u^*,\bar{q}^*_a) + \left\langle \Big( \frac{\partial g}{\partial q_2} (q^*_a,q^*(b^*)) \Big)^\top \psi  , w_\Pi (b^*,u^*,\bar{q}^*_{a}) \right\rangle_{\R^n} \leq 0 .
\end{multline*}
We recall that $\bar{w}_\Pi(\sigma^*_1(r),u^*,\bar{q}^*_a) = \bar{h}_\Pi(\sigma^*_1(r),u^*,\bar{q}^*_a)$ where the variation vector $\bar h_\Pi=(h_\Pi,h^0_\Pi)^\top$ associated with the needle-like variation $\Pi = (r,y)$ of $u$ is defined by~\eqref{varvect_scatteredh} (replacing $f$ by $\bar{f}$). % Since $\bar{f}$ does not depend on $q^0$, note that $h_\Pi(\cdot,u^*,\bar{q}^*_a)$ and $h^0_\Pi (\cdot,u^*,\bar{q}^*_a)$ are, respectively, the unique solutions on $[r,\sigma^*_1(r)]_\T$ of the linear $\DD$-Cauchy problems
%$$ h^\DD (t) = \dfrac{\partial f}{\partial q} (t,q^*(t),u^*(r)) \, h(t) + \dfrac{\partial f}{\partial u} (t,q^*(t),u^*(r)) \, (y-u^*(r)), \qquad h(r) = 0, $$
%and
%$$ h^{0\DD} (t) = \dfrac{\partial f^0}{\partial q} (t,q^*(t),u^*(r)) h(t)  + \dfrac{\partial f^0}{\partial u} (t,q^*(t),u^*(r)) (y-u^*(r)), \qquad h^0(r) = 0. $$
Since $\bar{h}_\Pi(r,u^*,\bar{q}^*_a) = 0$, it follows that
$$ \int_{[r,\sigma^*_1(r))_\T} \langle \bar{p} , \bar{h}_\Pi(\cdot,u^*,\bar{q}^*_a) \rangle_{\R^{n+1}}^\DD (\t) \; \DD \t = \langle \bar{p}(\sigma^*_1(r)) , \bar{h}_\Pi(\sigma^*_1(r),u^*,\bar{q}^*_a) \rangle_{\R^{n+1}} \leq 0. $$
Using the Leibniz formula~\eqref{eqleibniz} in the integrand and using~\eqref{varvect_scatteredh} (replacing $f$ by $\bar{f}$) and \eqref{equationpourpdanspreuve}, we finally get
\begin{equation*}
\left\langle \int_{[r,\sigma^*_1(r))_\T} \frac{\partial H}{\partial u} (\t,q^*(\t),p^\sigma(\t),p^0,u^*(r)) \; \DD \t \; , \; y-u^*(r) \right\rangle_{\R^m}  \leq 0.
\end{equation*}
%In particular, we have
%\begin{multline*}
%h^0_\Pi(\sigma^*_1(r),u^*,\bar{q}^*_a)
%\\
%=\int_{[r,\sigma^*_1(r))_\T} \dfrac{\partial f^0}{\partial q} (\t,q^*(\t),u^*(r)) h_\Pi(\t,u^*,\bar{q}^*_a)  +  \dfrac{\partial f^0}{\partial u} (\t,q^*(\t),u^*(r)) (y-u^*(r)) \, \DD \t.
%\end{multline*}

\paragraph{Transversality conditions.}
The transversality condition on the adjoint vector $p$ at the final time $b^*$ has been obtained by definition (note that $-\psi \in \O_\S [g(q^*_a,q^*(b^*))] $ as mentioned previously). Let us now establish the transversality condition on the adjoint vector $p$ at the initial time $a$.
Let $\bar{q}_a \in \R^n \times \{ 0 \}$. With the same arguments as before, we prove that the function $\langle \bar{p}, \bar{w}_{\bar{q}_a}(\cdot,u^*,\bar{q}^*_a) \rangle_{\R^{n+1}}$ is constant on $[a,b^*]_\T$. It thus follows from~\eqref{varineqpointinit} that
\begin{equation*}
\begin{split}
\langle \bar{p}(a) , \bar{w}_{\bar{q}_a}(a,u^*,\bar{q}^*_a) \rangle_{\R^{n+1}}
& = \langle \bar{p}(b^*) , \bar{w}_{\bar{q}_a}(b^*,u^*,\bar{q}^*_a) \rangle_{\R^{n+1}} \\
& = \psi^{0}  w^0_{\bar{q}_a} (b^*,u^*,\bar{q}^*_a)+ \left\langle \Big( \frac{\partial g}{\partial q_2} (\bar{q}^*_a,\bar{q}^*(b^*)) \Big)^\top \psi , w_{\bar{q}_a} (b^*,u^*,\bar{q}^*_a) \right\rangle_{\R^n}  \\
&  \leq - \left\langle \Big( \frac{\partial g}{\partial q_1} (q^*_a,q^*(b^*)) \Big)^\top \psi , q_a \right\rangle_{\R^n}  ,
\end{split}
\end{equation*}
and since $\bar{w}_{\bar{q}_a}(a,u^*,\bar{q}^*_a)=\bar{q}_a=(q_a,0)$, we finally get
\begin{equation*}
\left\langle p(a)+ \Big( \frac{\partial g}{\partial q_1} (q^*_a,q^*(b^*)) \Big)^\top \psi , q_a \right\rangle_{\R^n} \leq 0.
\end{equation*}
Since this inequality holds for every $\bar{q}_a \in \R^n \times \{ 0 \}$, the left-hand equality of~\eqref{transv_cond} follows.

\subsubsection{End of the proof}\label{section47}
In this subsection, we conclude the proof of Theorem~\ref{thmmain}. Note that we prove the fourth item of Theorem~\ref{thmmain} in Remark~\ref{remfindepreuvefreefinaltime} below. To conclude the proof of Theorem~\ref{thmmain}, we will use the result claimed in Remark~\ref{remarknonsubmersive}. Let us separate two cases.

Firstly, let us assume that $g$ is submersive at $(q^*_a,q^*(b^*))$. In that case, we have just to prove that the couple $(p,p^0)$ is not trivial. Let us assume that $p$ is trivial. Then $p(a)=p(b^*)=0$ and, from the transversality conditions on the adjoint vector, $\psi$ belongs to the kernels of $( \partial_{q_1} g  (q^*_a,q^*(b^*)) )^\top $ and $( \partial_{q_2} g  (q^*_a,q^*(b^*)) )^\top $. It follows that $\psi$ belongs to the orthogonal of the image of the differential of $g$ at $(q^*_a,q^*(b^*))$. Since $g$ is submersive at $(q^*_a,q^*(b^*))$, it implies that $\psi = 0$. Since the couple $(\psi,p^0)$ is not trivial, we conclude that $p^0 \neq 0$ and then $(p,p^0)$ is not trivial. This concludes the proof of Theorem~\ref{thmmain} in this first case.

Secondly, let us assume that $g$ is not necessarily submersive at $(q^*_a,q^*(b^*))$. In that case, one has to note that if $q^*$ is an optimal solution of $\OSCP$ associated with the function $g$ and with the closed convex $\S$, then $q^*$ is also an optimal solution of $\OSCP$ associated with the function $\tilde{g}$ defined by $\tilde{g}(q_1,q_2) = (q_1,q_2)$ (which is submersive at any point) and with the closed convex $\tilde{\S} = \{ q^*_a \} \times \{ q^*(b^*) \}$. Then, we get back to the above first case, but with a different function $g$ and a different closed convex $\S$ that would impact only the third item of Theorem~\ref{thmmain}.

\begin{remark}\label{remfindepreuvefreefinaltime}
Throughout this remark, we assume that all asumptions of the fourth item of Theorem~\ref{thmmain} are satisfied. As mentioned in Remark~\ref{remmultiscale}, Theorem~\ref{thmmain} can be easily extended (without the fourth item, for now) to the framework of dynamical systems on time scales with several sampled-data controls with different sets of controlling times. To prove the transversality condition on the final time, we use the \textit{multiscale version} (without the fourth item) of Theorem~\ref{thmmain}.

Let $\delta>0$ be such that $[b^* - \delta , b^* + \delta] \subset \T$ and such that $(f,f^0)$ is of class $\CC^1$ on $[b^* - \delta , b^* + \delta] \times \R^n \times \R^m$. Obviously, the trajectory $q^*$, associated with $b^*$ and $u^*$, is an optimal solution of the optimal sampled-data control problem, with free final time $b \in (b^* - \delta, b^* + \delta)$, defined by
\begin{align}[left=\OF_{[b^* - \delta, b^* + \delta]}  \qquad\empheqlbrace]
& \min  \int_{b^* - \delta}^b f^0 (\t,q (\t), u^\Phi(\t) ) \, d \t ,  \notag \\
& \dot{q}(t) = f (t,q(t),u^\Phi(t)),  \quad \text{for a.e. $t \in [b^* - \delta,b)$, } \notag  \\
& u\in \L^\infty_{\TU} (\TU,\Omega) , \notag \\
& (q(b^* - \delta),g(q^*(a),q(b)))  \in \{ q^*(b^* - \delta) \} \times \S. \notag
\end{align}
We set $\rho^* (s) = \delta (s-1) + b^*$, $x^*(s) = q^* \circ \rho^* (s)$ and $w^* (s) = \delta$, for every $s \in [0,1]$. With the change of variable $x = q \circ \rho$ and with $\dot{\rho} = w$, it is clear that the augmented trajectory $(\rho^*,x^*)$, associated with the augmented control $(w^*,u^*)$, is an optimal solution of the optimal sampled-data control problem, with fixed final time $s=1$, defined by
\begin{align}[left=\OF_{[0,1]}  \qquad\empheqlbrace]
& \min  \int_{0}^1 w(\t) f^0 (\rho(\t), x (\t), u^{\Phi \circ \rho }(\t) ) \, d \t ,  \notag \\
& (\dot{\rho} (t),\dot{x}(t)) = (w(t),w(t) f ( \rho (t) , x(t) , u^{\Phi \circ \rho }(t)) ),  \quad \text{for a.e. $t \in [0,1)$, } \notag  \\
& (w,u) \in \L^\infty ( [0,1], [ \delta/2, + \infty ) ) \times \L^\infty_{\TU} (\TU,\Omega) , \notag \\
& (\rho(0),x(0)) = (b^* - \delta,q^*(b^* - \delta)), \notag \\
& (\rho (1) , g(q^*(a),x(1)) ) \in [b^* - \delta , b^* + \delta ] \times \S . \notag
\end{align}
In this new optimal sampled-data control problem, the sampled-data controls $w$ and $u$ are defined on different time scales. Its Hamiltonian $\tilde{H}$ is 
$ \tilde{H}(\rho,x,p_\rho,p_x,p^0,w,u) = p_\rho w + \langle p_x , w f(\rho,x,u) \rangle_{\R^n} +  p^0 w f^0 (\rho,x,u).$
Applying the \textit{multiscale version} (without the fourth item) of Theorem~\ref{thmmain}, since $w^*$ takes its values in the interior of the constraint set $[\delta/2,+\infty)$, it follows in particular that
$$ \dfrac{\partial \tilde{H}}{\partial w} (\rho^* (s) , x^*(s) , p_{\rho} (s) , p_x (s),  p^0 , w^*(s), u^{*,\Phi \circ \rho^* }(s) ) = 0, $$
for almost every $s \in [0,1)$. Moreover, since $\rho^* (1) = b^*$ belongs to the interior of $(b^* - \delta , b^* + \delta )$, we can select $(p_\rho,p_x)$ such that $p_\rho (1) = 0$ (usual transversality condition on the adjoint vector). It follows that
$$ \langle p_x (s) ,  f(\rho^*(s),x^*(s),u^{*,\Phi \circ \rho^* }(s) \rangle_{\R^n} + p^0 f^0 (\rho^*(s),x^*(s),u^{*,\Phi \circ \rho^* }(s))  = - p_\rho (s) ,$$
for almost every $s \in [0,1]$.
We have thus proved that the function $t \mapsto H(t,q^*(t),p(t),p^0,u^{*\Phi} (t) )$ coincides, almost everywhere on $[b^*-\delta,b^*)$, with a continuous function vanishing at $t=b^*$.
\end{remark}

\begin{remark}
Another method in order to prove the fourth item of Theorem~\ref{thmmain} consists of considering variations of the final time $b$ in a neighborhood of $b^*$ and to modify accordingly the functional of Section~\ref{section41} to which Ekeland's variational principle is applied. %Since the proof goes exactly as in the classical continuous-time case, we do not provide any detail.
\end{remark}

\subsection{Proof of Theorem~\ref{propexistence}}\label{proofexistence}
The following proof can be read independently of the rest of Section~\ref{annexe}. It is inspired from \cite[Chap. 6]{trel}.
We only treat the case where the final time $b \in \T$ is fixed. The proof can be easily adapted to the case of a free final time.

Let us consider a sequence $(q_k)_{k \in \N}$ of $\mathcal{M}$, associated with sampled-data controls $u_k \in \L^\infty_{\TU}(\TU,\Omega)$, minimizing the cost considered in $\OSCP$. It follows from the assumptions that the sequence $( f (\cdot,q_k,u^\Phi_k ),f^0 (\cdot,q_k,u^\Phi_k ) )_{k \in \N}$ is bounded in $\L^2_{\T} ([a,b)_\T,\R^{n+1})$. Hence a subsequence (that we do not relabel) converges in the weak topology of $\L^2_{\T} ([a,b)_\T,\R^{n+1})$ to a function $(F,F^0)$. Moreover, a subsequence of $( q_k (a) )_{k \in \N}$ (that we do not relabel) converges in $\R^n$ and we denote the limit by $x(a)$. We define the absolutely continuous functions $ x(t) = x(a) + \int_{[a,t)_\T} F(\t)  \DD \t$ and $x^0 (t) =  \int_{[a,t)_\T} F^0(\t) \DD \t $. In particular, note the pointwise convergence of $( q_k )_{k \in \N}$ to $x$ on $[a,b]_\T$. Since $g$ is continuous and since $\S$ is closed, we have $g(x(a),x(b)) \in \S$. Note that $x^0 (b)$ is equal to the infimum of admissible costs. To conclude the proof, we have to prove the existence of $u \in \L^\infty_{\TU} (\TU,\Omega)$ such that $ x(t) = x(a) + \int_{[a,t)_\T} f(\t,x(\t),u^\Phi(\t))  \DD \t$ and $x^0 (t) =  \int_{[a,t)_\T} f^0(\t,x(\t),u^\Phi(\t)) \DD \t $.

The sequence $ ( f (\cdot,x,u^\Phi_k ),f^0 (\cdot,x,u^\Phi_k ) )_{k \in \N}$ is bounded in $\L^2_{\T} ([a,b)_\T,\R^{n+1})$. Hence, a subsequence (that we do not relabel) converges in the weak topology of $\L^2_{\T} ([a,b)_\T,\R^{n+1})$ to some $(\tilde{F},\tilde{F}^0)$. It follows from the Lebesgue dominated convergence theorem and from the global Lipschitz continuity of $f$ in $q$ on $[a,b]_\T \times \B_{\R^n}(0,M) \times \Omega$ that $ \int_{[a,b)_\T} \varphi(\t) F(\t) \DD \t = \int_{[a,b)_\T} \varphi(\t) \tilde{F}(\t) \DD \t $ for every $\varphi \in \L^2_{\T} ([a,b)_\T,\R)$. Hence $F=\tilde{F}$ (and similarly $F^0=\tilde{F}^0$) $\DD$-a.e. on $[a,b)_\T$.

We define the set $ \mathcal{W}$ of functions $\Theta \in \L^2_{\T} ([a,b)_\T,\R^{n+1})$ such that $\Theta (t) \in \mathcal{W}(t,x(t))$ for $\DD$-a.e. $t \in [a,b)_\T$. It follows from the assumptions that $\mathcal{W}$ is a closed (convex) subset of $\L^2_{\T} ([a,b)_\T,\R^{n+1})$ with its usual topology, and thus with its weak topology as well (see \cite{brez}). We infer that $(F,F^0)=(\tilde{F},\tilde{F}^0) \in \mathcal{W}$. Hence, for $\DD$-a.e. $t \in [a,b)_\T$, there exists $v(t) \in \Omega$ such that $ (F(t),F^0(t)) = (f (t,x(t),v(t) ) , f^0 (t,x(t),v(t) ) ) $. Note that $v$ can be selected $\mu_\DD$-measurable on $[a,b)_\T$.\footnote{Indeed, similarly to the proof of Proposition~\ref{propnumber1}, one has just to consider the extension function $\tilde{v}$ defined on $[a,b)$ by $\tilde{v} (t) = v(t)$ if $t \in \T$ and by $\tilde{v}(t) = v(r)$ if $t \in (r,\sigma(r))$, where $r \in \RS$. Then, the function $\tilde{v}$ can be selected $\mu_L$-measurable on $[a,b)$ from \cite[Lemma~3A p. 161]{lee}, which implies the $\mu_\Delta$-measurability of $v$ on $[a,b)_\T$ from~\cite{caba}.}

Since $\RS_1 \cap [a,b)_\T$ is at most countable, using a diagonal argument, there exists a subsequence of $(u_k)_{k \in \N}$ (that we do not relabel) such that $u_k (r)$ converges to some $u_r \in \Omega$ for every $r \in \RS_1 \cap [a,b)_\T$. It follows from the Lebesgue dominated convergence theorem that  $ x(t) = x(a) + \int_{[a,t)_\T} f(\t,x(\t),u^\Phi(\t))  \DD \t$ and $x^0 (t) =  \int_{[a,t)_\T} f^0(\t,x(\t),u^\Phi(\t)) \DD \t $, where $u$ is defined by $u(t)=u_r$ if $t=r \in \RS_1 \cap [a,b)_\T$ and $u(t)=v(t)$ if $t \in \RD_1 \cap [a,b)_\T$. It is easy to prove that $u \in \L^\infty_\TU ([a,b)_\TU,\Omega)$ (the $\mu_{\DD_1}$-measurability of $u$ on $[a,b)_\TU$ is established as in the proof of Proposition~\ref{propnumber1}). This concludes the proof of Theorem~\ref{propexistence}.

\bigskip

\noindent{\bf Acknowledgment.}
The second author was partially supported by the Grant FA9550-14-1-0214 of the EOARD-AFOSR.

\bibliographystyle{plain}
%\bibliography{../bibliototal}

\begin{thebibliography}{}

\end{thebibliography}


\begin{thebibliography}{10}


\bibitem{agar2}
R.P. Agarwal and M.~Bohner.
\newblock Basic calculus on time scales and some of its applications.
\newblock {\em Results Math.}, 35(1-2):3--22, 1999.

\bibitem{agar3}
R.P. Agarwal, M.~Bohner, and A.~Peterson.
\newblock Inequalities on time scales: a survey.
\newblock {\em Math. Inequal. Appl.}, 4(4):535--557, 2001.

\bibitem{agar}
R.P. Agarwal, V.~Otero-Espinar, K.~Perera, and D.R. Vivero.
\newblock Basic properties of {S}obolev's spaces on time scales.
\newblock {\em Adv. Difference Equ.}, Art. ID 38121, 14, 2006.

\bibitem{agrach}
A.A. Agrachev and Y.L. Sachkov.
\newblock {\em Control theory from the geometric viewpoint}, volume~87 of {\em
  Encyclopaedia of Mathematical Sciences}.
\newblock Springer-Verlag, Berlin, 2004.

\bibitem{aulb}
B. Aulbach and L.~Neidhart.
\newblock Integration on measure chains.
\newblock In {\em Proceedings of the Sixth International Conference on Difference Equations}, 239--252, Boca Raton, FL, 2004. CRC.

\bibitem{ati}
F.M. Atici, D.C. Biles, and A.~Lebedinsky.
\newblock An application of time scales to economics.
\newblock {\em Math. Comput. Modelling}, 43(7-8):718--726, 2006.

\bibitem{torr8}
Z.~Bartosiewicz and D.F.M. Torres.
\newblock Noether's theorem on time scales.
\newblock {\em J. Math. Anal. Appl.}, 342(2):1220--1226, 2008.

%\bibitem{blot}
%J.~Blot and H.~Chebbi.
%\newblock Discrete time {P}ontryagin principles with infinite horizon.
%\newblock {\em J. Math. Anal. Appl.}, 246(1):265--279, 2000.

\bibitem{bohn2}
M.~Bohner.
\newblock Calculus of variations on time scales.
\newblock {\em Dynam. Systems Appl.}, 13(3-4):339--349, 2004.

\bibitem{bohn4}
M.~Bohner and G.S. Guseinov.
\newblock Double integral calculus of variations on time scales.
\newblock {\em Comput. Math. Appl.}, 54(1):45--57, 2007.

\bibitem{bohn}
M.~Bohner and A.~Peterson.
\newblock {\em Dynamic equations on time scales. An introduction with applications.}
\newblock Birkh\"auser Boston Inc., Boston, MA, 2001.

\bibitem{bohn3}
M.~Bohner and A.~Peterson.
\newblock {\em Advances in dynamic equations on time scales}.
\newblock Birkh\"auser Boston Inc., Boston, MA, 2003.

\bibitem{bolt}
V.G. Boltyanski{\u\i}.
\newblock {\em Optimal control of discrete systems}.
\newblock John Wiley \& Sons, New York-Toronto, Ont., 1978.

\bibitem{Bon-Chy03}
B.~Bonnard, M.~Chyba,
\newblock {\em The role of singular trajectories in control theory}.
\newblock Springer Verlag, 2003.

\bibitem{trel2}
B.~Bonnard, L.~Faubourg, and E.~Tr{\'e}lat.
\newblock {\em M\'ecanique c\'eleste et contr\^ole des v\'ehicules spatiaux},
  volume~51 of {\em Math\'ematiques \& Applications (Berlin) [Mathematics \&
  Applications]}.
\newblock Springer-Verlag, Berlin, 2006.

%\bibitem{borw}
%J.M. Borwein, Q.J. Zhu.
%\newblock {\em Techniques of variational analysis}.
%\newblock CMS Books in Mathematics/Ouvrages de Math\'ematiques de la SMC, 20.
%  Springer-Verlag, New York, 2005.

\bibitem{bour10}
L.~Bourdin, E.~Tr\'elat.
\newblock General Cauchy-Lipschitz theory for Delta-Cauchy problems with Carath\'eodory dynamics on time scales.
\newblock {\em J. Difference Equ. Appl.}, 20(4):526-547, 2014.

\bibitem{bour11}
L.~Bourdin, E.~Tr\'elat.
\newblock Pontryagin maximum principle for finite dimensional nonlinear optimal control problems on time scales.
\newblock {\em SIAM J. Control Optim.}, 51(5):3781-3813, 2013.

\bibitem{bour12}
L.~Bourdin.
\newblock Nonshifted calculus of variations on time scales with nabla-differentiable sigma.
\newblock {\em J. Math. Anal. Appl.}, 411(2):543-554, 2014.

\bibitem{bres}
A.~Bressan and B.~Piccoli.
\newblock {\em Introduction to the mathematical theory of control}, volume~2 of
  {\em AIMS Series on Applied Mathematics}.
\newblock Springfield, MO, 2007.

\bibitem{brez}
H.~Brezis.
\newblock {\em Functional analysis, Sobolev spaces and partial differential equations}.
\newblock Springer, New York, 2011.

\bibitem{brys}
J.a.e. Bryson and Y.C. Ho.
\newblock {\em Applied optimal control}.
\newblock Hemisphere Publishing Corp. Washington, D. C., 1975.
\newblock Optimization, estimation, and control, Revised printing.

\bibitem{BulloLewis}
F. Bullo, A.D. Lewis,
\newblock {\em Geometric control of mechanical systems. Modeling, analysis, and design for simple mechanical control systems}.
\newblock Texts in Applied Mathematics, 49, Springer-Verlag, New York, 2005.

\bibitem{caba2}
A.~Cabada and D.R. Vivero.
\newblock Criterions for absolute continuity on time scales.
\newblock {\em J. Difference Equ. Appl.}, 11(11):1013--1028, 2005.

\bibitem{caba}
A.~Cabada and D.R. Vivero.
\newblock Expression of the {L}ebesgue {$\Delta$}-integral on time scales as a
  usual {L}ebesgue integral: application to the calculus of
  {$\Delta$}-antiderivatives.
\newblock {\em Math. Comput. Modelling}, 43(1-2):194--207, 2006.

\bibitem{cano}
M.D. Canon, J.C.D. Cullum, and E.~Polak.
\newblock {\em Theory of optimal control and mathematical programming}.
\newblock McGraw-Hill Book Co., New York, 1970.


\bibitem{Cesari}
L.~Cesari,
\newblock {\em Optimization -- theory and applications. Problems with ordinary differential equations}.
\newblock Applications of Mathematics, 17, Springer-Verlag, New York, 1983.


%\bibitem{caputo}
%M.C.~Caputo.
%\newblock Time scales: from nabla calculus to delta calculus and vice versa via duality.
%\newblock {\em Int. J. Difference Equ.}, 5(1):25--40, 2010.

\bibitem{ekel}
I.~Ekeland.
\newblock On the variational principle.
\newblock {\em J. Math. Anal. Appl.}, 47:324--353, 1974.

%\bibitem{evan}
%L.C. Evans and R.F. Gariepy.
%\newblock {\em Measure theory and fine properties of functions}.
%\newblock Studies in Advanced Mathematics. CRC Press, Boca Raton, FL, 1992.
%
%\bibitem{evan2}
%L.C. Evans.
%\newblock {\em An Introduction to Mathematical Optimal Control Theory}.
%\newblock Lecture notes Version 0.2 for an undergraduate course.

\bibitem{fan}
L.~Fan and C.~Wang.
\newblock {\em The discrete maximum principle: a study of multistage systems
  optimization}.
\newblock John Wiley \& {S}ons, New York, 1964.

\bibitem{torr9}
R.A.C. Ferreira and D.F.M. Torres.
\newblock Higher-order calculus of variations on time scales.
\newblock In {\em Mathematical control theory and finance}, pages 149--159.
  Springer, Berlin, 2008.

\bibitem{gama}
J.G.P. Gamarra and R.V. Solv\'e.
\newblock Complex discrete dynamics from simple continuous population models.
\newblock {\em Bull. Math. Biol.}, 64:611--620, 2002.

\bibitem{gamk}
R.V. Gamkrelidze.
\newblock Discovery of the maximum principle.
\newblock In {\em Mathematical events of the twentieth century}, pages 85--99.
  Springer, Berlin, 2006.

\bibitem{guse}
G.S. Guseinov.
\newblock Integration on time scales.
\newblock {\em J. Math. Anal. Appl.}, 285(1):107--127, 2003.

%\bibitem{HaberkornTrelat}
%T. Haberkorn and E. Tr\'elat.
%\newblock Convergence results for smooth regularizations of hybrid nonlinear optimal control problems.
%\newblock {\em SIAM J.\ Control Optim.}, 49(4):1498--1522, 2011.

\bibitem{Halkin}
H. Halkin.
\newblock A maximum principle of the Pontryagin type for systems described by nonlinear difference equations.
\newblock {\em SIAM J. Control}, 4:90--111, 1966.

\bibitem{hest}
M.R. Hestenes.
\newblock {\em Calculus of variations and optimal control theory}.
\newblock Robert E. Krieger Publishing Co. Inc., Huntington, N.Y., 1980.
\newblock Corrected reprint of the 1966 original.

\bibitem{hilg}
S.~Hilger.
\newblock {\em Ein {M}a{\ss}kettenkalk\"ul mit {A}nwendungen auf Zentrumsmannigfaltigkeiten}.
\newblock PhD thesis, Universit\"at {W}\"urzburg, 1988.

\bibitem{hils}
R.~Hilscher and V.~Zeidan.
\newblock Calculus of variations on time scales: weak local piecewise {$C^1_{\rm rd}$} solutions with variable endpoints.
\newblock {\em J. Math. Anal. Appl.}, 289(1):143--166, 2004.

\bibitem{hils1}
R.~Hilscher and V.~Zeidan.
\newblock First-order conditions for generalized variational problems over time scales.
\newblock {\em Comput. Math. Appl.}, 62(9):3490--3503, 2011.

\bibitem{hils2}
R.~Hilscher and V.~Zeidan.
\newblock Weak maximum principle and accessory problem for control problems on time scales.
\newblock {\em Nonlinear Anal.}, 70(9):3209--3226, 2009.

\bibitem{hils3}
R.~Hilscher and V.~Zeidan.
\newblock Time scale embedding theorem and coercivity of quadratic functionals.
\newblock {\em Analysis (Munich)}, 28(1):1--28, 2008.

\bibitem{holt2}
J.M. Holtzman.
\newblock Convexity and the maximum principle for discrete systems.
\newblock {\em IEEE Trans. Automatic Control}, AC-11:30--35, 1966.

\bibitem{JBHU}
J.-B. Hiriart-Urruty.
\newblock {\em La commande optimale pour les d\'ebutants}.
\newblock Opuscules. Ellipses, Paris, 2008.

\bibitem{holt}
J.M. Holtzman and H.~Halkin.
\newblock Discretional convexity and the maximum principle for discrete
  systems.
\newblock {\em SIAM J. Control}, 4:263--275, 1966.

\bibitem{Jurdjevic}
V. Jurdjevic,
\newblock {\em Geometric control theory}.
\newblock Cambridge Studies in Advanced Mathematics, 52, Cambridge University Press, 1997.

\bibitem{lee}
E. B. Lee and L. Markus,
\newblock {\em Foundations of optimal control theory}.
\newblock John Wiley, New York, 1967.

%\bibitem{liyo}
%X.J. Li and J.M. Yong.
%\newblock {\em Optimal control theory for infinite-dimensional systems}.
%\newblock Systems \& Control: Foundations \& Applications. Birkh\"auser Boston
%  Inc., Boston, MA, 1995.
%
%\bibitem{torr7}
%A.B. Malinowska and D.F.M. Torres.
%\newblock The delta-nabla calculus of variations.
%\newblock {\em Fasc. Math.}, (44):75--83, 2010.

\bibitem{may}
R.M. May.
\newblock Simple mathematical models with very complicated dynamics.
\newblock {\em Nature}, 261:459--467, 1976.

\bibitem{mord}
B.S. Mordukhovich.
\newblock {\em Variational analysis and generalized differentiation, {I}:
  Basic theory, {II}: Applications}.
\newblock Volumes 330 and 331 of {\em Grundlehren der
  Mathematischen Wissenschaften [Fundamental Principles of Mathematical
  Sciences]}.
\newblock Springer-Verlag, Berlin, 2006.

\bibitem{pont}
L.S. Pontryagin, V.G. Boltyanskii, R.V. Gamkrelidze, and E.F. Mishchenko.
\newblock The mathematical theory of optimal processes.
\newblock Interscience Publishers John Wiley \& Sons, Inc. New York-London, 1962.

\bibitem{Schattler}
H. Sch\"attler, U. Ledzewicz,
\newblock {\em Geometric optimal control, theory, methods and examples}.
\newblock Interdisciplinary Applied Mathematics, Vol. 38, Springer, 2012.

\bibitem{seie}
A. Seierstad and K. Syds{\ae}ter.
\newblock {\em Optimal control theory with economic applications}.
\newblock North-Holland Publishing Co., Amsterdam, Volume 24, 1987.

\bibitem{seth}
S.P. Sethi and G.L. Thompson.
\newblock {\em Optimal control theory. Applications to management science and economics}.
\newblock Kluwer Academic Publishers, Boston, MA, second edition, 2000.

%\bibitem{trel3}
%C. Silva and E. Tr{\'e}lat.
%\newblock Smooth regularization of bang-bang optimal control problems.
%\newblock {\em IEEE Trans. Automat. Control}, 55(11):2488--2499, 2010.

\bibitem{trel}
E. Tr{\'e}lat.
\newblock {\em Contr\^ole optimal, th{\'e}orie \& applications}.
\newblock Math\'ematiques Concr\`etes. Vuibert, Paris, 2005.

\bibitem{Trelat_JOTA2012}
E. Tr{\'e}lat.
\newblock {\em Optimal control and applications to aerospace: some results and challenges}.
\newblock J. Optim. Theory Appl. 154(3):713--758, 2012.

%\bibitem{Vinter}
%R. Vinter.
%\newblock Optimal control. Systems \& Control: Foundations \& Applications.
%\newblock Birkh\"auser Boston, Inc., Boston, MA, 2000.
%
%\bibitem{zhan2}
%Z.~Zhan, S.~Chen, and W.~Wei.
%\newblock A unified theory of maximum principle for continuous and discrete
%  time optimal control problems.
%\newblock {\em Math. Control Relat. Fields}, 2(2):195--215, 2012.
%
%\bibitem{zhan3}
%Z.~Zhan and W.~Wei.
%\newblock On existence of optimal control governed by a class of the
%  first-order linear dynamic systems on time scales.
%\newblock {\em Appl. Math. Comput.}, 215(6):2070--2081, 2009.

\end{thebibliography}

\end{document}